\newcommand{\gkm}{\hat{\fg}_{- \kappa + 2\kappa_c}}
\newcommand{\lt}{L_{-\operatorname{Tate}}}
\newcommand{\csi}{C^{\frac{\infty}{2} + *}}
\newcommand{\gkmod}{\gk\operatorname{-mod}}
\newcommand{\dcs}{ \wfl \backslash W_\lambda / \wstab}
\newcommand{\wfl}{W_{\operatorname{f}, \lambda}}
\newcommand{\wstab}{W^\circ_\lambda}
\newcommand{\wstabf}{W^\circ_{\operatorname{f}, \lambda}}
\newcommand{\Wf}{W_{\operatorname{f}}}
\newcommand{\ch}{\operatorname{ch}}
\newcommand{\Hom}{\operatorname{Hom}}
\newcommand{\gr}{\operatorname{gr }}
\newcommand{\g}{\hat{\mathfrak{g}}}
\newcommand{\fn}{\mathfrak{n}}
\newcommand{\h}{\mathfrak{h}}
\newcommand{\halpha}{\check{\alpha}}
\newcommand{\fsl}{\mathfrak{sl}}
\newcommand{\id}{\text{id}}
\newcommand{\C}{\ensuremath{\mathbb{C}}}
\newcommand{\Q}{\ensuremath{\mathbb{Q}}}
\newcommand{\Z}{\ensuremath{\mathbb{Z}}}
\newcommand{\comment}[1]{}
\newcommand{\OO}{\mathscr{O}}
\newcommand{\Ext}{\operatorname{Ext}}
\newcommand{\Phf}{{}^{\operatorname{f}}\Phi^\vee}
\newcommand{\gk}{\g_\kappa}
\newtheorem{theo}[equation]{Theorem}
\newtheorem{lemma}[equation]{Lemma}
\newtheorem{cor}[equation]{Corollary}
\newtheorem{pro}[equation]{Proposition}
\theoremstyle{definition}
\newtheorem{ex}[equation]{Example}
\newtheorem{defn}[equation]{Definition}
\newtheorem{re}[equation]{Remark}
\newcommand{\fg}{\mathfrak{g}}
\newcommand{\cw}{\mathscr{W}}
\numberwithin{equation}{section}
\newcommand{\scc}{\mathscr{C}}
\newcommand{\cW}{\mathscr{W}}
\begin{document}

\dedicatory{In memory of N.B.}

\title{Semi-infinite cohomology and the linkage principle for $ \mathscr{W}$-algebras}
\author{Gurbir Dhillon}
\date{Spring 2019}
\begin{abstract} Let $\mathfrak{g}$ be a simple Lie algebra, and let $\cW_\kappa$ be the affine $\cW$-algebra associated to a principal nilpotent element of $\mathfrak{g}$ and level $\kappa$. We explain a duality between the categories of smooth $\mathscr{W}$ modules at levels $\kappa + \kappa_c$ and $-\kappa +  \kappa_c$, where $\kappa_c$ is the critical level. Their pairing amounts to a construction of semi-infinite cohomology for the $\cW$-algebra. 

As an application, we determine all homomorphisms between the Verma modules for $\cW$, verifying a conjecture from the conformal field theory literature of de Vos--van Driel. Along the way, we determine the linkage principle for Category $\mathscr{O}$ of the $\cW$-algebra.  \end{abstract}

\maketitle

\section{Introduction}

In a celebrated work, Feigin and Fuchs computed the space of intertwining operators between Verma modules for the Virasoro algebra \cite{test}. As a striking consequence of their calculation, the full subcategories of Verma modules at central charges $c$ and 26 - $c$ were opposite to one another. Feigin had recently introduced the semi-infinite cohomology of  Lie algebras into mathematics \cite{fml}. For the Virasoro Lie algebra, the obstruction to taking the semi-infinite cohomology of a module vanished only at central charge $26$, and it was already understood in {\em loc. cit.}  that the duality of Verma embeddings at complementary central charges could be recovered from properties of the semi-infinite cohomology functor.   

The Virasoro vertex algebra can be realized as the quantum Hamiltonian reduction of the vacuum vertex algebra for affine $\fsl_2$. Applying the same procedure for other simple Lie algebras $\mathfrak{g}$ produces the $\mathscr{W}$-algebras. A natural conjecture, which appeared in the conformal field theory literature, is that a version of Feigin--Fuchs duality for Verma modules should persist in this wider setting \cite{dv}. Moreover, it was understood that this again should follow from having a theory of semi-infinite cohomology for $\cW$-algebras. 

However, generalizing from $\fsl_2$ to general $\fg$ proved to be far from automatic. Since the operator product expansions of the standard generating fields of a general $\cW$-algebra are not linear in the fields, unlike the case of the Virasoro vertex algebra, the previous Lie algebra formalism for semi-infinite cohomology was inapplicable. Nonetheless, it was clear such a process should exist and play a similarly basic role, e.g. in cancelling unphysical states against ghosts in $\cW$-gravity and $\cW$-string theory \cite{bw3}, \cite{b}, \cite{pope}. Despite constructions of such a cohomology theory and their detailed study for other small rank examples, no general method was found.  

In this paper we construct a semi-infinite cohomology theory for $\cW$-algebras. To do so, we will make essential use of several recent breakthroughs in the local quantum geometric Langlands program. Having done so, we will, as conformal field theorists knew all along, see an appropriate form of Feigin--Fuchs duality for the $\cW$-algebra. Along the way, we will prove several basic results on Category $\OO$ for the $\cW$-algebra, notably a long expected linkage principle.

\section{Statement of Results}\label{two} Let $\fg$ be a simple complex Lie algebra with triangular decomposition $\fg = \fn^- \oplus \h \oplus \fn$.\footnote{All results we prove have analogues for the Lie algebra of a general reductive group, which may be deduced from the simple case. Similarly, one may replace our ground field $\mathbb{C}$ with any algebraically closed field of characteristic zero.}  Let $G$ be the simple, simply connected algebraic group with Lie algebra $\fg$. Fix a level $\kappa$, i.e. an invariant bilinear form $\kappa \in (\fg^* \otimes \fg^*)^G$, and write $\kappa_c$ for the critical level. Write $\gk$ for the affine Lie algebra associated to $\fg$ and $\kappa$, and let $\cw_\kappa$ denote the $\cW$-algebra associated to $\gk$ and a principal nilpotent element of $\fg$. 

\subsection{Categorical Feigin--Fuchs duality} We prove a version of Feigin--Fuchs duality which applies to all representations of the $\cW$-algebra. Since this uses several modern notions from homological algebra, which may not be familiar to all readers, we will approach it via several more elementary statements. A reader comfortable with the basics of stable cocomplete $\infty$-categories may wish to skip directly to Theorem \ref{mainthe}. 

As mentioned in the introduction, Feigin and Fuchs found a remarkable duality between the Verma modules for Virasoro at complementary central charges, i.e. a contravariant equivalence of categories. We show this phenomenon persists for $\cW$. Let us write $\cw_\kappa\operatorname{-mod}^\heartsuit$ for its abelian category of representations, and $\operatorname{Verma}^\heartsuit_\kappa$ for its full subcategory consisting of Verma modules. Then we have:

\begin{theo} There is a canonical equivalence of categories: $$\operatorname{Verma}_\kappa^{\heartsuit, op} \simeq \operatorname{Verma}_{-\kappa + 2\kappa_c}^\heartsuit.$$ \label{app1}
\end{theo}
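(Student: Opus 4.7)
The plan is to extract Theorem~\ref{app1} from the categorical Feigin--Fuchs duality (Theorem~\ref{mainthe}) by analyzing what the duality does on the full subcategories of Verma modules.

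First, I would invoke the pairing of categories coming from semi-infinite cohomology: for $M \in \cW_\kappa\operatorname{-mod}$ and $N \in \cW_{-\kappa + 2\kappa_c}\operatorname{-mod}$, the bifunctor
$$\langle M, N \rangle := C^{\frac{\infty}{2}+*}(\cW, M \otimes N)$$
provides a pairing between the two complementary levels (the sum $\kappa + (-\kappa + 2\kappa_c) = 2\kappa_c$ correctly matches the requirement from semi-infinite cohomology that the levels sum to twice critical). The main result of the paper asserts that, after passing to appropriate $\infty$-categorical enhancements, this pairing is a perfect duality. The induced contravariant functor $D$ sends a module $M$ at level $\kappa$ to the representable object $N \mapsto \langle M, N\rangle^\vee$ in $\cW_{-\kappa + 2\kappa_c}\operatorname{-mod}$.

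Second, I would compute $D$ on Verma modules, which is the key calculation. Writing $M_\kappa(\lambda)$ for the Verma module for $\cW_\kappa$ of highest weight $\lambda$, the claim is that
$$\langle M_\kappa(\lambda), M_{-\kappa + 2\kappa_c}(\mu) \rangle \simeq \begin{cases} \C, & \lambda = \mu, \\ 0, & \text{otherwise,} \end{cases}$$
concentrated in cohomological degree zero. I would prove this by exploiting the fact that $M_\kappa(\lambda)$ is the quantum Drinfeld--Sokolov reduction of a Kac--Moody Verma module, together with the compatibility of $C^{\frac{\infty}{2}+*}$ for $\cW$ with Drinfeld--Sokolov reduction, which reduces the calculation to the analogous pairing between Kac--Moody Vermas at complementary levels; the latter is a standard computation from the affine BRST complex. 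This yields $D M_\kappa(\lambda) \simeq M_{-\kappa + 2\kappa_c}(\lambda)$, so $D$ carries $\operatorname{Verma}_\kappa^\heartsuit$ into $\operatorname{Verma}_{-\kappa+2\kappa_c}^\heartsuit$, and by symmetry of the pairing it is essentially surjective.

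Third, once $D$ is known to preserve Verma modules, the statement on morphisms is automatic: for any highest weights $\lambda, \mu$,
$$\Hom_{\cW_\kappa}(M_\kappa(\mu), M_\kappa(\lambda)) \;\simeq\; \Hom_{\cW_{-\kappa+2\kappa_c}}(M_{-\kappa+2\kappa_c}(\lambda), M_{-\kappa+2\kappa_c}(\mu)),$$
which is precisely the contravariant equivalence. The main obstacle is step two, namely showing that the semi-infinite cohomology of the tensor product of two Vermas at complementary levels is concentrated in degree zero with one-dimensional value. This demands precise control over the BRST complex defining $C^{\frac{\infty}{2}+*}$ for $\cW$, in particular its compatibility with Drinfeld--Sokolov reduction from the affine side, and is the place where the critical-level shift $2\kappa_c$ enters. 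Once this input is established, the remaining steps are formal consequences of the categorical duality.
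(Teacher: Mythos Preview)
Your overall strategy matches the paper's: deduce the contravariant equivalence on Vermas from the categorical Feigin--Fuchs duality (Theorem~\ref{mainthe}) by checking that the induced functor on compact objects carries Verma modules to Verma modules. This is precisely how the paper obtains Theorem~\ref{app1}, via Theorem~\ref{vermsd}.

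However, your step two contains errors. The dual of $M_\kappa(\lambda)$ is not $M_{-\kappa+2\kappa_c}(\lambda)$ but rather $M_{-\kappa+2\kappa_c}(-\lambda-2\rho)[\dim N]$; see Theorem~\ref{vermsd} and Lemma~\ref{duverma}. The highest-weight involution $\lambda \mapsto -\lambda - 2\rho$ arises from the combinatorial duality of Proposition~\ref{flippy}, and the shift by $\dim N$ comes from the averaging functors in the adolescent Whittaker construction. Your claimed orthogonality relation $\langle M_\kappa(\lambda), M_{-\kappa+2\kappa_c}(\mu)\rangle \simeq \C$ for $\lambda=\mu$ in degree zero is therefore incorrect on both counts. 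These errors do not affect the bare statement of Theorem~\ref{app1}, since a uniform cohomological shift and a bijection on highest weights still yield an equivalence of the abelian categories of Vermas, but they would matter for Theorem~\ref{app22} and for the explicit Verma embedding results in Section~9.

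You also skip a point the paper treats carefully: one must verify that Verma modules are compact in the renormalized category $\cW_\kappa\operatorname{-mod}$, so that the duality functor is even defined on them. The paper does this in the proof of Theorem~\ref{vermsd} by tracing compactness through the composite \eqref{comp}. Finally, the paper's actual computation does not proceed by a direct BRST calculation of the pairing between two $\cW$-Vermas; instead it computes the Kac--Moody dual (Lemma~\ref{duverma}) and then pushes through the Whittaker averaging and insertion functors using Propositions~\ref{followduals} and Theorem~\ref{compactsinwhs}. Your proposed direct route may be workable, but the ``compatibility of $C^{\frac{\infty}{2}+*}$ for $\cW$ with Drinfeld--Sokolov reduction'' you invoke is exactly what the adolescent Whittaker machinery of Section~\ref{dwm} is set up to make precise.
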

To our knowledge, Theorem \ref{app1} is new for all cases besides the Virasoro algebra. A next approximation to our Feigin--Fuchs duality is roughly the statement that not only $\operatorname{Hom}$s between Vermas match, but also $\operatorname{Ext}$s. Let us write $\cw_\kappa\operatorname{-mod}^b$ for its bounded derived category of representations, or better its canonical dg-enhancement. If we consider $\operatorname{Verma}_\kappa$, the full pretriangulated subcategory generated by  $\operatorname{Verma}_\kappa^\heartsuit$, we have:

\begin{theo} There is a canonical equivalence of dg-categories: $$\operatorname{Verma}_\kappa^{op} \simeq \operatorname{Verma}_{-\kappa + 2\kappa_c}. $$\label{app22}\end{theo}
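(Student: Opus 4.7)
The plan is to lift Theorem \ref{app1} from an equivalence of the abelian categories $\operatorname{Verma}_\kappa^\heartsuit$ to a dg-equivalence of their pretriangulated hulls in $\cw_\kappa\Mod^b$, by exploiting the semi-infinite cohomology pairing
$$\csi(-,-): \cw_\kappa\Mod \otimes \cw_{-\kappa + 2\kappa_c}\Mod \to \operatorname{Vect}$$
that underlies the duality in this paper.

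The first and main step is to compute $\csi$ on pairs of Vermas at complementary levels. The target claim is that for Vermas $V$ at level $\kappa$ and $V'$ at level $-\kappa + 2\kappa_c$, the complex $\csi(V, V')$ is one-dimensional and concentrated in cohomological degree zero when $V$ and $V'$ correspond under the bijection of Theorem \ref{app1}, and is acyclic otherwise. The $H^0$ statement in the matching case is exactly Theorem \ref{app1}, so the substance is the vanishing of higher $\csi$ together with total acyclicity in the non-matching case. Using the chain-level construction of $\csi$ for $\cw$-algebras developed earlier in the paper, this reduces to a weight-space computation on a BRST-type complex that collapses onto a single highest weight vector; the linkage principle for Category $\OO$ controls which pairs of weights can a priori contribute, eliminating the non-matching cases.

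With this computation in hand, bifunctoriality and biexactness of $\csi$ give representability: for each $V \in \operatorname{Verma}_\kappa$, the dg-functor $\csi(V, -)$ restricted to $\operatorname{Verma}_{-\kappa + 2\kappa_c}$ is corepresented by the matching Verma, yielding a contravariant dg-functor $\Phi: \operatorname{Verma}_\kappa \to \operatorname{Verma}_{-\kappa + 2\kappa_c}$ extending Theorem \ref{app1}. The analogous construction with the arguments swapped produces $\Psi$ in the opposite direction, and because the matching is an involution, $\Psi$ and $\Phi$ are mutually quasi-inverse. Equivalently, bifunctoriality of $\csi$ yields natural isomorphisms
$$\Ext^*_{\cw_\kappa}(V, U) \overset{\sim}{\longrightarrow} \Ext^*_{\cw_{-\kappa + 2\kappa_c}}(\Phi(U), \Phi(V))$$
for Vermas $V, U$ at level $\kappa$, so $\Phi$ is fully faithful at the dg level. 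Essential surjectivity is automatic since both sides are pretriangulated hulls of their hearts and $\Phi$ bijects Vermas.

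The main obstacle is the $\csi$ computation on pairs of Vermas: establishing higher-cohomology vanishing in the matching case and total acyclicity in the non-matching case. This depends on the explicit chain-level construction of $\csi$ for $\cw$-algebras and on the full linkage information for Category $\OO$. Once these are in place, the passage from the Hom-level Theorem \ref{app1} to the dg-level statement above is a formal representability argument in the stable $\infty$-category of smooth $\cw$-modules.
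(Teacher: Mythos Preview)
Your proposal has a genuine gap: it presupposes an explicit, chain-level semi-infinite cohomology functor $\csi(-,-)$ for $\cW$-algebras that can be computed directly on Verma modules. No such construction appears in the paper. On the contrary, the paper \emph{defines} $\csi$ for $\cW$ as the pairing induced by the categorical duality of Theorem~\ref{ffd}; see the definition in Section~5.3. The introduction emphasizes that for general $\fg$ no BRST-type complex realizing $\cW$ semi-infinite cohomology was previously available, and producing one is precisely what the paper does---but only \emph{after} the duality is established by other means. So invoking ``the chain-level construction of $\csi$ for $\cW$-algebras developed earlier in the paper'' to prove the duality is circular.

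A second problem: your use of the linkage principle to force acyclicity of $\csi(V,V')$ in the ``non-matching'' case is insufficient. Two distinct Verma modules can perfectly well lie in the same block, so linkage alone says nothing about vanishing there.

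The paper's route is the reverse of yours. It first proves the full categorical duality $\cW_\kappa\Mod^\vee \simeq \cW_{-\kappa+2\kappa_c}\Mod$ (Theorem~\ref{ffd}) by starting from the explicit Tate Lie algebra semi-infinite cohomology pairing for $\gk$ (Theorem~\ref{tld}), passing to Whittaker coinvariants via the adolescent Whittaker construction (Theorem~\ref{wduals}), and invoking affine Skryabin. This yields Theorem~\ref{app3} on compact objects via Proposition~\ref{cduals}. Then Theorem~\ref{vermsd} tracks a single Verma module through this construction---using the Shapiro lemma at the Kac--Moody level (Lemma~\ref{duverma}), the compatibility of duality with averaging functors (Proposition~\ref{followduals}), and the adolescent Whittaker bookkeeping (Theorem~\ref{compactsinwhs})---to show that Vermas go to Vermas up to a shift. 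Theorems~\ref{app1} and~\ref{app22} are then immediate corollaries of the big duality restricted to the pretriangulated hull of Vermas, not inputs to it.
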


Another approximation of Feigin--Fuchs duality is the statement that, as Verma modules are protypical objects of tame ramification, the above admits variants with wild ramification. To state this cleanly involves a slightly subtle point, which we now review. Let us write $\cw_\kappa\operatorname{-mod}^{n}$ ($n$ for naive) for the usual unbounded derived category of $\cw_\kappa$ modules. As objects of $\cw_\kappa\operatorname{-mod}^{n}$, the Verma modules are not compact. However, there is a slight enlargement of $\cw_\kappa\operatorname{-mod}^{n}$, which we denote by $\cw_\kappa\operatorname{-mod}$, in which the Verma modules are now compact. An analogous renormalization was first introduced for $\gk$ by Frenkel--Gaitsgory, and later for $\cw_\kappa$ by Raskin \cite{fg}, \cite{r}.\footnote{As a remark for experts - we in fact show all of $\gk\operatorname{-mod}^n$ embeds fully faithfully into $\gk\operatorname{-mod}$, cf. Proposition \ref{basl}. Previously this was only known for the bounded below derived category, and the same proof applies to  $\cw\operatorname{-mod}$ as well.} Such renormalizations are now more broadly seen as useful facts of life in both singular finite dimensional and smooth infinite dimensional algebraic geometry, and quantizations thereof \cite{gi}, \cite{gn}. 

Within $\cw_\kappa\operatorname{-mod}$ one has its subcategory of compact objects $\cw_\kappa\operatorname{-mod}^c$. We now give a heuristic sense of what these look like. Recall that $Z \fg$, the center of the universal enveloping algebra of $\fg$, quantizes the invariant theory quotient $\fg/\!\!/G$. Similarly, the topological associative algebra associated to $\cW$ quantizes the loop space $L(\fg/\!\!/G)$. Bounding the order of poles of an algebraic loop exhibits $L(\fg/\!\!/G)$ as an ascending union of pro-finite-dimensional affine spaces, and objects of $\cw_\kappa\operatorname{-mod}^c$  quantize pushforwards of perfect complexes from these subschemes.\footnote{In modern parlance, the latter are $\operatorname{IndCoh}(L(\fg/\!\!/G))^c$.}  

With these explanations in place, we can state:

\begin{theo}There is a canonical equivalence of dg-categories:

\begin{equation}\cw_\kappa\operatorname{-mod}^{c, op} \simeq \cw_{-\kappa + 2\kappa_c}\operatorname{-mod}^c.\label{app3eq}\end{equation}
\label{app3}
\end{theo}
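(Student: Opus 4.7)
The plan is to construct a continuous pairing
\[
\langle -, - \rangle \colon \cw_\kappa\Mod \otimes \cw_{-\kappa + 2\kappa_c}\Mod \longrightarrow \operatorname{Vect}
\]
whose restriction to compact objects is perfect; the equivalence \eqref{app3eq} then follows by passing to duals of compactly generated dg-categories. The natural candidate is the semi-infinite cohomology of the $\cw$-algebra constructed earlier in the paper: the tensor product $M \otimes N$ of modules at complementary levels carries a canonical action of $\cw$ at total level $2\kappa_c$, which is exactly the level at which the $\csi$ obstruction vanishes, so I would set
\[
\langle M, N \rangle \;:=\; \csi(\cw,\, M \otimes N).
\]

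To establish non-degeneracy on compacts, first reduce to a convenient family of generators. Using the theory of Category $\OO$ for $\cw$ developed earlier in the paper together with the renormalization of \cite{r}, the Verma modules $\V^\lambda_\kappa$ furnish such a family, after augmenting by translates under the action of integrable objects so as to sweep out all of $\cw_\kappa\Mod^c$ rather than merely Category $\OO$. On pairs of Vermas I would compute $\csi(\cw,\, \V^\lambda_\kappa \otimes \V^\mu_{-\kappa + 2\kappa_c})$ via the BRST realization of $\cw$ as the quantum Drinfeld--Sokolov reduction of $\gk$; the answer should be, up to a cohomological shift, the graded dual of $\Hom_\cw(\V^\lambda_\kappa, \V^{-\mu}_\kappa)$, which is perfect by the derived Verma duality of Theorem \ref{app22}.

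A more structural and, I suspect, more efficient route is to descend the analogous duality on $\gk\Mod^c$ through the Drinfeld--Sokolov reduction functor $\Psi \colon \gk\Mod \to \cw_\kappa\Mod$. The Kac--Moody duality is induced by semi-infinite cohomology with respect to $\hat{\fn}$ and is available from the local geometric Langlands framework. The crucial point to establish is the commutativity, up to canonical natural isomorphism, of the square obtained by pairing DS-reduction at levels $\kappa$ and $-\kappa + 2\kappa_c$ with the two $\csi$ pairings, expressing the fact that Hamiltonian reduction at the algebra level intertwines the two semi-infinite cohomology constructions. Once this compatibility is proved and $\Psi$ is shown to send a compact generating family of $\gk\Mod^c$ to a compact generating family of $\cw_\kappa\Mod^c$, the theorem reduces to its Kac--Moody counterpart.

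The main obstacle will be the interaction of the renormalization with the pairing. Concretely, one must check that $\csi(\cw,\, - \otimes -)$, which is manifestly continuous on the naive categories $\cw_\kappa\Mod^n \otimes \cw_{-\kappa + 2\kappa_c}\Mod^n$, extends continuously to the renormalized categories, and that the induced functor $\cw_\kappa\Mod^c \to (\cw_{-\kappa + 2\kappa_c}\Mod)^{op}$ actually lands in compact objects. This preservation of compactness, which is equivalent to continuity of the right adjoint, is the technical heart of the proof; it should follow from the explicit description of $\cw_\kappa\Mod^c$ in terms of perfect complexes pushed forward from truncations of the arc scheme $L(\fg /\!\!/ G)$ mentioned above, combined with Raskin's analysis of the behavior of DS reduction under renormalization.
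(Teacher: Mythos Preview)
Your first approach has a genuine circularity. You propose to use ``the semi-infinite cohomology of the $\cw$-algebra constructed earlier in the paper'' to build the pairing, but in this paper semi-infinite cohomology for $\cw$ is \emph{defined} as the pairing induced by the duality you are trying to prove (see Definition~5.18, which comes after Theorem~\ref{ffd}). There is no prior, independent construction of $\csi(\cw, -)$ available to you. Your sentence ``the tensor product $M \otimes N$ \ldots carries a canonical action of $\cw$ at total level $2\kappa_c$, which is exactly the level at which the $\csi$ obstruction vanishes'' is reasoning by analogy with the Lie algebra case, and the introduction explains precisely why this analogy fails: the standard generating fields of $\cw_\kappa$ have nonlinear OPEs, so the Lie algebra formalism for semi-infinite cohomology does not apply. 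Making sense of $\csi(\cw, -)$ directly is the problem the paper is solving, not an ingredient one may invoke.

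Your second approach is in the right spirit but misses the mechanism. The paper does start from Kac--Moody duality, but the pairing there is $\csi(\hat{\fg}_{2\kappa_c}, L^+\fg, - \otimes -)$, not semi-infinite cohomology with respect to $\hat{\fn}$ as you write. The crucial point is that this pairing is $LG$-equivariant. The paper then invokes two results: Raskin's affine Skryabin isomorphism $\gk\operatorname{-mod}_{LN,\psi} \simeq \cw_\kappa\operatorname{-mod}$, and the general duality theorem for Whittaker models of $LG$-representations (Theorem~\ref{wduals}), which says that for any dualizable $\scc \in D_\kappa(LG)\operatorname{-mod}$ one has $(\scc_{LN,\psi})^\vee \simeq (\scc^\vee)_{LN,-\psi}$. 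This last theorem is proved via Raskin's adolescent Whittaker filtration, and it is the real technical engine; it replaces your vaguer ``commutativity of the square'' and handles compactness and renormalization uniformly (Theorem~\ref{compactsinwhs}, Proposition~\ref{cunions}). Chaining these gives $(\cw_\kappa^\psi\operatorname{-mod})^\vee \simeq \cw_{-\kappa+2\kappa_c}^{-\psi}\operatorname{-mod}$ in one line, after which a short argument with the adjoint torus removes the sign on $\psi$. Your proposed route through Verma modules and explicit perfect complexes on truncations of $L(\fg/\!\!/G)$ is neither needed nor, as stated, sufficient: Vermas alone do not generate $\cw_\kappa\operatorname{-mod}^c$, and the ``augmenting by translates under integrable objects'' you allude to is not developed.
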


In the case of the Virasoro algebra, it had long been a folklore conjecture that something like \eqref{app3eq} should be true. Positselski has given a solution in the interesting monograph \cite{pos} for any Tate Lie algebra. His formulation, and in particular the types of exotic derived categories he works with, are seemingly different from ours. We also prove a Tate Lie algebra analogue of Theorem \ref{app3} in Theorem \ref{tld}, and it would be good to understand the precise relationship between this and {\em loc. cit.}.  However, for all other $\cW$-algebras we unaware of previous work along these lines. 

Finally, we explain a reformulation of Theorem \ref{app3} that applies to the entirety of $\cw_\kappa\operatorname{-mod}$. The totality of all $\mathbb{C}$-linear cocomplete dg-categories can be organized into an $(\infty, 2)$-category $\operatorname{DGCat}_{cont}$, whose homotopy classes of 1-morphisms are $\mathbb{C}$-linear continuous quasifunctors. This is further a symmetric monoidal $\infty$-category, so that given two cocomplete dg-categories $\scc, \mathscr{D}$, one can form their tensor product $\scc \otimes \mathscr{D}$. In particular, one can make sense of dual objects $\scc, \scc^\vee$ in $\operatorname{DGCat}_{cont}$, and Theorem \ref{app3} is equivalent to:

\begin{theo} There is a canonical duality of cocomplete dg-categories: \begin{equation}\cw_{\kappa}\operatorname{-mod}^\vee {\simeq} \cw_{-\kappa + 2\kappa_c}\operatorname{-mod}.\label{maineq}\end{equation} \label{mainthe} \end{theo}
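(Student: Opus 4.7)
\medskip

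The plan is to deduce Theorem \ref{mainthe} from Theorem \ref{app3} by a formal argument in $\operatorname{DGCat}_{cont}$, using that both sides of \eqref{maineq} are compactly generated by the corresponding subcategories of compact objects.

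First I would record (or invoke the result of the renormalization construction producing) the fact that $\cw_\kappa\Mod$ is compactly generated, with the category of compact objects being exactly $\cw_\kappa\Mod^c$. This should follow from the construction of the renormalization in the style of Frenkel--Gaitsgory/Raskin that is cited in the paper, where by design the renormalized category is defined as an ind-completion of a small subcategory (one which in particular contains the Verma modules as compact generators). In particular one has a canonical equivalence $\cw_\kappa\Mod \simeq \Ind(\cw_\kappa\Mod^c)$, and similarly at level $-\kappa + 2\kappa_c$.

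Next, I would appeal to the general formalism of compactly generated presentable stable $\infty$-categories: for any $\scc \in \operatorname{DGCat}_{cont}$ which is compactly generated, the dual is given by ind-completion of the opposite of the small category of compact objects,
\[
\scc^\vee \; \simeq \; \Ind(\scc^{c, op}),
\]
with the evaluation $\scc \otimes \scc^\vee \to \operatorname{Vect}$ induced by the $\Hom$ pairing on compacts. Applied to $\scc = \cw_\kappa\Mod$, this gives $\cw_\kappa\Mod^\vee \simeq \Ind(\cw_\kappa\Mod^{c, op})$. Now I would plug in Theorem \ref{app3}, which furnishes a canonical equivalence $\cw_\kappa\Mod^{c, op} \simeq \cw_{-\kappa + 2\kappa_c}\Mod^c$, and then ind-complete both sides; using the compact generation of $\cw_{-\kappa + 2\kappa_c}\Mod$, the right-hand side becomes $\cw_{-\kappa + 2\kappa_c}\Mod$, yielding \eqref{maineq}.

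Conversely, one should note that the equivalence of Theorem \ref{mainthe} restricts on compact objects back to the equivalence of Theorem \ref{app3}, since any duality of compactly generated dg-categories is determined by its restriction to compacts. This makes the two statements strictly equivalent. The only non-formal input, and hence the main obstacle already handled before this theorem, is the construction of the duality at the level of compacts, i.e.\ Theorem \ref{app3}; the passage to the full categories is then a clean exercise in the $(\infty, 2)$-categorical formalism of $\operatorname{DGCat}_{cont}$ together with the compact generation of the renormalized $\cW$-module categories.
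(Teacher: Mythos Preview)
Your formal argument is correct, and indeed the paper itself remarks immediately before Theorem~\ref{mainthe} that it is equivalent to Theorem~\ref{app3}. However, you have the direction of implication reversed relative to the paper's actual logical structure. In the paper, Theorem~\ref{app3} is \emph{not} proven independently and then used as input; rather, the paper proves Theorem~\ref{mainthe} directly (as Theorem~\ref{ffd}) and deduces Theorem~\ref{app3} from it via Proposition~\ref{cduals}. So while your argument correctly establishes the equivalence of the two statements, as a proof of Theorem~\ref{mainthe} it is circular: the ``main obstacle already handled before this theorem'' that you invoke is in fact handled only afterwards, as a corollary.

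The paper's actual proof of Theorem~\ref{mainthe} (carried out as Theorem~\ref{ffd}) is substantive and does not pass through compact objects at all. It proceeds by (i) establishing a perfect pairing between $\gk\operatorname{-mod}$ and $\hat{\fg}_{-\kappa+2\kappa_c}\operatorname{-mod}$ via semi-infinite cohomology for Tate Lie algebras (Theorem~\ref{tld}), (ii) showing this pairing is $LG$-equivariant, (iii) applying the general duality for Whittaker models of loop group representations (Theorem~\ref{wduals}) to obtain $(\gk\operatorname{-mod}_{LN,\psi})^\vee \simeq \hat{\fg}_{-\kappa+2\kappa_c}\operatorname{-mod}_{LN,-\psi}$, (iv) invoking Raskin's affine Skryabin isomorphism $\gk\operatorname{-mod}_{LN,\psi} \simeq \cW_\kappa^\psi\operatorname{-mod}$, and (v) identifying the categories for $\psi$ and $-\psi$ via the adjoint torus action (Lemma~\ref{lchp}). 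This route has the advantage of producing the duality datum, i.e.\ the semi-infinite cohomology pairing for $\cW$, explicitly, which is one of the main goals of the paper; your ind-completion argument, even if one had an independent proof of Theorem~\ref{app3}, would give the duality abstractly without this concrete description.
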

From a duality datum, one obtains a contravariant equivalence between  the compact objects, recovering Theorem \ref{app3}, and we show this sends Verma modules to Verma modules, recovering Theorems \ref{app1}, \ref{app22}. Moreover, as dual categories, representations of complementary $\cW$-algebras have a `perfect pairing'$$\cw_\kappa\operatorname{-mod} \otimes \cw_{-\kappa + 2\kappa_c}\operatorname{-mod} \rightarrow \operatorname{Vect},$$which is the promised semi-infinite cohomology functor. The problem of constructing such a functor was first raised in the conformal field theory and string theory literature, and solved in several small rank cases   \cite{p2}, \cite{p3}, \cite{p1}, \cite{bw3}, \cite{b},  \cite{p5}, \cite{p7}, \cite{p8}, \cite{sd}, \cite{sd2}, \cite{p4}, \cite{p6}. Its existence has also been anticipated in the mathematical literature. Notably, I. Frenkel and collaborators have developed a rich program on the relation between quantum groups and affine Lie algebras that links both through $\cW$. A crucial role is played by remarkable conjectural calculations of $\cW$ semi-infinite cohomology, e.g. a putative construction of algebras of functions on quantum groups via the modified regular representations of $\cW$-algebras \cite{f1}, \cite{f2}, \cite{f5}, \cite{f4}, \cite{f3}. Finally, we would like to mention that the kernel realizing the duality is closely related to the chiral universal centralizer, i.e. the bi-Whittaker reduction of chiral differential operators on the group. This remarkable vertex algebra, which was studied for $\fg = \fsl_2$ in work of Frenkel--Styrkas \cite{f1}, prominently figures in the recent breakthroughs of Arakawa on the 2d/4d correspondence of Beem {\em et. al.} for genus zero theories of Class $\mathcal{S}$ and the Moore--Tachikawa conjecture \cite{araclasss}, \cite{beem}, \cite{ga},  \cite{mta}.

\subsection{Homomorphisms between Verma modules and the linkage principle}  Feigin and Fuchs not only proved a contravariant equivalence as in Theorem \ref{app1} for the Virasoro algebra, but also explicitly determined all the morphisms in the category $\operatorname{Verma}_\kappa^\heartsuit$. We will presently describe a similar classification for a general $\mathscr{W}$-algebra. Recall that highest weights for $\cW_\kappa$, i.e. the maximal spectrum of its Zhu algebra, identify with $\Wf \backslash \h^*$, where $\Wf$ denotes the finite Weyl group, cf. Subsection \ref{zhu} for our normalizations.

Fix a noncritical level $\kappa$. To describe $\operatorname{Verma}_\kappa^\heartsuit$, we first determine the block decomposition of Category $\OO$. I.e., we determine the finest partition $$\Wf \backslash \h^* = \bigsqcup_{\alpha} \mathscr{P}_\alpha$$such that every module $M$ in $\OO$ decomposes as a direct sum of submodules $\bigoplus_\alpha M_\alpha$ wherein all the simple subquotients of $M_\alpha$ have highest weights in $\mathscr{P}_\alpha$.

Loosely, we find that blocks for $\cW_\kappa$ are projections of blocks for Kac--Moody. More precisely, consider the level $\kappa$ dot action of the affine Weyl group $W$ on $\h^*$ and the projection $$\pi: \h^* \rightarrow \Wf \backslash \h^*.$$ For $\lambda \in \h^*$, write $W_\lambda$ for its integral Weyl group, $W_{\operatorname{f}, \lambda}$ for the intersection $W_\lambda \cap \Wf$, and $W_\lambda^\circ$ for the stabilizer of $\lambda$ in $W_\lambda$. 

\begin{theo} (Linkage principle) The block of $\OO$ containing the simple module with highest weight $\pi(\lambda)$ has highest weights $$\pi( W_\lambda \cdot \lambda) \simeq W_{\operatorname{f}, \lambda} \backslash W_\lambda / W_\lambda^\circ.$$
\label{linkit}
\end{theo}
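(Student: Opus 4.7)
The plan is to deduce the linkage principle from its affine Kac--Moody counterpart via quantum Drinfeld--Sokolov reduction. Recall that for $\gk\Mod$ at noncritical level, the Kac--Kazhdan linkage principle asserts that the block of $\OO$ through $L(\mu)$ is indexed by $W_\mu \cdot \mu \simeq W_\mu / \wstab$. The plan is to use Arakawa's quantum Drinfeld--Sokolov reduction functor $\Psi: \gk\Mod \to \cw_\kappa\Mod$, which is $t$-exact on Category $\OO$ (in the conventions adapted to the plus/minus grading), sends the $\gk$-Verma $M(\mu)$ to the $\cw_\kappa$-Verma of highest weight $\pi(\mu)$, and induces an essential surjection on simples: every simple of $\cw_\kappa\Mod^\heartsuit$ in $\OO$ is of the form $\Psi(L(\mu))$ for some $\mu$. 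The $\Wf$-quotient in the highest weights of $\cw_\kappa$ reflects precisely the ambiguity in this presentation.

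For the linkage direction, fix $\lambda \in \h^*$ and $w \in W_\lambda$. By Kac--Kazhdan there is a chain of nonzero Verma maps
$$M_{\gk}(w \cdot \lambda) \hookrightarrow \cdots \hookrightarrow M_{\gk}(\lambda)$$
factoring along a reduced expression for $w$ inside $W_\lambda$. Applying $\Psi$ produces a chain of maps among the $\cw_\kappa$-Vermas with highest weights in $\pi(W_\lambda \cdot \lambda)$, the nonvanishing verified by following a highest weight generator, which is not annihilated by the reduction. This places all simples with highest weights in $\pi(W_\lambda \cdot \lambda)$ in a common block. The bijection with $\wfl \backslash W_\lambda / \wstab$ is purely combinatorial: $W_\lambda/\wstab$ parameterizes the orbit $W_\lambda \cdot \lambda$, and the fibers of $\pi$ on this orbit are the $\wfl$-orbits, since $\Wf \cap W_\lambda = \wfl$ captures precisely the elements of $\Wf$ preserving the integrality of $\lambda$.

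The main obstacle is the separation direction: given projected orbits $\pi(W_\lambda \cdot \lambda)$ and $\pi(W_\nu \cdot \nu)$ that are disjoint, one must show that no module in $\OO$ for $\cw_\kappa$ has simple subquotients drawn from both. The route I would pursue is through projective covers. At noncritical level, the appropriate truncated Category $\OO$ for $\gk$ has enough projectives, and the projectivity of their $\Psi$-images can be established using an exact right adjoint to $\Psi$ (a form of Whittaker or Kostant induction). Granting this, the projective cover $P_{\cw}(\pi(\lambda))$ arises as a summand of $\Psi$ applied to a projective of $\OO_{\gk}$, whose composition factors lie in $W_\lambda \cdot \lambda$ by Kac--Kazhdan; hence those of $P_{\cw}(\pi(\lambda))$ are confined to $\pi(W_\lambda \cdot \lambda)$. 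Since any object of $\OO_{\cw}$ is a quotient of a direct sum of such projectives, its composition factors must lie in a single union of projected orbits, yielding the block decomposition. The technical heart, and where I expect the main work to lie, is verifying exactness of the right adjoint at the level of Category $\OO$ in a form strong enough to transport the projective-cover identification; this seems to require the renormalization framework of Raskin alluded to in the introduction, together with care in matching the shifted dot actions for $\gk$ and $\cw_\kappa$ under $\pi$.
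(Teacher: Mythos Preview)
Your linkage direction is essentially what the paper does: Arakawa's exact $\operatorname{DS}_-$ sends Kac--Moody Vermas to $\cW_\kappa$-Vermas and preserves composition-factor relations, so the known Kac--Moody linkage transports to show that all of $\pi(W_\lambda\cdot\lambda)$ lies in a single block. Your combinatorial identification $\Wf\cap W_\lambda=\wfl$ is also correct and matches the paper's Lemma on orbits.

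The separation direction, however, diverges from the paper and contains a genuine gap. You propose to transport projective covers from $\gk$ via an exact right adjoint to $\operatorname{DS}_-$, but you do not construct this adjoint, and you correctly flag that its existence with the needed exactness is the crux. More seriously, at positive level Category $\OO$ for $\gk$ (and for $\cW_\kappa$) need not have enough projectives because objects can have infinite length, so the projective-cover strategy does not apply uniformly. The paper avoids all of this: the block decomposition is proved \emph{intrinsically} for $\cW_\kappa$, with no reference to $\gk$, by a direct $\operatorname{Ext}^1$-vanishing argument between putative blocks. The key tools are the highest-weight and co-highest-weight filtrations available in $\OO$ (every object is a colimit of objects filtered by highest weight modules, and dually a limit of objects cofiltered by co-highest weight modules), together with the elementary computation $\operatorname{Ext}^1(M(\lambda),A(\nu))=0$ for $\lambda\neq\nu$. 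A d\'evissage then gives $\operatorname{Ext}^1(M,A)=0$ whenever $M,A$ lie in distinct linkage classes, hence the abstract block decomposition. Only \emph{after} this is established does the paper invoke $\operatorname{DS}_-$, and then merely to identify the linkage classes with $\pi(W_\lambda\cdot\lambda)$ via composition-factor bookkeeping. This is both more elementary (no adjoints, no renormalization needed for the linkage principle itself) and uniform across positive and negative levels.
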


 Theorem \ref{linkit} has been anticipated as a folklore conjecture since the earliest days of the subject, cf. \cite{bw3}. To our knowledge, this is its first appearance for all cases save the Virasoro algebra. It was also found by Arakawa (unpublished). Using Theorem \ref{linkit}, we formulate in Section 8 several conjectures on the structure of blocks and a conjectural relation between Drinfeld--Sokolov reduction and translation functors.  

To describe the homomorphisms between Verma modules within a single block, recall that since $\kappa$ is noncritical, $W_\lambda \cdot \lambda$ always contains an antidominant or dominant weight. Specifically, if $\kappa$ is negative every block contains an antidominant weight, and if $\kappa$ is positive every block contains a dominant one. Thus we may assume that $\lambda$ is (anti-)dominant, in which case $W_{\operatorname{f}, \lambda}$ and $W_\lambda^\circ$ are parabolic subgroups of $W_\lambda$, and hence $\dcs$ carries a Bruhat order.  

\begin{theo} For $w \in \dcs$ write $M_w$ for the corresponding Verma module. Then, for $y,w \in \dcs$, we have in the abelian category $\cw_\kappa\operatorname{-mod}^\heartsuit$:

\begin{enumerate}
    \item $\Hom(M_y, M_w)$ is at most one dimensional, and any nonzero morphism is an embedding. 
    \item If $\lambda$ is antidominant, then $\Hom(M_y, M_w)$ is nonzero if and only if $y \leqslant w$. \
    \item Suppose $\lambda$ is dominant. Then $\Hom(M_y, M_w)$ is nonzero if and only if $y \geqslant w$. 
\end{enumerate}

\label{m2v}
\end{theo}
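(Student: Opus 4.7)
The strategy is to descend the classification of Homs between $\gk$-Verma modules, given by Kac--Kazhdan and Rocha-Caridi--Wallach, along quantum Drinfeld--Sokolov reduction, using Arakawa's exactness theorem, and to bootstrap the dominant case from the antidominant one via the Feigin--Fuchs duality of Theorem \ref{app1}.

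First I would reduce (3) to (2). The contravariant equivalence $\operatorname{Verma}_\kappa^{\heartsuit, op} \simeq \operatorname{Verma}_{-\kappa + 2\kappa_c}^\heartsuit$ of Theorem \ref{app1} swaps positivity and negativity of the level, matches blocks by the linkage principle of Theorem \ref{linkit}, and reverses arrows. One verifies that on a single block it acts on the parametrizing set $\dcs$ by the order-reversing involution of the Bruhat poset. Thus Hom-nonvanishing at $(M_y, M_w)$ in a dominant block transfers to Hom-nonvanishing at the reversed pair in an antidominant block at the complementary level, and (3) follows from (2). It therefore suffices to prove (1) and (2) at antidominant $\lambda$.

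For this, apply quantum Drinfeld--Sokolov reduction $H = H^0_{DS, -}$ to the block of $\lambda$ in $\OO_\kappa$ for $\gk$. By theorems of Arakawa, $H$ is exact on $\OO_\kappa$ at noncritical level, carries the Kac--Moody Verma $M(y \cdot \lambda)$ to the $\cW$-Verma $M_{[y]}$, and never annihilates Vermas in the block. Kac--Kazhdan gives $\dim \Hom_{\gk}(M(y \cdot \lambda), M(w \cdot \lambda)) \leq 1$, with nonvanishing governed by the Bruhat order on $W_\lambda$ and every nonzero morphism injective. Exactness of $H$ transports embeddings to embeddings; passing to minimal length double coset representatives yields the ``if'' direction of (2) and the embedding claim of (1). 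For the upper bound of one and the ``only if'' direction, which are subtler because $H$ is not fully faithful on Verma subcategories (distinct $\gk$-Vermas can reduce to the same $\cW$-Verma), I would invoke the right adjoint $H^R$ of $H$, whose existence follows from the cocomplete dg framework of Theorem \ref{mainthe}, and rewrite
\[
\Hom_{\cw_\kappa}(M_{[y]}, M_{[w]}) = \Hom_{\gk}(M(y \cdot \lambda), H^R M_{[w]}).
\]
The crux is then to compute the weights of $H^R M_{[w]}$ in the orbit $W_\lambda \cdot \lambda$: they should be precisely the $W_{f,\lambda} W_\lambda^\circ$-translates of $w \cdot \lambda$, each occurring with multiplicity one, after which Kac--Kazhdan applied in the source produces the desired Hom classification.

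The main obstacle is this computation of $H^R M_{[w]}$, equivalently the demonstration that Kac--Moody morphisms between distinct lifts of a given $\cW$-Verma all descend to a single morphism on the $\cW$ side. Here the linkage principle of Theorem \ref{linkit} is crucial in forcing stray off-coset contributions to vanish, and it is at this step that the full categorical duality of Theorem \ref{mainthe} --- rather than merely its abelian shadow Theorem \ref{app1} --- becomes indispensable.
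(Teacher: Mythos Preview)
Your reduction of (3) to (2) via Feigin--Fuchs duality is correct and matches the paper, though note that in the paper's setup the identification of the parametrizing double coset spaces at levels $\kappa$ and $-\kappa+2\kappa_c$ is the identity map (Proposition~\ref{pm} and the diagram \eqref{flipcup}); the reversal of the Bruhat condition comes entirely from the contravariance of $\mathbb{D}$, not from an order-reversing involution on $\dcs$.

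The genuine gap is in your treatment of the ``at most one-dimensional'' bound, the ``only if'' direction of (2), and the full embedding claim in (1). Your argument via exactness of $\operatorname{DS}_-$ only shows that the images of Kac--Moody Verma embeddings are $\cW$-Verma embeddings; it does not show that \emph{every} nonzero $\cW$-morphism $M_y \to M_w$ is injective, since you have not established that all such morphisms arise from the Kac--Moody side. Your proposed fix via a right adjoint $H^R$ and a weight computation of $H^R M_{[w]}$ is left as an unproven claim, and it is not clear how Theorem~\ref{mainthe} would supply this computation.

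The paper avoids this entirely by a much more elementary route. First, Proposition~\ref{vembb} proves directly, via a PBW-type filtration on the current algebra $\mathfrak{U}(\cW_\kappa)$, that any nonzero morphism between $\cW$-Vermas is injective; no adjoint is needed. Second, the one-dimensionality follows from a pivot through the antidominant Verma $M_e$: applying $\operatorname{DS}_-$ to the Kac--Moody facts that $M_e$ is simple and $[M_y:M_e]=1$ gives the same for $\cW$, so $M_e$ embeds uniquely into every $M_y$; given $f,g \in \Hom(M_y,M_w)$ one scales so that $f-g$ kills $M_e \subset M_y$, and then Proposition~\ref{vembb} forces $f=g$. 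Third, the ``only if'' direction is handled by Jordan--H\"older content: a nonzero map $M_v \to M_w$ gives $[M_w : L_v] > 0$, which one lifts to the Kac--Moody side (using that $\operatorname{DS}_-$ of the relevant $\gk$-simple is the $\cW$-simple $L_v$) to conclude the Bruhat relation there. None of this requires computing $H^R$ or invoking the full categorical duality at negative level.
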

Theorem \ref{m2v} was conjectured in the conformal field theory literature by de Vost--van Driel in the remarkable paper \cite{dv}. The main non-elementary input into Theorem \ref{linkit} and the negative level cases of Theorem \ref{m2v} is Arakawa's resolution of the Frenkel--Kac--Wakimoto conjecture on Drinfeld--Sokolov reduction \cite{ara}, \cite{fkw}. It is unclear whether Theorem \ref{m2v}(1) at positive level can be proven by similar methods. However, following the fundamental insight of Gaitsgory and his school that categorical dualities can see concrete representation-theoretic phenomena, we instead use Categorical Feigin--Fuchs duality to reduce to the negative level case.  

\subsection{Future directions} The constructions and applications described in this paper mostly concern the algebraic representation theory of $\cW$-algebras. However, we would like to mention two further places where its contents are relevant that have a slightly different flavor, namely the geometric representation theory of $\cW$-algebras and applications to the quantum Langlands correspondence. 

First, a basic problem, still unsolved for the Virasoro algebra, has been to produce a localization theorem for highest weight representations of $\cW$-algebras analogous to those available for simple and affine Lie algebras. As we will explain elsewhere, this can be done using Whittaker sheaves on the enhanced affine flag variety. Moreover, to explicitly identify the representations realized in this way one needs to speak of blocks of Category $\OO$ for the $\cW$-algebra. This motivated our determination of the linkage principle in the current work. Further, at positive level, the localization is essentially of derived nature.\footnote{In particular, a version of Kashiwara--Tanisaki localization, which has not yet appeared in the literature, holds for $\gk$ at positive level. } This motivated the present study of Categorical Feigin--Fuchs duality, which may be used  to reduce to the $t$-exact negative level case.

Second, the closely related category of Whittaker sheaves on the affine Grassmannian is the subject of the conjectural Fundamental Local Equivalence (FLE) of the quantum geometric Langlands program \cite{gtw}, \cite{gfl}. This conjecture, due to Gaitsgory--Lurie, provides a remarkable deformation of the Geometric Satake isomorphism to all Kac--Moody levels. Conjectures of Aganacic--Frenkel--Okounkov suggest that the FLE should be provable using the representation theory of $\cW$-algebras \cite{afo}. We can currently give such a proof of the FLE over a point, i.e. non-factorizably, for $\fg = \fsl_2$, and the case of general $\fg$ over a point is work in progress with Raskin. The above localization theorem and the results of this paper play a basic role. Similar arguments, which are work in progress, apply to a tamely ramified variant of the FLE conjectured by Gaitsgory \cite{gkl}; for appropriately integral levels this has been implemented in a forthcoming work with Campbell \cite{tr}.

\subsection{Organization of the Paper} In Section 3, we collect basic definitions and notations. In Section 4, as preparation for Feigin--Fuchs duality, we prove a general duality statement for Whittaker models of categorical loop group representations. In Section 5, we derive Feigin--Fuchs duality for $\cW$-algebras and Tate Lie algebras. In Section 6, we prove some basic structural properties of Category $\OO$ for $\cw_\kappa$. In Sections 7 and 8, we prove the linkage principle, and propose a conjectural description of the blocks for the $\cW$-algebra. Finally, in Section 9 we prove de Vost--van Driel's conjecture on Verma embeddings.

\label{sres}
\subsection*{Acknowledgments} We thank Tomoyuki Arakawa, Dima Arinkin, Christopher Beem, Roman Bezrukavnikov, Alexander Braverman, Dan Bump, Thomas Creutzig, Davide Gaiotto, Dennis Gaitsgory, Alexander Goncharov, Edward Frenkel, Igor Frenkel, Flor Hunziker, Victor Kac, Sam Raskin, Ben Webster, Emilie Wiesner, David Yang, Zhiwei Yun, and Gregg Zuckerman for helpful discussions and correspondence.

\section{Preliminaries}
In this section we establish notation and collect some facts which will be useful to us. The reader may wish to skip to the next section and refer back only as needed.

Our notation and conventions are standard, with the mild exceptions of our normalizations of the isomorphism $\operatorname{Zhu}(\cW_\kappa) \simeq Z \fg$ and the duality on Category $\OO$ for $\cW_\kappa$, cf. Remark \ref{zhu}, and Subsection \ref{dp} respectively.

\subsection{The affine Kac--Moody algebra} Recall that $\fg = \fn^- \oplus \h \oplus \fn$ is a simple Lie algebra. Let $\hat{\fg}_{AKM} = \fg[z, z^{-1}] \oplus \C c \oplus \C D$ be the affine Kac--Moody algebra associated to $\fg$, cf. \cite{kitty}. We use the standard normalizations, so that $$[X \otimes z^n, Y \otimes z^m] = [X,Y] \otimes z^{n+m} + n\delta_{n, -m}  \kappa_b(X,Y)c, \quad \quad X, Y \in \fg, \quad n,m \in \mathbb{Z},$$where $\kappa_b$ is the basic invariant inner product, i.e. normalized so that $\kappa_b( \check{\theta}, \check{\theta}) = 2,$ where $\check{\theta} \in \h$ is the coroot associated with the highest root $\theta \in \h^*$ of $\fg$. The remaining brackets are: $$[\C c, \hat{\fg}_{AKM}] = 0, \quad \quad [D, X \otimes z^n] = n X \otimes z^n, \quad \quad X \in \fg.$$
The affine Kac--Moody algebra has a standard triangular decomposition $\hat{\fg} = \hat{\fn}^- \oplus \hat{\h} \oplus \hat{\fn}$, where $\hat{\h} = \h \oplus \C c \oplus \C D$. Write $\check{\alpha}_i \in \h, \alpha_i \in \h^*, i \in I,$ for the simple coroots and roots of $\mathfrak{g}$. Then the simple coroots and roots for $\hat{\fg}_{AKM}$ are naturally indexed by $\hat{I} := I \sqcup \{0\}$. Explicitly, the simple coroots are given by: $$\check{\alpha}_i \in \h \oplus 0 \oplus 0, i \in I,  \quad \operatorname{and} \quad  \check{\alpha}_0 := - \check{\theta} + c \in \h \oplus \C c \oplus 0.$$ To write down the simple roots, we decompose $$\hat{\h}^* \simeq \h^* \oplus (\C c)^* \oplus (\C D)^* = \h^* \oplus \C c^* \oplus \C D^*,$$where $\langle c^*, c \rangle = 1$, $\langle D^*, D \rangle = 1$. With this, the simple roots are given by: $$\alpha_i  \in \h^* \oplus 0 \oplus 0, i \in I, \quad \operatorname{and} \quad \alpha_0 := -\theta + D^* \in \h^* \oplus 0 \oplus \C D^*.$$  

\subsection{The affine Weyl group and the level $\kappa$ action}
For each index $i \in \hat{I}$, we have an associated simple reflection $$s_i: \hat{\h}^* \rightarrow \hat{\h}^*, \quad s_i(\lambda) := \lambda - \langle \lambda, \check{\alpha}_i \rangle \alpha_i.$$The subgroup of $GL(\hat{\h}^*)$ generated by these reflections $s_i, i \in \hat{I},$ is the affine Weyl group $W$. The subgroup generated by $s_i, i \in I,$ is the  finite Weyl group $W_{\operatorname{f}}$ associated to $\fg$. In both cases, these preferred generators exhibit $W$ and $W_{\operatorname{f}}$ as Coxeter groups.

By construction, $W$ acts trivially on $\C D^*$.
It follows that $W$ acts linearly on the quotient $\hat{\h}^*/ \C D^* \simeq \h^* \oplus \C c^*$. The action of $W$ on $\h^* \oplus \C c^*$ further preserves the affine hyperplanes $H_k$ cut out by $c = k, k \in \C$.\footnote{Of course, the same holds without quotienting by $\C D^*$.} Therefore, under the affine linear isomorphism $\h^* \simeq H_k,$ sending $\lambda \in \h^*$ to $\lambda + k c^*$, we obtain an action of $W$ on $\h$ by affine linear automorphisms, which we call {\em the level $k$ action}. 

Explicitly, at level $k$, $W_{\operatorname{f}}$ acts in the usual way on $\h^*$, and $s_0$ acts by the affine reflection through $\check{\theta} = k$:$$s_0(\lambda) = \lambda - (\langle \lambda, \check{\theta} \rangle  - k) \theta.$$By composing $s_0$ with the reflection $s_\theta \in W_{\operatorname{f}}$ associated with root $\theta$, one obtains translation by $k \theta$. Using this, one finds:

\begin{pro} Write $\Lambda \subset \h^*$ for the lattice generated by the long roots. Then the level $k$ action of $W$ on $\h^*$ identifies with $W_{\operatorname{f}} \ltimes \Lambda$, where $W_{\operatorname{f}}$ acts by usual reflections and $\lambda \in \Lambda$ acts by the dilated translation $t_{k, \lambda}(\nu) = \nu + k \lambda, \nu \in \h^*$. 
\label{kact} \end{pro}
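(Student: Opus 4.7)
The plan is a direct computation starting from the explicit formula for $s_0$ in the level $k$ action given just before the proposition. I will first introduce a candidate group homomorphism $\Phi \colon W_{\operatorname{f}} \ltimes \Lambda \to \operatorname{Aff}(\h^*)$ sending each $w \in W_{\operatorname{f}}$ to its usual linear reflection on $\h^*$ and each $\lambda \in \Lambda$ to the dilated translation $t_{k,\lambda}$. That $\Phi$ is a well-defined group homomorphism follows from $W_{\operatorname{f}}$ preserving $\Lambda$ (as it permutes long roots) together with the conjugation identity $w \circ t_{k,\lambda} \circ w^{-1} = t_{k, w(\lambda)}$, which matches the semidirect product structure.

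The main claim then splits into two matching containments between the image of $\Phi$ and the level $k$ image of $W$ in $\operatorname{Aff}(\h^*)$. For the containment $W \subset \operatorname{image}(\Phi)$, I will rewrite the given formula as $s_0 = t_{k, \theta} \circ s_\theta$, which manifestly lies in $\operatorname{image}(\Phi)$; together with the evident inclusion $W_{\operatorname{f}} \subset \operatorname{image}(\Phi)$ and the presentation $W = \langle s_i : i \in \hat{I} \rangle$, this suffices. For the reverse containment, the same identity gives $t_{k,\theta} = s_0 \circ s_\theta \in W$. Conjugating by elements of $W_{\operatorname{f}}$, and using that $W_{\operatorname{f}}$ acts transitively on roots of each length, produces $t_{k, \alpha} \in W$ for every long root $\alpha$; since long roots generate $\Lambda$ as a $\mathbb{Z}$-module, all translations $t_{k, \lambda}$ with $\lambda \in \Lambda$ lie in $W$, and so does the full semidirect product. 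Injectivity of $\Phi$ for $k \neq 0$ then comes from evaluating a putative kernel element at $0 \in \h^*$, which forces the translation part to vanish, after which faithfulness of the $W_{\operatorname{f}}$ action on $\h^*$ kills the remaining linear part.

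There is no serious obstacle in this argument; it is an elementary structural verification, and the only nontrivial ingredient beyond the explicit $s_0$ formula is the classical identification of $W_{\operatorname{f}} \cdot \theta$ with the set of long roots of $\fg$. The value of the proposition is organizational: it repackages the level $k$ action into the familiar semidirect product form, which will be the convenient framework for the dot action, the integral Weyl groups $W_\lambda$, and the blocks appearing in the linkage principle of Theorem \ref{linkit}.
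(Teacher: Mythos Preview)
Your proposal is correct and follows exactly the approach the paper sketches: the paper only provides the one-line hint that composing $s_0$ with $s_\theta$ yields translation by $k\theta$, and your argument is a faithful and complete elaboration of that hint (including the conjugation step to reach all long roots and the injectivity check for $k \neq 0$).
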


\label{wgp}
\subsection{The affine Lie algebra at level $\kappa$ and the level $\kappa$ action} 
\label{sss}
Recall that to any invariant inner product $\kappa$ on $\fg$, we have associated central extension of $\fg(\!(z)\!)$ $$0 \rightarrow \C \mathbf{1} \rightarrow \gk \rightarrow \fg(\!(z)\!) \rightarrow 0,$$with commutator given by: $$[X \otimes f, Y \otimes g] = [X,Y] \otimes fg + \kappa(X,Y) \operatorname{Res}_{z = 0} f dg, \quad \quad X,Y \in \fg, \quad f,g \in \mathbb{C}(\!(z)\!).$$Since $\fg$ is simple, we may write $\kappa = k \kappa_b, k \in \C$.

Write $\hat{\fg}_{AKM} \operatorname{-mod}_k^\heartsuit$ for the abelian category of smooth representations of $\hat{\fg}_{AKM}$ on which $c$ acts by the scalar $k \in \mathbb{C}$. Similarly, write $\gk\operatorname{-mod}^\heartsuit$ for the abelian category of smooth representations of $\gk$ on which $\mathbf{1}$ acts by the identity. By smoothness, restricting an action of $\hat{\fg}_{AKM}$ to $\fg[z, z^{-1}] \oplus \C c$ produces a functor $\hat{\fg}_{AKM} \operatorname{-mod}_k^\heartsuit \rightarrow \gk\operatorname{-mod}^\heartsuit$. When $\kappa \neq \kappa_c$, this functor admits a distinguished section, given by the Segal--Sugawara construction, cf. \cite{Fr}. With these identifications, we have: 

\begin{pro} Let $\kappa = k \kappa_b, \kappa \neq \kappa_c$. Then the level $k$ action of $W$ on $\h^*$ considered in Subsection \ref{wgp} coincides with the standard action of $W$ on the weights of $\h$-integrable $\gk$ modules. 
\end{pro}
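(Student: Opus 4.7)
The plan is to deduce the proposition by promoting smooth $\gk$-modules to smooth $\hat{\fg}_{AKM}$-modules via the Segal--Sugawara construction, and then recognizing both actions as shadows of the linear $W$-action on $\hat{\h}^*$. Concretely, since $\kappa \neq \kappa_c$, the Segal--Sugawara operators furnish a distinguished section $\gk\Mod^\heartsuit \rightarrow \hat{\fg}_{AKM}\Mod_k^\heartsuit$ (cf.\ \cite{Fr}), and in particular a canonical $D = -L_0$ action on any $\h$-integrable $\gk$-module $M$. Under this promotion the $\h$-weight space decomposition refines to a decomposition by $\hat{\h}$-weights $\nu = \mu + k c^* + a D^*$, whose $\h$-component is $\mu$.

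Next I would observe that the standard $W$-action on $\h$-weights of $M$ as a $\gk$-module coincides with the induced action from the $W$-action on $\hat{\h}$-weights of $M$ as an $\hat{\fg}_{AKM}$-module. Indeed, for $i \in I$ the reflection $s_i$ is implemented by integrating the finite $\fsl_2$-triple $(e_i, \check{\alpha}_i, f_i) \subset \fg \subset \gk$, and for $i = 0$ by the affine $\fsl_2$-triple with Chevalley generators $e_\theta \otimes z^{-1}$, $f_\theta \otimes z$ (whose bracket lives in $\h \oplus \C \mathbf{1}$, hence in $\gk$). Both triples lie entirely in $\gk \subset \hat{\fg}_{AKM}$, and the Sugawara $D$ does not enter, so integrating them inside either Lie algebra permutes weight spaces in the same way.

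Next I would invoke the classical fact (e.g.\ Kac, \textit{Infinite dimensional Lie algebras}) that on any integrable $\hat{\fg}_{AKM}$-module the $W$-action on weights is realized by the linear $W$-action on $\hat{\h}^*$, namely $s_i \cdot \nu = \nu - \langle \nu, \check{\alpha}_i\rangle \alpha_i$. A direct computation using $\check{\alpha}_i \in \h$ for $i \in I$ and $\check{\alpha}_0 = -\check{\theta} + c$ yields $\langle D^*, \check{\alpha}_i\rangle = 0$ for all $i \in \hat{I}$, so $W$ fixes $D^*$ pointwise and its action descends to $\hat{\h}^*/\C D^* \simeq \h^* \oplus \C c^*$, preserving each level hyperplane $H_k = \{\,c = k\,\}$.

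Finally, I would close the loop by unwinding definitions: the level $k$ action of $W$ on $\h^*$ is, by construction in Subsection \ref{wgp}, transported from the $W$-action on $H_k$ via $\mu \mapsto \mu + k c^*$. On the other hand, for any $\hat{\h}$-weight $\nu = \mu + k c^* + a D^*$ of $M$, its image under $w \in W$ is $w(\mu + k c^*) + a D^*$; projecting away $\C D^*$ and reading off the $\h$-component recovers exactly $w \cdot_k \mu$. Since the $\h$-component of $\nu$ is $\mu$, this identifies the induced $W$-action on $\h$-weights with the level $k$ action, completing the proof. There is no real obstacle: the whole statement amounts to tracking how the $W$-action on $\hat{\h}^*$ projects to $\h^*$ under the dual of the inclusion $\h \hookrightarrow \hat{\h}$, and the only nontrivial input is the Segal--Sugawara lift, which is unavailable at precisely $\kappa = \kappa_c$ (as the hypothesis acknowledges).
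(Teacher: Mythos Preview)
Your proposal is correct and follows the same approach the paper sets up: the paper does not give a detailed proof of this proposition, but immediately before stating it explains that for $\kappa \neq \kappa_c$ the Segal--Sugawara construction provides a section $\gk\operatorname{-mod}^\heartsuit \rightarrow \hat{\fg}_{AKM}\operatorname{-mod}_k^\heartsuit$, and then asserts the proposition ``with these identifications.'' Your argument spells out exactly this: lift via Segal--Sugawara so that weights live in $\hat{\h}^*$ at $c = k$, use that $W$ acts linearly on $\hat{\h}^*$ fixing $\mathbb{C}D^*$ and preserving the hyperplane $H_k$, and project back to $\h^*$.

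One minor comment: your second paragraph, where you describe the $s_i$ as implemented by integrating $\mathfrak{sl}_2$-triples inside the module, is more than what is needed and strictly speaking only applies to modules integrable for those $\mathfrak{sl}_2$'s, not merely $\h$-integrable ones. The proposition is really comparing two \emph{a priori} actions of $W$ on $\h^*$ (one defined combinatorially in Subsection~\ref{wgp}, one transported from $\hat{\h}^*$ via Segal--Sugawara), and your final two paragraphs already establish their equality formally. The $\mathfrak{sl}_2$-integration is a red herring here; you can safely drop it.
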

Henceforth, we will speak interchangeably of the level $k$ and $\kappa$ actions. 

\subsection{Combinatorial duality}\label{combd} Having explained the level $\kappa$ action, we presently discuss a combinatorial shadow of the duality between $\gk\operatorname{-mod}$ and $\hat{\fg}_{-\kappa + 2 \kappa_c}\operatorname{-mod}$. From Proposition \ref{kact}, $W$ acts at level $\kappa$ through $W_{\operatorname{f}} \ltimes \Lambda$, where the translation action of $\Lambda$ is dilated by $\frac{\kappa}{\kappa_b}$. It follows that the orbits for the level $\kappa$ and $\kappa'$ actions coincide only when $\kappa + \kappa' = 0$. In this case, it is not quite the case that level $\kappa$ and $\kappa'$ actions coincide, since $s_0$ acts as the affine reflection through $\check{\theta} = \pm \frac{\kappa}{\kappa_b}$. Incorporating this sign, we have:
\begin{pro} Write $\h^*_{\kappa, naive}$ for $\h^*$ with the level $\kappa$ action of $W$. Then negation $\lambda \rightarrow - \lambda$ is a W-equivariant isomorphism  $\h^*_{\kappa, naive} \simeq \h^*_{-\kappa, naive}.$ \label{dnaive}
\end{pro}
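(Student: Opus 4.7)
The proposition is elementary once one unpacks the definitions, so the proof proposal will be essentially a direct verification. I would give two related arguments, the first conceptual and the second computational, so that the reader can pick whichever is most transparent.

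The conceptual approach is to lift the statement back up to $\widehat{\h}^*$. Recall that the level $k$ action of $W$ on $\h^*$ arises by first noting that $W$ acts linearly on $\widehat{\h}^* / \C D^* \simeq \h^* \oplus \C c^*$, preserves the hyperplane $\{c = k\}$ for every $k$, and then identifying $\h^*$ with this hyperplane via the affine map $\lambda \mapsto \lambda + k c^*$. The plan is to observe that the linear involution $-\mathrm{id}$ on $\h^* \oplus \C c^*$ is tautologically $W$-equivariant (it is central in $GL(\h^* \oplus \C c^*)$), and that it swaps the hyperplanes $\{c = k\}$ and $\{c = -k\}$. Tracing through the two affine identifications $\h^* \xrightarrow{\sim} H_{\pm k}$ shows that the map induced on $\h^*$ is precisely $\lambda \mapsto -\lambda$. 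Hence negation intertwines the level $\kappa$ and level $-\kappa$ actions.

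The computational approach is to use the decomposition $W = \Wf \ltimes \Lambda$ of Proposition \ref{kact} and check equivariance on generators. For $w \in \Wf$ the action is independent of level and linear, so $w(-\lambda) = -w(\lambda)$ is immediate. For the translations, at level $\kappa$ the lattice $\Lambda$ acts by $t_{k, \mu}(\nu) = \nu + k\mu$, so
\[
t_{-k, \mu}(-\lambda) \;=\; -\lambda - k\mu \;=\; -\bigl(\lambda + k\mu\bigr) \;=\; -\,t_{k, \mu}(\lambda),
\]
as required. Equivalently, one may verify this on the affine Coxeter generator $s_0$: at level $k$ one has $s_0(\lambda) = \lambda - (\langle \lambda, \check{\theta}\rangle - k)\theta$, and a direct computation shows $s_0^{(-k)}(-\lambda) = -s_0^{(k)}(\lambda)$.

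There is no real obstacle; the only thing to be careful about is the sign bookkeeping for $s_0$, which is why I would prefer to organize the calculation through the $\Wf \ltimes \Lambda$ decomposition (or, better, lift to the ambient linear action on $\h^* \oplus \C c^*$) rather than grind through the Coxeter generators one by one.
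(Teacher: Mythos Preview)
Your proposal is correct. The paper does not give a formal proof of this proposition; it is stated as an immediate consequence of the preceding discussion, which, like your computational approach, invokes the decomposition $W \simeq \Wf \ltimes \Lambda$ of Proposition \ref{kact} and the explicit formula for $s_0$. Your conceptual lift to the linear action on $\h^* \oplus \C c^*$ is a clean alternative not mentioned in the paper.
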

The appearance of the critical twist comes, as usual, from a $\rho$ shift. To explain this, write $\rho \in \h^*$ for the unique element satisfying $\langle \rho, \check{\alpha}_i \rangle = 1, i \in I$. Write $\hat{\rho} \in \hat{\h}^*$ for an element satisfying $\langle \hat{\rho}, \check{\alpha}_i \rangle  = 1, i \in \hat{I}.$ Explicitly, the possible $\hat{\rho}$ form the affine line $\rho + h^\vee c^*  + \mathbb{C} D^*$, where $h^\vee$ is the dual Coxeter number. Recalling that $\kappa_c = - h^{\vee} \kappa_b$, in particular any $\hat{\rho}$ will lie in the affine hyperplane of $\hat{\h}^*$ corresponding to minus the critical level. 

We then have the dot action of $W$ on $\hat{\h}^*$, where $w \cdot \lambda = w(\lambda + \hat{\rho}) - \hat{\rho}$, which is independent of the choice of $\hat{\rho}$. This is the usual action, albeit centered at $- \hat{\rho}$ rather than 0. That is, the map $\hat{\h}^* \rightarrow \hat{\h}^*$ sending $\lambda$ to $\lambda - \hat{\rho}$ intertwines the usual action of $W$ on the domain and the dot action of $W$ on the target.

As in Subsection \ref{wgp}, the dot action descends to an action on $\h^* \oplus \C c^*$. This action preserves each affine hyperplane $c = \frac{\kappa}{\kappa_b}$, which gives us a level $\kappa$ dot action on $\h^* \simeq \h^* + \frac{ \kappa}{\kappa_b} c^* \subset \h^* \oplus \C c^*,$ which we denote by $\h^*_{\kappa}$. By construction, we have an isomorphism \begin{equation} \label{ps} \iota: \h_{\kappa, naive}^* \simeq \h_{\kappa + \kappa_c}^* \quad \quad \iota(\lambda) =\lambda - \rho, \quad \lambda \in \h^*_{\kappa, naive}. \end{equation}Concatenating Equation \eqref{ps} and Proposition \ref{dnaive} gives:

\begin{pro} There is a $W$ equivariant isomorphism: \begin{equation} i: \h^*_{\kappa} \simeq \h^*_{-\kappa + 2 \kappa_c}, \quad \quad i(\lambda) = - \lambda - 2 \rho, \quad \lambda \in \h^*_{\kappa}. \end{equation}
\label{flippy}
\end{pro}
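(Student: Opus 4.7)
The plan is to derive Proposition \ref{flippy} by chaining together the two identifications already in place: the $\rho$-shift \eqref{ps} relating the naive and dot actions, and the negation isomorphism of Proposition \ref{dnaive} relating naive actions at opposite levels.

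First I would apply the inverse of \eqref{ps}, with the naive parameter taken to be $\kappa - \kappa_c$, to obtain a $W$-equivariant isomorphism $\h^*_{\kappa} \simeq \h^*_{\kappa - \kappa_c, naive}$ sending $\lambda$ to $\lambda + \rho$. Next, Proposition \ref{dnaive} applied at naive level $\kappa - \kappa_c$ produces a $W$-equivariant negation isomorphism $\h^*_{\kappa - \kappa_c, naive} \simeq \h^*_{-\kappa + \kappa_c, naive}$. Finally, \eqref{ps} itself yields a $W$-equivariant isomorphism $\h^*_{-\kappa + \kappa_c, naive} \simeq \h^*_{-\kappa + 2\kappa_c}$ given by $\mu \mapsto \mu - \rho$. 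Composing the three maps sends $\lambda \mapsto \lambda + \rho \mapsto -\lambda - \rho \mapsto -\lambda - 2\rho$, which is the formula for $i$ asserted in the proposition.

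Since each factor in the composition is already $W$-equivariant by the cited results, the composite is $W$-equivariant as well, and no further verification is required. The only step worth explicit attention is the arithmetic bookkeeping on levels: the input naive level $\kappa - \kappa_c$ is precisely what \eqref{ps} requires to land in the dot space $\h^*_\kappa$, and symmetrically $-\kappa + \kappa_c$ is what is needed on the output side to land in $\h^*_{-\kappa + 2\kappa_c}$. I do not expect any real obstacle here; the proposition is essentially a repackaging of the naive-level duality of Proposition \ref{dnaive} in the dot-action coordinates that will be most convenient for the block decomposition and linkage principle appearing later in the paper. The critical twist $2\kappa_c$ in the target level, which at first sight looks mysterious, emerges transparently from the two $\rho$-shifts used to pass between the two coordinate systems.
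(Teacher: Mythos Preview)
Your proof is correct and follows precisely the approach indicated in the paper, which simply says the proposition is obtained by ``concatenating Equation \eqref{ps} and Proposition \ref{dnaive}.'' You have spelled out this concatenation in full detail, including the level bookkeeping, which the paper leaves implicit.
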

As we will see, this naive duality between orbits at positive and negative levels will show up in the combinatorics of the tamely ramified cases of Kac--Moody duality, in particular the duality of Verma modules, cf. Theorem \ref{vermsd}. 

\subsection{The $\mathscr{W}$-algebra} Some nice references for $\cW$-algebras are \cite{aran}, \cite{fbz}. Recall we wrote $\fg = \fn^- \oplus \h \oplus \fn$. For a complex Lie algebra $\mathfrak{a}$, write $L\mathfrak{a}$ for the topological Lie algebra $\mathfrak{a}(\!(z)\!)$, and similarly $L^+ \mathfrak{a} := \mathfrak{a}[[z]]$. \label{defw}

Fix a nondegenerate character $\psi: \fn \rightarrow \mathbb{C}$, i.e. one which is nonzero on each simple root subspace of $\fn$. Still write $\psi$ for the character of $L\fn$ given by the composition $$L\fn \xrightarrow{\operatorname{Res}} \fn \xrightarrow{\psi} \mathbb{C},$$which depends on our choice of coordinate $z$. Writing $\mathbb{C}_\psi$ for the associated character representation of $L\fn$, and $\mathbb{V}_\kappa$ for the vacuum algebra for $\gk$, cf. \cite{Fr}, \cite{fz}, we have \begin{equation}\cW_\kappa := H^{\frac{\infty}{2} + 0}( L\fn, L^+\fn, \mathbb{V}_\kappa \otimes \mathbb{C}_{\psi}).\label{brst}\end{equation}
$\cW_\kappa$ is a vertex algebra, and for $\kappa$ noncritical contains a canonical conformal vector $\omega$. Write $L_0$ for the corresponding energy operator, i.e. the coefficient of $z^{-2}$ in the field $\omega(z)$. Under the adjoint action of $L_0$, $\cW_\kappa$ acquires a $\mathbb{Z}^{\geqslant 0}$ grading. We say $a \in \cW_\kappa$ is homogeneous of degree $\lvert a \rvert \in \mathbb{Z}^{\geqslant 0}$ if $L_0 a = \lvert a \rvert a $.

\label{dsred}

\subsection{Category $\OO$} In what follows, we only consider levels $\kappa \neq \kappa_c$. Consider the abelian category $\mathscr{W}_\kappa\operatorname{-mod}^\heartsuit$ of  modules for $\mathscr{W}_\kappa$, cf. \cite{fbz}. The following definition is more or less due to Arakawa in \cite{ara}, see however Remark \ref{ssss}. 

\begin{defn} $\OO$ is the full subcategory of $\mathscr{W}\operatorname{-mod}^\heartsuit$ consisting of objects $M$ satisfying:

\begin{enumerate}
    \item $M$ decomposes as a sum of generalized eigenspaces under the action of $L_0$, i.e. we may write $$M = \bigoplus_{d \in \C} M_d, \quad \quad M_d := \{ m \in M: (L_0 + d)^N m = 0, N \gg 0 \}.$$ 
    \item For each $d \in \C$, $M_d$ is finite dimensional, and $M_{d + n} = 0$ for all $n \in \Z^{\geqslant N}, N \gg 0.$ 
\end{enumerate}
\label{defoglarg}

\end{defn}

\begin{re} Allowing generalized eigenspaces of $L_0$ may be compared with allowing generalized eigenvalues of the Cartan subalgebra and generalized central characters in the formation of Category $\OO$ for a semisimple Lie algebra, i.e. we obtain the `free-monodromic' Category $\OO$. To our knowledge, a good definition of the `non-monodromic' category for $\cW$-algebras other than Virasoro is unknown.\end{re}

\begin{re} We should mention that since $\kappa \neq \kappa_c$, we do not consider the grading as an auxiliary structure, but rather induced from the conformal vector of $\mathscr{W}$. This differs from the treatment of Arakawa \cite{ara}, as in the discussion of Subsection \ref{sss} for Kac--Moody.\label{ssss} \end{re}
\label{cato}

Let $M$ be an object of $\OO$. For $m \in M$, recall that $m$ is said to be {\em singular} if, for every homogeneous $a \in \mathscr{W}_\kappa$, $a(z)m \in M(\!(z)\!)$ has a pole of at most order $\lvert a \rvert$. Write $S(M)$ for the subspace of all singular vectors in $M$. This naturally carries an action of the Zhu algebra $\operatorname{Zhu}(\mathscr{W}_\kappa)$. The latter can be identified with $Z\fg$, the center of the universal enveloping algebra of $\fg$. We will explain our precise normalization for this isomorphism in Remark \ref{zhu} below, but in particular by the Harish--Chandra isomorphism we may identify characters of $\operatorname{Zhu}( \mathscr{W}_\kappa)$ with $\Wf \backslash \h^*$, the quotient of $\h^*$ by the dot action of $\Wf$. 

If $m \in S(M)$ is an eigenvector for the action of  $\operatorname{Zhu}(\mathscr{W})$, with eigenvalue $\lambda \in W_{\operatorname{f}} \backslash \h^*$, we say $m$ is a {\em highest weight vector} with {\em highest weight} $\lambda$. Morphisms in $\OO$ preserve singular and highest weight vectors. Accordingly, for $\lambda \in W_{\operatorname{f}} \backslash \h^*,$ there is an associated functor $m_\lambda: \OO \rightarrow \operatorname{Vect}$, sending an object $M$ to its highest weight vectors of weight $\lambda$. This is corepresented by an object $M(\lambda)$, called the Verma module. $M(\lambda)$ has a unique simple quotient $L(\lambda)$, and every simple object of $\OO$ is of the form $L(\lambda)$ for a unique $\lambda \in W_{\operatorname{f}} \backslash \h^*$.  

We say a module $M$ is a {\em highest weight module} if it can be generated by a highest weight vector. Equivalently, $M$ admits a surjection from $M(\lambda)$, for some necessarily unique $\lambda \in W_{\operatorname{f}} \backslash \h^*$. 

Feeding representations of $\gk\operatorname{-mod}^\heartsuit$ into a complex similar to Equation \eqref{brst} produces representations of $\cW_\kappa$. The following fundamental theorem of Arakawa informally says that this relates Category $\OO$ for $\gk$ and $\cW_\kappa$ in an extremely tight way.
 
\begin{theo} \cite{ara} Write $\OO_{\gk}$ for the Category $\OO$ associated to $\gk$. For $\lambda \in \h^*$, write $M(\lambda)$ and $L(\lambda)$ for the associated Verma and simple modules for $\gk$, and recall the projection $\pi: \h^* \rightarrow \Wf \backslash \h^*$. There exists an exact functor $$\operatorname{DS}_-: \OO_{\gk} \rightarrow \OO$$such that for any $\lambda \in \h^*$, $\operatorname{DS}_-(M(\lambda)) \simeq M(\pi(\lambda))$ and

$$\operatorname{DS}_{-}(L(\lambda)) \simeq \begin{cases} L(\pi(\lambda)) & \langle \lambda + \hat{\rho}, \halpha_i \rangle \notin \mathbb{Z}^{> 0}, i \in I, \\ 0 & \text{otherwise.} \end{cases}.$$ \label{mrd}
\end{theo}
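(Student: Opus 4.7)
The strategy is to realize $\operatorname{DS}_-$ not as the same BRST complex used to define $\cW_\kappa$ itself, but as a variant Drinfeld--Sokolov functor obtained by reducing along the opposite polarization, i.e. using a Whittaker-twisted semi-infinite Chevalley complex for the pair $(L\fn^-, L^+\fn^-)$ (with the Whittaker twist along $\fn$ implemented via an inner automorphism, so that the final output still carries a $\cW_\kappa$-action). The point of this asymmetric setup is that while the ``$+$-reduction'' $H^{\frac{\infty}{2}+\bullet}(L\fn, L^+\fn, -)$ is only guaranteed to be exact on certain categories (e.g.\ Kazhdan--Lusztig), the ``$-$-reduction'' will be $t$-exact on all of $\OO_{\gk}$ because the relevant loop-algebra action is locally nilpotent on highest weight modules.

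First I would set up $\operatorname{DS}_-$ carefully and establish this $t$-exactness. The key input is that for $M \in \OO_{\gk}$, the $L^+\fn^-$-action is locally finite while the negative modes act freely in a PBW sense; this forces the semi-infinite Chevalley complex, once equipped with the Whittaker differential, to be concentrated in cohomological degree $0$. Concretely, one filters by the PBW degree in $L\fn^-$ and identifies the associated graded with a Koszul-type complex whose cohomology is computable and sits in degree zero.

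Next, I would compute $\operatorname{DS}_-$ on Vermas. Since $M(\lambda) \simeq U(\gk) \otimes_{U(L^+\fg \oplus \C \mathbf{1})} \C_\lambda$, a direct PBW factorization lets one identify the $-$-reduction complex with a standard module for $\cW_\kappa$ whose space of singular vectors is one-dimensional and lives in weight $\pi(\lambda)$. This gives a canonical map $M(\pi(\lambda)) \to \operatorname{DS}_-(M(\lambda))$; comparing graded characters via the free-field/Wakimoto realization (the character of $\operatorname{DS}_-(M(\lambda))$ is computed by an Euler characteristic that collapses to the standard Verma character for $\cW_\kappa$) shows it is an isomorphism.

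The main obstacle is the computation on simples, which is really the content of the Frenkel--Kac--Wakimoto conjecture. The strategy is to use $t$-exactness to get, from a BGG-type resolution of $L(\lambda)$ by Vermas $M(w\cdot\lambda)$, a complex of Vermas $M(\pi(w\cdot\lambda))$ for $\cW_\kappa$ whose cohomology is $\operatorname{DS}_-(L(\lambda))$. One then shows that this complex either resolves $L(\pi(\lambda))$ or is acyclic, the dichotomy being governed by whether $\lambda+\hrho$ is antidominant with respect to some finite simple coroot $\halpha_i$: if $\langle \lambda+\hrho, \halpha_i\rangle \in \Z^{>0}$ for some $i \in I$, then $s_i \cdot \lambda$ has the same image under $\pi$ as $\lambda$, and the contribution of the subquotient $L(s_i \cdot \lambda)\subset M(\lambda)$ cancels that of $L(\lambda)$, yielding vanishing; otherwise the $\Wf$-orbit of $\lambda+\hrho$ maps injectively to $\Wf\backslash\h^*$, and the resolution descends to one for $L(\pi(\lambda))$. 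Making this cancellation/injectivity argument precise, and in particular verifying it at the level of multiplicities rather than just characters, is the crux and is where one invokes the full strength of Arakawa's analysis in \cite{ara}.
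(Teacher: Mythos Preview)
The paper does not give its own proof of this statement; it is quoted as a theorem of Arakawa from \cite{ara}, with the surrounding text only indicating that one feeds representations into a BRST-type complex. So there is no proof in the paper to compare your proposal against.

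That said, your sketch is broadly in the spirit of Arakawa's actual argument on two of the three points: the crucial observation that the \emph{minus} reduction is $t$-exact on all of $\OO_{\gk}$ (whereas the plus reduction is not), and the identification of $\operatorname{DS}_-(M(\lambda))$ with $M(\pi(\lambda))$ via a PBW/character computation. Your treatment of simples, however, deviates from Arakawa and would run into a genuine obstruction: arbitrary simple modules $L(\lambda)$ in $\OO_{\gk}$ do \emph{not} admit BGG-type resolutions by Verma modules, so the resolution-and-cancellation argument you describe does not get off the ground in general. Arakawa's route is instead to show that $\operatorname{DS}_-$ intertwines the contragredient dualities on $\OO_{\gk}$ and on $\OO$, so that it sends the co-Verma $A(\lambda)$ to the co-Verma $A(\pi(\lambda))$. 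Since $L(\lambda)$ is the image of the canonical map $M(\lambda) \to A(\lambda)$, exactness forces $\operatorname{DS}_-(L(\lambda))$ to be the image of $M(\pi(\lambda)) \to A(\pi(\lambda))$, which is either $L(\pi(\lambda))$ or zero; the dichotomy is then settled by a character computation. This duality-based argument is both cleaner and applies uniformly to all $\lambda$.
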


\begin{re}In particular, we normalize the isomorphism $\operatorname{Zhu}(\cW_\kappa) \simeq Z\fg$ by composing the isomorphism $\operatorname{Zhu}(\cW_\kappa) \simeq Z \fg$ used by Arakawa in {\em loc. cit.} with the involution $Z \fg \simeq Z \fg$ corresponding to the automorphism of $\Wf \backslash \h^*$ given by $\pi(\lambda) \mapsto \pi( - w_\circ \lambda)$, where $w_\circ$ denotes the longest element of $\Wf$.\label{zhu} \end{re}

\subsection{Duality}\label{dp}
 In {\em loc. cit.}, Arakawa shows that  $\OO$ admits a contravariant equivalence $\mathbf{D}': \OO \rightarrow \OO^{op}$. By definition, the underlying vector space of $\mathbf{D}' M$ is $\bigoplus_{d \in \C} M_d^*$, and the action of $\mathscr{W}$ is given by the adjoint action precomposed by an appropriate Chevalley anti-involution. 
 
 In {\em loc. cit.}, it is shown that $\mathbf{D}'  L(\pi(\lambda)) \simeq L(\pi(-w_\circ \lambda))$. We may modify it  to remove the appearance of $w_\circ$ as follows. Fix Chevalley generators $e_i, h_i, f_i, i \in I$ for $\fg$. The longest element $w_\circ$ of $\Wf$ sends the positive simple roots to the simple negative roots. I.e., we obtain an involution $\tau$ of $I$ such that $$\operatorname{Ad}_{w_\circ} \mathbb{C} e_i ~= \mathbb{C} f_{\tau(i)}, i \in I.$$Write $\tau$ for the involution of $\fg$ sending $e_i, h_i, f_i$ to $e_{\tau( i)}, h_{\tau(i)}, f_{\tau(i)}, i \in I$, respectively. This induces involutions $\tau$ of $L^+ \fg$ and $\gk$,  and hence $\mathbb{V}_k$. Similarly, it induces involutions of $L\fn$ and $L^+\fn,$  and if we pick $\psi$ so that $\psi(e_i) = 1, i \in I,$ we obtain an involution $\tau$ of  $\csi(L\fn, L^+ \fn, \mathbb{V}_k \otimes \mathbb{C}_\psi)$, and hence of $\cW_\kappa$. Writing $\tau^*$ for the restriction functor $\cw_\kappa\operatorname{-mod}^\heartsuit \rightarrow \cw_\kappa\operatorname{-mod}^\heartsuit$ induced by $\tau$, we define $\mathbf{D} ~:= \tau^* \circ \mathbf{D}'$. With this, we have
 $$\mathbf{D} L(\lambda) \simeq L(\lambda), \quad \quad \lambda \in \Wf \backslash \h^*.$$

Let us say that the module $A$ is {\em co-highest weight} if $\mathbf{D} A$ is a highest weight module. Calling $A(\lambda) := \mathbf{D} M(\lambda)$ a {\em co-Verma} module, equivalently a module $A$ is {\em co-highest weight} if it admits an injection into a necessarily unique $\mathbf{D} M(\lambda)$, for some $\lambda \in W_{\operatorname{f}} \backslash \h^*$.

\subsection{$q$-characters} The following will only be used in the proof of Lemma \ref{vcv}. For an object $M = \bigoplus_{d \in \C} M_d$ of $\OO$, its $q$-character is the formal series $$\ch_q M := \sum_{d \in \C} (\dim M_{-d}) q^d.$$

The basic properties which will be of use to us are summarized in the following

\begin{pro} For any object $M$ of $\OO$, we have $$\ch_q M = \ch_q \mathbf{D} M.$$In particular, $\ch_q M(\lambda) = \ch_q A(\lambda), \lambda \in \Wf \backslash \h^*$.  \label{chardual}
\end{pro}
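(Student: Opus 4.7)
The $q$-character of $M \in \OO$ records the graded dimensions of the generalized $L_0$-eigenspaces: $M_{-d}$ is the generalized eigenspace with eigenvalue $d$, so $\ch_q M$ depends only on $\dim M_d$ for each $d \in \C$. Therefore it suffices to check that the duality $\mathbf{D}$ preserves each graded piece $M_d$, up to taking linear duals.

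The plan is to handle $\mathbf{D}'$ and $\tau^*$ separately. For $\mathbf{D}'$, recall that by construction the underlying vector space is $\bigoplus_d M_d^*$, and the action of $\cW_\kappa$ is defined by precomposing the adjoint action with a Chevalley anti-involution $\sigma$ of $\cW_\kappa$. The key point to verify is that $\sigma$ fixes the conformal vector $\omega$: this is standard, since $\omega$ is the canonical conformal vector produced from the Segal--Sugawara construction in the BRST complex \eqref{brst}, and the Chevalley anti-involution of $\fg$ (fixing $\h$ pointwise and swapping $\fn, \fn^-$) visibly preserves $\omega$. Consequently the $L_0$-action on $\mathbf{D}' M$, restricted to $M_d^*$, is the transpose of $L_0$ on $M_d$. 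Transposition preserves generalized eigenvalues and dimensions on finite dimensional spaces, so $(\mathbf{D}' M)_d = M_d^*$ as generalized $L_0$-eigenspaces and $\dim (\mathbf{D}' M)_d = \dim M_d$. This yields $\ch_q \mathbf{D}' M = \ch_q M$.

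For $\tau^*$, note that $\tau$ is the Lie algebra involution of $\fg$ induced by conjugation by $w_\circ$ on Chevalley generators; it preserves the triangular decomposition, hence induces involutions of $L\fn, L^+\fn, \gk$, $\mathbb{V}_\kappa$, and ultimately of $\cW_\kappa$. Because $\tau$ preserves the Cartan subalgebra and the basic invariant form, it fixes the conformal vector $\omega$ of $\cW_\kappa$, so the induced functor $\tau^*$ on $\cW_\kappa\operatorname{-mod}^\heartsuit$ preserves the $L_0$-grading (as a twist by an automorphism of $\cW_\kappa$ fixing $L_0$). Therefore $\ch_q \tau^* N = \ch_q N$ for any $N \in \OO$. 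Combining with the previous paragraph,
\begin{equation*}
\ch_q \mathbf{D} M = \ch_q \tau^* \mathbf{D}' M = \ch_q \mathbf{D}' M = \ch_q M.
\end{equation*}
The particular case $\ch_q M(\lambda) = \ch_q A(\lambda)$ then follows from $A(\lambda) := \mathbf{D} M(\lambda)$.

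The only nontrivial step is the invariance of $\omega$ under both $\sigma$ and $\tau$, which I expect to be the main thing to justify carefully; everything else is bookkeeping about graded duals of finite dimensional spaces. In fact once one notes that the $L_0$-grading on $\cW_\kappa$ is intrinsic (it is the $\Z^{\geqslant 0}$-grading from the conformal vector, which is canonical up to normalization), any vertex algebra automorphism or anti-automorphism of $\cW_\kappa$ necessarily preserves $\omega$ and hence $L_0$, making the argument essentially formal.
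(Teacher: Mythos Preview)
The paper states this proposition without proof, treating it as a basic property of the duality $\mathbf{D}$ (it is essentially immediate from Arakawa's construction of $\mathbf{D}'$ as the graded dual). Your argument is correct and supplies exactly the details one would expect: the underlying vector space of $\mathbf{D}' M$ is by definition $\bigoplus_d M_d^*$, and once one knows the anti-involution $\sigma$ and the involution $\tau$ each fix $\omega$ (equivalently $L_0$), the claim is immediate.

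One small caution: your closing remark that ``any vertex algebra automorphism or anti-automorphism of $\cW_\kappa$ necessarily preserves $\omega$'' is stronger than what you need and not obviously true in general. The safer justification is the one you already gave earlier in the argument: both $\sigma$ and $\tau$ are induced from (anti-)involutions of $\fg$ that fix $\h$ pointwise and preserve the invariant form, hence fix the Segal--Sugawara element of $\mathbb{V}_\kappa$ and the ghost conformal vector, and therefore fix $\omega \in \cW_\kappa$. That is enough, and avoids any appeal to uniqueness of the conformal vector.
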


\section{Duality for Whittaker models of categorical loop group representations}
\label{dwm}

In this section, we use the adolescent Whittaker filtration of Raskin to construct a duality for Whittaker models of strong categorical representations of loop groups. A very readable reference for background on categorical representations of groups and loop groups, and in particular $D$-modules on loop groups, is \cite{Beraldot}.

Let us mention that this duality is already known to experts. Namely, the commutation of Whittaker invariants with tensoring by an object of $\operatorname{DGCat}_{cont}$, viewed as a trivial strong representation, already formally implies the duality of Whittaker models. This in turn follows from knowing that Whittaker invariants and coinvariants can be identified,  cf. \cite{dnotes}, \cite{denniswhit}. However, the present adolescent Whittaker construction of the duality, which has not yet appeared in the literature, gives a transparent picture helpful for concrete applications, in particular the relation to dualities on spherical and baby Whittaker invariants and the action on compact objects. These compatibilities are crucial for representation-theoretic applications in the remainder of the paper.

\subsection{Recollections on the adolescent Whittaker construction} Let $U$ be a quasicompact, pro-unipotent group scheme, and $\mathscr{C}$ a cocomplete dg-category with a strong action of $U$. Recall that we may form the invariants $\mathscr{C}^U$ and the coinvariants $\mathscr{C}_U$, and that the tautological composition \begin{equation} \label{naive} \mathscr{C}^U \xrightarrow{\text{Oblv}} \mathscr{C} \xrightarrow{\text{ins}} \mathscr{C}_U\end{equation}gives an equivalence $\mathscr{C}^U \simeq \mathscr{C}_U$ \cite{Beraldo}. 
\label{adwc}Recall that $G$ was simple with Lie algebra $\fg = \fn^- \oplus \h \oplus \fn$. Write $N$ for the connected unipotent subgroup of $G$ with Lie algebra $\fn$, and $L^+G, L^+N$ for their arc groups and $LG, LN$ for their loop groups, respectively. Let $\mathscr{C}$ be a cocomplete dg-category with a strong action of $LN$. $LN$ is now an  ind-prounipotent group scheme, and the analogous map $\scc^{LN} \rightarrow \scc_{LN}$ need not be an equivalence. However, a theorem of Raskin salvages a twisted version of this statement, as we now remind. For $\psi: LN \rightarrow \mathbb{G}_a$ a character, we may form the invariants $\mathscr{C}^{LN, \psi}$ and coinvariants $\mathscr{C}_{LN, \psi}$. 

The relevant $\psi$ is obtained as follows. Let $\psi: N \rightarrow \mathbb{G}_a$ be a character. Then $\psi$ necessarily factors through $N/[N, N]$. For each simple root $\alpha_i, i \in I$, we have the corresponding one parameter subgroup $U_{\alpha_i} \rightarrow N$, and an isomorphism $$N/[N, N] \simeq \prod_{i \in I} U_{\alpha_i}.$$ Recall that $\psi$ is said to be nondegerate if its restriction to each simple root subgroup $U_\alpha$ is nonzero. Associated to our choice of coordinate $z$ is a residue map $\operatorname{Res}: L\mathbb{G}_a \rightarrow \mathbb{G}_a$. Accordingly, we may form the composition \begin{equation}\label{psi} LN \rightarrow L(N/[N, N]) \xrightarrow{\operatorname{Res}} N/[N, N] \xrightarrow{\psi} \mathbb{G}_a,\end{equation}which we continue to denote by $\psi$. 

\begin{theo} \cite{r} Let $\psi: LN \rightarrow \mathbb{G}_a$ be a character as in Equation \eqref{psi}. Then for any $\kappa$ and $\scc$ with a strong level $\kappa$ action of $LG$, there is a canonical isomorphism \begin{equation}\scc^{LN, \psi}  \simeq \scc_{LN, \psi}.\label{rl}\end{equation} \label{whitteq}
\end{theo}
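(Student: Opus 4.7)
The plan is to reduce the statement for the ind-pro-unipotent group $LN$ to the analogous statement for its pro-unipotent subgroups, where the untwisted invariants-coinvariants equivalence \eqref{naive} is already available. Concretely, fix a strictly dominant coweight $\check{\lambda}$ of $G$, and for each $n \geq 0$ set $H_n := \operatorname{Ad}_{t^{-n\check{\lambda}}}(L^+ N) \subset LN$. Conjugation by $t^{-n\check{\lambda}}$ dilates each simple root subgroup of $L^+N$ by a factor of $z^{-n\langle \alpha_i, \check{\lambda}\rangle}$, so each $H_n$ is a pro-unipotent closed group subscheme of $LN$, the inclusions $H_n \hookrightarrow H_{n+1}$ are closed embeddings, and $\bigcup_n H_n = LN$ as sheaves of groups. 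Restricting $\psi$ from \eqref{psi} gives a character $\psi_n$ of $H_n$; by the nondegeneracy of $\psi$ together with our choice of $\check{\lambda}$, for $n$ sufficiently large $\psi_n$ is nontrivial on each subsequent layer $H_{n+1}/H_n$.

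The argument then has two inputs. First, for each fixed $n$, the $\psi_n$-twisted version of \eqref{naive} holds: $\scc^{H_n, \psi_n} \simeq \scc_{H_n, \psi_n}$ via the tautological composition $\text{ins} \circ \text{Oblv}$. This follows by mimicking the proof of the untwisted pro-unipotent equivalence after tensoring with the rank-one character D-module $\mathscr{L}_{\psi_n}$ on $H_n$, or equivalently by passing to the central extension of $H_n$ determined by $\psi_n$ and invoking the untwisted statement there. Second, I would prove stabilization: the natural functors $\scc^{H_{n+1}, \psi} \rightleftarrows \scc^{H_n, \psi}$, given by $\text{Oblv}$ on one side and $\psi$-twisted averaging along $H_{n+1}/H_n$ on the other, are mutually inverse equivalences once $n \geq n_0$ for some $n_0$ depending on $\check{\lambda}$ and on the nondegeneracy of $\psi$. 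The mechanism is that $H_{n+1}/H_n$ is a finite-dimensional affine space on whose abelianization $\psi$ descends to a nontrivial linear functional, so Fourier--Deligne/Artin--Schreier transform along this layer supplies the desired equivalence.

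Assembling these inputs, $\scc^{LN, \psi}$ is computed by the limit $\lim_n \scc^{H_n, \psi}$ and $\scc_{LN, \psi}$ by the colimit $\operatorname{colim}_n \scc_{H_n, \psi_n}$; stabilization collapses both to the common stable value $\scc^{H_{n_0}, \psi} \simeq \scc_{H_{n_0}, \psi_{n_0}}$, and the pro-unipotent equivalence of the first step identifies them. Canonicity of the resulting isomorphism \eqref{rl} is checked by passing between any two cofinal filtrations of $LN$ by pro-unipotent subgroups, which are connected by compatible stabilization equivalences. The main obstacle is stabilization: one must carry out the Fourier/Artin--Schreier calculation on each layer $H_{n+1}/H_n$ and verify that the resulting equivalence is compatible with the strong level $\kappa$ action of $LG$ on $\scc$, so that it indeed descends to a functor between $\psi$-twisted equivariant categories. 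Delicate points concern the continuity of Fourier transform at the dg-categorical level and its compatibility with the ind-pro structure on $LN$; these are handled by working in $\operatorname{DGCat}_{cont}$ throughout and using that each $H_{n+1}/H_n$ is pro-finite-dimensional with a well-behaved (ind-)affine space structure.
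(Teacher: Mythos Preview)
Your stabilization claim is false, and this is precisely the subtlety the paper flags immediately after stating the theorem: the equivalence is \emph{not} the analog of \eqref{naive}, and genuinely uses more than the $LN$-action. Concretely, the character $\psi$ of \eqref{psi} factors through the residue map $LN \to N/[N,N]$, so it only sees the $z^{-1}$ coefficient in each simple root direction. Once $n \geqslant 1$, that coefficient already lies in $H_n = \operatorname{Ad}_{t^{-n\check{\lambda}}}(L^+N)$, and therefore $\psi$ restricts to the \emph{trivial} character on the successive quotient $H_{n+1}/H_n$. Your Fourier--Deligne step then collapses to the untwisted situation, where $\operatorname{Oblv}: \scc^{H_{n+1}} \to \scc^{H_n}$ is certainly not an equivalence in general. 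In particular, the limit $\varprojlim_n \scc^{H_n,\psi}$ and the colimit $\varinjlim_n \scc_{H_n,\psi}$ do not stabilize; for $\scc = \gk\operatorname{-mod}$ the Whittaker model is $\cW_\kappa\operatorname{-mod}$, which is not recovered at any finite stage of your filtration.

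Raskin's construction, which the paper reviews, sidesteps this by working with the adolescent Whittaker subgroups $I_n = \operatorname{Ad}_{t^{-n\check{\rho}}} I'_n \subset LG$, which are \emph{not} contained in $LN$: they are conjugates of congruence subgroups of $L^+G$ intersected with $N$-jets. The transition functors $i_{n,m,*}$ and $i_{n,m}^!$ average in the $LN$ and $LB^-$ directions respectively, and the crucial identity $i_{n,m,!} \simeq i_{n,m,*}[2(m-n)\Delta]$ is what matches the two colimit presentations of invariants and coinvariants, up to a uniform cohomological shift. This genuinely requires the ambient $LG$-action and has no analog for an arbitrary $LN$-module category, which is why the hypothesis that $\scc$ carry a level $\kappa$ action of $LG$ is essential and not merely a convenience.
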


We emphasize that one asks that the action of $LN$ be the restriction of an action of $LG$, and indeed the functor realizing the equivalence of Theorem \ref{whitteq} is not the analog of \eqref{naive}. Instead, as we recall presently, it is subtler, and uses more of the $LG$ action.  

Consider the decreasing sequence of subgroups of $L^+G:$ $$I'_n := L^+G  \overset{G[z]/z^{n}}{\times} N[z]/z^{n},\quad \quad n \in \Z^{\geqslant 1}.$$In words, $I'_n$ is jets into $G$ which to $(n-1)^{st}$ order lie in $N$. Note that for all $n \geqslant 1,$ $I'_n$ is prounipotent.  Raskin skews the $I'_n$ by the cocharacter $\check{\rho}$ of the adjoint torus $H_{ad}$ to obtain $$I_n := \operatorname{Ad}_{t^{-n \check{\rho}}} I_n, \quad \quad n \in \Z^{\geqslant 1}.$$Using the map $I'_n \rightarrow N[z]/z^n$, $\psi$ induces a homomorphism $I_n \rightarrow \mathbb{G}_a$ which coincides with \eqref{whitteq} on $I_n \cap N(K)$. Accordingly, one can form the {\em adolescent Whittaker categories} $\scc^{I_n, \psi}, n \geqslant 1.$

Due to conjugation by $t^{-n \check{\rho}}$, the $I_n$ no longer form a decreasing series of subgroups. Instead, for $n \leqslant m$, one relates the adolescent Whittaker categories by the functors $$i_{n,m *}: \scc^{I_n, \psi} \xrightarrow{\text{Oblv}} \scc^{I_n \cap I_m, \psi} \xrightarrow{\text{Av}_{*, \psi}} \scc^{I_m, \psi},$$
$$i_{n,m}^!: \scc^{I_m, \psi} \xrightarrow{\text{Oblv}} \scc^{I_n \cap I_m, \psi} \xrightarrow{\text{Av}_{*, \psi}} \scc^{I_n, \psi}.$$Informally, $i_{n,m *}$ averages an element of $\scc$ by $I_m \cap LN / I_n \cap LN$, and $i_{n,m}^!$ averages an element of $\scc$ by $I_n \cap LB^-/ I_m \cap LB^-$. Raskin proves that $i_{n,m}^!$ admit fully faithful left adjoints $i_{n,m !}$. A key result is that this coincides with $i_{n,m*}$ up to a shift: \begin{equation}i_{n,m !} \simeq i_{n,m *}[2(m-n) \Delta]\label{ss},\end{equation}where $\Delta = \dim \text{Ad}_{t^{-\check{\rho}}} L^+N /  L^+N$.

Raskin moreover observes that: $$\scc_{LN, \psi} \simeq \varinjlim_{i_{n,m *}} \scc^{I_n, \psi}, \quad \quad \quad \quad \scc^{LN, \psi} \simeq \varinjlim_{i_{n,m!}} \scc^{I_n, \psi}.$$Here and elsewhere, the above homotopy colimits and limits are taken in the setting of cocomplete dg-categories, unless otherwise specified. By virtue of \eqref{ss}, the maps $\operatorname{id}[-2m \Delta]: \scc^{I_n, \psi} \rightarrow \scc^{I_n, \psi}$ yield the desired isomorphism $\scc^{LN, \psi} \simeq \scc_{LN, \psi}$ of \eqref{whitteq}.

\subsection{Whittaker invariants of dual representations: prounipotent case} 
 Since we are unaware of a reference with proofs for the desired duality and its basic properties in the prounipotent case, though it is well known to experts, we first write this down in some detail.
\label{puc}

\subsubsection{Dualizability}

Suppose $U$ is a quasicompact, prounipotent group scheme. We remind the isomorphism between invariants and coinvariants in more detail. Let $\scc$ be a category strongly acted on by $U$. Then the invariants category $\scc^U$ comes equipped with an adjunction of continuous functors: \begin{equation} \label{adi} \text{Oblv}: \scc^U \leftrightarrows \scc: \text{Av}_*,\end{equation}wherein $\text{Oblv}$ is fully faithful. Similarly, the coinvariants category $\scc_U$ comes equipped with an adjunction of continuous functors: \begin{equation}\label{adc}\text{ins}^L: \scc_U \leftrightarrows \scc: \text{ins},\end{equation}wherein $\text{ins}^L$ is fully faithful. The following assertion may be found in \cite{Beraldo}. 

\begin{theo} The composition $\operatorname{ins} \circ \operatorname{Oblv}: \scc^U \rightarrow \scc_U$ is an equivalence. \label{rlu} \end{theo}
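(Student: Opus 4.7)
The strategy is to exhibit $G := \operatorname{Av}_* \circ \operatorname{ins}^L: \scc_U \to \scc^U$ as a quasi-inverse to $F := \operatorname{ins} \circ \operatorname{Oblv}$. Since $\operatorname{Oblv}$ is left adjoint to $\operatorname{Av}_*$ and $\operatorname{ins}$ is right adjoint to $\operatorname{ins}^L$, a short chain of adjunctions gives that $G$ is left adjoint to $F$, provided we know that $\operatorname{Oblv}$ admits a further left adjoint $\operatorname{Av}_!$ which agrees with $\operatorname{Av}_*$ without shift. In our quasicompact prounipotent setting this holds: prounipotent group schemes are ``categorically contractible,'' so $!$- and $*$-averaging coincide, in contrast to the ind-unipotent case appearing in Theorem \ref{whitteq}. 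The plan is then to show the unit and counit of the adjunction $G \dashv F$ are both equivalences.

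Using that $\operatorname{Oblv}$ and $\operatorname{ins}^L$ are fully faithful, the verification of the unit $\eta: \operatorname{id}_{\scc_U} \to FG$ and counit $\varepsilon: GF \to \operatorname{id}_{\scc^U}$ can be transported to $\scc$, where they become statements about the coincidence of the essential images of the fully faithful embeddings $\operatorname{Oblv}: \scc^U \hookrightarrow \scc$ and $\operatorname{ins}^L: \scc_U \hookrightarrow \scc$. Concretely, one reduces to showing that the two projector endofunctors $\operatorname{Oblv}\operatorname{Av}_*$ and $\operatorname{ins}^L \operatorname{ins}$ of $\scc$ have the same essential image. This is the crux of the theorem.

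To establish this coincidence I would argue by dévissage on $U$. Writing $U$ as an inverse limit of finite-dimensional unipotent quotients $U_n$ (using quasicompactness), and using that both invariants and coinvariants are compatible with the filtration by the normal kernels $\ker(U \to U_n)$, one reduces to the case of a finite-dimensional unipotent $U$. Filtering along a central series whose successive subquotients are isomorphic to $\mathbb{G}_a$, and using compatibility of invariants and coinvariants with short exact sequences $1 \to K \to U \to U' \to 1$ of prounipotent groups, the problem is reduced by induction on $\dim U$ to the base case $U = \mathbb{G}_a$. There the claim is checked directly: strong $\mathbb{G}_a$-actions amount to module structures over the convolution monoidal category $D(\mathbb{G}_a)$, and the Fourier transform on $\mathbb{A}^1$ identifies $\scc^{\mathbb{G}_a}$ and $\scc_{\mathbb{G}_a}$ with the same full subcategory of $\scc$, intertwining the comparison map with the identity.

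The main obstacle is the compatibility of invariants, coinvariants, and the comparison $\operatorname{ins} \circ \operatorname{Oblv}$ with the inductive decomposition of $U$, which amounts to a categorical Künneth-type statement for the bar construction of a prounipotent group. This is where the prounipotent hypothesis is essential: one must use that $D(U)\text{-mod}$ is compactly generated by the trivial representation, which is moreover self-dual, in order to reconcile the limit computing invariants with the colimit computing coinvariants. Absent prounipotence this reconciliation fails, as one sees in the ind-unipotent setting where Theorem \ref{whitteq} requires a character twist and a cohomological shift by $2(m-n)\Delta$ to produce the analogous equivalence.
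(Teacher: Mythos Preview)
The paper does not prove this theorem; it simply cites \cite{Beraldo}. From the proof of Proposition~\ref{swapd} one learns that Beraldo's inverse $\iota$ is obtained by applying the universal property of $\scc_U$ to $\operatorname{Av}_*:\scc\to\scc^U$, and a short check shows this $\iota$ coincides with your $G=\operatorname{Av}_*\circ\operatorname{ins}^L$ (use $\operatorname{ins}\circ\operatorname{ins}^L\simeq\operatorname{id}$). So you have correctly identified the inverse, and your reduction ``both composites are equivalences iff the essential images of $\operatorname{Oblv}$ and $\operatorname{ins}^L$ in $\scc$ coincide'' is valid.

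Where your proposal diverges from the cited argument is in the verification. The d\'evissage to finite dimension is more delicate than you let on. To pass from $U=\varprojlim U_n$ to the $U_n$, you factor $\scc_U\simeq((\scc^{K_n})_{K_n})_{U_n}$ with $K_n=\ker(U\to U_n)$; but $K_n$ acts trivially on $\scc^{K_n}$, so you need $(\scc^{K_n})_{K_n}\simeq\scc^{K_n}$, i.e.\ that coinvariants for the \emph{trivial} action of a prounipotent group is the identity. This is true, but it is essentially the same ``contractibility'' input ($k_{K_n}\simeq\omega_{K_n}$, $H^*_{dR}(K_n)\simeq\mathbb{C}$, or equivalently self-duality of the trivial $D(K_n)$-module $\operatorname{Vect}$) that powers the direct proof. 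So the d\'evissage does not genuinely reduce the difficulty; it relocates it.

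The direct route, which you in fact have the ingredients for, is shorter. Once $\iota$ is constructed, $\iota\circ F\simeq\operatorname{Av}_*\circ\operatorname{Oblv}\simeq\operatorname{id}$ is immediate from full faithfulness of $\operatorname{Oblv}$. For $F\circ\iota\simeq\operatorname{id}$, precompose with $\operatorname{ins}$ and use the universal property of coinvariants: one is reduced to showing $\operatorname{ins}\circ\operatorname{Oblv}\circ\operatorname{Av}_*\simeq\operatorname{ins}$, i.e.\ that $\operatorname{ins}\circ(k_U\star -)\simeq\operatorname{ins}$. This holds because $k_U\simeq\omega_U$ makes $\operatorname{Av}_!\simeq\operatorname{Av}_*$, so $k_U\star -\simeq\operatorname{Oblv}\operatorname{Av}_!$, and the counit $\operatorname{Oblv}\operatorname{Av}_!\to\operatorname{id}$ becomes an equivalence after applying the $U$-equivariant functor $\operatorname{ins}$. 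Equivalently and most cleanly: the self-duality of $\operatorname{Vect}$ as a $D(U)$-module (which you mention but do not exploit directly) gives $\scc_U=\operatorname{Vect}\otimes_{D(U)}\scc\simeq\operatorname{Hom}_{D(U)}(\operatorname{Vect},\scc)=\scc^U$ in one line. Your Fourier argument for $\mathbb{G}_a$ is a special case of this self-duality, so the d\'evissage detour is unnecessary.
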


The following compatibility will be useful to us. Throughout the paper, we understand commutative diagrams of dg-categories to commute up to natural equivalence. 

\begin{pro} The equivalence of Theorem \ref{rlu} exchanges the adjunctions \eqref{adi}, \eqref{adc}, i.e. fits into  commutative diagrams: $$\xymatrix{\scc^U \ar[rd]_{\operatorname{Oblv}} \ar[rr]^{\operatorname{ins} \circ \operatorname{Oblv}} & & \scc_U \ar[ld]^{\operatorname{ins}^L} &  & \scc^U \ar[rr]^{\operatorname{ins} \circ \operatorname{Oblv}} & & \scc_U \\   & \scc  & & & & \ar[ul]^{\operatorname{Av_*}} \scc \ar[ur]_{\operatorname{ins}} }$$ \label{swapd}

\end{pro}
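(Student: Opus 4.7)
The plan is to prove the first diagram directly and deduce the second by an adjoint duality argument. Since $\Phi := \operatorname{ins}\circ\operatorname{Oblv}$ is an equivalence by Theorem \ref{rlu}, its right adjoint equals its inverse. Combined with $\operatorname{Oblv}\dashv\operatorname{Av}_*$ and $\operatorname{ins}^L\dashv\operatorname{ins}$, passing to right adjoints in the identity $\operatorname{ins}^L\circ\Phi\simeq\operatorname{Oblv}$ of the first diagram yields $\Phi^{-1}\circ\operatorname{ins}\simeq\operatorname{Av}_*$, equivalently $\operatorname{ins}\simeq\Phi\circ\operatorname{Av}_*$, which is the second diagram. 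So it suffices to prove one.

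To prove the first diagram, I unpack $\Phi$ to rewrite the claim as $\operatorname{ins}^L\circ\operatorname{ins}\circ\operatorname{Oblv}\simeq\operatorname{Oblv}$. The counit $\epsilon\colon\operatorname{ins}^L\circ\operatorname{ins}\to\operatorname{id}_\scc$ of the adjunction $(\operatorname{ins}^L,\operatorname{ins})$ supplies the candidate natural transformation $\epsilon\operatorname{Oblv}\colon\operatorname{ins}^L\operatorname{ins}\operatorname{Oblv}\to\operatorname{Oblv}$. Since $\operatorname{ins}^L$ is fully faithful, the component $\epsilon_Y$ is an equivalence precisely when $Y$ lies in the essential image of $\operatorname{ins}^L$. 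The task therefore reduces to showing that $\operatorname{Oblv}(\scc^U)$ is contained in $\operatorname{ins}^L(\scc_U)$ as full subcategories of $\scc$.

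To establish this inclusion, I argue that the two idempotent endofunctors $E := \operatorname{Oblv}\circ\operatorname{Av}_*$ and $F := \operatorname{ins}^L\circ\operatorname{ins}$ of $\scc$ have canonically equivalent essential images, namely $\operatorname{Oblv}(\scc^U)$ and $\operatorname{ins}^L(\scc_U)$ respectively. Both $E$ and $F$ project $\scc$ onto subcategories equivalent to $\scc^U\simeq\scc_U$ via $\Phi$, and one verifies by a diagram chase, using that the unit $\operatorname{id}_{\scc^U}\simeq\operatorname{Av}_*\operatorname{Oblv}$ and the counit $\operatorname{ins}\operatorname{ins}^L\simeq\operatorname{id}_{\scc_U}$ are equivalences (by full faithfulness of $\operatorname{Oblv}$ and $\operatorname{ins}^L$), that applying $\operatorname{ins}$ to the map $\epsilon_{\operatorname{Oblv} X}$ produces the identity on $\Phi X$. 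Granted the equivalence of essential images, $\operatorname{Oblv}(X)$ lies in $\operatorname{ins}^L(\scc_U)$ for every $X\in\scc^U$; the counit $\epsilon_{\operatorname{Oblv} X}$ is then an equivalence, and naturality in $X$ is automatic.

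The main obstacle is the canonical identification of the idempotents $E$ and $F$. This is not purely formal from having the equivalence $\Phi$: one must track the units and counits of the two adjunctions through the construction of $\Phi$ in \cite{Beraldo}. For prounipotent $U$ the required compatibility is in effect built into the construction of $\Phi$ as an equivalence of categories of (co)algebras over a common (co)monad, but extracting it cleanly requires careful bookkeeping of 2-cells. Once this identification is in place, the rest of the argument is a routine manipulation of units and counits in stable cocomplete dg-categories.
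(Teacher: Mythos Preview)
Your reduction strategy (prove one triangle, deduce the other by passing to adjoints) is exactly what the paper does, but you have chosen the harder triangle to prove directly. The paper proves the \emph{right} triangle first, and the point is that this is essentially a tautology once one recalls how the inverse of $\Phi=\operatorname{ins}\circ\operatorname{Oblv}$ is built in \cite{Beraldo}: the inverse $\iota$ is obtained by applying the universal property of coinvariants to $\operatorname{Av}_*:\scc\to\scc^U$, so by construction $\iota\circ\operatorname{ins}\simeq\operatorname{Av}_*$, i.e.\ $\operatorname{ins}\simeq\Phi\circ\operatorname{Av}_*$. The left triangle then follows by taking left adjoints. This is a two-line proof.

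By contrast, your attempt to prove the left triangle directly runs into the very difficulty you flag: showing that the essential images of $\operatorname{Oblv}$ and $\operatorname{ins}^L$ in $\scc$ coincide is not formal from the mere existence of an equivalence $\scc^U\simeq\scc_U$, and your observation that $\operatorname{ins}(\epsilon_{\operatorname{Oblv}X})$ is invertible does not by itself force $\epsilon_{\operatorname{Oblv}X}$ to be invertible, since $\operatorname{ins}$ need not be conservative. You correctly identify that resolving this requires going back into the construction of $\Phi$ in \cite{Beraldo}; but once you do that, you are one step away from the paper's argument, which extracts the right triangle immediately. So there is no genuine error in your outline, but the acknowledged gap is real, and the clean way to close it is precisely the paper's route.
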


\begin{proof} In {\em loc. cit}, an inverse functor $\iota$ to $\operatorname{ins} \circ \operatorname{Oblv}$ is constructed by applying the universal property of $\scc_U$ to $\operatorname{Av_*}: \scc \rightarrow \scc^U$. This yields the right hand triangle, and the other is obtained by passing to left adjoints. \end{proof}

We can now state how dualizability interacts with invariants and coinvariants for prounipotent groups. Recall that  $\operatorname{DGCat}_{cont}$ denotes the $(\infty,2)$-category of cocomplete $\mathbb{C}$-linear dg-categories and continuous $\mathbb{C}$-linear quasi-functors between them, equipped with the Lurie tensor product, cf. \cite{Gaitsge}. Let $\scc$ be a dualizable object of $\operatorname{DGCat}_{cont}$ with dual $\scc^\vee$. If $\scc$ is equipped with a strong action of $U$, then $\scc^\vee$ acquires a strong right action of $U$ by the dualizability of $D(U)$. Applying inversion on the group, one converts this into a strong left action of $U$. 

\begin{pro}\label{dualic} Let $\scc$, $\scc^\vee$ be as in the preceding paragraph. Then $\scc^U$ is again dualizable, with dual $(\scc^\vee)^U$ and pairing: \begin{equation} \label{pairing}  \scc^U \otimes (\scc^\vee)^U  \xrightarrow{\operatorname{Oblv} \otimes \operatorname{Oblv}} \scc \otimes \scc^\vee \rightarrow \operatorname{Vect}.\end{equation}

\end{pro}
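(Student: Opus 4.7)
My plan is to establish dualizability by verifying its universal property, and then separately identify the counit with the claimed pairing. The prounipotent hypothesis enters only through Theorem~\ref{rlu}, which converts invariants into coinvariants; after that, the argument becomes a formal manipulation of tensor products in $\operatorname{DGCat}_{cont}$.

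Concretely, for any test category $\mathscr{D} \in \operatorname{DGCat}_{cont}$ I would produce the chain of natural equivalences
\begin{align*}
\Hom(\scc^U, \mathscr{D}) &\;\simeq\; \Hom(\scc_U, \mathscr{D}) \;\simeq\; \Hom_{D(U)}(\scc, \mathscr{D}) \\
&\;\simeq\; \Hom_{D(U)}(\operatorname{Vect}, \scc^\vee \otimes \mathscr{D}) \;\simeq\; (\scc^\vee \otimes \mathscr{D})^U \;\simeq\; (\scc^\vee)^U \otimes \mathscr{D},
\end{align*}
in which the first step is Theorem~\ref{rlu}, the second is the universal property of coinvariants as the relative tensor product $\scc \otimes_{D(U)} \operatorname{Vect}$, the third moves $\scc$ to the other side via its dual, the fourth re-expresses $D(U)$-linear functors out of $\operatorname{Vect}$ as invariants, and the last pulls the trivial $U$-action on $\mathscr{D}$ outside. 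Taking $\mathscr{D} = \operatorname{Vect}$ already recovers $(\scc^U)^\vee \simeq (\scc^\vee)^U$, and naturality in $\mathscr{D}$ supplies the dualizability of $\scc^U$ in $\operatorname{DGCat}_{cont}$.

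To identify the resulting counit with \eqref{pairing}, I would trace an element $F \in (\scc^\vee)^U$ backward through the chain. Under the third equivalence, $F$ corresponds to the functor $\operatorname{ev}(-,\operatorname{Oblv} F): \scc \to \operatorname{Vect}$; under the second, it is then characterized as its descent to $\scc_U$; finally, via the compatibility of Proposition~\ref{swapd}, namely that the equivalence $\scc^U \simeq \scc_U$ intertwines $\operatorname{Oblv}: \scc^U \to \scc$ with $\operatorname{ins}^L: \scc_U \to \scc$, the associated functor on $\scc^U$ becomes the composite
$$\scc^U \xrightarrow{\operatorname{Oblv}} \scc \xrightarrow{\operatorname{ev}(-,\, \operatorname{Oblv} F)} \operatorname{Vect}.$$
This is exactly $\operatorname{Oblv} \otimes \operatorname{Oblv}$ followed by the original evaluation, so the pairing agrees with \eqref{pairing}.

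The main technical point is the last interchange $(\scc^\vee \otimes \mathscr{D})^U \simeq (\scc^\vee)^U \otimes \mathscr{D}$. For quasicompact prounipotent $U$ the invariants functor $(-)^U$ is continuous, and $\otimes$ in $\operatorname{DGCat}_{cont}$ preserves colimits separately in each variable; combined with the trivial $U$-action on the second factor, this yields the claim. Everything else reduces to the $(\otimes, \Hom)$-adjunction in $\operatorname{DGCat}_{cont}$, the definition of $\scc_U$ as a relative tensor product, and a small amount of bookkeeping to check that the right $U$-action on $\scc^\vee$ produced by dualizability and the inversion on $U$ is the one used throughout to form $(\scc^\vee)^U$.
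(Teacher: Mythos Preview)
Your approach is essentially the paper's: compute $\Hom(\scc_U,\mathscr{D})$ via the universal property of coinvariants, use dualizability of $\scc$ to rewrite this as $(\scc^\vee\otimes\mathscr{D})^U$, and then pull out the $\mathscr{D}$. The identification of the pairing via Proposition~\ref{swapd} is likewise the paper's argument, phrased pointwise rather than diagrammatically.

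One step needs sharpening. Your justification of the interchange $(\scc^\vee\otimes\mathscr{D})^U\simeq(\scc^\vee)^U\otimes\mathscr{D}$ via ``continuity of $(-)^U$'' is not the right input: invariants is a totalization, hence a \emph{limit}, and $-\otimes\mathscr{D}$ need not commute with limits, so this interchange is genuinely non-formal. The paper flags exactly this and gives the correct argument, which your own framing should also lead to: apply Theorem~\ref{rlu} a second time to pass to coinvariants, obtaining
\[
(\scc^\vee\otimes\mathscr{D})^U\;\simeq\;(\scc^\vee\otimes\mathscr{D})_U\;\simeq\;(\scc^\vee)_U\otimes\mathscr{D}\;\simeq\;(\scc^\vee)^U\otimes\mathscr{D},
\]
where the middle step is immediate since $(-)_U$ is a geometric realization and $\otimes\mathscr{D}$ preserves colimits. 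So Theorem~\ref{rlu} enters twice, not only in your first step; once this is made explicit your proof agrees with the paper's.
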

\begin{proof} We first show the dualizability of $\scc^U \simeq \scc_U$. For an arbitrary test object $\mathscr{S}$ of $\operatorname{DGCat}_{cont}$, we may equip it with a trivial strong action of $U$, and compute $$\Hom_{\operatorname{DGCat}_{cont}}(\scc_U, \mathscr{S}) \simeq \Hom_{D(U)\operatorname{-mod}}(\scc, \mathscr{S}) \simeq \Hom_{\operatorname{DGCat}_{cont}}(\scc, \mathscr{S})^U \simeq (\scc^\vee \otimes \mathscr{S})^U.$$To show dualizability, it remains to show the natural map $(\scc^\vee)^U \otimes \mathscr{S} \rightarrow (\scc^\vee \otimes \mathscr{S})^U$ is an equivalence, i.e. we would like to pull the $\mathscr{S}$ out of the invariants. Since the latter is a totalization, and in particular an inverse limit, this is not completely formal. However, the analogous assertion for coinvariants is, and so we compute using Theorem \ref{rlu} $$(\scc^\vee \otimes \mathscr{S})^U \simeq (\mathscr{C}^\vee \otimes \mathscr{S})_U \simeq (\mathscr{C}^\vee)_U \otimes \mathscr{S} \simeq (\scc^\vee)^U \otimes \mathscr{S}.$$
By construction, the perfect pairing $\delta: \scc_U \otimes (\scc^\vee)^U \rightarrow \operatorname{Vect}$ fits into a commutative diagram: $$\xymatrix{\scc \otimes (\scc^\vee)^U \ar[r]^{\hspace{2mm} \operatorname{id} \otimes \operatorname{Oblv}} \ar[rd]_{\operatorname{ins} \otimes \operatorname{id}} & \scc \otimes \scc^\vee \ar[r] & \operatorname{Vect.} \\ & \scc_U \otimes (\scc^\vee)^U \ar[ru]_\delta }$$Precomposing the diagram with $\operatorname{ins}^L \otimes \operatorname{id}$, and using the fully-faithfulness of $\operatorname{ins}^L$, we deduce the pairing can be rewritten as $$\scc_U \otimes (\scc^\vee)^U \xrightarrow{ \operatorname{ins}^L \otimes \operatorname{Oblv}} \scc \otimes \scc^\vee \rightarrow \operatorname{Vect}.$$By Theorem \ref{rlu} and Proposition \ref{swapd}, we can replace $\scc_U$ and $\operatorname{ins}^L$ with $\scc^U$ and $\operatorname{Oblv}$, as desired.\end{proof}

Recall that to a continuous functor $F: \scc \rightarrow \mathscr{D}$ between dualizable objects of $\operatorname{DGCat}_{cont}$, one can associate a dual functor $F^\vee: \mathscr{D}^\vee \rightarrow \scc^\vee$. We now determine the effect of duality upon the forgetful and averaging functors, namely that they swap. 

\begin{pro} Under the equivalence $(\scc^U)^\vee \simeq (\scc^\vee)^U$ of Proposition \ref{dualic}, the functors $$\operatorname{Oblv}: \scc^U \leftrightarrows \scc: \operatorname{Av_*}$$are dual to the functors $$\operatorname{Oblv}: (\scc^\vee)^U \leftrightarrows \scc^\vee: \operatorname{Av_*},$$i.e. there are natural isomorphisms $\operatorname{Oblv}^\vee \simeq \operatorname{Av_*}$, $\operatorname{Av_*}^\vee \simeq \operatorname{Oblv}$. 
\label{dualfunc}
\end{pro}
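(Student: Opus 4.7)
The plan is to verify $\operatorname{Oblv}^\vee \simeq \operatorname{Av}_*$ directly from the universal property of the dual functor and the explicit pairing of Proposition \ref{dualic}, relying crucially on the $U$-equivariance of the pairing $\operatorname{ev}_\scc\colon \scc \otimes \scc^\vee \to \operatorname{Vect}$ and the universal property of coinvariants.

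First I would reduce to a single identification. Since dualization reverses adjunctions, the continuous adjunction $\operatorname{Oblv} \dashv \operatorname{Av}_*$ on $\scc$ gives rise to a dualized adjunction $\operatorname{Av}_*^\vee \dashv \operatorname{Oblv}^\vee$ between $\scc^\vee$ and $(\scc^\vee)^U$. Since $\scc^\vee$ independently carries $\operatorname{Oblv} \dashv \operatorname{Av}_*$, uniqueness of adjoints shows the two desired identifications $\operatorname{Oblv}^\vee \simeq \operatorname{Av}_*$ and $\operatorname{Av}_*^\vee \simeq \operatorname{Oblv}$ are equivalent, so I will verify only the former.

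To show $\operatorname{Oblv}^\vee \simeq \operatorname{Av}_*$, I would check the characterizing identity of the dual functor: the composites $\operatorname{ev}_\scc \circ (\operatorname{Oblv} \otimes \operatorname{id})$ and $\operatorname{ev}_{\scc^U} \circ (\operatorname{id} \otimes \operatorname{Av}_*)$ agree as bifunctors $\scc^U \otimes \scc^\vee \to \operatorname{Vect}$. Unfolding $\operatorname{ev}_{\scc^U}$ via Proposition \ref{dualic}, the second composite becomes $\operatorname{ev}_\scc \circ (\operatorname{Oblv} \otimes (\operatorname{Oblv} \circ \operatorname{Av}_*))$. For each fixed $d \in \scc^U$, the functor $\operatorname{ev}_\scc(\operatorname{Oblv}(d), -)\colon \scc^\vee \to \operatorname{Vect}$ is $U$-invariant, since the pairing is $U$-equivariant --- by the very construction of the $D(U)$-action on $\scc^\vee$ --- and $\operatorname{Oblv}(d)$ is $U$-fixed. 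By the universal property of coinvariants, this functor factors uniquely as $\operatorname{ev}_\scc(\operatorname{Oblv}(d), -) \simeq \phi \circ \operatorname{ins}$ for some continuous $\phi\colon (\scc^\vee)_U \to \operatorname{Vect}$, naturally in $d$.

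Transporting along the equivalence $(\scc^\vee)_U \simeq (\scc^\vee)^U$ of Theorem \ref{rlu} and Proposition \ref{swapd} --- which sends $\operatorname{ins}$ to $\operatorname{Av}_*$ and $\operatorname{ins}^L$ to $\operatorname{Oblv}$ --- the factorization reads $\operatorname{ev}_\scc(\operatorname{Oblv}(d), -) \simeq \phi' \circ \operatorname{Av}_*$ for a unique $\phi'\colon (\scc^\vee)^U \to \operatorname{Vect}$. Precomposing both sides with $\operatorname{Oblv}$ and using the full faithfulness $\operatorname{Av}_* \circ \operatorname{Oblv} \simeq \operatorname{id}$ forces $\phi' \simeq \operatorname{ev}_\scc(\operatorname{Oblv}(d), \operatorname{Oblv}(-))$; substituting back gives $\operatorname{ev}_\scc(\operatorname{Oblv}(d), -) \simeq \operatorname{ev}_\scc(\operatorname{Oblv}(d), \operatorname{Oblv}(\operatorname{Av}_*(-)))$, the desired identity. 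The main obstacle is organizing the $U$-equivariance of the pairing and its functorial factorization through coinvariants carefully in the $(\infty, 2)$-categorical setting; once this is in place, the rest of the argument is an adjoint zig-zag.
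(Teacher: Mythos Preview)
Your proposal is correct and rests on the same underlying idea as the paper's proof, namely the compatibility of the pairing $\scc \otimes \scc^\vee \to \operatorname{Vect}$ with the $D(U)$-action and the $U$-invariance of $\operatorname{Oblv}(d)$. The organization, however, is different enough to be worth a comment.

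The paper's argument is a direct three-line convolution computation rather than an abstract factorization through coinvariants. It identifies $\operatorname{Oblv} \circ \operatorname{Av}_*$ as convolution by the constant sheaf $k_U$ on $U$, then moves this convolution across the pairing using the very definition of the $D(U)$-action on $\scc^\vee$ (picking up $\operatorname{inv}_{*,dR}$), observes $\operatorname{inv}_{*,dR} k_U \simeq k_U$, and finishes with $k_U \star \operatorname{Oblv}(d) \simeq \operatorname{Oblv}(d)$ from full faithfulness of $\operatorname{Oblv}$. This completely bypasses the step you flag as the main obstacle: there is no need to set up the $U$-equivariant structure on $\operatorname{ev}_\scc$ as a datum in the $(\infty,2)$-categorical sense, nor to invoke the universal property of coinvariants and transport along $(\scc^\vee)_U \simeq (\scc^\vee)^U$. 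Your route is more conceptual and makes clear why the statement should hold, but the paper's route is shorter and avoids exactly the bookkeeping you were worried about. Both are valid; the paper's version is what you would arrive at if you unwound your abstract equivariance claim into its concrete content.
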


\begin{proof} For a dualizable object $\mathscr{D}$ of $\operatorname{DGCat}_{cont}$ with dual $\mathscr{D}^\vee$, let us write $\langle -, - \rangle_{\mathscr{D}} \in \operatorname{Vect}$ for the pairing $\mathscr{D} \otimes \mathscr{D}^\vee \rightarrow \operatorname{Vect}$. To prove the Proposition, it suffices to provide an equivalence of quasi-functors: $$\langle -, \operatorname{Av}_* - \rangle_{\mathscr{C}^U} \simeq \langle \operatorname{Oblv} -, - \rangle_{\scc}.$$By the definition of the pairing between $\scc^U, (\scc^\vee)^U$, we have $$\langle -, \operatorname{Av}_* - \rangle_{\mathscr{C}^U} \simeq \langle \operatorname{Oblv} -, \operatorname{Oblv} \circ \operatorname{Av}_* - \rangle_{\scc}.$$Recall that $\operatorname{Oblv} \circ \operatorname{Av}$ is given by convolution by $k_U$, the constant sheaf on $U$, cf. \cite{Beraldo}. Writing $\operatorname{inv}: U \rightarrow U$ for inversion, by the definition of the action of $D(U)$ on $\scc^\vee$ we have: $$\langle \operatorname{Oblv} -, \operatorname{Oblv} \circ \operatorname{Av}_* - \rangle_\scc \simeq \langle \operatorname{Oblv} -, k_U \star - \rangle_\scc \simeq \langle \operatorname{inv}_{*, dR} k_U \star \operatorname{Oblv} -, - \rangle_\scc.$$As $\operatorname{inv}_{*, dR} k_U \simeq k_U$, the fully-faithfulness of $\operatorname{Oblv}$ yields: $$\langle k_U \star \operatorname{Oblv} -, - \rangle_\scc \simeq \langle \operatorname{Oblv} -, - \rangle_\scc,$$as desired. Having shown the duality of $\operatorname{Oblv}: \scc^U \rightarrow \scc$ and $\operatorname{Av}_*: \scc^\vee \rightarrow (\scc^\vee)^U$, the other claim either follows from the involutivity of taking dual categories. \end{proof}

Finally, we will need a Whittaker twist of the above. So, suppose $\chi: U \rightarrow \mathbb{G}_a$ is a character. Associated to this is a character representation $\operatorname{Vect}_\chi$. Let us recall the construction. Write $z$ for the standard coordinate on $\mathbb{G}_a$, and $D$ for the algebra of global differential operators on $\mathbb{G}_a$. Then one has the exponential (left) $D$-module: \begin{equation}\label{exp}e^z := D/D(\partial_z - 1) [1].\end{equation}Writing $e^\chi$ for $\chi^! e^z$, then $e^\chi$ is a character $D$-module on $U$, i.e. is equipped with an isomorphism $m^! e^\chi \simeq e^\chi \boxtimes e^\chi,$ satisfying an associativity compatibility, where $m: U \times U \rightarrow U$ is the multiplication map. Accordingly, one has a representation $\operatorname{Vect}_\chi$ of $D(U)$, whose underlying dg-category is $\operatorname{Vect}$, and whose coaction: $$\operatorname{Vect} \xrightarrow{a^!} \operatorname{Vect} \otimes D(U) \simeq D(U) $$sends $V \in \operatorname{Vect}$ to $V \otimes e^\chi$. Note that when $\chi = 0$, $e^\chi$ is the dualizing sheaf $\omega_U$, and this recovers the usual trivial representation of $D(U)$. 

Let us recall that these multiply in the expected in way:

\begin{lemma}\label{dualtw} Let $\chi, \chi'$ be characters of $U$. Then there is a canonical isomorphism of strong representations $\operatorname{Vect}_\chi \otimes \operatorname{Vect}_{\chi'} \simeq \operatorname{Vect}_{\chi + \chi'}.$ In particular, we have a $D(U)$-equivariant duality between $\operatorname{Vect}_{\chi}$ and $ \operatorname{Vect}_{-\chi}.$

\end{lemma}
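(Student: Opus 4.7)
The plan is to deduce both assertions from the character property of the exponential $D$-module on $\mathbb{G}_a$, namely $\operatorname{add}^! e^z \simeq e^z \boxtimes e^z$, where $\operatorname{add}: \mathbb{G}_a \times \mathbb{G}_a \rightarrow \mathbb{G}_a$ is the addition map. This encodes the familiar $e^{x+y} = e^x e^y$ and is equivalent to the character $D$-module structure of $e^z$ on the additive group.

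First I would establish the pointwise identity
\begin{equation}
e^{\chi + \chi'} \simeq \Delta_U^!\bigl(e^{\chi} \boxtimes e^{\chi'}\bigr), \label{keyiso}
\end{equation}
where $\Delta_U: U \rightarrow U \times U$ is the diagonal. This is immediate from the factorization $\chi + \chi' = \operatorname{add} \circ (\chi \times \chi') \circ \Delta_U$, functoriality of $!$-pullback, and the multiplicativity of $e^z$ recalled above.

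Next I would use \eqref{keyiso} to implement the asserted equivalence of strong representations. Both $\operatorname{Vect}_\chi \otimes \operatorname{Vect}_{\chi'}$ and $\operatorname{Vect}_{\chi + \chi'}$ have underlying dg-category $\operatorname{Vect}$, so the candidate equivalence is the identity functor. The coaction on the tensor product is obtained by applying the product of the two individual coactions and then restricting the resulting $D(U \times U)$-action along $\Delta_U$, so that $V \in \operatorname{Vect}$ is sent to $V \otimes \Delta_U^!(e^\chi \boxtimes e^{\chi'})$; by \eqref{keyiso}, this matches the coaction of $\operatorname{Vect}_{\chi + \chi'}$. The coherence data for a strong representation express compatibility of the coaction with the multiplication on $U$, and their verification reduces, through the naturality of \eqref{keyiso}, to the associativity of addition on $\mathbb{G}_a$.

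For the duality, I would then observe that $\operatorname{Vect}_0$ is the trivial strong representation, as already recalled in the excerpt, and hence is the monoidal unit for the tensor product on $D(U)\operatorname{-mod}$. Specializing the first part to the pairs $(\chi, -\chi)$ and $(-\chi, \chi)$ produces canonical equivalences $\operatorname{Vect}_{\chi} \otimes \operatorname{Vect}_{-\chi} \simeq \operatorname{Vect}_0 \simeq \operatorname{Vect}_{-\chi} \otimes \operatorname{Vect}_\chi$ of $D(U)$-representations, which serve as unit and counit of a duality datum. The triangle identities then reduce to those of the tautological self-duality of $\operatorname{Vect}$, and are automatic. The only potentially delicate step is the coherence of \eqref{keyiso} across triples of characters, but this follows in a formal way from the associativity of $\mathbb{G}_a$ at the level of the character $D$-module $e^z$.
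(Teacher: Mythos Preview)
Your argument is correct and is essentially the same as the paper's, which records only the single line ``This follows from the isomorphism of character $D$-modules: $e^{\chi} \overset{!}{\otimes} e^{\chi'} \simeq e^{\chi + \chi'}$.'' Your identity \eqref{keyiso} is precisely this $!$-tensor isomorphism (since $\Delta_U^!(e^\chi \boxtimes e^{\chi'}) = e^\chi \overset{!}{\otimes} e^{\chi'}$), and the rest of your write-up simply unpacks what the paper leaves implicit.
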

\begin{proof} This follows from the isomorphism of character $D$-modules: $e^{\chi} \overset{!}{\otimes} e^{\chi'} \simeq e^{\chi + \chi'}$. \end{proof}

For a strong representation $\scc$ of $D(U)$ and an additive character $\chi$, let us write $\scc(\chi)$ for the representation $\scc \otimes \operatorname{Vect}_\chi$. With this, we have by Lemma \ref{dualtw}: $$\scc^{U, \chi} := \operatorname{Hom}_{D(U)\operatorname{-mod}}(\operatorname{Vect}_{\chi}, \scc) \simeq \operatorname{Hom}_{D(U)\operatorname{-mod}}(\operatorname{Vect}, \scc(-\chi) ) = \scc(-\chi)^U.$$In particular, as before there is an adjunction: $$\operatorname{Oblv}_\chi: \scc^{U, \chi} \leftrightarrows \scc: \operatorname{Av}_{\chi, *},$$wherein $\operatorname{Oblv}_\chi$ is fully faithful. Let us now record the interaction of dualizability with twisted invariants. 

\begin{pro} \label{charpairing} Let $\scc$ be a strong representation of $U$, and $\chi: U \rightarrow \mathbb{G}_a$ a character. Then if $\scc$ is dualizable, then the pairing: $$\scc^{U, \chi} \otimes (\scc^\vee)^{U, -\chi} \xrightarrow{\operatorname{Oblv} \otimes \operatorname{Oblv}} \scc \otimes \scc^\vee \rightarrow \operatorname{Vect}$$gives an equivalence $\scc^{U, \chi, \vee} \simeq (\scc^\vee)^{U, - \chi}.$With respect to this duality datum, we have equivalences of functors: $$\operatorname{Oblv}_\chi^\vee \simeq \operatorname{Av}_{-\chi, *} \quad \quad \operatorname{Av}_{\chi, *}^\vee \simeq \operatorname{Oblv}_{-\chi}.  $$

\end{pro}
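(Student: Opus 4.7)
The plan is to bootstrap Proposition \ref{charpairing} from the untwisted case, namely Propositions \ref{dualic} and \ref{dualfunc}, by absorbing the character into the representation via the twist $\scc \mapsto \scc(-\chi)$.

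First, I would recall the identification
$$\scc^{U,\chi} \simeq \scc(-\chi)^U,$$
which was noted just before the statement of the proposition using Lemma \ref{dualtw}. The key input is that twisting is compatible with duality of representations: since $\operatorname{Vect}_\chi \otimes \operatorname{Vect}_{-\chi} \simeq \operatorname{Vect}_0$ as strong $D(U)$-modules by Lemma \ref{dualtw}, the character representation $\operatorname{Vect}_\chi$ is invertible, hence its inverse $\operatorname{Vect}_{-\chi}$ also realizes its dual. Consequently, for any dualizable $D(U)$-module $\scc$, one has a canonical $D(U)$-equivariant equivalence $\scc(-\chi)^\vee \simeq \scc^\vee(\chi)$.

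Next, I would apply Proposition \ref{dualic} to the dualizable strong representation $\scc(-\chi)$, obtaining
$$\bigl(\scc(-\chi)^U\bigr)^\vee \simeq \bigl(\scc(-\chi)^\vee\bigr)^U \simeq \bigl(\scc^\vee(\chi)\bigr)^U \simeq (\scc^\vee)^{U,-\chi}.$$
Under these identifications, the pairing produced by Proposition \ref{dualic} is
$$\scc(-\chi)^U \otimes \scc^\vee(\chi)^U \xrightarrow{\operatorname{Oblv}\otimes \operatorname{Oblv}} \scc(-\chi) \otimes \scc^\vee(\chi) \to \operatorname{Vect},$$
where the last arrow is the composition of the canonical pairing $\scc \otimes \scc^\vee \to \operatorname{Vect}$ with the self-duality $\operatorname{Vect}_{-\chi} \otimes \operatorname{Vect}_\chi \simeq \operatorname{Vect}_0 \to \operatorname{Vect}$ of Lemma \ref{dualtw}. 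Unpacking the identification $\scc^{U,\chi} \simeq \scc(-\chi)^U$, whose forgetful functor $\operatorname{Oblv}_\chi$ is by construction the untwisted $\operatorname{Oblv}$ for $\scc(-\chi)$ composed with the underlying identity on $\scc$, this pairing is exactly the one asserted in \ref{charpairing}.

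For the duality of functors, I would likewise transport Proposition \ref{dualfunc} through the twist. Under $\scc^{U,\chi} \simeq \scc(-\chi)^U$, the functor $\operatorname{Oblv}_\chi$ corresponds to the untwisted $\operatorname{Oblv}: \scc(-\chi)^U \to \scc(-\chi)$, and $\operatorname{Av}_{\chi,*}$ to the untwisted $\operatorname{Av}_*: \scc(-\chi) \to \scc(-\chi)^U$ (the twist affects only the coaction, not the underlying dg-categories or these functors at the level of plain dg-categories). Proposition \ref{dualfunc} applied to $\scc(-\chi)$ gives $\operatorname{Oblv}^\vee \simeq \operatorname{Av}_*$ and $\operatorname{Av}_*^\vee \simeq \operatorname{Oblv}$, which, re-expressed on the twisted invariants of $\scc$ and of $\scc^\vee$, read $\operatorname{Oblv}_\chi^\vee \simeq \operatorname{Av}_{-\chi,*}$ and $\operatorname{Av}_{\chi,*}^\vee \simeq \operatorname{Oblv}_{-\chi}$.

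The only step that requires care is the identification of $\scc(-\chi)^\vee$ with $\scc^\vee(\chi)$ as strong $D(U)$-modules; this is the mild obstacle since it uses that $\operatorname{Vect}_\chi$ is an invertible object of $D(U)\operatorname{-mod}$ with inverse $\operatorname{Vect}_{-\chi}$, and that the duality pairing on $\scc \otimes \scc^\vee$ is $D(U)$-equivariant. Both are already supplied by Lemma \ref{dualtw} together with the definition of the dual representation via inversion on the group, so the proof is essentially formal once the twist is made explicit.
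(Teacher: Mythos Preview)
Your proposal is correct and follows essentially the same route as the paper: reduce to the untwisted case by writing $\scc^{U,\chi}\simeq \scc(-\chi)^U$, apply Proposition~\ref{dualic} to $\scc(-\chi)$, use Lemma~\ref{dualtw} to identify $\scc(-\chi)^\vee\simeq \scc^\vee(\chi)$, and then read off the functor dualities from Proposition~\ref{dualfunc}. The paper's own proof is precisely this chain of identifications, stated more tersely.
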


\begin{proof} We compute $$(\scc^{U, \chi})^\vee \simeq (\scc(-\chi)^U)^\vee \simeq ((\scc(-\chi))^\vee)^U \simeq (\scc^\vee(\chi))^U \simeq (\scc^{\vee})^{U, - \chi},$$where in the middle we used Proposition \ref{dualic}. The remaining assertions can be deduced from applying Propositions \ref{dualic} and \ref{dualfunc} to $\scc(-\chi)$. \end{proof}

\subsubsection{Dualizability and compact objects: generalities}
Let $\mathscr{D}$ be a cocomplete dg-category. Recall an object $d$ of $\mathscr{D}$ is compact if $\Hom(d, -): \mathscr{D} \rightarrow \operatorname{Vect}$ commutes with direct sums. 
Writing $\mathscr{D}^c$ for the subcategory of compact objects, we obtain a map $$\mathscr{D}^{c, op} \rightarrow \operatorname{Hom}(\mathscr{D}, \operatorname{Vect}). $$ Then we have:

\begin{pro} \label{cduals}For $d$ as above, $d^\vee$ is a compact object of $\operatorname{Hom}(\mathscr{D}, \operatorname{Vect})$. Moreover, if $\mathscr{D}$ is dualizable, then using the canonical identification: $$\operatorname{Hom}(\mathscr{D}, \operatorname{Vect}) \simeq \operatorname{Hom}(\operatorname{Vect}, \mathscr{D}^\vee) \simeq \mathscr{D}^\vee,$$the assignment $d \rightarrow d^\vee$ yields an equivalence: \begin{equation}\mathbb{D}: \mathscr{D}^{c, op} \simeq \mathscr{D}^{\vee, c}.\label{dc}\end{equation}
\end{pro}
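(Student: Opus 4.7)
The plan is to first verify that $d^\vee$ is a compact object of $\operatorname{Hom}(\mathscr{D}, \operatorname{Vect})$, and then upgrade this to the equivalence $\mathbb{D}$ using biduality of $\mathscr{D}$. Unpacking the map $\mathscr{D}^{c, op} \to \operatorname{Hom}(\mathscr{D}, \operatorname{Vect})$, the object $d^\vee$ is precisely the corepresented functor $\Hom_{\mathscr{D}}(d, -)$, which is continuous because $d$ is compact (in the stable presentable setting, preservation of direct sums automatically upgrades to preservation of all small colimits).

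For compactness, I would invoke the dg Yoneda lemma for corepresented continuous functors: for any $F \in \operatorname{Hom}(\mathscr{D}, \operatorname{Vect})$, there is a natural equivalence
\begin{equation*}
\Hom_{\operatorname{Hom}(\mathscr{D}, \operatorname{Vect})}(d^\vee, F) \simeq F(d).
\end{equation*}
Since colimits in the functor category are computed pointwise, the assignment $F \mapsto F(d)$ commutes with direct sums, and hence so does $\Hom(d^\vee, -)$. This establishes compactness of $d^\vee$.

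Under the additional dualizability hypothesis, the stated canonical identification $\operatorname{Hom}(\mathscr{D}, \operatorname{Vect}) \simeq \mathscr{D}^\vee$ yields the functor $\mathbb{D}: \mathscr{D}^{c, op} \to \mathscr{D}^{\vee, c}$. To produce an inverse, I would apply the same construction to $\mathscr{D}^\vee$, whose dual is canonically $\mathscr{D}$ by reflexivity of dualizable objects; this yields $\mathbb{D}': (\mathscr{D}^\vee)^{c, op} \to \mathscr{D}^c$. It remains to check that $\mathbb{D}' \circ \mathbb{D}^{op}$ and $\mathbb{D} \circ \mathbb{D}'^{op}$ are naturally equivalent to the respective identities. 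By the Yoneda formula above, for $d \in \mathscr{D}^c$ the double dual $(d^\vee)^\vee$ is the functor $F \mapsto F(d)$ on $\mathscr{D}^\vee$, which under the biduality $(\mathscr{D}^\vee)^\vee \simeq \mathscr{D}$ is precisely $d$; symmetrically on the other side.

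The main conceptual obstacle is establishing the dg Yoneda equivalence $\Hom(d^\vee, F) \simeq F(d)$ rigorously in the presentable stable $\infty$-categorical setting, along with checking that the canonical identification $\operatorname{Hom}(\mathscr{D}, \operatorname{Vect}) \simeq \mathscr{D}^\vee$ intertwines evaluation against $d$ with the counit pairing. Both facts are standard in the literature on $\operatorname{DGCat}_{cont}$ (e.g., Gaitsgory's notes), but require careful bookkeeping with the unit and counit of duality. Once these are in hand, the rest of the argument is a formal consequence of biduality.
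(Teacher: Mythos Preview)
Your proposal is correct and follows essentially the same route as the paper: both use the dg-Yoneda identification $\Hom(d^\vee, F) \simeq F(d)$ together with pointwise computation of colimits to get compactness, and then invoke involutivity/biduality of $\mathscr{D}$ to produce the inverse functor. The paper spends an extra paragraph justifying why one may work with honest dg-functors rather than quasi-functors when the target is $\operatorname{Vect}$ (citing a comparison result), which is exactly the technical point you flag as the ``main conceptual obstacle.''
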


\begin{proof} Let us write momentarily $\operatorname{Hom}^{naive}_{\operatorname{DGCat}}(\mathscr{D}, \operatorname{Vect})$ for the dg-category of {\em bona fide} dg-functors from $\mathscr{D}$ to $\operatorname{Vect}$ localized at quasi-isomorphisms. Let us similarly write\\ $\operatorname{Hom}^{naive}_{\operatorname{DGCat}_{cont}}(\mathscr{D}, \operatorname{Vect})$ for the full subcategory of continuous dg-functors. If we write $\Hom_{\operatorname{DGCat}}(\mathscr{D}, \operatorname{Vect})$ for the inner Hom between $\mathscr{D}$ and $\operatorname{Vect}$ in $\operatorname{DGCat}$, i.e. the dg-category of quasi-functors, then there is a natural map: \begin{equation} \label{ntor} \operatorname{Hom}^{naive}_{\operatorname{DGCat}}(\mathscr{D}, \operatorname{Vect}) \rightarrow \operatorname{Hom}_{\operatorname{DGCat}}(\mathscr{D}, \operatorname{Vect}).\end{equation}It is known that Equation \eqref{ntor} is an equivalence, cf. \cite{toto}. Recalling that Hom in $\operatorname{DGCat}_{cont}$ is the full subcategory of $\operatorname{Hom}_{\operatorname{DGCat}}(\mathscr{D}, \operatorname{Vect})$ consisting of continuous quasi-functors, it follows that Equation \eqref{ntor} induces an equivalence: \begin{equation} \operatorname{Hom}^{naive}_{\operatorname{DGCat}_{cont}}(\mathscr{D}, \operatorname{Vect}) \simeq \operatorname{Hom}(\mathscr{D}, \operatorname{Vect}).\label{ezhom} \end{equation}The upshot is that there is no distinction between quasi-functors and functors to $\operatorname{Vect}$, i.e. we may work naively. Recall by the 
dg-Yoneda lemma, we have for $d \in \mathscr{D}$ and $\xi \in \operatorname{Hom}^{naive}_{\operatorname{DGCat}}(\mathscr{D}, \operatorname{Vect})$, evaluation on $d$ yields an equivalence: $$\operatorname{Hom}(d^\vee, \xi) \xrightarrow{\sim} \langle d, \xi \rangle,$$where $\langle d, \xi \rangle \in \operatorname{Vect}$ denotes the evaluation of $\xi$ on $d$. It follows that any $d^\vee$ is a compact object of $\operatorname{Hom}_{\operatorname{DGCat}}^{naive}(\mathscr{D}, \operatorname{Vect})$, as sums of functors are computed `point-wise'. In particular, if $d$ is compact, then $d^\vee$ is a compact object of $\operatorname{Hom}(\mathscr{D}, \operatorname{Vect})$. Applying dg-Yoneda therefore yields a fully-faithful embedding $\mathscr{D}^{c, op} \rightarrow \mathscr{D}^{\vee, c}.$ The involutivity of duality provides a reverse map $\mathscr{D}^{\vee, c} \rightarrow \mathscr{D}^{c, op},$ and these are readily seen to be inverse equivalences. 
\end{proof}

\begin{re}This recovers the well known fact that if $\mathscr{D}$ is compactly generated, then $\mathscr{D}$ is dualizable with dual $\operatorname{Ind}(\mathscr{D}^{c, op})$, cf. \cite{gaitsroz}. However, we do not know a reference for \eqref{dc} in the non-compactly-generated case.  \end{re}

\subsubsection{Dualizability and compact objects: prounipotent case} Let us see how compact objects transform under the functors discussed in Section \ref{puc}. In particular, $U$ is still a quasicompact, prounipotent group, $\chi: U \rightarrow \mathbb{G}_a$ is a character, and $\scc$ is a strong representation of $U$. Let us begin with an orienting observation.

\begin{pro} An object $c$ of $\scc^{U, \chi}$ is compact if and only if $\operatorname{Oblv} c$ is. 
\label{cinu}

\end{pro}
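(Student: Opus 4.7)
The plan is to prove both directions of the equivalence by leveraging the adjunction $\operatorname{Oblv}_\chi \dashv \operatorname{Av}_{\chi,*}$ together with the fully faithfulness of $\operatorname{Oblv}_\chi$, which were recorded in the discussion preceding the proposition. The argument is formal and involves no group-theoretic input; the only facts we will use are that $\operatorname{Oblv}_\chi$ is continuous (being a left adjoint), that its right adjoint $\operatorname{Av}_{\chi,*}$ is continuous (stated in the definition of the adjunction of continuous functors), and that $\operatorname{Oblv}_\chi$ is fully faithful.

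For the forward direction, suppose $c \in \scc^{U,\chi}$ is compact. To show $\operatorname{Oblv}_\chi c$ is compact in $\scc$, I would test it against an arbitrary filtered colimit (equivalently, direct sum) $\operatorname{colim} d_i$ in $\scc$. Applying adjunction, the continuity of $\operatorname{Av}_{\chi,*}$, and the compactness of $c$ in turn gives
\[
\operatorname{Hom}_{\scc}(\operatorname{Oblv}_\chi c, \operatorname{colim} d_i) \simeq \operatorname{Hom}_{\scc^{U,\chi}}(c, \operatorname{Av}_{\chi,*}\operatorname{colim} d_i) \simeq \operatorname{colim}\operatorname{Hom}_{\scc^{U,\chi}}(c, \operatorname{Av}_{\chi,*} d_i) \simeq \operatorname{colim}\operatorname{Hom}_{\scc}(\operatorname{Oblv}_\chi c, d_i),
\]
which is the compactness of $\operatorname{Oblv}_\chi c$.

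For the reverse direction, suppose $\operatorname{Oblv}_\chi c$ is compact in $\scc$. Given a filtered diagram $\{c'_i\}$ in $\scc^{U,\chi}$, I would compute using fully faithfulness of $\operatorname{Oblv}_\chi$, its continuity, and the compactness of $\operatorname{Oblv}_\chi c$:
\[
\operatorname{Hom}_{\scc^{U,\chi}}(c, \operatorname{colim} c'_i) \simeq \operatorname{Hom}_{\scc}(\operatorname{Oblv}_\chi c, \operatorname{Oblv}_\chi \operatorname{colim} c'_i) \simeq \operatorname{Hom}_{\scc}(\operatorname{Oblv}_\chi c, \operatorname{colim} \operatorname{Oblv}_\chi c'_i) \simeq \operatorname{colim}\operatorname{Hom}_{\scc^{U,\chi}}(c, c'_i),
\]
establishing compactness of $c$.

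There is no substantial obstacle: both implications are immediate categorical consequences of the properties of the adjunction $(\operatorname{Oblv}_\chi, \operatorname{Av}_{\chi,*})$ already established. The only point worth flagging is that continuity of $\operatorname{Av}_{\chi,*}$ is essential for the forward direction — this is the nontrivial input, since in general averaging functors need not be continuous, but here it is part of the setup that the adjunction consists of continuous functors.
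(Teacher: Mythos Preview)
Your proof is correct and follows exactly the same approach as the paper: the forward direction uses continuity of $\operatorname{Av}_{\chi,*}$ and the reverse direction uses fully faithfulness of $\operatorname{Oblv}_\chi$. The paper's proof is simply a terser statement of the same two observations, without writing out the chains of isomorphisms.
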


\begin{proof} If $c$ is compact, then $\operatorname{Oblv} c$ is compact by the continuity of $\operatorname{Av}_{*, \psi}$. If $\operatorname{Oblv} c$ is compact, then $c$ is by the fully-faithfulness of $\operatorname{Oblv}$. \end{proof}

We will repeatedly use the following general observation to trace what duality does to compact objects.

\begin{pro} Suppose $\mathsf{F}: \mathscr{C} \rightarrow \mathscr{D}$ is a continuous functor between cocomplete dg-categories. If $\mathscr{C}$ and $\mathscr{D}$ are dualizable, and $\mathsf{F}$ admits a continuous right adjoint $\mathsf{G}$, then the following diagram commutes: 
\label{followduals}
$$\xymatrix{\scc^{c,op} \ar[r]^{\mathsf{F}} \ar[d]^{\mathbb{D}} & \mathscr{D}^{c,op} \ar[d]^{\mathbb{D}} \\ \scc^{\vee, c } \ar[r]^{\mathsf{G}^\vee} & \mathscr{D}^{\vee, c}.}$$

\end{pro}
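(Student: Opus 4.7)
The plan is to unwind $\mathbb{D}$ using the ``naive'' realization of $\scc^\vee$ as $\operatorname{Hom}(\scc, \operatorname{Vect})$ afforded by Equation \eqref{ezhom}, under which $\mathbb{D}(c)$ corresponds to the corepresented functor $\operatorname{Hom}_\scc(c, -)$. Given this, the dual of any continuous functor is computed by precomposition: for $\mathsf{G} : \mathscr{D} \to \scc$, the functor $\mathsf{G}^\vee : \scc^\vee \to \mathscr{D}^\vee$ sends $\phi$ to $\phi \circ \mathsf{G}$. So one needs to verify, in $\mathscr{D}^\vee$, the equivalence
\[
\mathsf{G}^\vee(\mathbb{D}(c)) \;\simeq\; \operatorname{Hom}_\scc(c, \mathsf{G}(-)) \;\simeq\; \operatorname{Hom}_\mathscr{D}(\mathsf{F}(c), -) \;\simeq\; \mathbb{D}(\mathsf{F}(c)),
\]
where the middle isomorphism is the $(\mathsf{F}, \mathsf{G})$ adjunction. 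This is the heart of the argument, and everything else is bookkeeping.

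First I would check that the top horizontal arrow $\mathsf{F} : \scc^{c, op} \to \mathscr{D}^{c, op}$ is well-defined, i.e.\ that $\mathsf{F}$ preserves compactness; this is immediate from the continuity of its right adjoint $\mathsf{G}$, since then $\operatorname{Hom}_\mathscr{D}(\mathsf{F}(c), -) \simeq \operatorname{Hom}_\scc(c, \mathsf{G}(-))$ commutes with direct sums. Second, I would record that via \eqref{ezhom} we may freely pass between quasi-functors and dg-functors to $\operatorname{Vect}$, so the assertion ``$\mathsf{G}^\vee$ is precomposition with $\mathsf{G}$'' is literally true at the level of dg-functors, not just up to a quasi-equivalence. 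Third, I would spell out the adjunction isomorphism above as an identification in $\operatorname{Hom}^{naive}_{\operatorname{DGCat}_{cont}}(\mathscr{D}, \operatorname{Vect})$, and transport back through \eqref{ezhom} to obtain the desired equivalence in $\mathscr{D}^{\vee, c}$.

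To make the ``$\mathsf{G}^\vee$ is precomposition'' statement rigorous, I would recall that the duality datum for $\scc$ is, in the naive model, evaluation $\scc \otimes \operatorname{Hom}(\scc, \operatorname{Vect}) \to \operatorname{Vect}$, $(x, \phi) \mapsto \phi(x)$, and similarly for $\mathscr{D}$. The dual of $\mathsf{G} : \mathscr{D} \to \scc$ is then characterized by $\phi(\mathsf{G}(d)) = (\mathsf{G}^\vee \phi)(d)$, which is precisely precomposition. Combined with dg-Yoneda as used in the proof of Proposition \ref{cduals}, this pins down $\mathsf{G}^\vee(\mathbb{D}(c))$ as $\operatorname{Hom}_\scc(c, \mathsf{G}(-))$ on the nose.

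No step here is a serious obstacle; the only mild subtlety is to invoke Equation \eqref{ezhom} in order to avoid worrying about whether the adjunction isomorphism — a priori a natural equivalence of $\operatorname{Vect}$-valued quasi-functors — really produces an equivalence in $\mathscr{D}^{\vee, c}$. Once that is settled, the diagram commutes by Yoneda plus the $(\mathsf{F}, \mathsf{G})$ adjunction, as outlined.
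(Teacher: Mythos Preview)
Your proposal is correct and follows essentially the same approach as the paper: both check that $\mathsf{F}$ preserves compactness via continuity of $\mathsf{G}$, and then verify commutativity by the chain $\mathsf{G}^\vee(\mathbb{D}c) \simeq \operatorname{Hom}_\scc(c,\mathsf{G}-) \simeq \operatorname{Hom}_\mathscr{D}(\mathsf{F}c,-) \simeq \mathbb{D}(\mathsf{F}c)$ using the adjunction. The paper phrases this more tersely via the pairing notation $\langle -,-\rangle$ rather than unpacking through the naive model and \eqref{ezhom}, but the underlying argument is identical.
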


\begin{proof} The continuity of $\mathsf{G}$ implies $\mathsf{F}$ preserves compactness. If $c \in \scc^c$, the asserted commutativity follows from: $$\langle \mathsf{G}^\vee c^\vee, - \rangle_\mathscr{D} \simeq \langle c^\vee, \mathsf{G} - \rangle_\mathscr{C} \simeq \Hom_\scc(c, \mathsf{G} - ) \simeq \Hom_\mathscr{D}(\mathsf{F}c, -) \simeq \langle (\mathsf{F}c)^\vee, - \rangle_\mathscr{D}.$$\end{proof}

Combining Propositions \ref{charpairing}, \ref{followduals}, we obtain 

\begin{cor} For a strong representation $\scc$ of $U$ and a character $\chi: U \rightarrow \mathbb{G}_a$, taking duals commutes with $\operatorname{Oblv}$, i.e. we have a commutative diagram: 

$$\xymatrix{\scc^{U, \chi, c,op} \ar[d]^{\mathbb{D}} \ar[r]^{\operatorname{Oblv}} & \scc^{c,op} \ar[d]^{\mathbb{D}} \\ \scc^{\vee, U, -\chi,c}  \ar[r]^{\operatorname{Oblv}}& \scc^{\vee, c}.}$$

\end{cor}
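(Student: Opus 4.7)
The plan is to deduce this as a direct formal consequence of the two preceding propositions by applying Proposition \ref{followduals} to $\mathsf{F} = \operatorname{Oblv}_\chi: \scc^{U, \chi} \to \scc$. First I would verify the hypotheses of Proposition \ref{followduals}: the category $\scc^{U,\chi}$ is dualizable with dual $(\scc^\vee)^{U,-\chi}$ by Proposition \ref{charpairing}, the target $\scc$ is dualizable by assumption, and $\operatorname{Oblv}_\chi$ admits the continuous right adjoint $\operatorname{Av}_{\chi,*}$. The continuity of $\operatorname{Av}_{\chi, *}$ also ensures that $\operatorname{Oblv}_\chi$ preserves compact objects, which is what allows the horizontal arrows on compact subcategories to make sense; alternatively, this follows directly from Proposition \ref{cinu}.

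Next, I would invoke Proposition \ref{followduals} to obtain the commutative square
$$\xymatrix{\scc^{U,\chi,c,op} \ar[d]_{\mathbb{D}} \ar[r]^{\operatorname{Oblv}_\chi} & \scc^{c,op} \ar[d]^{\mathbb{D}} \\ (\scc^{U,\chi})^{\vee,c} \ar[r]^{\operatorname{Av}_{\chi,*}^\vee} & \scc^{\vee,c}.}$$
To rewrite this in the form stated in the corollary, I would then substitute the two identifications supplied by Proposition \ref{charpairing}: namely the equivalence $(\scc^{U,\chi})^\vee \simeq (\scc^\vee)^{U,-\chi}$ and, under this equivalence, the identification $\operatorname{Av}_{\chi,*}^\vee \simeq \operatorname{Oblv}_{-\chi}$. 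Substituting these yields the diagram claimed in the corollary.

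There is no real obstacle here, since Propositions \ref{charpairing} and \ref{followduals} have done all the work; the only thing to be careful about is bookkeeping of the signs of the character and of the placement of $\operatorname{op}$, and to note that the duality datum used in Proposition \ref{followduals} is precisely the one fixed in Proposition \ref{charpairing}, so the two $\mathbb{D}$'s appearing in the vertical arrows are compatible. One minor point worth recording in the writeup is that the same argument applied to the right adjoint $\operatorname{Av}_{\chi,*}$ (whose right adjoint exists in an appropriate ind-completed sense, or by applying the argument to the dual situation) immediately yields the companion statement that $\mathbb{D}$ also intertwines $\operatorname{Av}_{\chi,*}$ with $\operatorname{Av}_{-\chi,*}$, should it be needed later.
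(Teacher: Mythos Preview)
Your proof is correct and follows exactly the approach the paper intends: it deduces the corollary by combining Propositions \ref{charpairing} and \ref{followduals}, applying the latter with $\mathsf{F} = \operatorname{Oblv}_\chi$ and $\mathsf{G} = \operatorname{Av}_{\chi,*}$, and then identifying $\operatorname{Av}_{\chi,*}^\vee \simeq \operatorname{Oblv}_{-\chi}$ via the former. The paper itself simply writes ``Combining Propositions \ref{charpairing}, \ref{followduals}, we obtain'' without further detail, so your writeup is a faithful expansion of that.
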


\subsection{Whittaker invariants of dual representations of loop groups} Having handled the prounipotent case, we can now describe in the interactions between dualizable representations of loop groups, their Whittaker models, and compact objects therein. 
\subsubsection{Dualizability}

Write $D_\kappa(LG)\operatorname{-mod}$ for the $(\infty, 2)$-category of strong level $\kappa$ representations of $LG$. If $\scc$ in $D_\kappa(LG)\operatorname{-mod}$ is a dualizable object of $\text{DGCat}_{cont}$, then $\scc^\vee$ naturally is a right module for $D_\kappa(LG)$, i.e. lies in $D_{-\kappa}(LG)\operatorname{-mod}$. Writing $D_\kappa(LG)\operatorname{-mod}^{\operatorname{dualizable}}$ for the full subcategory of such objects, we obtain a functor $$\mathbb{D}: D_\kappa(LG)\operatorname{-mod}^{\operatorname{dualizable}} \rightarrow D_{-\kappa}(LG)\operatorname{-mod}^{\operatorname{dualizable, op}}$$

Next let us explain why we did not consider twisted differential operators in the discussion for prounipotent groups. Let $\kappa'$ be a level corresponding to a central extension $\widetilde{LG}$ of $LG$, e.g. $\kappa_{Killing}$. Then on any ind-prounipotent subgroup $U$ of $LG$ the restricted central extension $\widetilde{U}$ uniquely splits. Then by ind-prounipotence it is split uniquely, yielding a canonical identification for any $\kappa$: \begin{equation} \label{rll} D_\kappa(U)\operatorname{-mod} \simeq D(U)\operatorname{-mod}.\end{equation}By construction, under these identifications dualizing commutes with restriction from $D_{\pm \kappa}(LG)$ to $D(U)$. 

In particular, the above discussion applies to the subgroups $LN$ and $I_n, n \geqslant 1,$ showing up in the adolescent Whittaker construction. As the Whittaker model involves multiple of these subgroups at once, let us note that for $U' \subseteq U$ ind-prounipotent subgroups of $LG$, by the uniqueness of splittings of $\widetilde{U'}, \widetilde{U}$, the following diagram commutes: $$\xymatrix{D_\kappa(LG)\operatorname{-mod} \ar[r]^{\operatorname{Res}} \ar[d]^{\operatorname{Res}} & D_\kappa(U)\operatorname{-mod} \ar[r]^{\eqref{rl}} & D(U)\operatorname{-mod} \ar[d]^{\operatorname{Res}} \\ D_\kappa(U')\operatorname{-mod} \ar[rr]^{\eqref{rl}} & & D(U')\operatorname{-mod}. }$$ 

We are now ready to state the interaction of duality and Whittaker models. 

\begin{theo}\label{wduals} Let $\scc$ be an object of $D_\kappa(LG)\operatorname{-mod}^{\operatorname{dualizable}}$ with dual representation $\scc^\vee$. Then for a nondegenerate character $\psi: N \rightarrow \mathbb{G}_a$, there is a canonical duality of Whittaker models $$(\scc_{LN, \psi})^\vee {\simeq} (\scc^\vee)_{LN, -\psi} $$such that for any $n \geqslant 1$ the following diagram commutes: \begin{equation}\xymatrix{\scc^{I_n, \psi} \otimes \scc^{\vee, I_n, -\psi} \ar[rrr]^{\operatorname{Oblv} \otimes \operatorname{Oblv} [2n\Delta]} \ar[d]_{\operatorname{ins} \otimes \operatorname{ins}} &&& \scc \otimes \scc^\vee  \ar[d] \\ \scc_{LN, \psi} \otimes \scc^\vee_{LN, -\psi } \ar[rrr] && & \operatorname{Vect}.}\label{wpair} \end{equation}

\end{theo}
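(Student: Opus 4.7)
The plan is to bootstrap the prounipotent Whittaker duality of Section \ref{puc} up to the level of loop groups by way of the adolescent Whittaker filtration, and then to verify the shift $[2n\Delta]$ in \eqref{wpair} by carefully tracking Raskin's equivalence between invariants and coinvariants. The construction is entirely formal once the finite-level dualities are in hand; the one subtle point is the cohomological shift.

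First, I would install the duality at each finite level. Since each $I_n$ is quasicompact and prounipotent, the canonical splitting \eqref{rll} lets us transport the twisted action to an untwisted one, so that Proposition \ref{charpairing} applies and produces canonical dualities
\[
(\scc^{I_n, \psi})^\vee \simeq (\scc^\vee)^{I_n, -\psi}
\]
whose pairings are realized as $\operatorname{Oblv} \otimes \operatorname{Oblv}$ followed by the given pairing $\scc \otimes \scc^\vee \to \operatorname{Vect}$. Using Propositions \ref{dualfunc} and \ref{followduals} componentwise for the $\operatorname{Oblv}$ and $\operatorname{Av}_{*, \pm \psi}$ factors making up the transition functors, the operation $i_{n,m *}$ of the adolescent tower for $\scc$ is exchanged, under these dualities, with the operation $i_{n,m}^!$ of the adolescent tower for $\scc^\vee$ at character $-\psi$.

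Second, I would pass to the limit. The presentation $\scc_{LN, \psi} \simeq \varinjlim_{i_{n,m *}} \scc^{I_n, \psi}$ exhibits it as a colimit in $\operatorname{DGCat}_{cont}$ of dualizable objects whose transition functors admit continuous right adjoints, so it is itself dualizable with
\[
(\scc_{LN, \psi})^\vee \simeq \varprojlim_{i_{n,m}^!} (\scc^\vee)^{I_n, -\psi} \simeq (\scc^\vee)^{LN, -\psi} \simeq (\scc^\vee)_{LN, -\psi},
\]
where the last isomorphism is Raskin's Theorem \ref{whitteq} applied to the dual representation. This defines the asserted duality of Whittaker models.

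Finally, for the compatibility \eqref{wpair}, I would note that the insertion $\operatorname{ins} \colon \scc^{I_n, \psi} \to \scc_{LN, \psi}$ is the structure map of the colimit, so pulling the constructed pairing back along $\operatorname{ins} \otimes \operatorname{ins}$ reduces to the finite-level pairing at $I_n$ built in the first step, which is shift-free. The shift $[2n\Delta]$ enters precisely because the above identification of $(\scc_{LN, \psi})^\vee$ with $(\scc^\vee)_{LN, -\psi}$ passes through Raskin's equivalence \eqref{ss}, whose normalization $\operatorname{id}[-2n\Delta]$ on the $n$-th term transposes across the duality to the overall shift $[2n\Delta]$ appearing in the diagram. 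I expect this bookkeeping of shifts to be the main obstacle: the identification $i_{n,m !} \simeq i_{n,m *}[2(m-n)\Delta]$ is applied once on $\scc$ (to express coinvariants as a colimit of $I_n$-invariants) and once on $\scc^\vee$ (to undo the limit obtained by dualization), and one must check that these two shifts do not cancel but rather accumulate to give exactly $[2n\Delta]$, with the correct sign compatible with the conventions used to define the Whittaker model for $-\psi$.
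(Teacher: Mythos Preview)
Your proposal is correct and follows essentially the same route as the paper. The one packaging difference is that the paper invokes a general lemma (from \cite{Gaitsge}) stating that for a filtered system of dualizable categories with fully faithful transitions $F_{n,m}$ admitting continuous right adjoints $G_{m,n}$, one has $(\varinjlim_{F_{n,m}} \scc_n)^\vee \simeq \varinjlim_{G_{m,n}^\vee} \scc_n^\vee$ together with the shift-free compatibility diagram; this absorbs your two steps ``dual of colimit is limit of duals'' and ``Raskin's limit $=$ colimit'' into one, and makes the bookkeeping of the $[2n\Delta]$ shift a single application of $\operatorname{id}[2n\Delta]$ at the end rather than the two-pass accounting you describe. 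The paper also takes a moment to rewrite $i_{n,m*}$ and $i_{n,m}^!$ as composites $\operatorname{Av}_{*,\pm\psi} \circ \operatorname{Oblv}$ factoring through $\scc$ itself (rather than through $\scc^{I_n \cap I_m,\psi}$), so that Proposition~\ref{charpairing} applies directly to identify them as dual functors; your ``componentwise'' remark is pointing at the same verification.
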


\begin{re} To remove the shift by $[2n\Delta]$ in \eqref{wpair} the reader may prefer to write instead $\scc^{LN, \psi, \vee} {\simeq}\scc^\vee_{LN, -\psi}$. We choose to work with coinvariants in view of $\cW_\kappa\operatorname{-mod}$. 

\end{re}

\begin{proof} Let us recall the adolescent Whittaker presentations of the Whittaker model: $$\scc_{LN, \psi} \simeq \varinjlim_{i_{n,m, *}} \scc^{I_n, \psi} \quad \quad \scc^{LN, \psi} \simeq \varprojlim_{i_{n,m}^!} \scc^{I_n, \psi}.$$We will obtain the desired result from the following more general lemma.

\begin{lemma} Consider a diagram of categories $\scc_i, i \geqslant 1,$ in $\operatorname{DGCat}_{cont}$: $$\scc_1 \xrightarrow{F_{2,1}} \scc_2 \xrightarrow{F_{3,2}} \scc_3  \xrightarrow{F_{4,3}} \cdots$$Suppose that each $\scc_i$ is dualizable, and that $F_{i+1, i}$ are fully faithful and admit continuous right adjoints $G_{i,i+1}$. Then there is a canonical perfect pairing: $$(\varinjlim_{F_{n,m}} \scc_n)^\vee {\simeq} \varinjlim_{G_{m,n}^\vee} \scc_n^\vee,$$such that that for each $n \geqslant 1$ the following diagram commutes: \begin{equation}\label{pair}\xymatrix{\scc_n \otimes \scc_n^\vee \ar[rd] \ar[d]_{\operatorname{ins}_n \otimes \operatorname{ins}_n} \\ \varinjlim \scc_n \otimes \varinjlim \scc_n^\vee \ar[r] & \operatorname{Vect},}\end{equation}where $\operatorname{ins_n}$ is the tautological map into the colimit. \end{lemma}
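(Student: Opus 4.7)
The plan is to realize the colimit along the $F_{n+1,n}$'s as a limit along the right adjoints $G_{n,n+1}$, dualize to pass to the dual side, and then recognize the resulting limit as a colimit along the left adjoints $G_{n,n+1}^\vee$ in the dualized system. The only non-formal input I will invoke is the standard fact in $\operatorname{DGCat}_{cont}$ that, for a sequential diagram whose transitions $F_{n+1,n}: \scc_n \to \scc_{n+1}$ admit continuous right adjoints $G_{n,n+1}$, the natural comparison
$$\varinjlim_{F_{n+1,n}} \scc_n \;\xrightarrow{\sim}\; \varprojlim_{G_{n,n+1}} \scc_n$$
is an equivalence under which $\operatorname{ins}_n$ corresponds to the left adjoint of the projection from the limit; and this equivalence persists after tensoring with any $\mathscr{E} \in \operatorname{DGCat}_{cont}$, since the Lurie tensor product preserves adjunctions and commutes with colimits (cf.\ \cite{gaitsroz}).

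Dualizing the original system, one obtains $F_{n+1,n}^\vee: \scc_{n+1}^\vee \to \scc_n^\vee$ and $G_{n,n+1}^\vee: \scc_n^\vee \to \scc_{n+1}^\vee$, with $G^\vee \dashv F^\vee$ by the standard behavior of adjunctions under duality in $\operatorname{DGCat}_{cont}$. The fully-faithfulness hypothesis $G \circ F \simeq \operatorname{id}$ then dualizes to $F^\vee \circ G^\vee \simeq \operatorname{id}$, so $G^\vee$ is itself fully faithful, and the dualized system satisfies the same hypotheses as the original (with the roles of $F$ and $G$ interchanged). For any test object $\mathscr{E}$, I then compute
$$\operatorname{Hom}_{\operatorname{DGCat}_{cont}}\!\big(\varinjlim_F \scc_n,\, \mathscr{E}\big) \;\simeq\; \varprojlim_{F^\vee \otimes \operatorname{id}_\mathscr{E}} (\scc_n^\vee \otimes \mathscr{E}) \;\simeq\; \varinjlim_{G^\vee \otimes \operatorname{id}_\mathscr{E}} (\scc_n^\vee \otimes \mathscr{E}) \;\simeq\; \big(\varinjlim_{G^\vee} \scc_n^\vee\big) \otimes \mathscr{E},$$
where the first step uses dualizability of each $\scc_n$ together with the formula for $\operatorname{Hom}$ out of a colimit, the second applies the colimit-equals-limit equivalence to the dualized system tensored with $\mathscr{E}$, and the third is cocontinuity of the Lurie tensor product. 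This exhibits $\varinjlim_F \scc_n$ as dualizable with dual $\varinjlim_{G^\vee} \scc_n^\vee$, yielding the asserted perfect pairing.

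It remains to identify this pairing with the one assembled from the evaluations $\mathrm{ev}_n: \scc_n \otimes \scc_n^\vee \to \operatorname{Vect}$, i.e.\ to verify the commutativity of \eqref{pair}. By construction, the pairing above is determined by its restrictions along $\operatorname{ins}_n \otimes \operatorname{ins}_n$, which unwind to the canonical evaluations; the compatibility needed to descend them through the (co)limits is the identity
$$\langle F_{n+1,n}\, c,\; G_{n,n+1}^\vee d\rangle_{\scc_{n+1}} \;=\; \langle c,\; F_{n+1,n}^\vee G_{n,n+1}^\vee d\rangle_{\scc_n} \;=\; \langle c,\, d\rangle_{\scc_n}, \qquad c \in \scc_n,\ d \in \scc_n^\vee,$$
which uses precisely $F^\vee G^\vee = (GF)^\vee = \operatorname{id}$. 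The hardest part conceptually is the colimit-equals-limit fact for right-adjointable diagrams in $\operatorname{DGCat}_{cont}$; once that is granted, the remainder is a bookkeeping exercise in adjunctions and duality.
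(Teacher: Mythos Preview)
Your argument is correct and is essentially an unpacking of the reference the paper cites: the paper's own proof is a one-line appeal to \cite{Gaitsge}, Lemma 2.2.2 (together with the simplification of Lemma 1.3.6 of \emph{loc.\ cit.} available when the $F_{n,m}$ are fully faithful), and what you have written is precisely the content of that argument---the colimit-equals-limit identification for right-adjointable diagrams, dualization of the adjunction to obtain $G^\vee \dashv F^\vee$ with $G^\vee$ fully faithful, and the resulting $\operatorname{Hom}$ computation. Your verification of the diagram \eqref{pair} via $F^\vee G^\vee \simeq \operatorname{id}$ is exactly the ``simplification'' the paper alludes to.
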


\begin{proof} This follows from \cite{Gaitsge} Lemma 2.2.2, where to obtain the diagram \eqref{pair} one should notice the simplification of Lemma 1.3.6 of {\em loc. cit} in the case where the $F_{n,m}$ are fully faithful.  \end{proof}

To apply the lemma, we recall that $i_{n,m, *}$ was the composition: $$i_{n,m, *}: \scc^{I_n, \psi} \xrightarrow{\operatorname{Oblv}} \scc^{I_n \cap I_m, \psi} \xrightarrow{\operatorname{Av}_{*, \psi}} \scc^{I_m, \psi}.$$The latter relative averaging is by definition the right adjoint to $\operatorname{Oblv}: \scc^{I_m, \psi} \rightarrow \scc^{I_m \cap I_n, \psi}.$We claim that it may be calculated as the composition: $\scc^{I_n \cap I_m, \psi} \xrightarrow{\operatorname{Oblv}} \scc \xrightarrow{\operatorname{Av}_{*, \psi}} \scc^{I_m, \psi}.$ Indeed, we note that the fully-faithfulness of the forgetful functors $\scc^{I_n, \psi} \rightarrow \scc, \scc^{I_m \cap I_n, \psi} \rightarrow \scc$ imply the fully-faithfulness of $\scc^{I_n, \psi} \xrightarrow{\operatorname{Oblv}} \scc^{I_n \cap I_m, \psi}$, from which we calculate for $c_{n,m} \in \scc^{I_n \cap I_m, \psi}$, $c_m \in \scc^{I_m, \psi}$: $$\Hom_{\scc^{I_n \cap I_m, \psi}} ( \operatorname{Oblv} c_m , c_{n,m}) \simeq \Hom_{\scc}(\operatorname{Oblv} c_m, \operatorname{Oblv} c_{n,m}) \simeq \Hom_{\scc^{I_m, \psi}}(c_m, \operatorname{Av}_{*, \psi} \circ \operatorname{Oblv} c_{n,m}). $$By a similar argument for $i_{n,m}^!$, we have rewritten the functors as: $$i_{n,m, *}: \scc^{I_n, \psi}\xrightarrow{\operatorname{Oblv}} \scc \xrightarrow{\operatorname{Av}_{*, \psi}} \scc^{I_m, \psi}, \quad \quad i_{n,m}^!: \scc^{\vee, I_m, -\psi} \xrightarrow{\operatorname{Oblv}} \scc^\vee \xrightarrow{\operatorname{Av}_{*, -\psi}} \scc^{\vee, I_n, -\psi}.$$By Proposition \ref{swapd}, it follows that $i_{n,m, *}$ and $i_{n,m}^!$ are dual functors.  

We are ready to apply the lemma. The $F_{n,m}$ are $i_{n,m, *}$, the $G_{m,n}$ are $i^!_{m,n}[2(m-n)\Delta]$, and hence by the preceding paragraph the $G^\vee_{m,n}$ are $i_{n,m, *}[2(m-n)\Delta] \simeq i_{n,m,!}$. To conclude, we use that $$\varinjlim_{i_{n,m, *}} \scc^{\vee, I_n, -\psi} \simeq \varinjlim_{i_{n,m,*}[2(m-n)\Delta]} \scc^{\vee, I_n, -\psi},$$where we take the colimit of the functors $\operatorname{id}[2n \Delta]: \scc^{\vee, I_n, -\psi} \rightarrow \scc^{\vee, I_n, -\psi}.$\end{proof}

\subsubsection{Dualizability and compact objects: Whittaker models}

We first explain the relation between compact objects of the adolescent and full Whittaker models by proving the following general proposition, which roughly says taking compact objects commutes with taking unions in $\operatorname{DGCat}_{cont}$. 

\begin{pro} \label{cunions}Let $I$ be a filtered $(\infty, 1)$-category and $\scc_i, i \in I,$ an $I$ system of objects in $\operatorname{DGCat}_{cont}$. Suppose for each 1-morphism $\alpha: i \rightarrow j$ in $I$ the corresponding functor $F_\alpha: \scc_i \rightarrow \scc_j$ is fully faithful and admits a continuous right adjoint $G_\alpha$. Then for each $i_0 \in I$, the tautological functor $\scc_{i_0} \rightarrow \varinjlim_{i \in I} \scc_i$ preserves compactness, and this induces an equivalence: \begin{equation} \label{cl} \varinjlim_{i \in I} \scc_i^c \xrightarrow{\sim} (\varinjlim_{i \in I} \scc_i)^c\end{equation}

\end{pro}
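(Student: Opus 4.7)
The plan is to exploit the standard identification of the filtered colimit $\scc := \varinjlim_i \scc_i$ in $\operatorname{DGCat}_{cont}$ with the limit $\varprojlim_i \scc_i$ taken along the right adjoints $G_\alpha$, which applies since every $F_\alpha$ admits a continuous right adjoint. Under this equivalence, each insertion $\operatorname{ins}_{i_0}: \scc_{i_0} \to \scc$ is the left adjoint to the projection $\operatorname{ev}_{i_0}: \varprojlim_j \scc_j \to \scc_{i_0}$, so $\operatorname{ins}_{i_0}$ comes equipped with a continuous right adjoint and in particular preserves compact objects. Moreover, $\operatorname{ins}_{i_0}$ is fully faithful: this follows from the standard Hom formula $\Hom_\scc(\operatorname{ins}_i x, \operatorname{ins}_j y) \simeq \varinjlim_{i \to k, j \to k} \Hom_{\scc_k}(F_{i \to k} x, F_{j \to k} y)$ for filtered colimits, together with the fully-faithfulness of each $F_\alpha$. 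Combining these, the tautological functor $\varinjlim_i \scc_i^c \to \scc^c$ is well-defined, and the same Hom formula (applied inside $\scc_k^c$ for $k$ past a common cocone) shows it is fully faithful.

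For essential surjectivity, let $c \in \scc^c$. Using the limit presentation of $\scc$ and the filteredness of $I$, one has a canonical colimit decomposition $c \simeq \varinjlim_{i \in I} \operatorname{ins}_i(\operatorname{ev}_i c)$, assembled from the counits of the adjunctions $\operatorname{ins}_i \dashv \operatorname{ev}_i$. Since $c$ is compact and the colimit is filtered, $\operatorname{id}_c$ factors through some counit $\operatorname{ins}_{i_0}(\operatorname{ev}_{i_0} c) \to c$, exhibiting $c$ as a retract of an object in the essential image of $\operatorname{ins}_{i_0}$. By fully-faithfulness of $\operatorname{ins}_{i_0}$, this retraction corresponds to an idempotent on $\operatorname{ev}_{i_0} c \in \scc_{i_0}$, and since any cocomplete dg-category is idempotent complete, there exists $c' \in \scc_{i_0}$ with $\operatorname{ins}_{i_0}(c') \simeq c$. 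Finally, a direct computation using continuity of $\operatorname{ins}_{i_0}$ together with fully-faithfulness, namely $\Hom_{\scc_{i_0}}(c', \varinjlim_n x_n) \simeq \Hom_\scc(c, \operatorname{ins}_{i_0} \varinjlim_n x_n) \simeq \varinjlim_n \Hom_\scc(c, \operatorname{ins}_{i_0} x_n) \simeq \varinjlim_n \Hom_{\scc_{i_0}}(c', x_n)$, shows $c'$ is compact in $\scc_{i_0}$, completing the essential surjectivity.

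The main point requiring care is the decomposition $c \simeq \varinjlim_i \operatorname{ins}_i(\operatorname{ev}_i c)$, which amounts to asserting that in the limit presentation, the identity functor is the filtered colimit of the composites $\operatorname{ins}_i \circ \operatorname{ev}_i$. This is a formal consequence of the filteredness of $I$ and the compatibility of left and right adjoints under the colimit--limit equivalence in $\operatorname{DGCat}_{cont}$, but unpacking it is the one step in the argument that relies on genuine $\infty$-categorical input rather than on adjunction formalism alone. Once this identification is in place, the remainder of the proof is just a combination of the standard retract argument for compact objects with the idempotent completeness of cocomplete dg-categories.
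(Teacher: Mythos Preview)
Your proof is correct and follows essentially the same approach as the paper's: both identify the colimit with the limit along right adjoints to obtain the adjunction $\operatorname{ins}_{i_0} \dashv \operatorname{ev}_{i_0}$, deduce full faithfulness and preservation of compactness for $\operatorname{ins}_{i_0}$, and then use the decomposition $\operatorname{id} \simeq \varinjlim_i \operatorname{ins}_i \circ \operatorname{ev}_i$ together with compactness to extract a retract at a finite stage, which by full faithfulness and idempotent completeness yields a compact preimage. The only cosmetic difference is that the paper cites the naive computation of filtered colimits in $\operatorname{DGCat}$ (via \cite{Roz}) for the Hom formula you invoke directly, and phrases the final compactness check more tersely.
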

\begin{re} To be clear, in the left hand side of Equation \eqref{cl} the colimit is taken in $\operatorname{DGCat}$, i.e. is a colimit of non-cocomplete dg-categories, and on the right hand side one is taking compact objects from a colimit in $\operatorname{DGCat}_{cont}$. 

\end{re}

\begin{proof} For fixed $\iota \in I$, we claim the insertion $\operatorname{ins}_\iota : \scc_\iota \rightarrow \varinjlim \scc_j$ is fully faithful and admits a continuous right adjoint. To see this, we will show that under the isomorphism $$ \varinjlim_{F_\alpha} \scc_i  \simeq \varprojlim_{G_\alpha} \scc_i,$$the desired right adjoint is given simply by the evaluation $\operatorname{ev}_\iota: \varprojlim \scc_i \rightarrow \scc_\iota$. Indeed, for an object of $\varprojlim \scc_i$, which concretely is realized as a homotopy coherent system $\varprojlim c_i$ of objects of $\scc_i, i \in I,$ and an object $c \in \scc_\iota$ we have: $$\operatorname{Hom}( \operatorname{ins}_\iota c, \varprojlim c_i) \simeq \varprojlim \operatorname{Hom}(\operatorname{ev}_i \circ \operatorname{ins}_\iota  c, c_i )$$By filteredness of $I$, we may without loss of generality run the above homotopy limit of complexes over indices $i$ admitting a map $\beta: \iota \rightarrow i$. Moreover, by the fully-faithfulness of the $F_\alpha$, for such $i$ and $\beta$, we have $\operatorname{ev}_i \circ \operatorname{ins}_\iota \simeq F_\beta c$, and hence: $$\varprojlim \operatorname{Hom}(\operatorname{ev}_i \circ \operatorname{ins}_\iota c, c_i) \simeq \varprojlim \operatorname{Hom}( F_\beta c, c_i) \simeq \varprojlim \operatorname{Hom}(c, G_\beta c_i) \simeq \operatorname{Hom}(c, \operatorname{ev}_\iota \varprojlim c_i).$$Applying the above discussion of pairs $i$ and $\beta$ to $\iota$ and $\operatorname{id}: \iota \rightarrow \iota$ shows that $\operatorname{ev}_\iota$ is fully faithful. 

Having shown that $\operatorname{ins}_\iota$ admits a continuous right adjoint, it follows that it preserves compactness, and hence we obtain the map \eqref{cl}. Recall that filtered colimits in $\operatorname{DGCat}$ can be computed very naively, with every object inserted from a step in the colimit and morphisms the colimit of stepwise morphisms, cf. \cite{Roz}. By the fully-faithfulness of the $F_\alpha$ and $\operatorname{ins}_\iota$ it follows that $\eqref{cl}$ is fully-faithful. To see that it is essentially surjective on homotopy categories, observe that the composites $\operatorname{ins}_\iota \circ \operatorname{ev}_\iota$, $\iota \in I$, assemble into a colimit as one varies $\iota$, in a manner compatible with the counits $\operatorname{ins}_\iota \circ \operatorname{ev}_\iota \rightarrow \operatorname{id}$. Moreover, the resulting map $\varinjlim_{\iota \in I} \operatorname{ins}_\iota \circ \operatorname{ev}_\iota \rightarrow \operatorname{id}$ is a natural isomorphism. Applying this to a compact object $c$ of $\varinjlim \scc_i$, by compactness the inverse map $c \rightarrow \operatorname{ins}_\iota \circ \operatorname{ev}_\iota c$ factors through $\operatorname{ins}_i \circ \operatorname{ev}_i c$ for some $i \in I$. Thus $c$ is a direct summand of $\operatorname{ins}_i \circ \operatorname{ev}_i c$, and by the fully-faithfulness of $\operatorname{ins}_i$ moreover of the form $\operatorname{ins}_i \tilde{c}$. Again by the fully-faithfulness of $\operatorname{ins}_i$, it follows that $\tilde{c}$ is compact, as desired. \end{proof}

With these preparations, we can state how duality acts on compact objects in Whittaker models. 

\begin{theo} \label{compactsinwhs}Let $\scc$, $\scc^\vee$, $\psi$, and $(\scc_{LN, \psi})^\vee {\simeq} (\scc^{\vee})_{LN, -\psi}$ be as in Theorem \ref{wduals}. On compact objects, Theorem \ref{wduals}, combined with  Propositions \ref{cduals}, \ref{cunions} yield:$$\varinjlim_{i_{n,m *}} \scc^{I_n, \psi, c, op} \simeq \scc_{LN, \psi}^{c, op} {\simeq} \scc^{\vee,c}_{LN, -\psi} \simeq \varinjlim_{i_{n,m *}} \scc^{\vee, I_n, -\psi, c}.$$If we write $\mathbb{D}_n: \scc^{I_n, \psi, c, op} \rightarrow \scc^{\vee, I_n, -\psi,c}$ for the equivalence induced by Proposition \ref{charpairing}, then the composite equivalence $$\mathbb{D}_\infty : \varinjlim_{i_{n,m *}} \scc^{I_n, \psi, c, op} {\simeq} \varinjlim_{i_{n,m *}} \scc^{\vee, I_n, -\psi, c}$$is given by the colimit of $[-2n 
\Delta] \circ \mathbb{D}_n: \scc^{I_n, \psi, c, op} \simeq \scc^{\vee, I_n, -\psi, c}.$\end{theo}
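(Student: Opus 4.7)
The plan is to trace a compact object $c \in \scc^{I_n, \psi, c}$ through the three equivalences composing $\mathbb{D}_\infty$ and identify its image. First I would verify that Proposition \ref{cunions} applies to both filtered systems $\{\scc^{I_n, \psi}\}$ and $\{\scc^{\vee, I_n, -\psi}\}$ with transition maps $i_{n,m*}$: these are fully faithful (using the relation $i_{n,m!} \simeq i_{n,m*}[2(m-n)\Delta]$ of \eqref{ss} and the fully-faithfulness of $i_{n,m!}$) and admit continuous right adjoints $i_{n,m}^!$. The proposition then delivers the outer equivalences, and moreover the proof of Proposition \ref{cunions} shows that each insertion $\operatorname{ins}_n: \scc^{I_n, \psi} \to \scc_{LN, \psi}$ is itself fully faithful with a continuous right adjoint.

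The heart of the argument is to identify $\mathbb{D}(\operatorname{ins}_n c) \in \scc^{\vee, c}_{LN, -\psi}$ produced by Proposition \ref{cduals} using the pairing of Theorem \ref{wduals}. I claim $\mathbb{D}(\operatorname{ins}_n c) \simeq \operatorname{ins}_n \bigl(\mathbb{D}_n(c)[-2n\Delta]\bigr)$. By the defining property of compact-object duality, it suffices to verify that for any $e \in \scc^{I_n, \psi, c}$,
\[
\langle \operatorname{ins}_n e,\, \operatorname{ins}_n \mathbb{D}_n(c)[-2n\Delta] \rangle_{\scc_{LN,\psi}} \simeq \Hom_{\scc_{LN,\psi}}(\operatorname{ins}_n c,\, \operatorname{ins}_n e).
\]
Applying the compatibility \eqref{wpair} to the left side, the shift $[-2n\Delta]$ built into the second argument and the compensating $[2n\Delta]$ from the compatibility diagram cancel, yielding $\langle \operatorname{Oblv} e,\, \operatorname{Oblv} \mathbb{D}_n(c) \rangle_{\scc}$. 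By Proposition \ref{charpairing} this equals $\langle e,\, \mathbb{D}_n(c) \rangle_{\scc^{I_n,\psi}}$, which by the defining property of $\mathbb{D}_n$ equals $\Hom_{\scc^{I_n,\psi}}(c, e)$. The right-hand side equals $\Hom_{\scc^{I_n,\psi}}(c, e)$ by the fully-faithfulness of $\operatorname{ins}_n$, matching both sides. Testing against compacts from arbitrary finite levels is automatic by the filteredness of the system and the fully-faithfulness of the insertions.

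Finally I would check that these stepwise identifications are compatible with the transition maps so as to assemble into a functor on the filtered colimit. By Proposition \ref{followduals} applied to $i_{n,m*}$ with continuous right adjoint $i_{n,m}^!$, we have $\mathbb{D}_m \circ i_{n,m*} \simeq (i_{n,m}^!)^\vee \circ \mathbb{D}_n$. The computation inside the proof of Theorem \ref{wduals} shows $(i_{n,m}^!)^\vee \simeq i_{n,m*}[2(m-n)\Delta]$, whence $\mathbb{D}_m(i_{n,m*} c) \simeq i_{n,m*} \mathbb{D}_n(c)[2(m-n)\Delta]$. This is precisely the coherence required for $[-2n\Delta] \circ \mathbb{D}_n$ to define a natural transformation between the filtered systems, and hence a functor between their colimits, completing the identification $\mathbb{D}_\infty \simeq \varinjlim \bigl([-2n\Delta] \circ \mathbb{D}_n\bigr)$.

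The main obstacle is the bookkeeping of cohomological shifts entering from two sources: the shift $[2n\Delta]$ in the pairing of \eqref{wpair} and the shift $[2(m-n)\Delta]$ in the relation \eqref{ss} between $i_{n,m!}$ and $i_{n,m*}$. These must conspire so that the renormalization by $[-2n\Delta]$ turns the naive stepwise dualities $\mathbb{D}_n$ into a coherent system realizing $\mathbb{D}_\infty$. No essentially new homotopy-theoretic input beyond the preceding propositions is required; the content is tracking that shifts cancel as predicted.
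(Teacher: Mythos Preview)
Your proposal is correct and follows essentially the same route as the paper: both compute $\mathbb{D}(\operatorname{ins}_n c) \simeq \operatorname{ins}_n(\mathbb{D}_n(c)[-2n\Delta])$ by testing against insertions of objects from level $n$, using the compatibility diagram \eqref{wpair} to cancel the $[2n\Delta]$ shift and then invoking the fully-faithfulness of $\operatorname{ins}_n$. Your additional paragraph checking coherence with the transition maps via Proposition \ref{followduals} is more explicit than the paper, which leaves this implicit in the phrase ``this follows from the construction of the pairings,'' but the substance is the same.
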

\begin{proof}This follows from the construction of the pairings in Theorem \ref{wduals}, Proposition \ref{cduals}, and Proposition \ref{cunions}. Nonetheless, let us provide some detail for the convenience of the reader. Let us write ${}_n\langle -, - \rangle$ for the pairing between $\scc^{I_n, \psi}$ and $\scc^{\vee, I_n, -\psi}$, and ${}_\infty \langle -, - \rangle$ for the pairing between $\scc_{LN, \psi}$ and $\scc^\vee_{LN, -\psi}$. Then by construction, for $\xi_n \in \scc^{\vee, I_n, - \psi}$ and $c_n \in \scc^{I_n , \psi}$, we have: $${}_\infty\langle \operatorname{ins} \xi_n, \operatorname{ins} c_n \rangle \simeq {}_n\langle \xi_n, c_n \rangle [2n \Delta].$$To get the correct shifts, for $c_n$ a compact object of $\scc^{I_n, \psi}$, and $d_n$ an arbitrary object of $\scc^{I_n, \psi}$ we compute: $${}_\infty \langle (\operatorname{ins} c_n)^\vee, \operatorname{ins} d_n \rangle \simeq \operatorname{Hom}(\operatorname{ins} c_n, \operatorname{ins} d_n) \simeq \operatorname{Hom}( c_n, d_n) $$$$\simeq {}_n \langle c_n^\vee, d_n \rangle \simeq {}_n \langle c_n^\vee[-2n \Delta], d_n \rangle[2n \Delta] \simeq  {}_\infty \langle \operatorname{ins}( c_n^\vee[-2n \Delta]), d_n \rangle.$$\end{proof}

\section{Categorical Feigin--Fuchs duality and semi-infinite cohomology for $\cW$-algebras} 
This section is structured as follows. We first prove that semi-infinite cohomology gives a duality for Tate Lie algebras. We then reverse this logic for $\cW$, i.e. we apply the above duality in the case of Kac--Moody and pass to Whittaker models to construct semi-infinite cohomology for $\cW$. We then sketch an example calculation of this semi-infinite cohomology anticipated by I. Frenkel and Styrkas.

\subsection{Feigin--Fuchs duality for Tate Lie algebras}
The goal of this subsection is Theorem \ref{tld}, which says that semi-infinite cohomology is a perfect pairing between Tate Lie algebra representations at complementary levels. For Kac--Moody this is a theorem of Gaitsgory--Arkhipov \cite{ag}. The proof of {\em loc. cit.} uses explicit kernel bimodules. As we explain, all that one needs is the Shapiro lemma. 

\subsubsection{Reminders on Tate Lie algebras} Some references for Tate Lie algebras are Sections 2.7.7, 3.8.17 of \cite{bd} and Appendix D of \cite{pos}.

Let $L$ be a Tate Lie algebra, and write $L\operatorname{-mod}^\heartsuit$ for the abelian category of smooth $L$ representations. By a central extension $L_k$ of $L$, we mean a Tate Lie algebra $L_k$ fitted into an exact sequence: $$0 \rightarrow \mathbb{C} \mathbf{1} \rightarrow L_k \rightarrow L \rightarrow 0,$$where $\mathbf{1} \in L_k$ is central. For a central extension $L_k$, write $L_k\operatorname{-mod}^\heartsuit$ for the full subcategory of its abelian category of discrete modules on which $\mathbf{1}$ acts as the identity. For any Tate Lie algebra $L$, one has its canonical Tate extension $L_{\operatorname{Tate}}$. For any compact open subalgebra $L_0 \subset L$, one has a canonical section $s_0: L_0 \rightarrow L_{\operatorname{Tate}}$;  these need not be compatible under inclusions of compact open subalgebras. 

For any compact open subalgebra $L_0 \subset L,$ one has the functor of semi-infinite cohomology: $$C^{\frac{\infty}{2} + *}(L_{-\operatorname{Tate}}, L_0, -): L_{-\operatorname{Tate}}\operatorname{-mod}^+ \rightarrow \operatorname{Vect}.$$For a concrete presentation, ostensibly written in the setting of a $\mathbb{Z}$-graded Lie algebra, see Section 2 of \cite{ffsi}. Let us remind how this mildly depends on $L_0$. Recall that for any two compact open subspaces $C_0, C_1$ of a Tate vector space $V$, one can form their relative determinant $\det(C_0, C_1)$. If we choose a compact open subspace $C_2$ containing $C_0$ and $C_1$, then there is a canonical isomorphism: $$\det(C_0, C_1) \simeq \det( C_2/C_0 ) \otimes \det( C_2/C_1)^\vee.$$For two compact open subalgebras $L_0, L_1$, one has isomorphisms $$C^{\frac{\infty}{2} + *}(L_{-\operatorname{Tate}}, L_0, -) \simeq C^{\frac{\infty}{2} + *}(L_{-\operatorname{Tate}}, L_1, -) \otimes \det(L_0, L_1),$$where $\det(L_0, L_1)$ is graded by viewing $L$ as a graded vector space of degree -1. 

The canonical section $s_0: L_0 \rightarrow L_{\operatorname{Tate}}$ induces a section $L_0 \rightarrow L_{-\operatorname{Tate}}$, hence one can induce representations of $L_0$ to $L_{-\operatorname{Tate}}$. We will need the following standard:

\begin{pro} (Shapiro lemma) For any $M \in L_0\operatorname{-mod}^\heartsuit$, there is a canonical equivalence \begin{equation}
C^{\frac{\infty}{2} + *}(L_{-\operatorname{Tate}}, L_0, \operatorname{ind}_{L_0}^{L_{-\operatorname{Tate}}} M ) \simeq C^*(L_0, M).    
\end{equation} \label{shap}\end{pro}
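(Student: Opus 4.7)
The plan is to prove this by an explicit computation with the semi-infinite Chevalley complex, imitating the Koszul proof of the classical Shapiro lemma for discrete Lie algebras; the content of the statement is that the Tate-style completions do not obstruct this strategy.

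First, I would choose a discrete complement $L_1$ to $L_0$ in $L$, so that as a topological vector space $L \simeq L_0 \oplus L_1$ with $L_0$ compact open and $L_1$ discrete. Lifting via the canonical section $s_0 \colon L_0 \to \lt$ gives a standard Clifford realization of the semi-infinite wedge space, and for a smooth $\lt$-module $N$ the complex computing $\csi(\lt, L_0, N)$ has underlying (topological) vector space
$$ N \otimes \Lambda^\bullet(L_0^*) \otimes \Lambda^\bullet(L_1), $$
equipped with a differential $d = d_{\operatorname{CE}} + d_1$, where $d_{\operatorname{CE}}$ is the ordinary Chevalley--Eilenberg differential for the $L_0$-action on $N \otimes \Lambda^\bullet(L_0^*)$ and $d_1$ is built from the action of $L_1$ on $N$ together with the bracket $[L_1, L_1] \subset L$; the explicit formulas may be extracted from Section 2 of \cite{ffsi}.

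Second, I would substitute $N = \operatorname{ind}_{L_0}^{\lt} M$. By the topological PBW theorem, the restriction of this induced module to $L_0$ carries a canonical increasing filtration with associated graded $M \otimes \Sym^\bullet(L_1)$, on which $L_0$ acts diagonally via its adjoint action on $L_1$. The induced filtration on the semi-infinite complex is compatible with $d$, and the corresponding $E_1$-page is the complex
$$ M \otimes \Sym^\bullet(L_1) \otimes \Lambda^\bullet(L_0^*) \otimes \Lambda^\bullet(L_1), $$
in which the surviving $L_1$-piece of the differential acts purely as the Koszul contraction on the factor $\Sym^\bullet(L_1) \otimes \Lambda^\bullet(L_1)$, while $d_{\operatorname{CE}}$ continues to act on $M \otimes \Lambda^\bullet(L_0^*)$. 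Since the Koszul complex for the discrete vector space $L_1$ is quasi-isomorphic to $\mathbb{C}$ concentrated in degree zero, the $E_1$-page collapses onto $M \otimes \Lambda^\bullet(L_0^*)$ with $d_{\operatorname{CE}}$, which by definition is the Chevalley--Eilenberg complex computing $C^*(L_0, M)$. Independence of the auxiliary choice of $L_1$ follows from the relative-determinant naturality recalled immediately before the proposition.

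The main obstacle is verifying that the splitting $d = d_{\operatorname{CE}} + d_1$, the filtration above, and the Koszul cancellation all remain well-defined once the Tate completions entering the definition of the semi-infinite wedge are taken into account, and that the associated spectral sequence converges. The crucial input is smoothness of $M$, hence of $\operatorname{ind}_{L_0}^{\lt} M$: it guarantees that the a priori infinite sums appearing in $d_1$ act by finite sums on any fixed vector, and that the filtration is exhaustive and bounded below on each $L_0$-isotypic component, which is exactly what one needs for the spectral sequence to converge to the semi-infinite cohomology of the induced module.
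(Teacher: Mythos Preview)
Your proposal is correct and follows essentially the same route as the paper: filter the induced module by PBW degree, pass to the associated spectral sequence, and observe that the $L_1$ (i.e.\ $L/L_0$) piece of the differential degenerates to a Koszul complex for $\operatorname{Sym}(L/L_0)$, leaving the Chevalley--Eilenberg complex $C^*(L_0,M)$. The only cosmetic difference is that the paper first exhibits $C^*(L_0,M)$ as the subcomplex $Sp^{L_0^\perp}\otimes M$ and then uses the spectral sequence to show the inclusion is a quasi-isomorphism, whereas you run the spectral sequence directly; your remarks on convergence are a welcome addition that the paper leaves implicit.
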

\begin{proof}Since we are unaware of a reference containing a proof, we sketch one. Write $L^*$ for the continuous dual of $L$, and $Cl(L[1] \oplus L^*[-1])$ for the graded topological Clifford algebra generated by $L[1] \oplus L^*[-1]$ with its tautological pairing. This has a unique, up to tensoring by a graded line, simple discrete graded module. For a choice of open Lagrangian $U \subset L[1] \oplus L^*[-1]$, one can realize this module as $\operatorname{ind}_{\operatorname{Sym} U}^{Cl(L[1] \oplus L^*[-1)]} \mathbb{C}$, where $\mathbb{C}$ is the unique simple representation of the topological exterior algebra $\operatorname{Sym} U$. Let us write $Sp$ for its realization associated to the Lagrangian $L_0 \oplus L_0^\perp$. Writing $Sp^{L_0^\perp}$ for the subspace killed by $L_0^\perp$, we have: \begin{equation}Sp^{L_0^\perp} = \operatorname{Sym} (L^*[-1]) \mathbb{C} = \operatorname{Sym}(L^*/L_0^\perp[-1]) \mathbb{C} = \operatorname{Sym}(L_0^*[-1]) \mathbb{C}.\label{chains}\end{equation}

Recall the underlying vector space of $C^{\frac{\infty}{2} + *}(L_{-\operatorname{Tate}}, L_0, -)$ is given by $Sp \otimes -$. Equation \eqref{chains} identifies $Sp^{L_0^\perp} \otimes M$ with $C^*(L_0, M)$, and this is compatible with differentials. 

It remains to show this inclusion is a quasi-isomorphism. To do so, filter $\operatorname{ind} M := \operatorname{ind}_{L_0}^{L_{-\operatorname{Tate}}} M$ by $$F^i \operatorname{ind} M := (F^i U(L_{-\operatorname{Tate}})) M, \quad \quad i \geqslant 0,$$where $U(L_{-\operatorname{Tate}})$ denotes the universal enveloping algebra, and  $F^iU(L_{-\operatorname{Tate}})$ is the $i^{th}$ step in its PBW filtration. Writing $Sp^j, j \in \mathbb{Z}$, for the $j^{th}$ graded component of $Sp$, filter $Sp \otimes \operatorname{ind} M$ by $$F^i Sp \otimes \operatorname{ind }M := \bigoplus_{-j + k = i} Sp^j \otimes F^k \operatorname{ind} M.$$This is a filtration by subcomplexes. In the associated spectral sequence for its cohomology, on $E_1$ one encounters a family of Koszul complexes for $\operatorname{Sym} L/L_0$, and the $E_2$ page has the desired form. 
\end{proof}

\subsubsection{Renormalized derived categories}

Recall that $L$ is a topological Lie algebra with central extension $L_k$. Since the projection $L_k \rightarrow L$ is open, one has:

\begin{lemma} Write $\mathbf{C}_L$ for the category whose objects are compact open subalgebras $C$ of $L$, and whose morphisms are inclusions. Write $\mathbf{C}_{L_k}$ for the category whose objects are compact open subalgebras $C_k$ of $L_k$ containing $\mathbf{1}$, and whose morphisms are inclusions. Then pullback defines an equivalence $\mathbf{C}_L \simeq \mathbf{C}_{L_k}$.\label{cos} \end{lemma}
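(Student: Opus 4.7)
The plan is to construct an explicit inverse functor and verify mutual inverseness on the nose, with the two main checks being well-definedness of the two functors on objects.

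Let $p\colon L_k \to L$ denote the projection, which by hypothesis is continuous, surjective, and open. The pullback functor $\Phi\colon \mathbf{C}_L \to \mathbf{C}_{L_k}$ sends $C$ to $p^{-1}(C)$, and the candidate inverse $\Psi\colon \mathbf{C}_{L_k} \to \mathbf{C}_L$ sends $C_k$ to $p(C_k)$. Both are manifestly functorial in inclusions, so once the objects are handled, the rest is formal.

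For $\Phi$ to land in $\mathbf{C}_{L_k}$, I would check four things about $p^{-1}(C)$: it contains $\mathbf{1}$ tautologically; it is a subalgebra because $\mathbf{1}$ is central (so the preimage of any subalgebra is closed under the bracket); it is open by continuity of $p$; and it is linearly compact because it sits in the short exact sequence
\begin{equation*}
0 \to \mathbb{C}\mathbf{1} \to p^{-1}(C) \to C \to 0
\end{equation*}
in which both outer terms are linearly compact, and the class of linearly compact topological vector spaces is closed under such extensions (equivalently, dualizing gives an extension of discrete spaces, which is discrete). Dually, for $\Psi$ to land in $\mathbf{C}_L$, I would note that $p(C_k)$ is a subalgebra since $p$ is a continuous Lie algebra map; it is open because $p$ is open; and since $\mathbf{1} \in C_k$ forces $\mathbb{C}\mathbf{1} \subset C_k$, one has $p(C_k) \cong C_k/\mathbb{C}\mathbf{1}$, which is linearly compact as a quotient of a linearly compact space by a closed subspace.

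Finally I would check the two compositions: $\Psi\circ\Phi(C) = p(p^{-1}(C)) = C$ by surjectivity of $p$, and $\Phi\circ\Psi(C_k) = p^{-1}(p(C_k)) = C_k + \ker p = C_k + \mathbb{C}\mathbf{1} = C_k$ using $\mathbf{1}\in C_k$. These are strict equalities, so $\Phi$ and $\Psi$ are inverse isomorphisms of categories, yielding the desired equivalence.

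The only mildly substantive point is the stability of linear compactness under extensions used in verifying that $\Phi$ is well-defined; everything else is bookkeeping about the central extension and the explicit open surjectivity of $p$. I do not expect any real obstacle.
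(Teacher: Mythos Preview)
Your proof is correct. The paper does not actually give a proof of this lemma; it simply prefaces the statement with ``Since the projection $L_k \to L$ is open, one has:'' and leaves the verification to the reader. Your argument is precisely the intended elaboration, using openness of $p$ for the topology and the one-dimensional kernel for compactness in both directions.
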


Write $L_k\operatorname{-mod}^n$ ($n$ for naive) for the usual unbounded derived category of $L_k\operatorname{-mod}^\heartsuit$. Within it, consider the objects of the form $\operatorname{ind}_{C_k}^{L_k} V$, where $C_k \in \mathbf{C}_{L_k}$ and $V \in C_k \operatorname{-mod}^\heartsuit$ is a finite dimensional module. Consider the pretriangulated envelope $C$ of these objects within $L_k\operatorname{-mod}^n$,\footnote{We could equivalently work with $V \in C_k\operatorname{-mod}^n$ with finitely many nonzero cohomology groups, each smooth and finitely generated, or just $\operatorname{ind}_{C_k}^{L_k} \mathbb{C}$, the trivial representations.} and set $L_k\operatorname{-mod} := \operatorname{Ind}(C)$. We have a tautological functor: $$\Psi: \operatorname{Ind}(C) \rightarrow L_k\operatorname{-mod}.$$

\begin{pro} \label{basl}$\Psi$ admits a (typically discontinuous) fully faithful right adjoint $$\Psi: L_k\operatorname{-mod} \leftrightarrows L_k\operatorname{-mod}^n: \Phi.$$Moreover, there is a unique $t$-structure on $L_k\operatorname{-mod}$ with $L_k\operatorname{-mod}^{\geqslant 0} = \Phi  L_k\operatorname{-mod}^{n, \geqslant 0}$. In particular, $\Phi$ and $\Psi$ define inverse equivalences between the bounded below subcategories. 
\end{pro}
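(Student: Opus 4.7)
The plan has two stages: first, construct $\Phi$ as a right adjoint and show it is fully faithful; second, build the promised $t$-structure and prove the bounded-below equivalence.

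The functor $\Psi$ is continuous by construction, being the unique cocontinuous extension of the embedding $C \hookrightarrow L_k\operatorname{-mod}^n$; since both $\operatorname{Ind}(C)$ and $L_k\operatorname{-mod}^n$ are presentable dg-categories, the adjoint functor theorem produces a right adjoint $\Phi$. Fully-faithfulness of $\Phi$ is equivalent to the counit $\Psi\Phi \to \operatorname{id}$ being an equivalence, i.e.\ to the assertion that every $M \in L_k\operatorname{-mod}^n$ is the colimit of the arrows $c \to M$ with $c \in C$. On the heart $L_k\operatorname{-mod}^\heartsuit$ this is straightforward: every smooth module is the filtered union of its finitely generated submodules, and each such submodule is a quotient of some $\operatorname{ind}_{C_k}^{L_k} V$ with $V$ finite-dimensional. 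Passing to the full unbounded derived category via Postnikov towers adapted to the standard $t$-structure on $L_k\operatorname{-mod}^n$ then upgrades this to generation under colimits.

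For the $t$-structure, I would set $L_k\operatorname{-mod}^{\geqslant 0} := \Phi(L_k\operatorname{-mod}^{n, \geqslant 0})$ and let $L_k\operatorname{-mod}^{\leqslant -1}$ be its left orthogonal. Lurie's criterion for $t$-structures applies once one observes that the adjunction identity $\operatorname{Hom}_{L_k\operatorname{-mod}}(c, \Phi M) = \operatorname{Hom}_{L_k\operatorname{-mod}^n}(\Psi c, M)$ for $c \in C$ reduces the orthogonality and generation requirements to the known $t$-structure on $L_k\operatorname{-mod}^n$; uniqueness follows from the full-faithfulness of $\Phi$. The counit $\Psi\Phi \simeq \operatorname{id}$ is already in hand from the first stage, so what remains for the bounded-below claim is to show that the unit $\operatorname{id} \to \Phi\Psi$ is an equivalence on $L_k\operatorname{-mod}^+$, which is equivalent to $\Psi$ being fully faithful on $L_k\operatorname{-mod}^+$. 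To see this, one checks that the generators $c = \operatorname{ind}_{C_k}^{L_k} V \in C$ are compact when tested against bounded-below objects: Shapiro's lemma (Proposition \ref{shap}) identifies $\operatorname{Hom}_{L_k\operatorname{-mod}^n}(c, M)$ with the continuous $C_k$-cohomology of $V^* \otimes M$, and for $V$ finite-dimensional this commutes with filtered colimits of bounded-below smooth modules. Combined with the $t$-exactness of $\Psi$ and the equivalence of hearts established in the first stage, this yields full-faithfulness of $\Psi$ on $L_k\operatorname{-mod}^+$, hence the desired equivalence.

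\textbf{Main obstacle.} The real subtlety lies in the unbounded asymmetry between $L_k\operatorname{-mod}$ and $L_k\operatorname{-mod}^n$: objects of $C$ are generally not compact in $L_k\operatorname{-mod}^n$, so ind-systems in $C$ with vanishing colimit in $L_k\operatorname{-mod}^n$ can remain nonzero in $\operatorname{Ind}(C)$, and $\Psi$ is typically not fully faithful on all of $L_k\operatorname{-mod}$. The bounded-below hypothesis rescues the situation through the Shapiro-based compactness sketched above, but carefully organizing the colimit-exchange argument so that it genuinely uses boundedness, rather than accidentally claiming too much in the unbounded setting, is the most delicate point of the proof.
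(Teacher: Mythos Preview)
Your argument for the full faithfulness of $\Phi$ has a real gap. You correctly identify that what must be shown is that the counit $\Psi\Phi M \to M$ is an equivalence, i.e.\ that every $M$ equals the \emph{canonical} colimit over $C\downarrow M$. But what you actually argue is the weaker assertion that $C$ generates $L_k\operatorname{-mod}^n$ under colimits: that $M$ can be written as \emph{some} colimit built from objects of $C$. These are not the same. The subcategory of $M$ for which the counit is an isomorphism is not obviously closed under colimits, precisely because $\Phi$ is discontinuous; so neither your heart-level argument nor the Postnikov extension propagates to the unbounded case. (Your own ``main obstacle'' paragraph essentially diagnoses this: objects of $C$ are not compact in $L_k\operatorname{-mod}^n$, so $\operatorname{Hom}_{L_k\operatorname{-mod}^n}(c,\Psi\Phi M)$ cannot be pulled inside the defining filtered colimit of $\Phi M$.)

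The paper circumvents this by proving, using homotopy injective resolutions, the identity
\[
\varinjlim_{C_k}\operatorname{Hom}(\operatorname{ind}_{C_k}^{L_k}\mathbb{C},\,-\,)\ \simeq\ \operatorname{Oblv}(-)
\]
on all of $L_k\operatorname{-mod}^n$, not just on bounded-below objects. This is strictly stronger than generation: it says the conservative forgetful functor is a filtered colimit of functors corepresented in $C$. From this one checks that $\operatorname{Oblv}$ applied to the counit is an isomorphism (both source and target compute $\operatorname{Oblv}(M)$), and conservativity of $\operatorname{Oblv}$ finishes the job. The remaining claims about the $t$-structure and the bounded-below equivalence then follow the pattern of Frenkel--Gaitsgory, close to what you outline in your second stage. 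A small side remark: the adjunction $\operatorname{Hom}_{L_k}(\operatorname{ind}_{C_k}^{L_k}V,M)\simeq \operatorname{Hom}_{C_k}(V,M)$ you invoke is ordinary Frobenius reciprocity, not the semi-infinite Shapiro lemma of Proposition~\ref{shap}.
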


\begin{proof}Let us explain the fully faithfulness of $\Phi$. Since $L_k\operatorname{-mod}^\heartsuit$ is a Grothendieck abelian category, it follows that $L_k\operatorname{-mod}^n$ has enough homotopy injective complexes, cf. \cite{ser}. Writing $\operatorname{Oblv}: L_k\operatorname{-mod}^n \rightarrow \operatorname{Vect}$ for the forgetful functor, and letting $C_k$ run over the objects of $\mathbf{C}_{L_k}$, cf. Lemma \ref{cos},  it follows that $$\varinjlim \operatorname{Hom}( \operatorname{ind}_{C_k}^{L_k} \mathbb{C}, -) \simeq \operatorname{Oblv}(-).$$The remainder of the argument for fully faithfulness, and those for the $t$-structures, are now identical to those in Section 22 of \cite{fg}. \end{proof}

\subsubsection{Semi-infinite cohomology and duality}

For a central extension $L_k$, tensor product of representations gives a map $$L_k\operatorname{-mod} \otimes L_{-k - \operatorname{Tate}} \rightarrow L_{-\operatorname{Tate}}\operatorname{-mod}.$$Namely, one defines the pairing on our compact generators to be tensor product, and then ind extends. Similarly, recalling that $L_0$ denotes a compact open subalgebra of $L$, one obtains a map $$C^{\frac{\infty}{2} + *}(L_{-\operatorname{Tate}}, L_0, -): L_{-\operatorname{Tate}}\operatorname{-mod}  \rightarrow \operatorname{Vect}.$$We are now ready to prove the main result of this section. 
\begin{theo}The composite pairing \begin{equation}C^{\frac{\infty}{2} + *}(L_{-\operatorname{Tate}}, L_0, - \otimes - ): \operatorname{}L_k\operatorname{-mod} \otimes L_{-k-\operatorname{Tate}} \rightarrow \operatorname{Vect}\label{pinf}\end{equation}is perfect, i.e. is a duality datum in $\operatorname{DGCat}_{cont}$. 
\label{tld}
\end{theo}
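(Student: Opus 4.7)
The plan is to show that \eqref{pinf} is a duality datum by checking the induced functor on compact generators is an equivalence, with Shapiro's lemma (Proposition~\ref{shap}) doing essentially all the work; this avoids the kernel-bimodule approach of Arkhipov--Gaitsgory. By construction, both $L_k\operatorname{-mod}$ and $L_{-k-\operatorname{Tate}}\operatorname{-mod}$ are compactly generated---namely, as the ind-completion of a pretriangulated envelope of induced modules $\operatorname{ind}_C^{L_k}V$ with $V$ finite dimensional---hence dualizable in $\operatorname{DGCat}_{cont}$. It therefore suffices to verify that the contravariant functor
$$\Phi: L_k\operatorname{-mod}^{c,op} \to L_{-k-\operatorname{Tate}}\operatorname{-mod}^c,$$
characterized by $\operatorname{Hom}(\Phi X, -) \simeq C^{\frac{\infty}{2}+*}(L_{-\operatorname{Tate}}, L_0, X\otimes -)$, is well-defined and an equivalence.

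The main calculation computes $\Phi$ on generators. Take the reference compact open subalgebra for $C^{\frac{\infty}{2}+*}$ to coincide with the $L_0$ used to parametrize the generator. For $V$ a finite-dimensional $L_0$-representation and arbitrary $W\in L_{-k-\operatorname{Tate}}\operatorname{-mod}$, I would establish the projection formula
$$\operatorname{ind}_{L_0}^{L_k}V \otimes W \simeq \operatorname{ind}_{L_0}^{L_{-\operatorname{Tate}}}(V\otimes \operatorname{Res}_{L_0}W)$$
in $L_{-\operatorname{Tate}}\operatorname{-mod}$, using that the central extension on $L_k\oplus L_{-k-\operatorname{Tate}} \twoheadrightarrow L_{-\operatorname{Tate}}$ splits canonically on restriction to $L_0$. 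Combining this with Shapiro's lemma yields
$$C^{\frac{\infty}{2}+*}(L_{-\operatorname{Tate}}, L_0, \operatorname{ind}_{L_0}^{L_k}V\otimes W) \simeq C^*(L_0, V\otimes \operatorname{Res}W) \simeq \operatorname{RHom}_{L_{-k-\operatorname{Tate}}}\bigl(\operatorname{ind}_{L_0}^{L_{-k-\operatorname{Tate}}}V^*, W\bigr),$$
identifying $\Phi(\operatorname{ind}_{L_0}^{L_k}V)\simeq \operatorname{ind}_{L_0}^{L_{-k-\operatorname{Tate}}}V^*$.

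This formula makes the equivalence manifest: on the generating set indexed by pairs $(L_0, V)$ with $V$ finite dimensional, $\Phi$ restricts to the contravariant equivalence $V\mapsto V^*$ on finite-dimensional $L_0$-representations, with inverse given by the analogous construction in the other direction. Compatibility under change of reference subalgebra $L_0\subset L_1$ follows from induction in stages combined with the fact that both sides acquire the same determinant twist $\det(L_0,L_1)$---one from the dependence of $C^{\frac{\infty}{2}+*}$ on the reference subalgebra and one from Shapiro's lemma---so the definition of $\Phi$ is intrinsic. Full faithfulness and essential surjectivity on all compacts then follow by passing to the pretriangulated envelope.

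The main technical obstacle is making the projection formula and the interplay between tensor and induction precise in the Tate setting, where one must carefully distinguish the naive unbounded derived category $L_k\operatorname{-mod}^n$ from the renormalized version $L_k\operatorname{-mod}$. Proposition~\ref{basl} reduces all the relevant identifications to the bounded-below subcategory, where the projection formula and Shapiro's lemma are classical. The secondary bookkeeping issue---the determinant twists arising under change of reference subalgebra---is already present in Proposition~\ref{shap} itself, and the cancellation mentioned above is the essential consistency check; no new phenomenon is introduced.
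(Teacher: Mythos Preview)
Your approach is essentially the same as the paper's---both proceed by compact generation, the projection formula, and Shapiro's lemma to compute the induced functor on generators, arriving at $\Phi(\operatorname{ind}_{K}^{L_k} V) \simeq \operatorname{ind}_{K}^{L_{-k-\operatorname{Tate}}}(V^\vee \otimes \det(K,L_0))$. The paper keeps the reference $L_0$ fixed and carries the determinant twist explicitly, while you first set $K = L_0$ and defer the twist; this is only a cosmetic difference.

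There is, however, a genuine gap in your full-faithfulness step. The sentence ``$\Phi$ restricts to the contravariant equivalence $V \mapsto V^*$ on finite-dimensional $L_0$-representations'' is misleading: while $\Phi$ does send $\operatorname{ind}_{L_0}^{L_k} V$ to $\operatorname{ind}_{L_0}^{L_{-k-\operatorname{Tate}}} V^*$ on \emph{objects}, the morphism spaces in $L_k\operatorname{-mod}$ between such induced modules are not $\operatorname{Hom}_{L_0}(V,W)$ but rather $\operatorname{Hom}_{L_0}(V, \operatorname{Res}\operatorname{ind}\, W)$, which are infinite-dimensional. So exhibiting an ``inverse $\Psi$ given by the analogous construction in the other direction'' only shows $\Psi\Phi \simeq \operatorname{id}$ on isomorphism classes of generators, not as a natural transformation of functors. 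This does not yield an equivalence of categories, and ``passing to the pretriangulated envelope'' does not repair it.

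The paper closes exactly this gap: after observing (via a general lemma) that $\gamma$ admits a left adjoint $\gamma^L$, it checks that the counit $\gamma^L\gamma \to \operatorname{id}$ is an isomorphism on each generator by a diagram chase identifying the images of $\operatorname{id}_{\gamma^L\check c}$ and $\operatorname{id}_{\gamma d}$ inside the pairing $\langle c, d\rangle$---both arise from $\operatorname{id}_V \in V \otimes V^\vee \to C^*(K, V\otimes V^\vee)$. This is the substantive step you are missing; without it, you have computed $\Phi$ on objects but not shown it is fully faithful.
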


\begin{proof} Let us write $C := L_k\operatorname{-mod}, D := L_{-k-\operatorname{Tate}}\operatorname{-mod}$, and $\langle -, - \rangle$ for the pairing \eqref{pinf}. Since both are compactly generated, they are dualizable, with duals and pairings $$\langle - , - \rangle_C: C \otimes C^\vee \rightarrow \operatorname{Vect}, \quad \quad \langle-, - \rangle_D: D \otimes D^\vee \rightarrow \operatorname{Vect}.$$
The pairing \eqref{pinf} yields maps $\phi: C \rightarrow D^\vee, 
\gamma: D^\vee \rightarrow C$ satisfying  $$\langle -, \gamma - \rangle_C \simeq \langle -, - \rangle \simeq \langle \phi - , - \rangle_D.$$ We first observe that these preserve compactness. Indeed, fix a compact open $K \subset L$, pick a finite dimensional $V \in K_{k}\operatorname{-mod}^\heartsuit$, and calculate: $$\langle \phi \operatorname{ind}_{K_k}^{L_k} V, - \rangle_D \simeq \csi( \lt, L_0, \operatorname{ind}_{K_k}^{L_k} (V)\otimes - ) \simeq \csi(\lt, L_0, \operatorname{ind}_{K_{-\operatorname{Tate}}}^{\lt}( V \otimes - )) $$$$
\simeq \csi( \lt, K, \operatorname{ind}_{K_{-\operatorname{Tate}}}^{\lt}( V \otimes - )) \otimes \det(L_0, K) \simeq C^*( K, V \otimes - ) \otimes \det(L_0, K)$$$$\simeq \operatorname{Hom}_{K_{-k - \operatorname{Tate}}\operatorname{-mod}}( V^\vee \otimes \det(K, L_0), -) \simeq \operatorname{Hom}_{L_{-k-\operatorname{Tate}}\operatorname{-mod}}(\operatorname{ind}_{K_{-k-\operatorname{Tate}}}^{L_{-k - \operatorname{Tate}}} (V^\vee \otimes \det(K, L_0)), -).$$Note that in the above calculation, if $-$ is compact, we are using the `naive' definitions of the above functors. Since a general object $-$ of the renormalized derived category is a colimit of compact ones, the same calculation applies due to the continuity of renormalized functors appearing.

I.e., writing $\mathbb{D}$ for the equivalence $\mathbb{D}: D^{c, op} \simeq D^{\vee, c}$, cf. Proposition \ref{cduals}, we have produced an isomorphism \begin{equation}\label{is1}\phi \operatorname{ind}_{K_k}^{L_k} V \simeq \mathbb{D} \operatorname{ind}_{K_{-k-\operatorname{Tate}}}^{L_{-k - \operatorname{Tate}}} (V^\vee \otimes \det(K, L_0)).\end{equation}Interchanging the roles of $C$ and $D$, for any finite dimensional $W \in K_{-k -\operatorname{Tate}}\operatorname{-mod}^\heartsuit$, we obtain an isomorphism \begin{equation}\gamma \operatorname{ind}_{K_{-k-\operatorname{Tate}}}^{L_{-k -\operatorname{Tate}}} W \simeq \mathbb{D} \operatorname{ind}_{K_k}^{L_k} (W^\vee \otimes \det(K, L_0)).\label{is2}\end{equation} 

Having shown that $\phi$ preserves compactness, we may apply the following general lemma.

\begin{lemma} Suppose $C,D$ are dualizable objects of $\operatorname{DGCat}_{cont}$ equipped with a pairing $C \otimes D \rightarrow \operatorname{Vect}.$ Let $\phi, \gamma$ be as above. If $\phi$ preserves compactness, and $C$ is compactly generated, then $\gamma$ admits a left adjoint $\gamma^L$.

\end{lemma}
\begin{proof} Since $\phi$ preserves compactness, we obtain a map $\phi: C^c \rightarrow D^{\vee, c} \simeq D^{c, op}$, and hence its opposite $\mathbb{D} \phi \mathbb{D}: C^{c, op} \rightarrow D^c$. We will show that its ind-extension $\mathbb{D} \phi \mathbb{D}: C^\vee \rightarrow D$ provides the desired left adjoint. If we pick an object $\xi \in C^{\vee, c}$, this follows from $$\operatorname{Hom}_D( \mathbb{D} \phi \mathbb{D} \xi, - ) \simeq \langle \phi \mathbb{D} \xi, - \rangle_D \simeq \langle \mathbb{D} \xi, - \rangle \simeq \langle \mathbb{D} \xi, \gamma - \rangle_C \simeq \operatorname{Hom}_{C^\vee}( \xi, \gamma(-) ).$$\end{proof}
To prove the theorem, by the symmetry between $C$ and $D$ it suffices to show that $\gamma$ is an equivalence. Since $\gamma$ is continuous, preserves compactness, and its essential image contains a set of compact generators of $C^\vee$, it remains to show that $\gamma$ is fully faithful on $D^c$. To do so, we will argue that the natural transformation $\gamma^L \gamma \rightarrow \operatorname{id}_{D^c}$ is an equivalence. Let us introduce some notation. Set $c := \operatorname{ind}_{K_k}^{L_k} V$ and  $\check{c} := \mathbb{D} c$. Set $d := \operatorname{ind}_{K_{-k - \operatorname{Tate}}}^{L_{-k - \operatorname{Tate}}} (V^\vee \otimes \det(K, L_0))$. Consider the diagram: $$\xymatrix{\operatorname{Hom}(\gamma^L \check{c}, d) \ar@{-}[r]^{\hspace{.75cm}\sim} \ar@{-}[d]_{\eqref{is1}} & \langle c, d \rangle \ar@{-}[r]^{\hspace{-.75cm} \sim} & \operatorname{Hom}(\check{c}, \gamma d) \ar@{-}[d]^{\eqref{is2}} \\ \operatorname{Hom}(\gamma^L \check{c}, \gamma^L \check{c}) & & \operatorname{Hom}(\gamma d, \gamma d). }$$To see that $\gamma^L \gamma d \rightarrow d$ is an isomorphism, it remains to observe that the images of $\operatorname{id}_{\gamma^L \check{c}}$ and $\operatorname{id}_{\gamma d}$ coincide in $\langle c, d\rangle$. Explicitly, if we write $$\langle c, d\rangle \simeq \csi(\lt, K, \operatorname{ind}_{K_k}^{L_k} V \otimes  \operatorname{ind}_{K_{-k - \operatorname{Tate}}}^{L_{-k-\operatorname{Tate}}} V^\vee ),$$then both correspond to the image of $\id_V$ under the composition: $$\operatorname{End}(V) \simeq V \otimes V^\vee \rightarrow C^*(K, V \otimes V^\vee) \rightarrow \langle c, d \rangle.$$ \end{proof}

The following equivariance property of the pairing will be important to us. It is known to experts for Kac--Moody. Though we do not know of a published proof, it will appear in forthcoming work of Raskin \cite{mys}, see also \cite{rsi}. 

\begin{theo} Suppose $L$ is the Tate Lie algebra of an  group ind-scheme $\mathbf{H}$, and suppose that $\mathbf{H}$ contains a compact open subgroup $\mathbf{K}$ such that $\mathbf{H}/\mathbf{K}$ is ind-proper. Then for any compact open subalgebra $L_0$ and central extension $L_k$ of $L$, the pairing $$C^{\frac{\infty}{2} + *}(L_{-\operatorname{Tate}}, L_0, - \otimes - ): L_k\operatorname{-mod} \otimes L_{-k-\operatorname{Tate}}\operatorname{-mod} \rightarrow \operatorname{Vect}$$carries a canonical $\mathbf{H}$ equivariant structure.   \end{theo}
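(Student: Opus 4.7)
My plan is to reduce the $\mathbf{H}$-equivariance of the pairing, via a short sequence of formal manipulations, to a geometric statement about a certain multiplicative line bundle on $\mathbf{H}/\mathbf{K}$, where the ind-properness hypothesis enters essentially. First I would factor the pairing as
\begin{equation*}
L_k\operatorname{-mod} \otimes L_{-k-\operatorname{Tate}}\operatorname{-mod} \xrightarrow{\otimes} L_{-\operatorname{Tate}}\operatorname{-mod} \xrightarrow{\csi(L_{-\operatorname{Tate}}, L_0, -)} \operatorname{Vect},
\end{equation*}
and observe that the first arrow is manifestly $\mathbf{H}$-equivariant for the diagonal action on the source and the conjugation action on the target: the inner-automorphism action of $\mathbf{H}$ on $L$ canonically lifts to any central extension, and the tensor of an $L_k$-module with an $L_{-k-\operatorname{Tate}}$-module is naturally an $L_{-\operatorname{Tate}}$-module compatibly with these actions. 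The problem thus reduces to equipping $\csi(L_{-\operatorname{Tate}}, L_0, -): L_{-\operatorname{Tate}}\operatorname{-mod} \to \operatorname{Vect}$ with an $\mathbf{H}$-equivariant structure, with trivial action on $\operatorname{Vect}$.

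Next I would translate this into a concrete trivialization problem. For $h \in \mathbf{H}$, transport of structure along $\operatorname{Ad}_h$ combined with the change-of-compact-open formula for semi-infinite cohomology yields
\begin{equation*}
\csi(L_{-\operatorname{Tate}}, L_0, h \cdot V) \simeq \csi(L_{-\operatorname{Tate}}, \operatorname{Ad}_{h^{-1}} L_0, V) \simeq \csi(L_{-\operatorname{Tate}}, L_0, V) \otimes \det(\operatorname{Ad}_{h^{-1}} L_0, L_0),
\end{equation*}
so the equivariant structure amounts to a multiplicative trivialization of the graded line bundle $\mathcal{L}$ on $\mathbf{H}$ with fiber $\det(\operatorname{Ad}_{h^{-1}} L_0, L_0)$ at $h$.

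The main step, and the principal obstacle, is producing this trivialization using ind-properness. I would choose $L_0 = \operatorname{Lie}(\mathbf{K})$; then $\mathbf{K}$ preserves $L_0$, canonically trivializing $\mathcal{L}|_{\mathbf{K}}$ and exhibiting $\mathcal{L}$ as the pullback of a multiplicative graded line bundle $\widetilde{\mathcal{L}}$ on the ind-proper ind-scheme $\mathbf{H}/\mathbf{K}$. Identifying $\widetilde{\mathcal{L}}$ with the determinant-of-cohomology line bundle associated to the tautological family of compact open subalgebras parameterized by $\mathbf{H}/\mathbf{K}$, I expect that ind-properness ensures the relevant derived pushforwards are coherent so that the determinant admits a canonical presentation producing the trivialization. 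Carrying out this step carefully is the hard part: without ind-properness there is in general no such global trivialization, so the hypothesis is genuinely essential. Once the trivialization is constructed, the cocycle conditions required for the $\mathbf{H}$-equivariant structure follow formally from its multiplicativity, and the canonicity of the trivialization makes the whole construction independent of auxiliary choices.
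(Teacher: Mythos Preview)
The paper does not actually prove this theorem: immediately before the statement it says ``Though we do not know of a published proof, it will appear in forthcoming work of Raskin \cite{mys}, see also \cite{rsi}.'' So there is no proof in the paper to compare your proposal against.

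That said, your proposal has a genuine gap in the step you yourself flag as ``the hard part.'' The graded line bundle $\mathcal{L}$ with fiber $\det(\operatorname{Ad}_{h^{-1}}L_0, L_0)$ is, essentially by definition, the cocycle presenting the Tate central extension of $\mathbf{H}$. In the main example $\mathbf{H}=LG$, $\mathbf{K}=L^+G$, your $\widetilde{\mathcal{L}}$ on $\mathbf{H}/\mathbf{K}=\operatorname{Gr}_G$ is the determinant line bundle, which \emph{generates} $\operatorname{Pic}(\operatorname{Gr}_G)$ and is certainly not trivial. Ind-properness gives you coherence of pushforwards, but it does not trivialize line bundles: $\operatorname{Gr}_G$ is ind-projective and still has nontrivial Picard group. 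So the hoped-for ``canonical presentation producing the trivialization'' does not exist, and no amount of care will produce it from ind-properness alone.

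The place where the argument goes astray is earlier, in your first displayed isomorphism. ``Transport of structure along $\operatorname{Ad}_h$'' treats the $\mathbf{H}$-action on $L_{-\operatorname{Tate}}\operatorname{-mod}$ as the naive twist $V \mapsto h\cdot V$, but the strong action of $\mathbf{H}$ on $L_{-\operatorname{Tate}}\operatorname{-mod}$ is a twisted $D$-module action, and the twist is precisely at the $(-\operatorname{Tate})$ level. Concretely, $\mathbf{H}$ acts on the Clifford module $Sp$ underlying the semi-infinite complex, and this action is what manufactures the Tate extension in the first place; when you track it through, it contributes exactly the inverse of your determinant line and cancels the obstruction. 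In other words, the cancellation is not a geometric fact about $\mathbf{H}/\mathbf{K}$ but a tautology built into the definition of the Tate level. Your reduction to a line-bundle problem is a reasonable way to organize the bookkeeping, but the resolution must come from correctly incorporating how the group acts on the fermions, not from ind-properness.
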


\subsection{Feigin--Fuchs duality for $\cW$} We would like to identify representations of $\cW$ at complementary levels $\kappa_c \pm \kappa$ as dual categories. Let us do so now:

\begin{theo}\label{ffd}For any $\kappa$ there is a canonical duality in $\operatorname{DGCat}_{cont}$ $$\cw_\kappa\operatorname{-mod}^\vee  \simeq \cw_{-\kappa + 2 \kappa_c}\operatorname{-mod}.$$

\begin{proof}As in the previous subsection, the functor of semi-infinite cohomology gives an $LG$ equivariant perfect pairing $$C^{ \frac{\infty}{2} + *}(\hat{\fg}_{2\kappa_c}, L^+ \fg, - \otimes - ): \gk\operatorname{-mod} \otimes \hat{\fg}_{-\kappa + 2 \kappa_c} \rightarrow \operatorname{Vect}.$$Let us write $\psi: L\fn  \rightarrow \mathbb{C}$ for the character of $L\fn$ induced by the group homomorphism of Equation \eqref{psi}, and $\mathbb{C}_\psi$ for the corresponding one dimensional representation. For the time being, let us write $\cW_\kappa^\psi := H^{ \frac{\infty}{2}}(L\fn, L^+ \fn, \mathbb{V}_\kappa \otimes \mathbb{C}_\psi).$ As proved by Raskin \cite{r}, the functor $$C^{\frac{\infty}{2} + *}( L\fn, L^+ \fn, - \otimes \mathbb{C}_\psi): \gk\operatorname{-mod} \rightarrow \cW^\psi_\kappa\operatorname{-mod}$$induces an `affine Skryabin' isomorphism \begin{equation}\gk\operatorname{-mod}_{LN, \psi} \simeq \cW_\kappa^\psi\operatorname{-mod}.\label{affs}\end{equation}Combining these with Theorem \ref{wduals}, we obtain: \begin{equation}\label{bigdeal}(\cW_\kappa^{\psi}\operatorname{-mod})^\vee \simeq (\gk\operatorname{-mod}_{LN, \psi})^\vee  \simeq \hat{\fg}_{-\kappa + 2\kappa_c}\operatorname{-mod}_{LN, -\psi} \simeq \cW_{-\kappa + 2\kappa_c}^{-\psi}\operatorname{-mod}.\end{equation}
It remains to explain why the difference between $\pm \psi$ is inessential. Namely, writing $H$ for the adjoint torus corresponding to the finite Cartan $\mathfrak{h} \subset \fg$, its conjugation action on $N$ induces a simply transitive action on the nondegenerate $\psi$. This gives an simply transitive action on the corresponding Whittaker coinvariants:

\begin{lemma}\label{lchp}For a character $\psi$ of $N$, and $h \in H(\mathbb{C})$, consider the conjugated character $h \psi := \psi \circ \operatorname{Ad}_{h^{-1}}.$ Write $\delta_h \in D(H)^\heartsuit$ for the delta D-module supported on $h$. Then for any category $\scc$ with a strong action of $H \ltimes LN$, convolution with $\delta_h$ yields  isomorphisms: \begin{equation}\delta_h \star -: \scc^{LN, \psi} \simeq \scc^{LN, h \psi}, \quad \quad \quad \quad  \delta_h \star - : \scc_{LN, \psi} \simeq \scc_{LN, h \psi}.\label{chp}\end{equation}
\end{lemma}
\begin{proof}Let $U$ be a quasicompact prounipotent subgroup of $LN$ stable under the action of $H$, and by abuse of notation continue to write $\psi, h\psi$ for the restriction of these characters to $U$. We first show the equivalence: \begin{equation}\label{chch}\delta_h \star -: \scc^{U, \psi} \rightarrow \scc^{U, h \psi}.\end{equation}To see this, note that for $\psi: U \rightarrow \mathbb{G}_a$, by smoothness $\psi_{*}: D^*(U) \rightarrow D(\mathbb{G}_a)$ admits a left adjoint $\psi^*$. Write $\underline{e}^{-\psi}$ for the `twisted constant sheaf' $\psi^* e^{-z}[-2]$, cf. Equation \eqref{exp}. Then by the fully-faithfulness of $\operatorname{Oblv}: \scc^{U, \psi} \rightarrow \scc$, $\scc^{U, \psi}$ may be recovered as the full subcategory of $\scc$ consisting of the essential image of $\underline{e}^{-\psi} \star - $. Since $\delta_h \star \delta_{h^{-1}} = \delta_e$, it is equivalently the essential image of $\underline{e}^{-\psi} \star \delta_{h^{-1}} \star - $. Writing $\operatorname{Ad}_h: LN \rightarrow LN$ for conjugation by $h$, we finish by computing: \begin{equation}\label{exps}\delta_h \star \underline{e}^{-\psi} \star \delta_{h^{-1}} \simeq \operatorname{Ad}_{h, *} \underline{e}^{-\psi} \simeq (\psi \circ \operatorname{Ad}_{h^{-1}})^* e^{-z}[-2] \simeq \underline{e}^{-h \psi}.\end{equation}
Write $LN$ as a union of compact open subgroups $U_j, j 
\geqslant 1,$ stable under the action of $H$. Recall that: $$\scc^{LN, \psi} \simeq \varprojlim_{\operatorname{Oblv}} \scc^{U_j, \psi} \quad \quad \scc_{LN, \psi} \simeq \varinjlim_{\operatorname{Av}_{\psi, *}} \scc^{U_j, \psi}.$$Thus to prove the lemma it suffices to verify the commutativity of the following diagrams, for any $j \geqslant i \geqslant 1$: \begin{equation}\label{chchd}\xymatrix{\scc^{U_j, \psi} \ar[r]^{\operatorname{Oblv}} \ar[d]_{\delta_h \star - } & \scc^{U_{i}, \psi} \ar[d]^{\delta_h \star - } & & \scc^{U_i, \psi} \ar[r]^{\operatorname{Av}_{\psi, *}} \ar[d]_{\delta_h \star - } &  \scc^{U_j, \psi} \ar[d]^{\delta_h \star - }  \\ \scc^{U_j, h\psi} \ar[r]^{\operatorname{Oblv}} & \scc^{U_{i}, h\psi} & & \scc^{U_i, h \psi} \ar[r]^{\operatorname{Av}_{h \psi, *}} & \scc^{U_j, h\psi}. }\end{equation}For the left hand diagram, this is tautological, and for the right hand diagram this follows from Equation \eqref{exps}, applied to $\underline{e}^{-\psi}$ on $U_j$. Moreover, a simple refinement argument using Equation \eqref{chchd} shows the resulting isomorphisms are independent of the choice of subgroups $U_j, j \geqslant 1$. \end{proof}
Applying the lemma to Kac--Moody representations, we obtain canonical isomorphisms, for any nondegenerate characters $\psi, \psi'$ of $N$ and level $\kappa'$: $$\cW_{\kappa'}^\psi\operatorname{-mod} \simeq \hat{\fg}_{\kappa'}\operatorname{-mod}_{LN, \psi} \simeq \hat{\fg}_{\kappa'}\operatorname{-mod}_{LN, \psi'} \simeq \cW_{\kappa'}^{\psi'}\operatorname{-mod}.$$Concatenating this with Equation \eqref{bigdeal} yields the claimed duality. \end{proof}

\end{theo}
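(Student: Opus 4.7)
The plan is to deduce the $\cW$-algebra duality from the Tate Lie algebra duality for $\gk$, by passing to Whittaker models on both sides. First, I would apply Theorem \ref{tld} in the Kac--Moody setting to obtain a canonical perfect pairing
$$\gk\operatorname{-mod} \otimes \gkm\operatorname{-mod} \to \operatorname{Vect},$$
given by semi-infinite cohomology relative to $L^+\fg$. The essential extra input, beyond bare duality, is that this pairing carries a canonical $LG$-equivariant structure (the equivariance statement at the end of the Tate section); in particular, its restriction along $LN \hookrightarrow LG$ exhibits a duality in the 2-category of strong $LN$-representations.

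The key bridge between Kac--Moody and $\cW$-algebras is Raskin's affine Skryabin theorem, which, for a nondegenerate character $\psi$ of $LN$ of the form \eqref{psi}, identifies the Whittaker coinvariants $\gk\operatorname{-mod}_{LN, \psi}$ with $\cW_\kappa^\psi\operatorname{-mod}$, where $\cW_\kappa^\psi := H^{\frac{\infty}{2}}(L\fn, L^+\fn, \V_\kappa \otimes \C_\psi)$. Applying Theorem \ref{wduals} to the Kac--Moody duality upgrades it to an identification
$$(\gk\operatorname{-mod}_{LN, \psi})^\vee \simeq \gkm\operatorname{-mod}_{LN, -\psi},$$
and concatenating this with affine Skryabin on each side yields
$$\cW_\kappa^\psi\operatorname{-mod}^\vee \simeq \cW_{-\kappa + 2\kappa_c}^{-\psi}\operatorname{-mod}.$$

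The remaining subtlety is to absorb the sign flip $\psi \mapsto -\psi$, since a priori the statement of the theorem should not depend on the choice of $\psi$ at all. For this I would exploit the fact that the adjoint torus $H$ acts by conjugation on $N$ and permutes the nondegenerate characters simply transitively. The key lemma, which should hold for any strong $H \ltimes LN$-representation $\scc$, asserts that convolution with the delta D-module $\delta_h$ at $h \in H$ gives canonical equivalences $\scc^{LN, \psi} \simeq \scc^{LN, h\psi}$ and $\scc_{LN, \psi} \simeq \scc_{LN, h\psi}$. The verification reduces, via the presentation of Whittaker (co)invariants as a (co)limit over $H$-stable compact open subgroups $U_j \subset LN$, to the identity $\delta_h \star \underline{e}^{-\psi} \star \delta_{h^{-1}} \simeq \underline{e}^{-h\psi}$, which is immediate from pushforward along $\operatorname{Ad}_h$. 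Applied with $h$ chosen so that $h\psi = -\psi'$ for any target nondegenerate $\psi'$, this eliminates the discrepancy.

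The main obstacle is ensuring that all the equivalences assembled --- Kac--Moody Tate duality, its $LG$-equivariance, the affine Skryabin identification, Whittaker duality of Theorem \ref{wduals}, and the $H$-twist --- are canonical and composable, and that the final object depends only on $\kappa$. The $LG$-equivariance of the semi-infinite pairing is the only ingredient that is not entirely formal; everything else is a chain of established equivalences, and modulo that equivariance input the proof is the three-step concatenation above.
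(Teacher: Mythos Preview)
Your proposal is correct and follows essentially the same route as the paper: Kac--Moody Tate duality with its $LG$-equivariance, passage to Whittaker coinvariants via Theorem \ref{wduals}, affine Skryabin on both sides, and finally the $H$-twist lemma (convolution with $\delta_h$, reduced to the identity $\delta_h \star \underline{e}^{-\psi} \star \delta_{h^{-1}} \simeq \underline{e}^{-h\psi}$ on $H$-stable compact opens) to erase the sign on $\psi$. You have also correctly isolated the $LG$-equivariance of the semi-infinite pairing as the one input that is not purely formal.
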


We finish this subsection by checking some compatibilites satisfied by the isomorphisms between $\cw_\kappa^\psi\operatorname{-mod}$, for varying $\psi$. 

\begin{pro} Let $\scc$ be a $D_\kappa(LG)$ module. Then the isomorphisms of Lemma \ref{lchp} are compatible with the isomorphism between Whittaker invariants and coinvariants. That is, the following diagram commutes:

$$\xymatrix{\scc_{LN, \psi} \ar[r]^{\eqref{rl}} \ar[d]_{\eqref{chp}} & \scc^{LN, \psi} \ar[d]^{\eqref{chp}} \\ \scc_{LN, h\psi} \ar[r]^{\eqref{rl}} & \scc^{LN, h \psi}.  }$$
\end{pro}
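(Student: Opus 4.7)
The plan is to reduce the verification to the adolescent Whittaker presentation of $\eqref{rl}$ and check compatibility at each finite stage $n$. Recall from Subsection \ref{adwc} that the equivalence $\scc^{LN,\psi} \simeq \scc_{LN,\psi}$ was constructed as a colimit over $n \geqslant 1$ of the identity functors $\operatorname{id}[-2n\Delta]\colon \scc^{I_n,\psi} \to \scc^{I_n,\psi}$, relative to the presentations $\scc^{LN,\psi} \simeq \varinjlim_{i_{n,m,!}} \scc^{I_n,\psi}$ and $\scc_{LN,\psi} \simeq \varinjlim_{i_{n,m,*}} \scc^{I_n,\psi}$, using the identification $i_{n,m,!} \simeq i_{n,m,*}[2(m-n)\Delta]$. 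Since this comparison is built purely from shifts and identities that are independent of $\psi$, the compatibility will follow once we check that $\delta_h \star -$ intertwines each of these presentations in $\psi$ and $h\psi$.

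First I would verify that the adolescent pieces themselves are $H$-equivariant in the appropriate sense. Namely, since $H$ is abelian and $\check{\rho}$ is a cocharacter of $H = H_{ad}$, conjugation by $h \in H(\mathbb{C})$ commutes with $\operatorname{Ad}_{t^{-n\check{\rho}}}$ and preserves the subgroup $I'_n$ (because it preserves both $L^+G$ and $N[z]/z^n$). Hence $H$ normalizes each $I_n$, and restriction of the $H \ltimes LN$ action to $H \ltimes (I_n \cap LN)$, extended trivially over $I_n \cap L^+G$, lets us apply Lemma \ref{lchp} at each finite level to obtain isomorphisms $\delta_h \star - \colon \scc^{I_n,\psi} \simeq \scc^{I_n, h\psi}$.

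Next I would check that these finite-level isomorphisms intertwine both $i_{n,m,*}$ and $i_{n,m,!}$ up to the identifications $\psi \leftrightarrow h\psi$. The $i_{n,m,*}$ case is essentially the content of the right-hand square of \eqref{chchd} applied to $I_n, I_m$ in place of $U_i, U_j$; indeed, $i_{n,m,*}$ is a composite $\operatorname{Oblv} \circ \operatorname{Av}_{*,\psi}$ relative to the prounipotent subgroups $I_n, I_n \cap I_m, I_m$, each of which is $H$-stable. The $i_{n,m,!}$ case then follows by passing to left adjoints, or equivalently from the isomorphism $i_{n,m,!} \simeq i_{n,m,*}[2(m-n)\Delta]$ of \eqref{ss}, since the shift is $\psi$-independent and commutes tautologically with $\delta_h \star -$.

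Finally, assembling these compatibilities over $n$ and $m$ yields two commuting natural transformations of $\mathbb{N}$-indexed diagrams, and passing to the colimit gives the asserted square. The main technical point, and the only place where something beyond bookkeeping is required, is the commutation of $\delta_h \star -$ with the relative averaging functor $\operatorname{Av}_{*,\psi}$; this is exactly what the calculation \eqref{exps} in the proof of Lemma \ref{lchp} provides, via the identification $\delta_h \star \underline{e}^{-\psi} \star \delta_{h^{-1}} \simeq \underline{e}^{-h\psi}$. Everything else is formal manipulation of colimits in $\operatorname{DGCat}_{cont}$.
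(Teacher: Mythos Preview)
Your approach is essentially the same as the paper's: reduce to the adolescent presentation and verify that $\delta_h \star -$ intertwines the transition functors at each finite stage. However, there is one bookkeeping point you gloss over that the paper handles explicitly.

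The vertical arrows \eqref{chp} in the diagram were \emph{defined} in Lemma~\ref{lchp} using compact open subgroups $U_j$ of $LN$, not using the $I_n$. The subgroups $I_n$ are not contained in $LN$, so when you ``apply Lemma~\ref{lchp} at each finite level'' you are really invoking an extension of its proof to arbitrary $H$-stable prounipotent subgroups of $LG$ (the paper states this extension explicitly at the start of its proof). More importantly, after passing to the colimit you obtain a map $\scc_{LN,\psi} \to \scc_{LN,h\psi}$ built from the $I_n$-picture, and you still owe an identification of this map with the original \eqref{chp}. The paper closes this gap by taking $I_n^+ := I_n \cap LN$ as the $U_n$ of Lemma~\ref{lchp} and checking two extra squares (its squares (1) and (4)) comparing the $I_n^+$ and $I_n$ presentations via $\operatorname{Oblv}$ and $\operatorname{Av}_{\psi,*}$. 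Once you add that step, your argument and the paper's coincide.
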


\begin{proof} Let $U$ be a prounipotent subgroup of $LG$ stable under conjugation by $H$, and consider a character $\chi$ of $U$. Then, the analog of Equation \eqref{chch} remains true, with a nearly identical proof. Similarly, if $U'$ is another prounipotent subgroup of $LG$ containing $U$, and $\chi$ is the restriction of a character $\chi'$ of $U'$, then the analog of Equation \eqref{chchd} remains true. 

Briefly, since every functor appearing in the construction of \eqref{rl} is built from (co)limits of composites of averaging and forgetful functors, the required compatibility follows from \eqref{chchd}. 

More carefully, recall that the functors $i_{n,m, *}, i_{n,m, !},$ and $i_{n,m}^!$, cf. Subsection \ref{adwc}, are composites of averaging and forgetful functors. Applying $\delta_h \star - $ termwise to the appearing colimits and limits yields by \eqref{chchd}: $$\delta_h \star -: \varinjlim_{i_{n,m, *}} \scc^{I_n, \psi} \simeq \varinjlim_{i_{n,m,*}} \scc^{I_n, h\psi} \quad \quad \varinjlim_{i_{n,m, !}} \scc^{I_n, \psi} \simeq \varinjlim_{i_{n,m,!}} \scc^{I_n, h\psi}  \quad \quad \varprojlim_{i_{n,m}^!} \scc^{I_n, \psi} \simeq \varprojlim_{i_{n,m}^!} \scc^{I_n, h \psi}.$$
Writing $I_n^+$ for the subgroup $I_n \cap LN$, we may use the $I_n^+$ as the subgroups $U_n, n \geqslant 1$, appearing in Lemma \ref{lchp}. Concatenating the steps in the adolescent Whittaker construction, we will show each square in the following diagram commutes.

$$\xymatrix{\underset{\operatorname{Av}_{\psi, *}}\varinjlim \scc^{I_n^+, \psi} \ar[d]^{\delta_h \star - } \ar@{}[dr]|{(1)} & \ar@{}[dr]|{(2)} \ar[d]^{\delta_h \star - } \ar[l]_{\operatorname{Oblv}} \underset{{i_{n,m *}}}\varinjlim \scc^{I_n, \psi} \ar[r]^{[-]} & \ar@{}[dr]|{(3)}\ar[d]^{\delta_h \star - }\underset{{i_{n,m !}}}\varinjlim \scc^{I_n, \psi} \ar@{-}[r]^{\sim} & \ar@{}[dr]|{(4)}\ar[d]^{\delta_h \star - } \underset{i_{n,m}^!}\varprojlim \scc^{I_n, \psi} & \ar[d]^{\delta_h \star - } \ar[l]_{\operatorname{Av}_{\psi, *}} \underset{{\operatorname{Oblv}}}\varprojlim \scc^{I_n^+, \psi} \ar@{-} \\\underset{\operatorname{Av}_{h\psi, *}}\varinjlim \scc^{I_n^+, h\psi} & \ar[l]_{\operatorname{Oblv}} \underset{{i_{n,m *}}}\varinjlim \scc^{I_n, h\psi} \ar[r]^{[-]} & \underset{{i_{n,m !}}}\varinjlim \scc^{I_n, h\psi} \ar@{-}[r]^{\sim} & \underset{i_{n,m}^!}\varprojlim \scc^{I_n, h\psi} & \ar[l]_{\operatorname{Av}_{h\psi, *}} \underset{{\operatorname{Oblv}}}\varprojlim \scc^{I_n^+, h\psi} \ar@{-}  }$$We claim that the squares (1), (2), and (4) commute before passing to (co)limits. For the squares (1) and (4), this follows from Equation \eqref{chchd}. For the square (2), this is tautological, since both horizontal arrows are the same cohomological shift.

For (3), it suffices to verify, for any $k \geqslant j \geqslant 1$, the commutativity of the outer square of: 
$$\xymatrix{\scc^{I_j, \psi} \ar[d]^{\delta_h \star - } \ar[r]^{\operatorname{ins}} & \ar[d]^{\delta_h \star - }\underset{{i_{n,m!}}}\varinjlim \scc^{I_n, \psi} \ar@{-}[r]^\sim &\ar[d]^{\delta_h \star - } \underset{i_{n,m}^!} \varprojlim \scc^{I_n, \psi} \ar[r]^{\operatorname{ev}} & \ar[d]^{\delta_h \star - }\scc^{I_k, \psi}\\ \scc^{I_j, h\psi} \ar[r]^{\operatorname{ins}} & \underset{{i_{n,m!}}}\varinjlim \scc^{I_n, h\psi} \ar@{-}[r]^\sim & \underset{i_{n,m}^!} \varprojlim \scc^{I_n, h\psi} \ar[r]^{\operatorname{ev}} & \scc^{I_k, h\psi}}$$By the fully-faithfulness of the $i_{n,m!}$, the horizontal composites are $i_{j,k!}$, whence the claim again follows from Equation \eqref{chchd}. \end{proof}

As a second compatibility, note that the $H$ integrability of $\mathbb{V}_\kappa$ yields an isomorphism of $L\fn$ modules $\mathbb{V}_\kappa \otimes \mathbb{C}_\psi \simeq \mathbb{V}_\kappa \otimes \mathbb{C}_{h \psi}$ for any $h \in H(\mathbb{C})$. Taking semi-infinite cohomology, we obtain a canonical isomorphism $ \cW_\kappa^{\psi} \simeq \cW_\kappa^{h \psi}. $ This tautologically gives an identification of their bounded below derived categories of representations \begin{equation} \cW_\kappa^{\psi}\operatorname{-mod}^+ \simeq \cW_\kappa^{h \psi}\operatorname{-mod}^+.\label{bb}\end{equation}This exchanges the compact generators for the renormalized unbounded categories, yielding 
\begin{equation}\cW_\kappa^\psi\operatorname{-mod} \simeq \cW_\kappa^{h \psi}\operatorname{-mod}.\label{w=w}\end{equation}
The second compatibility we would like to record is between this isomorphism and affine Skyrabin: 

\begin{pro}The following diagram commutes: $$\xymatrix{\gkmod_{LN, \psi} \ar[r]^{\eqref{affs}} \ar[d]_{\eqref{chp}} & \cW_\kappa^{\psi}\operatorname{-mod} \ar[d]^{\eqref{w=w}} \\ \gkmod_{LN, h\psi} \ar[r]^{\eqref{affs}} & \cW_\kappa^{h \psi}\operatorname{-mod}.} $$\end{pro}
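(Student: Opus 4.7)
The plan is to realize the commutativity as an instance of $H(\mathbb{C})$-equivariance of the affine Skryabin equivalence \eqref{affs}. Both vertical arrows arise from a single underlying conjugation action of $H$ on $LG$: on the left, Lemma \ref{lchp} identifies \eqref{chp} with convolution by $\delta_h$, which is literally the action of $h$ on $\gk\operatorname{-mod}$ transported to Whittaker coinvariants; on the right, the isomorphism \eqref{w=w} is constructed from the action of $h$ on $\mathbb{V}_\kappa$ coming from its $H$-integrability, passed through the BRST complex defining $\cW_\kappa^\psi$. Both are shadows of the same equivariance of the entire BRST setup, so the compatibility should be automatic once that equivariance is made explicit.

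First I would verify that, at the level of abelian categories, the semi-infinite cochain functor $C^{\frac{\infty}{2}+*}(L\fn, L^+\fn, -\otimes \mathbb{C}_\psi): \gk\operatorname{-mod}^\heartsuit \to \cW_\kappa^\psi\operatorname{-mod}^\heartsuit$ underlying \eqref{affs} is $H$-equivariant. The conjugation action of $h \in H(\mathbb{C})$ preserves both $L\fn$ and $L^+\fn$, sends the character $\psi$ to $h\psi$, and sends any $M \in \gk\operatorname{-mod}^\heartsuit$ to $\delta_h \star M$. These facts together yield a canonical natural isomorphism of $L\fn$-modules $M \otimes \mathbb{C}_\psi \simeq (\delta_h \star M) \otimes \mathbb{C}_{h\psi}$, and applying semi-infinite cochains on the pair $(L\fn, L^+\fn)$ produces a functorial isomorphism implementing the required equivariance. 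Specializing this construction to $M = \mathbb{V}_\kappa$ with its regular $\gk$-action returns on the nose the vertex algebra isomorphism $\cW_\kappa^\psi \simeq \cW_\kappa^{h\psi}$ used to define \eqref{w=w}; naturality in $M$ then forces the module-level isomorphism to intertwine the two algebra actions along this vertex algebra isomorphism, which is exactly the commutativity of the diagram at the abelian level.

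To promote this to the renormalized cocomplete dg-categories appearing in the statement, I would use that both vertical arrows and \eqref{affs} are continuous and preserve compact objects, and that the $H$-actions on both sides are compatible with the passage from abelian hearts through bounded-below derived categories to the renormalized categories. Indeed, \eqref{chp} is defined uniformly for arbitrary strong representations, \eqref{w=w} is explicitly obtained by ind-extending an abelian-level identification along the compact generators \eqref{bb}, and the affine Skryabin equivalence is similarly determined by its abelian-level restriction. Thus commutativity on the renormalized categories reduces to commutativity on compact generators lifted from the heart, which the previous paragraph has already established.

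The main obstacle will be matching the two a priori distinct constructions of the $H$-action at the level of $\infty$-categorical functors: on the left, \eqref{chp} is obtained via convolution with $\delta_h$ using the twisted constant sheaf $\underline{e}^{-\psi}$ machinery of Lemma \ref{lchp}, while on the right, \eqref{w=w} uses vertex-algebraic transport along the $H$-integrable structure on $\mathbb{V}_\kappa$. One must check that both genuinely specialize the same underlying $H$-equivariance of the BRST setup, with the appropriate higher coherences. Fortunately, the immediately preceding proposition in the paper verifies an entirely analogous compatibility between the invariants--coinvariants equivalence and the character twist, and its proof can be adapted nearly verbatim with the averaging-forgetful adjunctions replaced by the semi-infinite cohomology functor.
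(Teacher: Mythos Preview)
Your approach is correct and coincides with the paper's: both hinge on recognizing that $\delta_h \star -$ on $\gk\operatorname{-mod}$ is restriction of representations along $\operatorname{Ad}_{h^{-1}}$, after which the commutativity is essentially tautological since the semi-infinite cohomology functor and the vertex algebra isomorphism $\cW_\kappa^\psi \simeq \cW_\kappa^{h\psi}$ are both built from operations that manifestly intertwine with this restriction. The paper's proof is literally that one sentence, so your worries in the final paragraph about matching higher coherences and adapting the preceding proposition's averaging--forgetful argument are unnecessary---once convolution by $\delta_h$ is identified with restriction, the compatibility holds on the nose at the level of explicit BRST complexes, and passes to the renormalized categories for the trivial reasons you already noted.
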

\begin{proof}This follows from the fact that $\delta_h \star -$ on $\gk\operatorname{-mod}$ is restriction of representations along $\operatorname{Ad}_{h^{-1}}$. \end{proof}

\subsection{Semi-infinite cohomology for the $\cW$-algebra}

Recall that in the previous subsection we saw a duality between  $\cW_\kappa\operatorname{-mod}$ and $\cw_{-\kappa + 2\kappa_c}\operatorname{-mod}.$ 

\begin{defn} The semi-infinite cohomology functor\begin{equation}\label{sinf}C^{\frac{\infty}{2} + *}( - \otimes - ): \cW_\kappa\operatorname{-mod} \otimes \cW_{-\kappa + 2\kappa_c} \rightarrow \operatorname{Vect}\end{equation}is the pairing induced by Theorem \ref{ffd}. 
\end{defn}
In the next remark, we provide some orienting discussion. 

\begin{re} 

\begin{enumerate}
    \item Recall $h^\vee = -\frac{\kappa_c}{\kappa_b}$ denotes the dual Coxeter number of $\fg$. Recall that the central charge of $\cW_\kappa, \kappa = k \kappa_b$, is given by: $$c(\kappa) = \dim \fg \frac{ k}{k + h^\vee} - \dim \fg + \operatorname{rk} \fg + 24 \langle \rho, \check{\rho} \rangle - 12(k + h^\vee)\kappa_b(\check{\rho}, \check{\rho}).$$Since $-\kappa + 2 \kappa_c = (-k -2h^\vee) \kappa_b$, it follows that the central charges for complementary levels are again complementary:$$c(\kappa) + c(-\kappa + 2\kappa_c) = 2 \operatorname{rk} \fg  + 48 \langle \rho, \check{\rho} \rangle.$$Let us write $c_{-\operatorname{Tate}} := 2 \operatorname{rk} \fg + 48 \langle \rho, \check{\rho} \rangle$ for their common sum.

\item Let $\fg$ be simply laced. In this case, we may equivalently write $$c(\kappa) = \operatorname{rk} \fg - \operatorname{rk} \fg (h^\vee)(h^\vee + 1) \frac{(k + h^\vee - 1)^2}{k + h^\vee}.$$If we consider the resulting map $$\mathbb{P}^1 \setminus \{ -h^\vee, \infty \} \rightarrow \mathbb{P}^1 \setminus \{ \infty \}, \quad \quad k \mapsto c(k \kappa_b),$$the $\cW$-algebras corresponding to levels with the same central charge are identified by the Feigin--Frenkel isomorphisms:$$\cw_{(-h^\vee + \epsilon)\kappa_b} \simeq \cw_{(-h^\vee + \epsilon^{-1})\kappa_b}, \quad \quad \epsilon \in \mathbb{C}.$$
Thus, we may unambiguously write $\cW_c$ for the $\cW$-algebra associated to $\fg$ of central charge $c$, and may rewrite \eqref{sinf} as$$\cw_c \operatorname{-mod} \otimes \cw_{c_{-\operatorname{Tate}} - c}\operatorname{-mod} \rightarrow \operatorname{Vect}, \quad \quad c \in \mathbb{C}.$$This latter parametrization was used in the known cases of semi-infinite cohomology, namely for $\fg = \mathfrak{sl}_2, \mathfrak{sl}_3$, with $c_{-\operatorname{Tate}} = 26, 100,$ respectively.  
\item Our functor \eqref{sinf} takes the shape of the so-called {\em non-critical} BRST reduction anticipated in the conformal field theory literature. For $\cW_\kappa$ with $c(\kappa) = c_{-\operatorname{Tate}}$,\footnote{In the reference \cite{b}, $c_{-\operatorname{Tate}}$ is equivalently written as $2\sum_s(6s^2 -6s + 1)$, where $s$ runs over the conformal dimensions of the standard generators for $\cW$, i.e. the degrees of the fundamental invariants for $\fg$.} it was anticipated there should be a {\em critical} reduction functor: $$C^{\frac{\infty}{2} + *}(-): \cW_\kappa\operatorname{-mod} \rightarrow \operatorname{Vect}.$$This can be done by a similar method, as we will explain in a subsequent publication. 
\end{enumerate}\label{rere}
\end{re}

We bring two questions to the attention of the reader. Firstly, as suggested by Remark \ref{rere}(2), there should be a compatibility between semi-infinite cohomology for $\cW$-algebras and Feigin--Frenkel duality. Second, it would be good to establish the compatibility between the usual semi-infinite cohomology for Virasoro and one constructed in this subsection. 

For both, one can show as proof of concept that the dualities coming from the two different constructions of semi-infinite cohomology send the standard compact generators to the same objects, up to isomorphism. 
\subsection{An example: chiral differential operators} In this subsection, $\fg = \mathfrak{sl}_2$ and $\kappa$ is irrational, i.e.  $\kappa \in \mathbb{C} \kappa_b \setminus \mathbb{Q} \kappa_b$. We now address a problem raised by I. Frenkel and Styrkas in Remark 7 of \cite{f1}, i.e. to give a direct explanation of why the semi-infinite cohomology of the CDO for $\operatorname{Vir}_\kappa$ matched the semi-infinite cohomology of the CDO for $\gk$. As anticipated in {\em loc. cit.}, we show this may be done via Drinfeld--Sokolov reduction. 

Since we will return to this question and others from {\em loc. cit.} in more generality elsewhere, we only provide a sketch here. Let $P^+ \simeq \mathbb{Z}^{\geqslant 0}$ index the dominant integral weights of $\fg$, and for $\lambda \in P^+$ write $\mathbb{V}_{\kappa, \lambda}$ for the corresponding Weyl module of $\gk$. As a $\gk \oplus \gkm$ module, the Chiral differential operators for $\gk$ decompose as: $$\operatorname{CDO} \simeq \bigoplus_{\lambda \in P^+} \mathbb{V}_{\kappa, \lambda} \otimes \mathbb{V}_{-\kappa + 2\kappa_c, - w_\circ \lambda}.$$Using Kac--Moody duality, we may calculate its semi-infinite cohomology as: $$\csi(\hat{\fg}_{2\kappa_c}, L^+ \fg, \operatorname{CDO}) \simeq \bigoplus_{\lambda \in P^+} \operatorname{Hom}_{\gk\operatorname{-mod}}( \mathbb{V}_{\kappa, \lambda}, \mathbb{V}_{\kappa, \lambda}) \simeq \operatorname{Fun}(G /\!\!/ G) \otimes \mathbb{C}^{h \fg},$$where $\mathbb{C}^{h \fg}$ denotes the Lie algebra cohomology $\operatorname{Hom}_{\fg}(\mathbb{C}, \mathbb{C})$. 
The chiral differential operators for $\operatorname{Vir}_\kappa$ are simply $\Psi \boxtimes \Psi \operatorname{CDO},$ where $\Psi$ denotes Drinfeld--Sokolov reduction.\footnote{We should mention that $\Psi \operatorname{CDO}$ depends on $\kappa$ and not simply $c(\kappa)$. More generally, a basic fact of life is that the two `Kazhdan-Lusztig categories' in $\cW_\kappa\operatorname{-mod}$ coming $\fg, \fg^L$ do {\em not} coincide. Rather, as explained to us by Creutzig, they form two `axes' of a remarkable two parameter family of representations.} Accordingly, $$\Psi \operatorname{CDO} \simeq \bigoplus_{\lambda \in P^+} \Psi \mathbb{V}_{\kappa, \lambda} \otimes \Psi \mathbb{V}_{-\kappa + 2\kappa_c, \lambda}.$$Using categorical Feigin--Fuchs duality, we have $$\csi( \operatorname{Vir}_{26}, \Psi \operatorname{CDO}) \simeq \bigoplus_{\lambda \in P^+} \operatorname{Hom_{Vir_\kappa\operatorname{-mod}}}( \Psi \mathbb{V}_{\kappa, \lambda}, \Psi \mathbb{V}_{\kappa, \lambda}).$$It therefore suffices to argue that the natural map $$\operatorname{Hom}(\mathbb{V}_{\kappa, \lambda}, \mathbb{V}_{\kappa, \lambda}) \rightarrow \operatorname{Hom}(\Psi \mathbb{V}_{\kappa, \lambda}, \Psi \mathbb{V}_{\kappa, \lambda})$$is an equivalence. But since $\kappa$ is generic, in fact the corresponding block of monodromic category $\OO$ for $\gk$ is sent isomorphically onto the corresponding block of monodromic category $\OO$ for $\operatorname{Vir}_\kappa$ by $\Psi$. This follows from the localization theorem alluded to at the end of Section \ref{two}, see also Subsection \ref{cbl}.

\section{Jordan-H\"older content and highest weight filtrations in Category $\OO$}

Recall the definition of Category $\OO$ from Subsection \ref{cato}. We remind two basic properties of Category $\OO$ for an affine Lie algebra. First, at a positive level $\kappa$ many objects will not have finite length. Nonetheless, one can make sense of the Jordan--H\"older content of an object, i.e. express the formal character of a module as a locally finite sum of simple characters. Second, any object of $\OO$ admits an ascending filtration with successive quotients highest weight modules. In this section, we set up the analogous theory for $\mathscr{W}$-algebras, for $\mathscr{W}_3$ see \cite{bw3}. 

\subsection{Jordan-H\"{o}lder content} As a first observation, note that since $\mathscr{W}$ is $\Z^{\geqslant 0}$ graded by $L_0$, we have the following coarse decomposition of $\OO$:

\begin{lemma} Let $M$ be any object of $\OO$, and write $M = \bigoplus_{d \in \C} M_d$ for its $L_0$ eigenspace decomposition as in Definition \ref{defoglarg}. For $\gamma \in \C/\Z$, consider the subspace: $$M_\gamma := \bigoplus_{d \in \gamma + \Z} M_d.$$Then $M_\gamma$ is a submodule of $M$, and we have: $M = \bigoplus_{\gamma \in \C  / \Z} M_\gamma.$

\label{break}
\end{lemma}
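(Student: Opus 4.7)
The plan is to reduce the statement to checking that $M_\gamma$ is closed under the action of modes of homogeneous elements of $\cw_\kappa$, and then to use the integrality of the conformal grading on $\cw_\kappa$ together with the standard commutation relation between $L_0$ and modes of homogeneous fields. The direct sum statement $M = \bigoplus_\gamma M_\gamma$ is immediate from the $L_0$-generalized eigenspace decomposition given in Definition \ref{defoglarg}, since the cosets $\gamma + \mathbb{Z}$ partition $\mathbb{C}$; so the entire content of the lemma is the submodule property.

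First I would reduce to the homogeneous case. Since $\cw_\kappa$ is $\mathbb{Z}^{\geqslant 0}$-graded by $L_0$, every vector $a \in \cw_\kappa$ is a finite sum of homogeneous vectors $a = \sum_i a^{(i)}$ with $|a^{(i)}| \in \mathbb{Z}^{\geqslant 0}$, and the modes of $a$ are linear combinations of modes of the $a^{(i)}$. It therefore suffices to check that for homogeneous $a$ and each mode $a_{(n)}$ appearing in the expansion $a(z) = \sum_{n \in \mathbb{Z}} a_{(n)} z^{-n-|a|}$, the operator $a_{(n)}$ preserves $M_\gamma$ for each $\gamma \in \mathbb{C}/\mathbb{Z}$.

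The key step is the standard computation $[L_0, a_{(n)}] = -n \cdot a_{(n)}$ for a homogeneous element $a$ of $\cw_\kappa$. Together with the fact that $L_0$ acts locally finitely on $M$, this implies that $a_{(n)}$ sends the generalized eigenspace $M_d$ into the generalized eigenspace $M_{d-n}$. Since $n \in \mathbb{Z}$, the eigenvalue $d-n$ lies in the same coset $\gamma \in \mathbb{C}/\mathbb{Z}$ as $d$, so $a_{(n)}$ maps $M_\gamma$ into $M_\gamma$. This gives the desired submodule property.

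There is really no main obstacle here; the lemma is a formality from the integrality of the conformal weights on $\cw_\kappa$, exactly analogous to the familiar decomposition of modules for the Virasoro algebra into blocks indexed by $L_0$-eigenvalue modulo $\mathbb{Z}$. The only mild care needed is the passage from ordinary to generalized eigenspaces, which is harmless because $[L_0, a_{(n)}] = -n \cdot a_{(n)}$ implies $(L_0 - (d-n))^N a_{(n)} m = a_{(n)} (L_0 - d)^N m$ for all $N$ and $m \in M$.
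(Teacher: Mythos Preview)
Your proof is correct and follows the same approach the paper indicates: the paper states the lemma without proof, prefacing it only with ``since $\mathscr{W}$ is $\Z^{\geqslant 0}$ graded by $L_0$,'' which is exactly the mechanism you unpack via $[L_0, a_{(n)}] = -n\, a_{(n)}$ and the resulting integer shift of generalized $L_0$-eigenvalues. Your handling of the generalized eigenspace case is the only point requiring any care, and you address it correctly.
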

 
Thus, to define the Jordan-H\"older content of $M$, it suffices to consider $M = M_\gamma$ for some $\gamma \in \C/\Z$. Fix a lift $\dot{\gamma} \in \C$ of $\gamma$. For $\lambda \in \Wf \backslash \h^*$, write $\langle L_0, \lambda \rangle$ for its lowest energy, i.e. the evaluation of $\lambda$ on the element of $\operatorname{Zhu}(\cW_\kappa)$ corresponding to the conformal vector $\omega$. 

\begin{lemma} An object $M = M_\gamma$ admits a finite filtration $$0 = M_0 \subset M_1 \subset \cdots \subset M_{n-1} \subset M_n = M$$such that each successive quotient $M_i/M_{i-1}$ is either (i) a simple module $L(\lambda_i)$, with $ -\langle L_0, \lambda_i \rangle \in \dot{\gamma} + \Z^{\geqslant 0}$, or (ii) has $-L_0$ eigenvalues in $\dot{\gamma} + \Z^{< 0}$. 
\label{ffil} \end{lemma}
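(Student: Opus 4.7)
The plan is a straightforward descending induction on the ``mass above $\dot\gamma$''
\[
N_{\dot\gamma}(M) \; := \; \sum_{d \,\in\, \dot\gamma + \Z^{\geqslant 0}} \dim M_d,
\]
which is a finite nonnegative integer since by Definition \ref{defoglarg} the set of $d \in \dot\gamma + \Z$ with $M_d \neq 0$ is bounded above and each $M_d$ is finite-dimensional. The filtration will be produced by repeatedly peeling simple quotients off the top, a process which must terminate because this invariant is a nonnegative integer that strictly drops.

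If $N_{\dot\gamma}(M) = 0$, the trivial filtration $0 \subset M$ consists of the single layer $M$, which is of type (ii) by definition. Otherwise, let $d^* := \max\{d : M_d \neq 0\}$, which lies in $\dot\gamma + \Z^{\geqslant 0}$. I first observe that every vector in $M_{d^*}$ is singular: for a homogeneous $a \in \cW_\kappa$ of degree $|a|$, the mode $a_n$ shifts the $-L_0$-grading by $n$, so for $v \in M_{d^*}$ and $n > 0$ one has $a_n v \in M_{d^* + n} = 0$ by maximality of $d^*$. Since $\operatorname{Zhu}(\cW_\kappa) \simeq Z\fg$ is commutative and acts on the finite-dimensional space $M_{d^*}$, I can select an honest eigenvector $v \in M_{d^*}$ of weight $\lambda \in \Wf \backslash \h^*$. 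Then $v$ is a highest weight vector in $M$, and the submodule $N := \cW_\kappa \cdot v$ is a highest weight module of weight $\lambda$, with unique simple quotient $L(\lambda)$; writing $N' \subsetneq N$ for the corresponding maximal proper submodule, $N/N' \simeq L(\lambda)$ is a type-(i) layer since $-\langle L_0, \lambda \rangle = d^* \in \dot\gamma + \Z^{\geqslant 0}$.

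To finish, I apply the inductive hypothesis to $N'$ and to $M/N$ and splice the resulting filtrations into $0 \subseteq N' \subseteq N \subseteq M$. The invariant strictly decreases in each: $N'_{d^*} = 0$ forces $N_{\dot\gamma}(N') < N_{\dot\gamma}(M)$, and the class of $v$ is killed in $M/N$, so $N_{\dot\gamma}(M/N) < N_{\dot\gamma}(M)$ as well. The only delicate point I anticipate is the selection of an honest (rather than merely generalized) Zhu-eigenvector at the top layer, which I would handle by the commutativity of $Z\fg$ together with the fact that a nonzero generalized eigenspace of a single operator contains an honest eigenvector, applied iteratively to a set of generators.
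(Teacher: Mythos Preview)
Your proof is correct and follows essentially the same approach as the paper's: both induct on the total dimension $D = N_{\dot\gamma}(M)$ above $\dot\gamma$, locate a Zhu eigenvector in the top graded piece (which is automatically singular by maximality), form the highest weight submodule it generates together with its maximal proper submodule, and recurse on the two remaining pieces. Your write-up simply supplies more of the routine justifications (singularity of the top layer, existence of an honest Zhu eigenvector via commutativity of $Z\fg$, and the strict drop of the invariant) that the paper leaves implicit.
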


\begin{proof} By assumption, $D := \dim \bigoplus_{n \in \Z^{\geqslant 0}} M_{\dot{\gamma} + n} < \infty.$ Take $n$ maximal for which $M_{\dot{\gamma} + n}$ is nonzero. Then $\operatorname{Zhu}(\mathscr{W})$ acts on $M_{\dot{\gamma} + n}$, and we may take an eigenvector $v$ with eigenvalue $\lambda \in W_{\operatorname{f}} \backslash \h^*$. This induces a map $M(\lambda) \rightarrow v$, whose image we call $M'$. There is a short exact sequence $0 \rightarrow N' \rightarrow M' \rightarrow L(\lambda) \rightarrow 0$ which we use to form a filtration $N' \subset M' \subset M.$ By induction on $D$, we may produce filtrations of $N'$ and $M/M'$ of the desired form. The induced filtration of $M$ satisfies the conditions of the lemma. \end{proof}

We would like to say for  $\lambda \in W_{\operatorname{f}} \backslash \h^*,$ $-\langle L_0, \lambda \rangle \in \dot{\gamma} + \Z^{\geqslant 0}$, that $[M: L(\lambda)]$ should be the number of times in a filtration as above the successive quotient is isomorphic to $L(\lambda)$. 

To do so, consider $\OO_{\gamma}$, the full subcategory of $\OO$ consisting of objects with $-L_0$ eigenvalues in $\gamma + \Z$. Within $\OO_\gamma$, consider $\OO_{<\dot{\gamma}}$, the full subcategory of $\OO$ consisting of objects with $-L_0$ eigenvalues in $\dot{\gamma} + \Z^{ < 0}.$ By construction, we have:

\begin{lemma} $\OO_{< \dot{\gamma}}$ is a thick subcategory of $\OO_{\gamma}$, i.e. is a full abelian subcategory closed under subquotients and extensions. \end{lemma}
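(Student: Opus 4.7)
My plan is to reduce the claim to the elementary observation that, in the present setting, morphisms and exact sequences in $\OO_\gamma$ behave well with respect to the generalized $L_0$-eigenspace decomposition. Since every morphism $f \colon M \to N$ in $\cw_\kappa\operatorname{-mod}^\heartsuit$ is $\cw_\kappa$-linear, it commutes with the action of $L_0$, hence preserves the generalized eigenspace filtrations $(L_0 + d)^{N}$. Consequently $f(M_d) \subset N_d$ for every $d \in \C$, and in fact each $M_d$ is naturally a subquotient-compatible piece of $M$.

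With this in place, closure under subobjects and quotients is immediate. If $N \subset M \in \OO_{<\dot\gamma}$, then by the observation above $N_d$ is a subspace of $M_d$, so vanishes for $d \notin \dot\gamma + \Z^{<0}$; the finite-dimensionality in each degree and the upper-boundedness condition of Definition \ref{defoglarg} are automatically inherited. The same argument applies verbatim to quotients, using that $(M/N)_d$ is a quotient of $M_d$.

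For closure under extensions, I plan to use that each $L_0$-generalized eigenspace functor $M \mapsto M_d$ is exact on $\OO_\gamma$. Given $0 \to N \to M \to Q \to 0$ in $\OO_\gamma$ with $N, Q \in \OO_{<\dot\gamma}$, the sequence $0 \to N_d \to M_d \to Q_d \to 0$ is exact for each $d$, so $N_d = Q_d = 0$ forces $M_d = 0$; moreover $M_d$ is finite dimensional, and the upper bound on non-vanishing degrees for $M$ is controlled by the upper bounds for $N$ and $Q$. This verifies that $M \in \OO_{<\dot\gamma}$.

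Finally, to conclude that $\OO_{<\dot\gamma}$ is a full abelian subcategory (not merely an additive one stable under subquotients and extensions), I will note that fullness is built into the definition and that closure under kernels and cokernels within $\OO_\gamma$ follows from closure under subobjects and quotients. There is no real obstacle here; the only point that warrants a line of justification is the exactness of the assignment $M \mapsto M_d$, which holds because the projectors onto generalized eigenspaces are polynomials in $L_0$ acting on any object whose degreewise components are finite-dimensional and bounded above.
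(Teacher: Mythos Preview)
Your argument is correct and is precisely the natural verification; the paper itself offers no proof, stating the lemma as evident ``by construction.'' One small remark: your closing justification for the exactness of $M \mapsto M_d$ via projectors being ``polynomials in $L_0$'' is not quite accurate, since an object of $\OO_\gamma$ typically has infinitely many nonzero graded components (bounded above but unbounded below), so no single polynomial in $L_0$ realizes the projection onto a fixed $M_d$. The exactness you need follows more directly from what you already used: since by definition every $M \in \OO_\gamma$ decomposes as $\bigoplus_d M_d$ and morphisms preserve this decomposition, for any short exact sequence $0 \to N \to M \to Q \to 0$ one checks by hand that $0 \to N_d \to M_d \to Q_d \to 0$ is exact (e.g.\ surjectivity of $M_d \to Q_d$ holds because any preimage in $M$ of $q \in Q_d$ has its $d$-component also mapping to $q$, by uniqueness of the decomposition in $Q$). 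This is a minor point and does not affect the validity of your proof.
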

As with any thick subcategory of an abelian category, one can form the quotient abelian category $\OO_\gamma / \OO_{< \dot{\gamma}}$. The objects are again those of $\OO_\gamma$, but the morphisms between $M$ and $N$ are the direct limit over $\Hom(M', N/N')$, where $M'$ is a subobject of $M$ with $M/M' \in \OO_{< \dot{\gamma}}$ and $N' \in \OO_{< \dot{\gamma}}$ is a subobject of $N$.

\begin{pro} In the quotient category $\OO_\gamma / \OO_{< \dot{\gamma}}$ every object is of finite length, and the isomorphism classes of irreducibles are given by the $L(\lambda), -\langle L_0,  \lambda \rangle \in \dot{\gamma} + \Z^{\geqslant 0}.$
 \end{pro}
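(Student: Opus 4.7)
The plan is to deduce everything from the finite filtration produced in Lemma \ref{ffil}, once we verify that the modules $L(\lambda)$ with $-\langle L_0, \lambda\rangle \in \dot\gamma + \Z^{\geqslant 0}$ survive as simple, pairwise non-isomorphic objects in the quotient. Since $\OO_{<\dot\gamma}$ is thick in $\OO_\gamma$ the Serre quotient $\OO_\gamma/\OO_{<\dot\gamma}$ exists as an abelian category, and the kernel of the localization is exactly $\OO_{<\dot\gamma}$. So the first thing I would check is precisely which $L(\lambda)$ lie in $\OO_{<\dot\gamma}$: by definition $L(\lambda)$ has top $-L_0$-eigenvalue $-\langle L_0,\lambda\rangle$, and all other eigenvalues shifted by $\Z^{>0}$, so $L(\lambda) \in \OO_{<\dot\gamma}$ iff $-\langle L_0,\lambda\rangle \in \dot\gamma + \Z^{<0}$. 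This gives the candidate list of simples.

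Next I would establish simplicity and distinctness. A subobject of $L(\lambda)$ in the quotient is represented by a pair $N' \subset M' \subset L(\lambda)$ with $L(\lambda)/M'$ and $N'$ both in $\OO_{<\dot\gamma}$. Since $L(\lambda)$ is simple in $\OO_\gamma$ and (by the previous step) not in $\OO_{<\dot\gamma}$, we must have $M' = L(\lambda)$ and $N' = 0$, so $L(\lambda)$ is simple in the quotient. For distinctness, any nonzero map $L(\lambda)\to L(\mu)$ in the quotient is represented by a genuine nonzero map $L(\lambda)\to L(\mu)/N'$ with $N'\in \OO_{<\dot\gamma}$; simplicity of $L(\mu)$ together with $L(\mu)\notin \OO_{<\dot\gamma}$ forces $N'=0$, reducing to a nonzero map $L(\lambda)\to L(\mu)$ in $\OO_\gamma$, hence $\lambda=\mu$.

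Finally, for finite length and the claim that the listed simples exhaust the irreducibles, I would run the filtration $0=M_0\subset\cdots\subset M_n=M$ provided by Lemma \ref{ffil}. Every successive quotient of type (ii) lies in $\OO_{<\dot\gamma}$ and thus becomes zero in the quotient, so in $\OO_\gamma/\OO_{<\dot\gamma}$ the induced filtration of $M$ has successive quotients which are either zero or of the form $L(\lambda_i)$ with $-\langle L_0,\lambda_i\rangle \in \dot\gamma + \Z^{\geqslant 0}$. This exhibits a finite composition series, so $M$ is of finite length, and, by the Jordan--H\"older theorem in the abelian category $\OO_\gamma/\OO_{<\dot\gamma}$, every simple object arises this way. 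The only step requiring any care is the verification that the Serre-quotient subobjects of $L(\lambda)$ are controlled as above; this is the main (but routine) obstacle, and once it is in place the rest of the argument is formal.
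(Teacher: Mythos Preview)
Your proposal is correct and follows essentially the same approach as the paper: use the filtration of Lemma \ref{ffil} to obtain finite length and the exhaustive list of simples, then verify via the explicit description of morphisms in the Serre quotient that the $L(\lambda)$ with $-\langle L_0,\lambda\rangle \in \dot\gamma + \Z^{\geqslant 0}$ remain simple and pairwise non-isomorphic. The paper's proof is terser, asserting these last points as ``easy to see from the explicit form of morphisms and the simplicity of the $L(\lambda)$,'' whereas you spell out the Serre-quotient bookkeeping in detail; the substance is the same.
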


\begin{proof} It follows from Lemma \ref{ffil} that every object is of finite length, and any simple object is of the form $L(\lambda), -\langle L_0,  \lambda \rangle \in \dot{\gamma} + \Z^{\geqslant 0}$. From the explicit form of morphisms, and using the simplicity of the $L(\lambda)$, it is easy to see the $L(\lambda), -\langle L_0,  \lambda \rangle \in \dot{\gamma} + \Z^{\geqslant 0},$ remain mutually non-isomorphic. \end{proof}

With this, we can prove:

\begin{theo} Fix $M \in \OO_\gamma$ and $L(\lambda)$ with $- \langle L_0, \lambda \rangle \in \gamma + \Z$. If we pick a $\dot{\gamma}$ with $-\langle L_0, \lambda \rangle \in \dot{\gamma} + \Z^{\geqslant 0}$, and a filtration of $M$ as in Lemma \ref{ffil}, then the number $[M:L(\lambda)]$ of times $M_i/M_{i-1}$ is isomorphic to $L(\lambda)$ is independent of the choice of $\dot{\gamma}$ and the filtration.

\end{theo}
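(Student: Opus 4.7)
The plan is to reduce the well-definedness to an application of the Jordan--H\"older theorem inside the Serre quotient $\OO_\gamma / \OO_{<\dot\gamma}$, and then verify independence of the auxiliary choice $\dot\gamma$ by comparing two such quotients.

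First, I would fix $\dot\gamma$ with $-\langle L_0, \lambda \rangle \in \dot\gamma + \Z^{\geqslant 0}$ and a filtration $0 = M_0 \subset \cdots \subset M_n = M$ as in Lemma \ref{ffil}. Pass to the quotient abelian category $\OO_\gamma/\OO_{<\dot\gamma}$; by the preceding proposition, $M$ has finite length there, and its simple objects are the $L(\mu)$ with $-\langle L_0, \mu\rangle \in \dot\gamma + \Z^{\geqslant 0}$. Quotients of type (ii) in the filtration become zero in $\OO_\gamma/\OO_{<\dot\gamma}$, while quotients of type (i), namely the $L(\lambda_i)$, remain simple and nonzero. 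Hence the image filtration is a composition series in the quotient, and the usual Jordan--H\"older theorem for a finite-length abelian category guarantees that the multiplicity of $L(\lambda)$ among the $L(\lambda_i)$ depends only on $M$ and $\dot\gamma$, not on the filtration.

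It remains to show independence of $\dot\gamma$. Given two choices $\dot\gamma_1$ and $\dot\gamma_2$ compatible with $\lambda$, without loss of generality $\dot\gamma_2 = \dot\gamma_1 - N$ for some $N \in \Z^{\geqslant 0}$. Then $\OO_{<\dot\gamma_2} \subset \OO_{<\dot\gamma_1}$, so there is a canonical exact quotient functor
\[
q : \OO_\gamma/\OO_{<\dot\gamma_2} \longrightarrow \OO_\gamma/\OO_{<\dot\gamma_1},
\]
which kills precisely the simples $L(\mu)$ with $-\langle L_0, \mu\rangle \in \dot\gamma_1 + \Z^{<0}$ and is the identity on all other simples $L(\mu)$ with $-\langle L_0, \mu\rangle \in \dot\gamma_2 + \Z^{\geqslant 0}$. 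Since $-\langle L_0, \lambda\rangle \in \dot\gamma_1 + \Z^{\geqslant 0}$, the functor $q$ preserves the multiplicity of $L(\lambda)$ in any composition series. Any filtration of $M$ as in Lemma \ref{ffil} for $\dot\gamma_2$ gives, after applying $q$, a filtration whose nonzero successive quotients appear among those of some filtration for $\dot\gamma_1$; the $\dot\gamma_1$-count of $L(\lambda)$ therefore equals the $\dot\gamma_2$-count.

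The only subtle point, and the one I would take care over, is verifying that the Serre quotients and the quotient functor $q$ behave as expected for this not-necessarily-finite-length ambient category; this is why the proof is organized so that all length-theoretic arguments are pushed into the quotient, where the preceding proposition guarantees finiteness. Once that is in hand, the result reduces to standard Jordan--H\"older, and the rest is bookkeeping.
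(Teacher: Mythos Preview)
Your proposal is correct and follows essentially the same approach as the paper: for fixed $\dot\gamma$, both arguments identify $[M:L(\lambda)]$ with the Jordan--H\"older multiplicity in the Serre quotient $\OO_\gamma/\OO_{<\dot\gamma}$, and for independence of $\dot\gamma$ the paper refines a $\dot\gamma$-filtration to a $(\dot\gamma-n)$-filtration while you pass through the exact quotient functor $\OO_\gamma/\OO_{<\dot\gamma_2}\to\OO_\gamma/\OO_{<\dot\gamma_1}$, which is the same observation phrased categorically.
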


\begin{proof} For fixed $\dot{\gamma}$, the independence from the choice of filtration follows from the fact that $[M:L(\lambda)]$ is really the Jordan-H\"older content in the quotient category $\OO_\gamma / \OO_{< \dot{\gamma}}$. To compare the number $[M:L(\lambda)]$ obtained from $\dot{\gamma}$ and $\dot{\gamma} - n, n \in \Z^{\geqslant 0}$, one can refine a filtration for $\dot{\gamma}$ as in Lemma \ref{ffil} to obtain one for $\dot{\gamma} - n$. \end{proof}

For a general object $M$ of $\OO$, and $L(\lambda)$ with $-\langle L_0, \lambda \rangle \in \gamma + \Z$, we define $[M: L(\lambda)] := [M_\gamma: L(\lambda)]$, where $M_\gamma$ was defined in Lemma \ref{break}. We now collect the basic properties of this Jordan-H\"older content:

\begin{pro} For a short exact sequence $0 \rightarrow M' \rightarrow M \rightarrow M'' \rightarrow 0$ and any $\lambda \in W_{\operatorname{f}} \backslash \h^*$, we have $[M: L(\lambda)] = [M': L(\lambda)] + [M'': L(\lambda)].$ \label{jh1} \end{pro}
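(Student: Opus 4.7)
The plan is to reduce the assertion to standard Jordan--H\"older additivity in a finite length abelian category. First, I would invoke the direct sum decomposition of Lemma \ref{break}, which assigns to each $M \in \OO$ its summand $M_\gamma$ consisting of vectors with $-L_0$ generalized eigenvalues in $\gamma + \Z$. Since this decomposition is functorial in $M$ and exact (it is simply the decomposition into isotypic components for a locally finite action), the short exact sequence $0 \to M' \to M \to M'' \to 0$ restricts for each $\gamma \in \C/\Z$ to a short exact sequence $0 \to M'_\gamma \to M_\gamma \to M''_\gamma \to 0$ in $\OO_\gamma$. Letting $\gamma$ be the class of $-\langle L_0, \lambda\rangle$ modulo $\Z$, and recalling that by definition $[M:L(\lambda)] := [M_\gamma:L(\lambda)]$, we may assume $M$, $M'$, $M''$ all lie in $\OO_\gamma$.

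Next, choose a lift $\dot\gamma \in \C$ with $-\langle L_0, \lambda\rangle \in \dot\gamma + \Z^{\geqslant 0}$, and pass to the Serre quotient $\OO_\gamma / \OO_{<\dot\gamma}$. Since $\OO_{<\dot\gamma}$ is a thick subcategory of $\OO_\gamma$, the localization functor $Q: \OO_\gamma \to \OO_\gamma/\OO_{<\dot\gamma}$ is exact, and hence we obtain a short exact sequence $0 \to Q(M') \to Q(M) \to Q(M'') \to 0$ in the quotient.

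Finally, by the preceding proposition every object of $\OO_\gamma / \OO_{<\dot\gamma}$ has finite length with simple objects the images of the $L(\mu)$ for $-\langle L_0,\mu\rangle \in \dot\gamma + \Z^{\geqslant 0}$; the usual Jordan--H\"older theorem for finite length abelian categories then gives additivity of composition factor multiplicities on short exact sequences. Combined with the identification, already used in the proof of the preceding theorem, of $[M:L(\lambda)]$ with the multiplicity of $Q(L(\lambda))$ as a composition factor of $Q(M)$, this yields the desired equality $[M:L(\lambda)] = [M':L(\lambda)] + [M'':L(\lambda)]$. There is no real obstacle here: the entire content of the proposition is the compatibility of the $\cW$-algebraic definition of multiplicity with a standard abstract construction, and the two reductions above (to a fixed $\gamma$-component, and then to the Serre quotient) are both straightforward once Lemma \ref{break} and the exactness of Serre localization are in hand.
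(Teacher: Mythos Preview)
Your proof is correct and follows precisely the approach the paper intends. The paper does not give an explicit proof of this proposition, simply recording it as a basic property of the Jordan--H\"older content just constructed; your argument---reducing to $\OO_\gamma$ via Lemma \ref{break}, passing to the Serre quotient $\OO_\gamma/\OO_{<\dot\gamma}$ where the multiplicity was already identified as an honest Jordan--H\"older multiplicity, and invoking additivity there---is exactly the intended justification.
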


\begin{pro} An object $M$ of $\OO$ is zero if and only if $[M: L(\lambda)] = 0$ for all $\lambda \in W_{\operatorname{f}} \backslash \h^*$. 
 \label{jh2}
\end{pro}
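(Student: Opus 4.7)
\medskip

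The ``only if'' direction is immediate from additivity (Proposition \ref{jh1}) applied to $0 \to M \to M \to 0$. My plan for the converse is to locate, inside a nonzero $M$, a simple subquotient $L(\lambda)$ sitting at the top of the $-L_0$ spectrum, forcing $[M:L(\lambda)] \geq 1$. First I would reduce to the case $M = M_\gamma$ for some $\gamma \in \mathbb{C}/\mathbb{Z}$, using Lemma \ref{break} together with the observation that $[M:L(\lambda)]$ was defined precisely as $[M_\gamma : L(\lambda)]$ for the unique relevant $\gamma$ (so the contents in distinct $\gamma$-blocks are independent). Assuming $M = M_\gamma \neq 0$ for contradiction, I invoke condition (2) of Definition \ref{defoglarg}: the $-L_0$ eigenvalues $d \in \gamma + \mathbb{Z}$ with $M_d \neq 0$ form a nonempty set bounded above, so they admit a maximum, which I take as the lift $\dot{\gamma}$.

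With this choice of $\dot{\gamma}$, I apply Lemma \ref{ffil} to produce a finite filtration $0 = N_0 \subset N_1 \subset \cdots \subset N_n = M$ whose successive quotients are either (i) simples $L(\lambda_i)$ with $-\langle L_0, \lambda_i \rangle \in \dot{\gamma} + \mathbb{Z}^{\geqslant 0}$ or (ii) modules whose $-L_0$ spectrum lies in $\dot{\gamma} + \mathbb{Z}^{<0}$. Since $M_{\dot{\gamma}} \neq 0$, picking the smallest $i$ with $(N_i)_{\dot{\gamma}} \neq 0$ produces a successive quotient $N_i/N_{i-1}$ with nonzero $\dot{\gamma}$-weight space; the type (ii) option is excluded by the weight constraint, so $N_i/N_{i-1} \cong L(\lambda_i)$ for some $\lambda_i$ with $-\langle L_0, \lambda_i \rangle \in \dot{\gamma} + \mathbb{Z}^{\geqslant 0}$. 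On the other hand, the top $-L_0$ weight of any subquotient of $M$ is bounded above by $\dot{\gamma}$, and for the simple $L(\lambda_i)$ that top weight is exactly $-\langle L_0, \lambda_i \rangle$; combining these two bounds forces $-\langle L_0, \lambda_i \rangle = \dot{\gamma}$. Reading off the filtration then gives $[M : L(\lambda_i)] \geqslant 1$, contradicting the hypothesis.

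There is no real obstacle; the only point requiring care is that $[M:L(\lambda_i)]$ computed from the particular $\dot{\gamma}$ and filtration I chose is well-defined, but this is exactly the content of the theorem preceding this proposition (independence of $\dot{\gamma}$ and of the filtration), so I can appeal to it directly.
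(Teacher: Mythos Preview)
Your argument is correct. The paper does not actually supply a proof of this proposition, treating it as a routine consequence of the Jordan--H\"older setup just developed; what you have written is the natural argument one would give. One small redundancy: you need not pin down $-\langle L_0,\lambda_i\rangle=\dot{\gamma}$ exactly, since the type (i) condition from Lemma~\ref{ffil} already guarantees $-\langle L_0,\lambda_i\rangle\in\dot{\gamma}+\mathbb{Z}^{\geqslant 0}$, which is precisely what the defining theorem requires in order to read off $[M:L(\lambda_i)]$ from your chosen $\dot{\gamma}$ and filtration. With that simplification the proof is as short as it should be.
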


\subsection{(Co-)Highest weight filtrations} Recall the basic operation of picking a highest weight vector in an object of Category $\OO$, which already appeared in Lemma \ref{ffil}. Iterating this, we obtain:

\begin{pro} Suppose $M = M_\gamma$ for some $\gamma \in \C/\Z$. Then $M$ admits an ascending filtration: $$0 = M_0 \hookrightarrow M_1 \hookrightarrow M_2 \hookrightarrow \cdots, \quad \quad \varinjlim_i M_i = M,$$such that each successive cokernel $M_i/M_{i-1}, i \geqslant 1,$ is a highest weight module. 
 
 \label{hwfil}
\end{pro}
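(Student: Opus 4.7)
The plan is to build the filtration inductively by peeling off a highest weight submodule from the top of $M$ in $L_0$-eigenvalue. I will set $M_0 = 0$. Given $M_{i-1} \neq M$, note that $M/M_{i-1}$ again lies in $\OO_\gamma$, so by Definition~\ref{defoglarg}(2) its $L_0$-eigenvalues are bounded above; denote the maximum by $\mu_i$, and write $T_i := (M/M_{i-1})_{\mu_i}$ for the corresponding top eigenspace, which is finite dimensional. As in the proof of Lemma~\ref{ffil}, every vector of $T_i$ is automatically singular, since no mode of a homogeneous element of $\mathscr{W}_\kappa$ can produce a vector of strictly larger $L_0$-eigenvalue. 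Since the Zhu algebra $\operatorname{Zhu}(\mathscr{W}_\kappa) \simeq Z\fg$ is commutative and acts on the finite-dimensional complex vector space $T_i$, one can choose a simultaneous eigenvector $\bar v \in T_i$, which is then a highest weight vector. Define $M_i$ to be the preimage in $M$ of the $\mathscr{W}$-submodule of $M/M_{i-1}$ generated by $\bar v$; by construction $M_i/M_{i-1}$ is a highest weight module.

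The remaining task is to verify that $\varinjlim_i M_i = M$. The sequence $\mu_i$ is weakly decreasing in $i$, since $M/M_i$ is a quotient of $M/M_{i-1}$. Moreover, the top $L_0$-eigenspace of any highest weight module is one dimensional, so whenever $\mu_i = \mu$ we have $\dim(M/M_i)_\mu = \dim(M/M_{i-1})_\mu - 1$. Hence after at most $\dim M_\mu < \infty$ consecutive steps at level $\mu$, the top strictly drops. Now fix an eigenvalue $d$ of $M$: by Definition~\ref{defoglarg}(2) only finitely many eigenvalues of $M$ exceed $d$, and each such eigenspace is finite dimensional, so after finitely many steps $\mu_i < d$, forcing $M_d \subset M_i$. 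Every eigenspace of $M$ is therefore eventually captured, and the ascending union equals $M$.

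The only slightly subtle point is the existence of a simultaneous $Z\fg$-eigenvector inside $T_i$; this is a routine consequence of the commutativity of $Z\fg$ together with the fact that the image of any commutative $\mathbb{C}$-subalgebra of $\operatorname{End}(T_i)$ admits a one-dimensional quotient, hence a nonzero common kernel on $T_i$. All other steps are formal bookkeeping, and if the process terminates at some finite stage because $M_{i-1} = M$, we simply set $M_j = M$ for all $j \geq i-1$, which is still an ascending filtration of the required form.
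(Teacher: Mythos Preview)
Your argument is correct and follows essentially the same route as the paper's proof: pick a highest weight vector in the extremal $L_0$-eigenspace of $M/M_{i-1}$, let $M_i$ be the preimage of the submodule it generates, and iterate; your explicit verification that $\varinjlim_i M_i = M$ simply spells out what the paper dismisses as ``clear.'' The one slip is a sign: in the paper's conventions $M_d$ is the generalized $(-d)$-eigenspace of $L_0$ and condition~(2) of Definition~\ref{defoglarg} bounds $d$ above, so the $L_0$-eigenvalues are bounded \emph{below}, not above, and the ``top'' eigenspace $T_i$ sits at the \emph{minimal} $L_0$-eigenvalue---the modes that must annihilate $T_i$ are those that would lower $L_0$ further. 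Your argument is internally consistent under the flipped convention, so this is a cosmetic correction rather than a gap.
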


\begin{proof} Begin as in Lemma \ref{ffil} to obtain $M'$, a highest weight submodule of $M$ whose highest weight line lies in the maximal nonzero $M_d, d \in \gamma + \Z$. We set $M_1 = M'$, and apply the same construction to $M/M_1$ to produce $M_2$, etc. Since we use a maximal nonzero $M_d$ at each step, and the weight spaces $M_d, d \in \C$, are all finite dimensional, it is clear this construction satisfies the conditions of the proposition. 
\end{proof}

By applying the duality $\mathbf{D}$ on Category $\OO$, we obtain a dual form of Proposition \ref{hwfil}.

\begin{pro} Suppose $M = M_\gamma$ for some $\gamma \in \C/\Z$. Then $M$ admits a descending filtration: $$ \cdots \twoheadrightarrow A_{-2} \twoheadrightarrow A_{-1} \twoheadrightarrow A_0 = 0, \quad \quad \varprojlim_i A_i = M,$$such that each successive kernel $\ker(A_{i} \rightarrow A_{i+1}), i \leqslant -1$, is a co-highest weight module. 
 
 \label{chwfil}
\end{pro}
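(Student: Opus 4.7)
The plan is to deduce this directly from Proposition \ref{hwfil} by transport along the contravariant duality $\mathbf{D}: \OO \to \OO^{op}$ recalled in Subsection \ref{dp}. The key observation is that every piece of structure appearing in Proposition \ref{hwfil} has a natural image under $\mathbf{D}$: injections become surjections, highest weight modules become co-highest weight modules (this is literally the definition given in Subsection \ref{dp}), and a filtered colimit in $\OO$ becomes a cofiltered limit of the dual system.

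First, I would observe that the decomposition of Lemma \ref{break} is preserved by $\mathbf{D}$. Indeed, $(\mathbf{D}M)_d$ is by construction the linear dual of $M_d$, so the generalized $L_0$-eigenvalues of $\mathbf{D}M$ coincide with those of $M$, and in particular $\mathbf{D}(M_\gamma) = (\mathbf{D}M)_\gamma$. Hence if $M = M_\gamma$, then $\mathbf{D}M = (\mathbf{D}M)_\gamma$ as well, and Proposition \ref{hwfil} applies to produce an ascending filtration
\[
0 = N_0 \hookrightarrow N_1 \hookrightarrow N_2 \hookrightarrow \cdots, \qquad \varinjlim_{i \geqslant 0} N_i = \mathbf{D}M,
\]
with each $N_i/N_{i-1}$ a highest weight module.

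Next, I would apply $\mathbf{D}$ termwise. Setting $A_{-i} := \mathbf{D}N_i$ for $i \geqslant 0$, the inclusions $N_{i-1} \hookrightarrow N_i$ become surjections $A_{-i} \twoheadrightarrow A_{-(i-1)}$, with $A_0 = \mathbf{D}(0) = 0$. The kernel of $A_{-i} \twoheadrightarrow A_{-(i-1)}$ is $\mathbf{D}(N_i / N_{i-1})$, which is co-highest weight by the definition recalled in Subsection \ref{dp}. It remains to identify $\varprojlim_i A_{-i}$ with $M$. This is the only step requiring any verification: one checks it $L_0$-eigenspace by $L_0$-eigenspace, where finite dimensionality forces the ascending system $(N_i)_d$ to stabilize at some finite stage $n = n(d)$, so that $(\mathbf{D}M)_d = (N_n)_d$ and dually $M_d = (A_{-n})_d$, with all further transition maps isomorphisms. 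Taking the limit over $i$ of $(A_{-i})_d$ therefore recovers $M_d$, and assembling across $d$ (using that each weight space of the prospective limit matches that of $M$) gives $\varprojlim_i A_{-i} \simeq M$ in $\OO$.

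The argument is essentially formal, with no real obstacle: the only mild subtlety is confirming that the limit is genuinely computed in $\OO$ and coincides weight-space-by-weight-space with $M$, but this is immediate from the eventual stabilization on each $L_0$-eigenspace, since $\mathbf{D}$ is defined by taking linear duals on these finite dimensional eigenspaces.
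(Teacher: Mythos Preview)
Your proposal is correct and takes essentially the same approach as the paper: the paper simply states that the result is obtained by applying the duality $\mathbf{D}$ to Proposition \ref{hwfil}, and you have filled in exactly those details. Your eigenspace-by-eigenspace verification of the inverse limit is a nice explicit justification of what the paper leaves implicit, and indeed matches the content of the remark the paper makes immediately after the proposition.
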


\begin{re} It may be psychologically helpful for the reader to note that in the the inverse limit of Proposition \ref{chwfil}, for each $M_d, d \in \C,$ the inverse limit $\varprojlim_i (A_i)_d$ stabilizes after finitely many steps. 
\end{re}

\begin{re}The results and proofs in this section should apply verbatim to any $\mathbb{Z}^{\geqslant 0}$ graded vertex algebra which is finitely strongly generated, cf. \cite{fbz}. 
\end{re}

\section{The abstract linkage principle} \label{slinkies}
In this section and its sequel, we will  canonically decompose Category $\OO$ for $\cW_\kappa$ as a direct sum over images of $W$ orbits in $W_{\operatorname{f}} \backslash \h^*$. For nonintegral weights and levels the blocks should, as usual,  refine the above decomposition. In this first section, we give a general abstract decomposition of $\OO$ into blocks. For Kac--Moody algebras this is done in \cite{dgk}, \cite{moop} by similar means, albeit with less use of co-highest weight filtrations. In Section \ref{clinkies}, we will then provide a Coxeter theoretic description.

We now introduce notation. For $\mu, \nu \in \Wf \backslash \h^*,$  write $\mu \prec \nu$ if $L(\mu)$ is a subquotient of $M(\mu)$. Consider the finest partition$$\Wf \backslash \h^* = \bigsqcup_{\alpha} \mathscr{P}_\alpha$$such that if $\nu \in \mathscr{P}_\alpha$ and $\nu \prec \mu$, then $\mu \in \mathscr{P}_\alpha$. For fixed $\mathscr{P}_\alpha$, write $\OO_\alpha$ for the full subcategory of $\OO$ consisting of objects $M$ all of whose simple subquotients have highest weights in $\mathscr{P}_\alpha$. We will refer to $\mathscr{P}_\alpha$ as a {\em linkage class} and to $\OO_\alpha$ as the corresponding {\em block}. Our goal is to prove the following theorem, which gives the block decomposition of $\OO$.

\begin{theo} Every object $M$ of $\OO$ canonically decomposes as a sum $\bigoplus_\alpha M_\alpha$, where $M_\alpha$ lies in $\OO_\alpha$. 
\label{blocks}
\end{theo}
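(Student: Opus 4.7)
The plan is to invoke the ascending highest weight filtration of Proposition \ref{hwfil} and reduce the decomposition of $M$ to an $\Ext^1$-vanishing lemma between distinct blocks, which in turn is handled via the Zhu-algebra action on singular vectors.

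By Lemma \ref{break} one may assume $M = M_\gamma$ for a single $\gamma \in \C/\Z$. For each linkage class $\alpha$, let $M^{[\alpha]} \subset M$ denote the sum of all subobjects of $M$ belonging to $\OO_\alpha$; Proposition \ref{jh1} ensures $M^{[\alpha]} \in \OO_\alpha$, and Proposition \ref{jh2} shows the sum $\sum_\alpha M^{[\alpha]}$ is internally direct, since any element of the intersection $M^{[\alpha_i]} \cap \sum_{j\neq i} M^{[\alpha_j]}$ generates a submodule with Jordan--H\"older content forced into the empty set $\mathscr{P}_{\alpha_i} \cap \bigcup_{j\neq i}\mathscr{P}_{\alpha_j}$.

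The substantive claim $M = \bigoplus_\alpha M^{[\alpha]}$ I would obtain by induction along the ascending filtration $0 = M_0 \subset M_1 \subset \cdots$ with $\varinjlim M_n = M$, showing at each stage that $M_n$ admits a finite block decomposition $M_n = \bigoplus_\alpha M_n^{[\alpha]}$ with $M_n^{[\alpha]} \in \OO_\alpha$, compatibly with the inclusions $M_n \hookrightarrow M_{n+1}$; passing to the filtered colimit then yields the decomposition of $M$. The inductive step amounts to splitting off the summands $\alpha \neq \alpha_{n+1}$ from the short exact sequence $0 \to M_n \to M_{n+1} \to M_{n+1}/M_n \to 0$, where $M_{n+1}/M_n$ is highest weight in a single block $\OO_{\alpha_{n+1}}$. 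Since $M_n$ is presented as a finite direct sum, this splitting is controlled by $\Ext^1_\OO(M_{n+1}/M_n, M_n^{[\alpha]})$ for $\alpha \neq \alpha_{n+1}$, and the whole argument reduces to the following key lemma: for a highest weight module $A$ in $\OO_{\alpha_0}$ and any $B \in \OO_\beta$ with $\beta \neq \alpha_0$, one has $\Ext^1_\OO(A, B) = 0$.

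The proof of this key lemma is the main obstacle. Given an extension $0 \to B \to E \to A \to 0$, one produces a splitting by lifting the highest weight generator $v \in A$ of weight $\mu \in \mathscr{P}_{\alpha_0}$ to a singular vector $\tilde v \in S(E) \subset E$ of weight $\mu$, which provides a map $M(\mu) \to E$ descending through $A$. The Zhu algebra $\operatorname{Zhu}(\cW_\kappa) \simeq Z\fg$ acts on $S(E)$, with characters parametrized by $\Wf \backslash \h^*$ via Harish--Chandra; in particular distinct elements of $\Wf \backslash \h^*$ give distinct characters. Since the singular vectors of any object in $\OO_\beta$ correspond to highest weights of its subquotients, which lie in $\mathscr{P}_\beta$ and are therefore distinct from $\mu$, one has $S(B)^{(\chi_\mu)} = 0$ as a generalized eigenspace under $Z\fg$. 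Working in the finite-dimensional $L_0$-weight spaces guaranteed by Definition \ref{defoglarg} and using the descending co-highest weight filtration of $B$ from Proposition \ref{chwfil} to organize the obstructions into finite-dimensional successive layers, one corrects an arbitrary lift $\tilde v \in E$ of $v$ by elements of $B$ to make it singular; the spectral disjointness above ensures that the resulting system of finite-dimensional correction equations is consistent, producing the desired singular lift and thus the splitting.
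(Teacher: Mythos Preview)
Your overall strategy is the paper's: reduce to $M = M_\gamma$ via Lemma \ref{break}, run up the ascending highest weight filtration of Proposition \ref{hwfil}, decompose each $M_n$ block by block, and pass to the colimit using $\Hom$-vanishing between blocks (Lemma \ref{nohom}) for compatibility. Your key lemma---$\Ext^1_\OO(A,B)=0$ for $A$ highest weight and $B$ in a different block---is exactly the special case of the paper's Theorem \ref{extv} that the deduction of Theorem \ref{blocks} actually uses.

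The gap is in your sketch of that $\Ext^1$ vanishing. The spectral disjointness $S(B)^{(\chi_\mu)} = 0$ only gives \emph{uniqueness} of a singular lift of $v$, not existence: left exactness of the singular-vector functor yields an injection $S(E)^{(\chi_\mu)} \hookrightarrow S(A)^{(\chi_\mu)}$, but says nothing about surjectivity. Invoking the co-highest weight filtration of $B$ is the correct next move and is what the paper does, but the base case it produces---an extension of a highest weight module by a co-highest weight module---still does not yield to spectral disjointness alone. The paper's d\'evissage (Lemmas \ref{vcv}--\ref{int4} and Proposition \ref{int5}) reduces all the way down to $\Ext^1(M(\lambda), A(\nu)) = 0$ for $\lambda \neq \nu$, and the proof of \emph{that} base case (Lemma \ref{vcv}) uses the duality $\mathbf{D}$ and the symmetry $\Ext^1(M(\lambda),A(\nu)) \simeq \Ext^1(M(\nu),A(\lambda))$ to arrange, without loss of generality, that $A(\nu)$ has no $L_0$-weight above the highest weight line of $M(\lambda)$; only then is every lift of the highest weight vector automatically singular. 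Without this swap, or an equivalent device, there is no reason your ``system of finite-dimensional correction equations'' should be consistent, and the argument does not close.
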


We will obtain Theorem \ref{blocks} from the following theorem, which we turn to next.

\begin{theo} For objects $M \in \OO_\alpha$ and $A \in \OO_{\alpha'},$ $\alpha \neq \alpha'$, $\operatorname{Ext}^1(M, A) = 0.$
\label{extv}
\end{theo}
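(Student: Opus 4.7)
The plan is to reduce to the atomic vanishing $\Ext^1(L(\mu), L(\nu)) = 0$ for $\mu \in \mathscr{P}_\alpha$, $\nu \in \mathscr{P}_{\alpha'}$ with $\alpha \neq \alpha'$, and to handle this atomic case by a direct analysis keyed on the conformal weights $\langle L_0, \mu \rangle$ and $\langle L_0, \nu \rangle$.

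For the atomic case, consider an extension $0 \to L(\nu) \to E \to L(\mu) \to 0$, and write $d(\lambda) := -\langle L_0, \lambda\rangle$ and $\pi: E \to L(\mu)$ for the projection. If $d(\mu) > d(\nu)$, then $E_d = 0$ for $d > d(\mu)$, so $E_{d(\mu)}$ is the one-dimensional top of $E$, mapping isomorphically onto $L(\mu)_{d(\mu)}$; its generator $v$ is automatically singular (positive modes send it to the vanishing spaces $E_d$ with $d > d(\mu)$) and, since the $\operatorname{Zhu}(\cW_\kappa)$-action commutes with $\pi$, has Zhu-eigenvalue $\mu$. The resulting map $f: M(\mu) \to E$ satisfies $f(M(\mu)) + L(\nu) = E$ and $f(M(\mu)) \cap L(\nu) \in \{0, L(\nu)\}$; in the former subcase the extension splits, while in the latter $L(\nu)$ is a composition factor of $M(\mu)$, forcing $\nu \prec \mu$ and contradicting $\nu \not\sim \mu$. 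The case $d(\mu) < d(\nu)$ follows by applying the preceding to the dual extension $0 \to L(\mu) \to \mathbf{D} E \to L(\nu) \to 0$, using that the duality $\mathbf{D}$ of Subsection \ref{dp} fixes simples and preserves the $d$-grading. When $d(\mu) = d(\nu)$, the two-dimensional top of $E$ is a module over the polynomial algebra $\operatorname{Zhu}(\cW_\kappa) \simeq Z\fg$ with two distinct characters $\mu \neq \nu$, hence decomposes as $\mathbb{C}_\mu \oplus \mathbb{C}_\nu$; the Zhu $\mu$-eigenvector is singular, projects nontrivially to $\bar w_\mu$ (else it would sit in $L(\nu)_{d(\mu)}$, which has Zhu-eigenvalue $\nu$), and yields the required map $M(\mu) \to E$.

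For the general case, first decompose via Lemma \ref{break} to assume $M, A \in \OO_\gamma$ for a common $\gamma \in \mathbb{C}/\mathbb{Z}$. Given an extension $\xi: 0 \to A \to E \to M \to 0$ with projection $\pi: E \to M$, define $E^{(\alpha)} \subset E$ to be the sum of all submodules of $E$ whose simple subquotients lie in $\mathscr{P}_\alpha$; this is itself such a submodule by the closure property defining $\mathscr{P}_\alpha$. By disjointness, $E^{(\alpha)} \cap A = 0$, so $E^{(\alpha)}$ injects into $M$ under $\pi$. If $\pi(E^{(\alpha)}) = M$, then $E = E^{(\alpha)} \oplus A$ splits $\xi$. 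Otherwise, Proposition \ref{hwfil} applied to $M/\pi(E^{(\alpha)})$ yields a highest weight submodule $N$ of some weight $\mu \in \mathscr{P}_\alpha$; the pullback extension $0 \to A \to \tilde N \to N \to 0$ inside $E/E^{(\alpha)}$, if split, produces a copy of $N$ in $E/E^{(\alpha)}$ lying in $\OO_\alpha$, contradicting maximality of $E^{(\alpha)}$. It therefore suffices to establish $\Ext^1(N, A) = 0$ for $N$ highest weight in $\OO_\alpha$; by the long exact sequence attached to $0 \to N' \to N \to L(\mu) \to 0$ together with the auxiliary vanishing $\Hom(N', A) = 0$ (any nonzero image lies simultaneously in $\mathscr{P}_\alpha$ and $\mathscr{P}_{\alpha'}$), this reduces to $\Ext^1(L(\mu), A) = 0$ and $\Ext^1(N', A) = 0$. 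An iterative application of the $E^{(\alpha)}$ argument, together with Jordan--H\"older induction in the finite-length truncation $\OO_\gamma/\OO_{<\dot\gamma}$ of Section 6.1 and the atomic case as base, closes this recursion.

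The principal obstacle is bookkeeping the passage from the finite-length setting of the truncations $\OO_\gamma/\OO_{<\dot\gamma}$ to the ambient $\OO$, where Jordan--H\"older content may be infinite. The finiteness of each weight space $E_d$ ensures that the iterative splitting procedure can be implemented one $L_0$-eigenvalue at a time and then assembled globally, but organizing this stratification carefully is the main technical subtlety.
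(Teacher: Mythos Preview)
Your atomic case $\Ext^1(L(\mu),L(\nu))=0$ is argued correctly, but the reduction to it is incomplete. The long exact sequence attached to $0\to N'\to N\to L(\mu)\to 0$ reduces $\Ext^1(N,A)=0$ to $\Ext^1(L(\mu),A)=0$ and $\Ext^1(N',A)=0$. Neither is closer to your atomic case: $A$ is still arbitrary in $\OO_{\alpha'}$, and $N'$ (the radical of a highest weight module) need not have finite length, so the recursion does not terminate. Your proposed fix via the truncations $\OO_\gamma/\OO_{<\dot\gamma}$ does not close this gap either: $\Ext^1$ in a Serre quotient is not the same as $\Ext^1$ in the ambient category, and you have given no mechanism for assembling splittings in the various truncations into a splitting in $\OO$. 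You yourself flag this as ``the main technical subtlety,'' but it is the heart of the matter, not a bookkeeping detail.

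The paper's proof avoids this by choosing a different atomic case: $\Ext^1(M(\lambda),A(\nu))=0$ for a Verma and a co-Verma, proved by the same conformal-weight analysis you use for simples. The point is that Verma modules are the initial objects among highest weight modules and co-Vermas are the terminal objects among co-highest weight modules, so the d\'evissage runs cleanly on \emph{both} variables: one passes from $M(\lambda)$ to highest weight $M$, then to $M$ with a finite highest weight filtration, then to arbitrary $M$ via the ascending filtration of Proposition~\ref{hwfil}; and \emph{dually} from $A(\nu)$ to co-highest weight $A$, then to $A$ with a finite co-highest weight filtration, then to arbitrary $A$ via the descending filtration of Proposition~\ref{chwfil}. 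At each step the needed $\Hom$ vanishing (Lemma~\ref{nohom}) supplies uniqueness of splittings, which is what makes the (co)limit passages go through. Your approach never invokes the co-highest weight side, and without it there is no terminating induction on the second variable.
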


We now begin the proof of Theorem \ref{extv}. 
\begin{lemma} For objects $M \in \OO_\alpha$ and  $A \in \OO_{\alpha'}$, $\Hom(M, A) = 0.$ In particular, if an extension of $M$ by $A$ splits, it does so uniquely. 
\label{nohom} \label{unsp}
\end{lemma}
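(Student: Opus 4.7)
The plan is to show that any morphism $f : M \to A$ must have zero image, by analyzing the Jordan--H\"older content of $\operatorname{im}(f)$ and using the disjointness of the linkage classes $\mathscr{P}_\alpha$ and $\mathscr{P}_{\alpha'}$.

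First, let $f : M \to A$ be a morphism with $M \in \OO_\alpha$ and $A \in \OO_{\alpha'}$, and set $I := \operatorname{im}(f)$, which is an object of $\OO$ by abelianness. Then $I$ fits into short exact sequences
\[
0 \to \ker(f) \to M \to I \to 0, \qquad 0 \to I \to A \to A/I \to 0.
\]
Applying Proposition \ref{jh1} to each of these, we find for every $\lambda \in \Wf \backslash \h^*$ that
\[
[I : L(\lambda)] \leqslant [M : L(\lambda)] \quad \text{and} \quad [I : L(\lambda)] \leqslant [A : L(\lambda)].
\]
Hence any $\lambda$ with $[I:L(\lambda)] > 0$ must simultaneously satisfy $\lambda \in \mathscr{P}_\alpha$ (because $M \in \OO_\alpha$) and $\lambda \in \mathscr{P}_{\alpha'}$ (because $A \in \OO_{\alpha'}$). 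By construction of the partition $\Wf \backslash \h^* = \bigsqcup_\beta \mathscr{P}_\beta$, the classes $\mathscr{P}_\alpha$ and $\mathscr{P}_{\alpha'}$ are disjoint, so no such $\lambda$ exists. Therefore $[I : L(\lambda)] = 0$ for all $\lambda$, and Proposition \ref{jh2} forces $I = 0$, i.e.\ $f = 0$.

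For the second assertion, suppose $0 \to A \to E \xrightarrow{\pi} M \to 0$ admits two splittings $s, s' : M \to E$. Then $\pi \circ (s - s') = \id_M - \id_M = 0$, so $s - s'$ factors through the kernel of $\pi$, yielding a morphism $M \to A$. By the first part this morphism vanishes, i.e.\ $s = s'$. The only real step in the argument is the additivity of Jordan--H\"older multiplicities established in the previous section; once this is in hand the rest is formal, so I do not anticipate a genuine obstacle.
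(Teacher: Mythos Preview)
Your proof is correct and follows essentially the same approach as the paper: the paper also argues by examining the Jordan--H\"older content of the image of a morphism via Propositions \ref{jh1} and \ref{jh2}, and deduces uniqueness of splittings from the resulting $\Hom$ vanishing. You have simply spelled out the details that the paper leaves to the reader.
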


\begin{proof} The first point follows from considering the Jordan--H\"older content of the image of a morphism, which would necessarily vanish, cf. Propositions \ref{jh1}, \ref{jh2}. The second follows since the set of splittings is a torsor over $\Hom(M, A)$.   \end{proof}

Next, we check the desired $\Ext$ vanishing for Verma and co-Verma modules.

\begin{lemma} For $\lambda$ and  $\nu \in W_{\operatorname{f}} \backslash \h^*, \lambda \neq \nu$, $\Ext^1(M(\lambda), A(\nu)) = 0.$
\label{vcv}
\end{lemma}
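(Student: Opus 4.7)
The plan is to split any extension $0 \to A(\nu) \to E \to M(\lambda) \to 0$ by exhibiting a highest weight vector of weight $\lambda$ in $E$ and invoking the corepresenting property of $M(\lambda)$. Throughout, write $h_\mu := \langle L_0, \mu \rangle$ for $\mu \in \Wf \backslash \h^*$.

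First I would reduce to the case $h_\lambda - h_\nu \in \mathbb{Z}$ using Lemma \ref{break}, since otherwise $E$ splits as $E_\gamma \oplus E_{\gamma'}$ for the distinct $L_0 \bmod \mathbb{Z}$ cosets of $\lambda$ and $\nu$, separating $A(\nu)$ from $M(\lambda)$. Then, applying the duality $\mathbf{D}$ of Subsection \ref{dp}, which interchanges $M(\mu) \leftrightarrow A(\mu)$ and preserves the $L_0$-grading, transforms an extension with $h_\lambda > h_\nu$ into one with the roles of $\lambda$ and $\nu$ swapped. Thus I may further assume $h_\lambda \leqslant h_\nu$.

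Next I would exploit the $q$-character identity from Proposition \ref{chardual}:
\[
\ch_q E \;=\; \ch_q M(\lambda) + \ch_q A(\nu) \;=\; \ch_q M(\lambda) + \ch_q M(\nu).
\]
This shows $h_\lambda$ is the minimal $L_0$-eigenvalue appearing in $E$, and the corresponding $L_0$-eigenspace $E^{(h_\lambda)}$ has dimension $1$ if $h_\lambda < h_\nu$ and dimension $2$ if $h_\lambda = h_\nu$. The crux is to locate a vector in $E^{(h_\lambda)}$ on which $\mathrm{Zhu}(\cW_\kappa) \cong Z\fg$ acts through the character $\lambda$. When $h_\lambda < h_\nu$ this is automatic, as $E^{(h_\lambda)} \xrightarrow{\sim} M(\lambda)^{(h_\lambda)}$. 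When $h_\lambda = h_\nu$, the short exact sequence of Zhu-modules
\[
0 \to A(\nu)^{(h_\nu)} \to E^{(h_\lambda)} \to M(\lambda)^{(h_\lambda)} \to 0
\]
has outer terms one-dimensional with distinct characters $\nu \neq \lambda$; as a two-dimensional module over a commutative algebra with two distinct generalized eigenvalues, $E^{(h_\lambda)}$ decomposes into genuine eigenlines, yielding the desired $\lambda$-eigenvector $\tilde v_\lambda$.

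The final step is then routine: $\tilde v_\lambda$ lies at the lowest $L_0$-grade appearing in $E$, so every positive mode of every homogeneous field of $\cW_\kappa$ annihilates it for grade reasons, i.e.\ $\tilde v_\lambda$ is singular. Hence $\tilde v_\lambda$ is a highest weight vector of weight $\lambda$, and the corepresenting property of $M(\lambda)$ produces a map $M(\lambda) \to E$ whose composition with $E \twoheadrightarrow M(\lambda)$ acts by a nonzero scalar on the top line and so is an automorphism of $M(\lambda)$, splitting the sequence. The only real obstacle is the diagonal case $h_\lambda = h_\nu$, where the possible failure of semisimplicity of the Zhu action on $E^{(h_\lambda)}$ is precisely what the hypothesis $\lambda \neq \nu$ defuses.
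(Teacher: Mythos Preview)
Your proposal is correct and follows essentially the same route as the paper's proof: use the duality $\mathbf{D}$ to arrange that the top $L_0$-grade of $M(\lambda)$ is not strictly below that of $A(\nu)$, observe that the top graded piece of $E$ then consists of singular vectors, split this as a $\operatorname{Zhu}(\cW_\kappa)$-module using $\lambda \neq \nu$, and lift via the universal property of $M(\lambda)$. The only cosmetic differences are that you invoke Lemma~\ref{break} separately and divide into the cases $h_\lambda < h_\nu$ and $h_\lambda = h_\nu$, whereas the paper handles both at once by the single assumption $-\langle L_0,\nu\rangle \notin -\langle L_0,\lambda\rangle + \mathbb{Z}^{>0}$ and the uniform short exact sequence $0 \to A(\nu)_d \to E_d \to M(\lambda)_d \to 0$ (with $A(\nu)_d$ possibly zero).
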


\begin{proof} Using the duality $\mathbf{D}$ on $\OO$, we have a canonical isomorphism $$\Ext^1( M(\lambda), A(\nu) ) \simeq \Ext^1( M(\nu), A(\lambda) ).$$Since $q$-characters are unchanged by duality, by using the above isomorphism we may assume without loss of generality that $- \langle L_0, \nu \rangle \notin - \langle L_0,  \lambda \rangle + \Z^{> 0}$. Suppose we have an extension $0 \rightarrow A(\nu) \rightarrow E \rightarrow M(\lambda) \rightarrow 0.$ Writing $d := - \langle L_0, \lambda \rangle$, i.e. the highest $-L_0$ eigenvalue of $M(\lambda)$, we have an exact sequence
\begin{equation} 0 \rightarrow A(\nu)_d \rightarrow E_d \rightarrow M(\lambda)_d \rightarrow 0. \label{toppiece}\end{equation}By our assumption, $E_{d + n} = 0$ for $n \in \Z^{\geqslant 0}$, hence $E_d$ consists of singular vectors. Viewing Equation \eqref{toppiece} as a short exact sequence of $\operatorname{Zhu}(\mathscr{W})$ modules, since $\lambda \neq \nu$ the sequence is split uniquely. By the universal property of $M(\lambda)$, the corresponding morphism $M(\lambda) \rightarrow E$ is again a splitting. \end{proof}

We now deduce the desired statement by d\'evissage, as explained in the following lemmas. 

\begin{lemma} For a highest weight module $M \in \OO_{\alpha}$ and $A(\nu) \in \OO_{\alpha'}$, $\Ext^1(M, A(\nu)) = 0.$
\label{int1}
\end{lemma}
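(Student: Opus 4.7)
The plan is to reduce to the Verma case (Lemma \ref{vcv}) by resolving $M$ by a Verma and tracking what happens on the kernel. If $M \neq 0$ is a highest weight module in $\OO_\alpha$, then its simple quotient is $L(\lambda)$ for some $\lambda \in \mathscr{P}_\alpha$, and $M$ admits a surjection from $M(\lambda)$; fixing such a presentation, write $K$ for the kernel, so that we have a short exact sequence $0 \to K \to M(\lambda) \to M \to 0$.

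First I would check that $M(\lambda)$ itself lies in $\OO_\alpha$. The Jordan--H\"older content of $M(\lambda)$ consists of simples $L(\mu_i)$ with $\mu_i \prec \lambda$. By the defining property of the partition $\{\mathscr{P}_\alpha\}$, any such $\mu_i$ lying in some $\mathscr{P}_\beta$ would force $\lambda \in \mathscr{P}_\beta$; since $\lambda \in \mathscr{P}_\alpha$, necessarily $\mu_i \in \mathscr{P}_\alpha$, so $M(\lambda) \in \OO_\alpha$. As $\OO_\alpha$ is closed under subobjects, $K \in \OO_\alpha$ as well, and hence $\mathbf{D} K \in \OO_\alpha$ (duality fixes the isomorphism classes of simple subquotients).

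Next I would apply $\Hom(-, A(\nu))$ to the short exact sequence to obtain
\[ \Hom(M(\lambda), A(\nu)) \to \Hom(K, A(\nu)) \to \Ext^1(M, A(\nu)) \to \Ext^1(M(\lambda), A(\nu)). \]
The rightmost term vanishes by Lemma \ref{vcv} (since $\lambda \neq \nu$, as they lie in distinct linkage classes). It therefore suffices to show $\Hom(K, A(\nu)) = 0$. Using $A(\nu) = \mathbf{D} M(\nu)$ and the involutivity of $\mathbf{D}$,
\[ \Hom(K, A(\nu)) \simeq \Hom(K, \mathbf{D} M(\nu)) \simeq \Hom(M(\nu), \mathbf{D} K). \]
A nonzero map $M(\nu) \to \mathbf{D} K$ would have image a highest weight module of highest weight $\nu$, whose simple quotient $L(\nu)$ would then be a subquotient of $\mathbf{D} K$. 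But $\mathbf{D} K \in \OO_\alpha$ while $\nu \in \mathscr{P}_{\alpha'}$ with $\alpha \neq \alpha'$, a contradiction. Hence $\Hom(K, A(\nu)) = 0$ and the desired $\Ext^1$ vanishing follows.

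I do not anticipate a serious obstacle: the argument is a standard dimension-shift once one has both $\Ext^1$-vanishing for Verma/co-Verma pairs (Lemma \ref{vcv}) and the stability of blocks under subobjects and duality. The only mildly subtle point is verifying that $M(\lambda) \in \OO_\alpha$, which uses the precise one-sided closure condition defining $\mathscr{P}_\alpha$ rather than a symmetric linkage relation.
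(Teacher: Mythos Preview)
Your proof is correct and follows the same approach as the paper: resolve $M$ by the Verma $M(\lambda)$, apply $\Hom(-, A(\nu))$, and kill the outer terms using Lemmas \ref{vcv} and \ref{nohom}. The only difference is that where the paper simply invokes Lemma \ref{nohom} to get $\Hom(K, A(\nu)) = 0$ directly, you take the slightly longer route through duality to $\Hom(M(\nu), \mathbf{D} K)$; this is unnecessary since Lemma \ref{nohom} already applies to $K \in \OO_\alpha$ and $A(\nu) \in \OO_{\alpha'}$.
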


\begin{proof} By assumption, we have an exact sequence $0 \rightarrow K \rightarrow M(\lambda) \rightarrow M \rightarrow 0,$ where $M(\lambda)$ and  $K$ are objects of $\OO_{l}$. This gives an exact sequence: $$\Hom(K, A(\nu) ) \rightarrow \Ext^1( M, A(\nu) \rightarrow \Ext^1( M(\lambda), A(\nu)),$$where $\Hom(K, A(\nu))$ and $\Ext^1( M(\lambda), A(\nu))$ vanish by Lemmas \ref{nohom} and \ref{vcv}, respectively.   \end{proof}

\begin{lemma} For a highest weight module $M \in \OO_{\alpha}$ and a co-highest weight module $A \in \OO_{\alpha'}$, $\Ext^1(M, A) = 0.$
\end{lemma}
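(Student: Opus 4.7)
The plan is to reduce to the case of a co-Verma module, which was handled in Lemma \ref{int1}, by using the defining property of co-highest weight modules.

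By definition, since $A$ is co-highest weight, it admits an injection $A \hookrightarrow A(\nu)$ into a co-Verma module, for some $\nu \in \Wf \backslash \h^*$. Let $Q$ denote the cokernel, yielding a short exact sequence
\begin{equation*}
0 \to A \to A(\nu) \to Q \to 0.
\end{equation*}
Applying $\Hom(M, -)$ produces the long exact sequence
\begin{equation*}
\Hom(M, Q) \to \Ext^1(M, A) \to \Ext^1(M, A(\nu)).
\end{equation*}
It suffices to show that both outer terms vanish.

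First, since $A \in \OO_{\alpha'}$ and the block $\OO_{\alpha'}$ is closed under subquotients (indeed, the linkage class $\mathscr{P}_{\alpha'}$ is defined so that $\OO_{\alpha'}$ is a Serre subcategory), the co-Verma $A(\nu)$ cannot necessarily be placed in $\OO_{\alpha'}$ a priori. However, the simple socle of $A$ is a simple subquotient of $A$, hence has highest weight in $\mathscr{P}_{\alpha'}$, and this socle is also the socle of $A(\nu)$, which is $L(\nu)$; therefore $\nu \in \mathscr{P}_{\alpha'}$. Since $A(\nu) = \mathbf{D} M(\nu)$ has the same Jordan-H\"older content as $M(\nu)$, and all highest weights of simple subquotients of $M(\nu)$ lie in a single linkage class (namely the one containing $\nu$, by definition of $\prec$ and $\mathscr{P}_{\alpha'}$), we conclude $A(\nu) \in \OO_{\alpha'}$. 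Consequently the quotient $Q$ also lies in $\OO_{\alpha'}$.

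With this in hand, the vanishing of the outer terms is immediate: $\Hom(M, Q) = 0$ by Lemma \ref{nohom} applied to $M \in \OO_\alpha$ and $Q \in \OO_{\alpha'}$ with $\alpha \neq \alpha'$, and $\Ext^1(M, A(\nu)) = 0$ by Lemma \ref{int1}. The main (minor) obstacle is the bookkeeping verification that $A(\nu)$, and hence $Q$, actually lies in $\OO_{\alpha'}$; once this is established the d\'evissage is formal.
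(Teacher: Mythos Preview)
Your proof is correct and follows essentially the same approach as the paper: embed $A$ into a co-Verma $A(\nu)$, apply $\Hom(M,-)$ to the resulting short exact sequence, and kill the outer terms using Lemmas \ref{nohom} and \ref{int1}. The paper leaves the verification that $A(\nu)$ and the cokernel lie in $\OO_{\alpha'}$ implicit, whereas you spell it out carefully via the socle argument; this extra bookkeeping is correct and indeed clarifies a point the paper glosses over.
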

\begin{proof} Similarly to the proof of Lemma \ref{int1}, we use an exact sequence $$0 \rightarrow A \rightarrow A(\nu) \rightarrow C \rightarrow 0,$$and finish by the statements of Lemmas \ref{nohom} and \ref{int1}. \end{proof}
We deduce: 
\begin{lemma} If $M \in \OO_{\alpha}$ has a finite filtration with successive quotients highest weight modules, and $A \in \OO_{\alpha'}$ is a co-highest weight module, then $\Ext^1(M, A) = 0.$ \label{int3} \end{lemma}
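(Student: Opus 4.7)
The plan is to prove Lemma \ref{int3} by induction on the length $n$ of the highest weight filtration of $M$. The base case $n=1$ is exactly the content of the preceding lemma, which handles the $\Ext^1$ vanishing between a highest weight module in $\OO_\alpha$ and a co-highest weight module in $\OO_{\alpha'}$.

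For the inductive step, I would choose a filtration $0 = M_0 \subset M_1 \subset \cdots \subset M_n = M$ with successive quotients $N_i := M_i/M_{i-1}$ highest weight modules. Note that each $N_i$ lies in $\OO_\alpha$, since $\OO_\alpha$ is closed under subquotients (which follows from the definition of the linkage classes $\mathscr{P}_\alpha$, or equivalently from the fact that $\OO_\alpha$ is defined by a condition on Jordan--H\"older content, cf. Propositions \ref{jh1}, \ref{jh2}). Consider the short exact sequence
\begin{equation*}
0 \to M_{n-1} \to M \to N_n \to 0,
\end{equation*}
where $M_{n-1}$ has a highest weight filtration of length $n-1$ and $N_n$ is highest weight, both living in $\OO_\alpha$. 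Applying $\Hom(-, A)$ yields an exact sequence
\begin{equation*}
\Ext^1(N_n, A) \to \Ext^1(M, A) \to \Ext^1(M_{n-1}, A).
\end{equation*}
The left term vanishes by the previous lemma (the base case), and the right term vanishes by the inductive hypothesis. Hence $\Ext^1(M, A) = 0$.

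The argument is essentially routine d\'evissage; the only point requiring care is verifying that the intermediate modules $M_{n-1}$ in the filtration remain in $\OO_\alpha$, so that the inductive hypothesis applies. This is immediate from the observation above that $\OO_\alpha$ is closed under subobjects and quotients within $\OO$. I do not anticipate any genuine obstacle here, as the two previous lemmas have already done the essential work of establishing the vanishing in the extremal cases (highest weight against Verma, then against co-highest weight).
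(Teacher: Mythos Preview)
Your proposal is correct and is exactly the routine d\'evissage the paper has in mind; indeed the paper does not spell out a proof here, merely writing ``We deduce:'' after the preceding lemma, signaling precisely the induction on filtration length via the long exact sequence that you have written out. Your care in noting that the subquotients $M_{n-1}$ and $N_n$ remain in $\OO_\alpha$ is appropriate and correct.
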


\begin{pro} If $M \in \OO_{\alpha}$ is arbitrary, and $A \in \OO_{\alpha'}$ is a co-highest weight module, then $\Ext^1(M,A) = 0.$
 \label{int5}
\end{pro}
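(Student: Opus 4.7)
The plan is to bootstrap from Lemma~\ref{int3} via the highest-weight filtration of Proposition~\ref{hwfil}. First I would observe that by Lemma~\ref{break} we may write $M = \bigoplus_{\gamma \in \mathbb{C}/\mathbb{Z}} M_\gamma$ as a direct sum in $\OO$, and each $M_\gamma$ again lies in $\OO_\alpha$ since its simple subquotients are among those of $M$. As $\operatorname{Ext}^1$ converts direct sums in its first argument to products, it suffices to treat the case $M = M_\gamma$ for a fixed coset $\gamma$.

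Assuming $M = M_\gamma$, Proposition~\ref{hwfil} provides an ascending filtration $0 = M_0 \hookrightarrow M_1 \hookrightarrow M_2 \hookrightarrow \cdots$ with $\varinjlim_i M_i = M$ and each $M_i/M_{i-1}$ a highest weight module. By induction on $i$, each $M_i$ therefore admits a finite filtration whose successive quotients are highest weight modules lying in $\OO_\alpha$, so Lemma~\ref{int3} yields $\operatorname{Ext}^1(M_i, A) = 0$ for every $i$.

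Given an arbitrary extension $0 \to A \to E \to M \to 0$, I would now pull back along the inclusions $M_i \hookrightarrow M$ to produce extensions $0 \to A \to E_i \to M_i \to 0$, where $E_i := E \times_M M_i$ sits inside $E$ as the preimage of $M_i$. By the previous step each such extension is split, and by Lemma~\ref{nohom} the splitting $s_i \colon M_i \to E_i \subseteq E$ is unique. For $i \leqslant j$ the restriction $s_j\vert_{M_i}$ lands in $E_i$ and furnishes another splitting of $E_i \to M_i$, so uniqueness forces $s_j\vert_{M_i} = s_i$. These compatible maps therefore assemble into a morphism $s \colon M = \varinjlim_i M_i \to E$, which by construction splits the original extension.

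The main delicate point is the final passage from stepwise to global splittings. Here the rigidity afforded by $\operatorname{Hom}(M_i, A) = 0$ (Lemma~\ref{nohom}) is essential: without the uniqueness of splittings one would face a genuine $\varprojlim^1$ obstruction to gluing, whereas in our setting the inverse system of splittings is essentially constant and the colimit in $\OO$ of the $M_i$ commutes cleanly with the construction.
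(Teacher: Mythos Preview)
Your proof is correct and follows essentially the same approach as the paper: write $M$ as the colimit of its highest-weight filtration from Proposition~\ref{hwfil}, pull back the extension along each $M_i \hookrightarrow M$, split stepwise using Lemma~\ref{int3}, and glue the splittings using the uniqueness guaranteed by Lemma~\ref{nohom}. Your explicit reduction to the case $M = M_\gamma$ via Lemma~\ref{break} is a detail the paper's proof leaves implicit but is indeed required to invoke Proposition~\ref{hwfil}.
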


\begin{proof} Write $M$ as a colimit $\varinjlim_i M_i$ as in Proposition \ref{hwfil}, and consider an extension: $$0 \rightarrow A \rightarrow E \rightarrow \varinjlim_i M_i \rightarrow 0.$$We will split it on the right.
For fixed $i$, consider the pullback: $$0 \rightarrow A \rightarrow E_i \rightarrow M_i \rightarrow 0.$$To split $E$ it suffices to produce splittings $s_i: M_i \rightarrow E_i$ which are compatible as we vary $i$. But $M_i$ by definition admits a finite filtration with successive quotients highest weight modules, whence by Lemma \ref{int3} admits a splitting $s_i$. If $i \leqslant j$, then the pullback of $s_j$ produces a splitting of $E_i$. Hence the compatibility of the $s_i$ follows from their uniqueness, cf. Corollary \ref{unsp}. \end{proof}

Similarly to Lemma \ref{int3}, we deduce:

\begin{lemma} If $M \in \OO_{\alpha}$ is arbitrary, and $A \in \OO_{\alpha'}$ has a finite filtration with successive quotients co-highest weight modules. Then $\Ext^1(M, A) = 0.$
\label{int4}
\end{lemma}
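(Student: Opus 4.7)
The plan is to argue by induction on the length $n$ of the finite filtration $0 = A_0 \subset A_1 \subset \cdots \subset A_n = A$ whose successive quotients $A_i / A_{i-1}$ are co-highest weight modules. The base case $n=1$ is exactly Proposition \ref{int5}, since then $A$ is itself co-highest weight and $M \in \OO_\alpha$ is arbitrary.

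For the inductive step, consider the short exact sequence
\[
0 \to A_{n-1} \to A_n \to A_n/A_{n-1} \to 0.
\]
Before applying $\Hom(M,-)$, I would first observe that both $A_{n-1}$ and $A_n/A_{n-1}$ lie in $\OO_{\alpha'}$: the subcategory $\OO_{\alpha'}$ is defined by a condition on Jordan--H\"older content (all simple subquotients lying in $\mathscr{P}_{\alpha'}$), and this condition is inherited by submodules and quotients, as is immediate from Proposition \ref{jh1}. Moreover, $A_{n-1}$ inherits a filtration of length $n-1$ with co-highest weight successive quotients from the given filtration of $A$.

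Applying $\Hom(M, -)$ yields the long exact sequence
\[
\Ext^1(M, A_{n-1}) \longrightarrow \Ext^1(M, A) \longrightarrow \Ext^1(M, A_n / A_{n-1}).
\]
The rightmost term vanishes by Proposition \ref{int5}, since $A_n/A_{n-1}$ is co-highest weight and lies in $\OO_{\alpha'}$. The leftmost term vanishes by the inductive hypothesis applied to $A_{n-1}$. Hence the middle term vanishes, completing the induction.

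There is no real obstacle: the argument is a standard d\'evissage exactly parallel to the passage from the highest weight case (Lemma \ref{int1}) to the finitely filtered case (Lemma \ref{int3}), with the roles of $M$ and $A$ interchanged and Proposition \ref{int5} playing the role of the base input. The only mild point to verify is that $\OO_{\alpha'}$ is closed under subquotients, which follows directly from the definition of the linkage classes $\mathscr{P}_\alpha$.
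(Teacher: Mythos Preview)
Your proposal is correct and follows essentially the same approach as the paper: the paper simply writes ``Similarly to Lemma \ref{int3}, we deduce'' without spelling out the argument, and the intended proof is precisely the induction on the length of the filtration of $A$ that you give, using Proposition \ref{int5} as the base input.
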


Finally, an argument dual to that of Proposition \ref{int5} finishes the d\'evissage.
\begin{proof}[Proof of Theorem \ref{extv}] Write $A$ as an inverse limit $\varprojlim_i A_i$ as in Proposition \ref{chwfil}. Given an extension $E \in \Ext^1(M,A)$, one splits it on the left by giving compatible splittings of the pushouts$$0 \rightarrow A_i \rightarrow E_i \rightarrow M \rightarrow 0,$$as in the proof of Proposition \ref{int5}. \end{proof}

\begin{proof}[Proof of Theorem \ref{blocks}]Let $M$ be an arbitrary object of $\OO$. By Lemma \ref{break}, we may reduce to the case $M = M_\gamma, \gamma \in \C/\Z$. Write $M$ as a colimit $\varinjlim_i M_i$ as in Proposition \ref{hwfil}. For each $i$, $M_i$ has a finite filtration with successive quotients highest weight modules. By Theorem \ref{extv}, $M_i$ accordingly has a finite direct sum decomposition of the desired form, which we write as  $M_i = \bigoplus_{\alpha} M_{i,\alpha}.$ By Lemma \ref{nohom}, the morphism $M_i \rightarrow M_j$ decomposes as a sum of morphisms $M_{i,\alpha} \rightarrow M_{j,\alpha}$. Accordingly, we have $$M \simeq \varinjlim_i M_i \simeq \varinjlim_i \bigoplus_{\alpha} M_{i, \alpha} \simeq \bigoplus_{\alpha} \varinjlim_i M_{i, \alpha},$$which has the desired form. \end{proof}

\begin{cor} For $M \in \OO_{\alpha}$ and $A \in \OO_{\alpha'}$, $\alpha \neq \alpha'$, we have: $$\Ext^i_\OO(M, A) = 0, \quad \quad i \geqslant 0.$$ \end{cor}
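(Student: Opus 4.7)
The plan is to proceed by induction on $i$, with the cases $i = 0$ and $i = 1$ supplied by Lemma \ref{nohom} and Theorem \ref{extv}, respectively. For the inductive step with $i \geqslant 2$, I will use the Yoneda description of $\operatorname{Ext}$.

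First, I represent a class $\xi \in \operatorname{Ext}^i_\OO(M, A)$ by an exact sequence
$$0 \to A \to E_{i-1} \to \cdots \to E_0 \to M \to 0,$$
set $K := \ker(E_0 \to M)$, and splice $\xi$ as a Yoneda product $\xi = \eta \cup \zeta$ of classes $\eta \in \operatorname{Ext}^1(M, K)$ and $\zeta \in \operatorname{Ext}^{i-1}(K, A)$. By Theorem \ref{blocks}, $K$ decomposes canonically as $K = K_\alpha \oplus K_{\alpha^c}$, where $K_\alpha \in \OO_\alpha$ and $K_{\alpha^c} := \bigoplus_{\beta \neq \alpha} K_\beta$. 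This finite direct sum splits $\operatorname{Ext}^1(M, K)$ and $\operatorname{Ext}^{i-1}(K, A)$ into two summands each, and by bilinearity of the Yoneda product together with the identity $(\iota_{j,*}\eta') \cup (\pi_k^*\zeta') = \eta' \cup (\pi_k\iota_j)^*\zeta'$, applied with the observation that the composites $\pi_{\alpha^c}\iota_\alpha$ and $\pi_\alpha\iota_{\alpha^c}$ are zero, the cross terms drop out:
$$\xi = \eta_\alpha \cup \zeta_\alpha + \eta_{\alpha^c} \cup \zeta_{\alpha^c}.$$

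I then show each remaining summand vanishes. The term $\zeta_\alpha \in \operatorname{Ext}^{i-1}(K_\alpha, A)$ vanishes by the inductive hypothesis, since $K_\alpha \in \OO_\alpha$ and $A \in \OO_{\alpha'}$ with $\alpha \neq \alpha'$. For $\eta_{\alpha^c}$, I argue directly, in the spirit of Proposition \ref{int5}: given any extension $0 \to K_{\alpha^c} \to Q \to M \to 0$, apply Theorem \ref{blocks} to write $Q = \bigoplus_\gamma Q_\gamma$. By Lemma \ref{nohom}, the inclusion $K_{\alpha^c} \hookrightarrow Q$ sends each $K_\beta$ ($\beta \neq \alpha$) into $Q_\beta$, while the surjection $Q \twoheadrightarrow M$ kills every $Q_\gamma$ with $\gamma \neq \alpha$. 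Comparing kernels forces $Q_\beta \xrightarrow{\sim} K_\beta$ for $\beta \neq \alpha$ and $Q_\alpha \xrightarrow{\sim} M$, so that $Q \simeq M \oplus K_{\alpha^c}$ and the extension is split. Thus $\eta_{\alpha^c} = 0$, and hence $\xi = 0$.

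The hard part will be the verification that $\eta_{\alpha^c} = 0$: since $K_{\alpha^c}$ may be an infinite direct sum across distinct blocks, this does not follow formally from Theorem \ref{extv} combined with any commutation of $\operatorname{Ext}^1$ with coproducts in the second variable. The direct block-wise analysis of the middle term $Q$, together with Lemma \ref{nohom}, sidesteps this issue entirely. The remaining steps—the decomposition of the Yoneda product, the vanishing of cross terms, and the appeal to the inductive hypothesis—are routine Yoneda calculus.
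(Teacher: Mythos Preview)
Your proof is correct. However, it is considerably more elaborate than what the paper intends, and in fact more elaborate than your own argument requires.

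The paper states the result as a corollary with no proof, because once Theorem \ref{blocks} is in hand the vanishing is formal: the decomposition $\OO = \bigoplus_\alpha \OO_\alpha$ is a decomposition of abelian categories, with each projection $p_\alpha$ exact, so $\operatorname{Ext}$ between distinct summands vanishes. Concretely, given any Yoneda $i$-extension
\[
\xi: \quad 0 \to A \to E_{i-1} \to \cdots \to E_0 \to M \to 0,
\]
apply Theorem \ref{blocks} to every $E_j$ and Lemma \ref{nohom} to every map. The $\alpha$-components assemble into an exact sequence
\[
\xi_\alpha: \quad 0 \to 0 \to (E_{i-1})_\alpha \to \cdots \to (E_0)_\alpha \to M \to 0,
\]
and the inclusions $(E_j)_\alpha \hookrightarrow E_j$ give a map of exact sequences $\xi_\alpha \to \xi$ which is the identity on $M$ and the zero map $0 \to A$ on the left. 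By the standard functoriality of Yoneda $\operatorname{Ext}$, this forces $[\xi] = 0_*[\xi_\alpha] = 0$.

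Notice that this is precisely your own argument for $\eta_{\alpha^c} = 0$, applied to the full $i$-extension rather than to a single $\operatorname{Ext}^1$. Once you are willing to invoke Theorem \ref{blocks} on middle terms, the induction, the splicing $\xi = \eta \cup \zeta$, and the separate treatment of $\zeta_\alpha$ all become unnecessary. Your route works, but it proves the result twice: once via the block decomposition of $Q$, and once via the induction that this already renders superfluous.
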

\begin{re}Since we are working with a monodromic Category $\OO$, it would be good to determine whether the map $\OO \rightarrow \cW\operatorname{-mod}^\heartsuit$ is derived fully faithful. \end{re}

\begin{re} A similar argument applies, {\em mutatis mutandis}, to any $\mathbb{Z}^{\geqslant 0}$ graded vertex operator algebra which is finitely strongly generated. \end{re}

\section{The Coxeter theoretic linkage principle} \label{clinkies}
Let us recall that for Kac--Moody Lie algebras one can prove an abstract linkage principle, as was done in the previous section. This reduces the classification of blocks to a question about the Jordan-H\"older content of Verma modules. The latter question in turn has a beautiful answer in terms of the dot action of the Weyl group on $\h^*$, and is the starting point for the Kazhdan-Lusztig combinatorics. 

In this section, we will perform the analogous classification for $\cW$. Whereas for $\gk$ blocks were parametrized by the cosets of a Coxeter group modulo a parabolic subgroup, for $\cW_\kappa$ the blocks are parmetrized by double cosets of a Coxeter group modulo two parabolic subgroups. Loosely, the one sided quotient is what one had prior to Drinfeld--Sokolov reduction, and the new quotient comes from the projection $\h^* \rightarrow W_{\operatorname{f}} \backslash \h^*$. 
\subsection{The linkage principle: recollections for Kac--Moody} Let $\kappa$ be an arbitrary noncritical level. We now review the classification of blocks for $\gk$. 

Let $\lambda \in \h^*_\kappa$. The block passing through $\lambda$ will be controlled not always by the full affine Weyl group $W$, but rather a subgroup which we now recall. To do so, we will need some notation. Write $\Phi^\vee_+$ for the positive coroots of $\gk$, $\Phi^\vee_-$ for the negative coroots of $\gk$, and $\Phi^\vee = \Phi_+^\vee \sqcup \Phi^\vee_-$ for the coroots. Write $\Phi^\vee_{re}$ for the real coroots, and similarly $\Phi^\vee_{\pm, re}$ for the positive and negative real coroots. The choice of $\lambda$ affords the subset: $$\Phi^\vee_\lambda := \{ \halpha \in \Phi^\vee : \langle \halpha, \lambda + \hat{\rho} \rangle \in \mathbb{Z} \}.$$Intersecting $\Phi^\vee_\lambda$ with the real and positive or negative coroots yields $$\Phi^\vee_{\lambda, \pm} := \Phi^\vee_\lambda \cap \Phi^\vee_{\pm}, \quad \quad  \Phi^\vee_{\lambda, re} := \Phi^\vee_{\lambda} \cap \Phi^\vee_{re},  \quad \quad  \Phi^\vee_{\lambda, re, \pm} := \Phi^\vee_{\lambda} \cap \Phi^\vee_{\pm, re}.$$

The reflections $s_\alpha$ corresponding to every real coroot in $\Phi^\vee_\lambda$ generate a subgroup $W_\lambda$ of $W$, the {\em integral Weyl group} $W_\lambda := \langle s_{\halpha} \rangle,  \halpha \in \Phi^\vee_{\lambda, re}.$ With this, we have the following well-known

\begin{pro} For $\lambda \in \h_\kappa^\vee$, the block of Category $\OO$ for $\gk$ containing $L(\lambda)$ has highest weights $W_\lambda \cdot \lambda$. \end{pro}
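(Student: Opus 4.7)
The plan is to combine the abstract blocks decomposition (the Kac--Moody analogue of Theorem \ref{blocks}, whose proof from Section \ref{slinkies} applies verbatim to $\gk\operatorname{-mod}$ once one has Verma and co-Verma modules with analogous embedding and $q$-character properties) with the Kac--Kazhdan theorem on Jordan--H\"older content of Verma modules. By the abstract linkage principle, it suffices to show that the equivalence relation on $\h^*_\kappa$ generated by $\mu \prec \nu$ (meaning $L(\mu)$ is a subquotient of $M(\nu)$) has as its classes exactly the $W_\lambda$-orbits under the dot action.

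For the inclusion $[\lambda] \subseteq W_\lambda \cdot \lambda$, I would invoke the Kac--Kazhdan theorem: if $\mu \prec \lambda$, there is a chain $\lambda = \lambda_0, \lambda_1, \dots, \lambda_n = \mu$ with positive real coroots $\hat{\beta}_i \in \Phi^\vee_{+, re}$ and positive integers $m_i$ satisfying $\lambda_i = \lambda_{i-1} - m_i \beta_i$ and $\langle \lambda_{i-1} + \hat{\rho}, \hat{\beta}_i \rangle = m_i$. Each step is the affine reflection $s_{\hat{\beta}_i} \cdot \lambda_{i-1} = \lambda_i$. Since $\lambda - \lambda_i$ is an integer combination of real roots, one checks by induction on $i$ that $\Phi^\vee_{\lambda_i} = \Phi^\vee_\lambda$, and in particular $\hat{\beta}_i \in \Phi^\vee_{\lambda, re}$. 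Hence $\mu \in W_\lambda \cdot \lambda$. Iterating and using the symmetry of the generating relation gives the full inclusion.

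For the reverse inclusion $W_\lambda \cdot \lambda \subseteq [\lambda]$, I would use that $W_\lambda$ is a Coxeter group generated by the reflections $s_{\hat\beta}$, $\hat\beta \in \Phi^\vee_{\lambda, re}$. It then suffices to verify $s_\beta \cdot \lambda \sim \lambda$ for each such reflection. Setting $m := \langle \lambda + \hat{\rho}, \hat{\beta} \rangle \in \mathbb{Z}$, if $m \geq 0$ then $s_\beta \cdot \lambda = \lambda - m\beta$, and one obtains a nontrivial embedding $M(s_\beta \cdot \lambda) \hookrightarrow M(\lambda)$ from the existence of the singular vector produced by the Shapovalov form (equivalently, the standard existence of embeddings between Vermas whose highest weights differ by a positive multiple of a real root at which one is integral dominant). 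This yields $s_\beta \cdot \lambda \prec \lambda$. If $m < 0$, the same argument with $\lambda$ and $s_\beta \cdot \lambda$ interchanged gives $\lambda \prec s_\beta \cdot \lambda$. In either case, $s_\beta \cdot \lambda \sim \lambda$.

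The main obstacle is the Kac--Kazhdan theorem underlying the first inclusion, which requires a nontrivial analysis of the Shapovalov determinant on each weight space of $M(\lambda)$. Fortunately this is a classical result, so it may be invoked as a black box. A minor secondary point is the verification that $W_\lambda$ carries a Coxeter structure whose simple reflections are real; this is also standard, following from viewing $\Phi^\vee_{\lambda, re}$ as a root subsystem of $\Phi^\vee_{re}$ in the sense of Moody--Pianzola.
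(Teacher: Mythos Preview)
The paper does not prove this proposition; it is introduced with ``we have the following well-known'' and no argument is supplied, with references to \cite{dgk}, \cite{moop} given earlier in Section \ref{slinkies} for the abstract Kac--Moody story. Your sketch follows the standard route and is essentially correct: the inclusion $[\lambda] \subseteq W_\lambda \cdot \lambda$ via the Kac--Kazhdan determinant formula and the resulting Jantzen/BGG-type chain condition, and the reverse inclusion via existence of Verma embeddings along real integral reflections.

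One small logical point in the reverse inclusion: the sentence ``it then suffices to verify $s_\beta \cdot \lambda \sim \lambda$ for each such reflection'' is not literally enough. Knowing $s_\beta \cdot \lambda \sim \lambda$ for each generating reflection does not by itself give $w \cdot \lambda \sim \lambda$ for a word $w = s_{\beta_1} \cdots s_{\beta_r}$; you must apply the same Verma-embedding argument at each successive weight $s_{\beta_r} \cdot \lambda$, $s_{\beta_{r-1}} s_{\beta_r} \cdot \lambda$, and so on. This works precisely because of the observation you already made in the first half, namely that $\Phi^\vee_\mu = \Phi^\vee_\lambda$ for every $\mu \in W_\lambda \cdot \lambda$, so each $\hat\beta_i$ remains integral at each intermediate weight and the embedding argument reapplies. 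Once that iteration is made explicit, the argument is complete.
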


We now recall the Coxeter theoretic parametrization of the block. Within $\Phi^\vee_\lambda$ one has the `simple integral coroots' $$\Pi_\lambda := \{ \halpha \in \Phi_{\lambda, +, re}^\vee:  s_{\halpha} \Phi_{\lambda, +}^\vee \cap \Phi^\vee_{\lambda, -} = -\halpha \}.$$Equivalently, $\Pi_\lambda$ consists of the elements of $\Phi^\vee_{\lambda, +, re}$ which cannot be expressed as the sum of two other elements of $\Phi^\vee_{\lambda, +}$. If we index the simple integral roots by $I_\lambda$, i.e. $$\{ \halpha_i, i \in I_\lambda \} := \Pi_\lambda,$$then it is known that $W_\lambda$ is a Coxeter group with simple reflections $s_i, i \in I_\lambda$. 

The orbit $W_\lambda \cdot \lambda$ will pass through a unique dominant or antidominant weight, depending on the sign of $\kappa$: $$\kappa \notin \kappa_c + \Q^{> 0}: \quad \quad C^-_\kappa := \{ \lambda \in \h^*_\kappa: \langle \lambda + \hat{\rho}, \halpha \rangle \notin \Z^{> 0}, \halpha \in \Phi^\vee_{+, re} \},$$\begin{equation}\label{cp}\kappa \in \kappa_c + \Q^{> 0}: \quad \quad C_\kappa^+ := \{ \lambda \in \h^*_\kappa: \langle \lambda + \hat{\rho}, \halpha \rangle \notin \Z^{< 0}, \halpha \in \Phi^\vee_{+, re} \}.\end{equation}Note that the above dichotomy, while a standard practice in the literature, is partially a convention, as nonintegral blocks may contain both a dominant and antidominant weight, and  one can equally well parametrize $\kappa \notin \Q \kappa_c$ by dominant weights.

Observe also that we can rewrite (anti)dominance in terms of the integral Weyl group: \begin{lemma}\label{obv}We have the equalities $$C^{-}_\kappa = \{ \lambda \in \h^*_\kappa: \langle \lambda + \hat{\rho}, \halpha_i \rangle \in \Z^{\leqslant 0}, i \in I_\lambda \},$$\begin{equation} \label{cpp} C^+_\kappa = \{ \lambda \in \h^*_\kappa: \langle \lambda + \hat{\rho}, \halpha_i \rangle \in \Z^{\geqslant 0}, i \in I_\lambda \}.\end{equation}\end{lemma}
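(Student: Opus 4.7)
The plan is to prove the two equalities by mutual inclusion, handling the $C^-_\kappa$ case in detail; the $C^+_\kappa$ case is symmetric. The forward inclusion is essentially immediate from the definitions: if $\lambda \in C^-_\kappa$, then for every $\halpha \in \Phi^\vee_{+, re}$ the pairing $\langle \lambda + \hat{\rho}, \halpha \rangle$ is not a positive integer. Specializing to $\halpha_i \in \Pi_\lambda$, which by construction lies in $\Phi^\vee_{\lambda, +, re} \subseteq \Phi^\vee_{+, re} \cap \Phi^\vee_\lambda$, the pairing is automatically an integer; combined with the exclusion of $\Z^{>0}$, it must lie in $\Z^{\leqslant 0}$.

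For the reverse inclusion, assume $\langle \lambda + \hat{\rho}, \halpha_i \rangle \in \Z^{\leqslant 0}$ for all $i \in I_\lambda$, and let $\halpha \in \Phi^\vee_{+, re}$ be arbitrary. Split into cases according to whether $\halpha \in \Phi^\vee_\lambda$. If not, then $\langle \lambda + \hat{\rho}, \halpha \rangle \notin \Z$, so in particular is not in $\Z^{>0}$. If yes, then $\halpha \in \Phi^\vee_{\lambda, +, re}$, and I will invoke the key structural input: $(\Phi^\vee_{\lambda, re}, \Pi_\lambda)$ is the root system of the Coxeter group $W_\lambda$ with simple system $\Pi_\lambda$ (a fact stated explicitly in the paragraph preceding the lemma, and standard for integral Weyl groups of Kac--Moody algebras). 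Consequently every positive real integral coroot admits an expansion $\halpha = \sum_{i \in I_\lambda} n_i \halpha_i$ with $n_i \in \Z^{\geqslant 0}$, whence $\langle \lambda + \hat{\rho}, \halpha \rangle = \sum_i n_i \langle \lambda + \hat{\rho}, \halpha_i \rangle$ is a non-negative integer combination of non-positive integers, hence itself lies in $\Z^{\leqslant 0}$, and in particular outside $\Z^{>0}$.

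The $C^+_\kappa$ equality is proved in exactly the same way, with the inequalities reversed throughout: positive real coroots outside $\Phi^\vee_\lambda$ again contribute non-integer pairings, while those inside have non-negative integer expansions in $\Pi_\lambda$ which preserve the $\Z^{\geqslant 0}$ condition. The main (only) nontrivial input is the Coxeter/root-system structure on $(W_\lambda, \Phi^\vee_{\lambda, re}, \Pi_\lambda)$; once this is granted, the lemma is a short bookkeeping exercise. I expect no serious obstacle, since this structural result is well known and is cited in the paragraph defining $\Pi_\lambda$.
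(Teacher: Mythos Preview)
Your proof is correct and follows essentially the same approach as the paper's: both establish the forward inclusion by noting that the $\halpha_i$ lie in $\Phi^\vee_\lambda$ so the pairing is integral, and the reverse by splitting $\Phi^\vee_{+,re}$ into non-integral coroots (where the condition is vacuous) and integral ones (where one invokes $\Phi^\vee_{\lambda,+} \subset \Z^{\geqslant 0}\{\halpha_i : i \in I_\lambda\}$). The only cosmetic difference is that you spell out the $C^-_\kappa$ case while the paper writes out $C^+_\kappa$.
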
\begin{proof}Since $\halpha_i, i \in I_\lambda$ lie in $\Phi^\vee_\lambda$, we have $\langle \lambda + \hat{\rho}, \halpha_i \rangle \in \mathbb{Z}$. It follows that the conditions defining the right hand side of \eqref{cpp} are a subset of those in \eqref{cp}. To see the reverse inclusion, for $\halpha \in \Phi^\vee_{+, re} \setminus \Phi^\vee_\lambda$, $\langle \lambda + \hat{\rho}, \halpha \rangle \notin \mathbb{Z}$, and hence the condition of \eqref{cp} is automatic for such $\halpha$. Moreover, if we know the right hand side of \eqref{cpp}, the fact that \eqref{cp} holds for all $\halpha \in \Phi^\vee_{\lambda, re, +}$ follows from knowing that $\Phi^\vee_{\lambda, +} \subset \Z^{\geqslant 0} \{ \halpha_i, i \in I_\lambda \}.$\end{proof} For $\lambda \in C^\pm_\kappa$, the stabilizer of $\lambda$ in $W_\lambda$, which we denote by $W^\circ_\lambda$, is generated by the simple reflections fixing $\lambda$.

\subsection{The linkage principle: the $\cW$-algebra}
Having recalled the classification of blocks for $\gk$, we now perform the analogous classification for $\cW_\kappa$. 

The main subtlety is that for $\overline{\lambda}$ in $W_{\operatorname{f}} \backslash \h^*_\kappa$, there may be several $W_{\operatorname{f}}$ antidominant weights $\lambda \in \h^*_\kappa$ with image $\overline{\lambda}$. To say how many, write $\Phf$ for the finite coroots, i.e. associated to $\mathfrak{g}$, and consider $$\Phf_{\lambda} := \{ \halpha \in \Phf: \langle \lambda + \rho, \halpha \rangle \in \mathbb{Z} \}.$$We may again consider the subgroup $W_{\operatorname{f}, \lambda} \subset \Wf$ generated by the reflections $s_{\halpha}, \halpha \in \Phf_\lambda$. Then there are $\Wf / W_{\operatorname{f}, \lambda}$ many $\Wf$ antidominant weights in $\Wf \cdot \lambda$, as we develop in the next two lemmas.

\begin{lemma} Write $\wstabf$ for the stabilizer of $\lambda$ in $\Wf$. Then $\wstabf$ is contained in $W_{\operatorname{f}, \lambda}$.  \label{fstab}\end{lemma}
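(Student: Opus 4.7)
The plan is to deduce this from the classical fact that point stabilizers in finite Weyl groups are generated by reflections through roots orthogonal to the stabilized point (a theorem of Chevalley-Steinberg).

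First, I would unwind the definition. By the discussion of the dot action in Subsection \ref{combd}, an element $w \in \Wf$ lies in $\wstabf$ precisely when $w \cdot \lambda = \lambda$, i.e., $w(\lambda + \rho) = \lambda + \rho$. Setting $\mu := \lambda + \rho$, the subgroup $\wstabf$ is thus the stabilizer of $\mu$ under the usual linear action of $\Wf$ on $\h^*$.

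Next, I would invoke the Chevalley-Steinberg theorem: the stabilizer in a finite Weyl group of any point is generated by the reflections through the roots orthogonal to that point. Concretely, $\wstabf$ is generated by the reflections $s_{\halpha}$ with $\halpha \in \Phf$ and $\langle \mu, \halpha \rangle = 0$.

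Finally, I would observe that any such coroot automatically lies in $\Phf_\lambda$: if $\langle \lambda + \rho, \halpha \rangle = 0$, then in particular $\langle \lambda + \rho, \halpha \rangle \in \mathbb{Z}$, so $\halpha \in \Phf_\lambda$ and $s_{\halpha} \in W_{\operatorname{f}, \lambda}$ by definition. Since these reflections generate $\wstabf$, the inclusion $\wstabf \subset W_{\operatorname{f}, \lambda}$ follows. The whole argument is essentially a one-liner once the dot action is translated to the linear action; the only nontrivial input is the classical reflection description of point stabilizers, which presents no real obstacle in the finite Weyl group setting.
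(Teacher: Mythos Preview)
Your argument is correct and is precisely the ``well known'' proof the paper alludes to. The paper does not spell out a proof, merely noting that one can adapt the more elaborate real/imaginary decomposition argument given for the affine analogue in Lemma~\ref{stabaff}; your route via the Chevalley--Steinberg fixed point theorem for finite reflection groups is the standard direct approach and is exactly what that adaptation collapses to in the finite case. One small remark: since $\lambda+\rho$ is a priori a complex vector, you are implicitly invoking Steinberg's theorem for finite complex reflection groups (or, equivalently, applying the real Chevalley theorem twice to the real and imaginary parts, which is what the paper's suggested adaptation of Lemma~\ref{stabaff} would do); either way this is classical and your proof stands.
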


\begin{proof}This is well known, and can be proved by adapting the argument of Lemma \ref{stabaff}.
\end{proof}

\begin{lemma} For each coset $y W_{\operatorname{f}, \lambda} \in \Wf / W_{\operatorname{f, \lambda}}$, there is a unique $W_{\operatorname{f}}$ antidominant weight in $y W_{\operatorname{f}, \lambda} \cdot \lambda$. In particular, under the identification of $\Wf \cdot \lambda \simeq \Wf/\wstabf$, each fibre of the projection onto $\Wf/W_{\operatorname{f}, \lambda}$ contains a unique $\Wf$ antidominant weight. \label{cosets} 
\end{lemma}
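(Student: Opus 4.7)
The plan is to reduce the statement to the integral Weyl group of a single weight and then invoke standard Coxeter-theoretic uniqueness. First, for any $y \in \Wf$, setting $\mu := y \cdot \lambda$ and applying $y$ to the defining condition $\langle \lambda + \rho, \halpha \rangle \in \Z$ for $\halpha \in \Phf_\lambda$ yields $\Phf_\mu = y\,\Phf_\lambda$, hence $W_{\operatorname{f}, \mu} = y W_{\operatorname{f}, \lambda} y^{-1}$. Consequently,
$$y W_{\operatorname{f}, \lambda} \cdot \lambda \;=\; y W_{\operatorname{f}, \lambda} y^{-1} \cdot (y \cdot \lambda) \;=\; W_{\operatorname{f}, \mu} \cdot \mu,$$
and an analogous calculation shows that $W_{\operatorname{f}, \nu} = W_{\operatorname{f}, \mu}$ for every $\nu$ in this orbit, so the integral finite Weyl group is constant along the fibre.

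Second, I would establish the key equivalence: a weight $\nu$ is $\Wf$-antidominant (i.e.\ $\langle \nu + \rho, \halpha \rangle \notin \Z^{>0}$ for all $\halpha \in \Phf_+$) if and only if $\langle \nu + \rho, \halpha \rangle \leqslant 0$ for every positive element of $\Phf_\nu$, i.e.\ if and only if $\nu$ is antidominant for its own integral Weyl group $W_{\operatorname{f}, \nu}$. Indeed, one direction is trivial, and the other follows from the observation that for $\halpha \in \Phf_+ \setminus \Phf_\nu$ the pairing $\langle \nu + \rho, \halpha \rangle$ is non-integral, so the exclusion from $\Z^{>0}$ is automatic. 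Combining this equivalence with the first paragraph, the $\Wf$-antidominant elements of $y W_{\operatorname{f}, \lambda} \cdot \lambda$ are precisely the $W_{\operatorname{f}, \mu}$-antidominant elements of $W_{\operatorname{f}, \mu} \cdot \mu$. Since $W_{\operatorname{f}, \mu}$ is a Coxeter group whose simple reflections correspond to the indecomposable elements of $\Phf_{\mu, +}$ (the finite analogue of the $\Pi_\lambda$ constructed in the subsection on the linkage principle for Kac--Moody), standard Coxeter theory for its reflection representation produces a unique antidominant element in each orbit, giving the claimed existence and uniqueness in $y W_{\operatorname{f}, \lambda} \cdot \lambda$.

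The concluding assertion then follows formally: by Lemma \ref{fstab}, the inclusion $\wstabf \subset W_{\operatorname{f}, \lambda}$ produces a projection $\Wf/\wstabf \twoheadrightarrow \Wf/W_{\operatorname{f}, \lambda}$, whose fibre over $y W_{\operatorname{f}, \lambda}$ is identified under $\Wf \cdot \lambda \simeq \Wf/\wstabf$ with $y W_{\operatorname{f}, \lambda} \cdot \lambda$, so the preceding analysis transfers directly. The main point requiring care is the key equivalence in the second paragraph; once granted, everything else is either a direct calculation with the dot action or an appeal to the Coxeter-theoretic fact that a reflection group acts freely and transitively on chambers in its reflection representation, applied to $W_{\operatorname{f}, \mu}$ rather than to all of $\Wf$.
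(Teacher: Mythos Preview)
Your proof is correct and follows essentially the same approach as the paper's: both reduce to the orbit $W_{\operatorname{f},\mu}\cdot\mu$ via the conjugation identity $W_{\operatorname{f}, y\cdot\lambda} = y W_{\operatorname{f},\lambda} y^{-1}$, invoke the equivalence between $\Wf$-antidominance and antidominance for the integral subgroup (the paper outsources this to the proof of Lemma~\ref{obv}, whereas you spell it out), and then appeal to Lemma~\ref{fstab} for the fibre statement. Your version is simply more detailed.
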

\begin{proof} It is straightforward that there is a unique $\Wf$ antidominant weight in $W_{\operatorname{f}, \lambda} \cdot \lambda$, cf. the proof of Lemma \ref{obv}. To finish the first part of the lemma, note that for any $y \in \Wf$, $W_{\operatorname{f}, y \cdot \lambda} = y W_{\operatorname{f}, \lambda} y^{-1}$, and hence $W_{\operatorname{f}, y \cdot \lambda} \cdot (y \cdot \lambda) = y W_{\operatorname{f}, \lambda} \cdot \lambda$. The remainder of the Lemma now follows from Lemma \ref{fstab}.\end{proof}

The previous two lemmas describe $\Wf$ antidominance within a single $\Wf$ orbit. We next need to understand $\Wf$ antidominance within a single $W$ orbit, or an integral Weyl group $W_\lambda$ orbit therein.

\begin{lemma} Let $\lambda \in \h^*_\kappa, \kappa \neq \kappa_c.$ Write $\wstab$ for the stabilizer of $\lambda$ in $W$. Then $\wstab$ is contained in $W_\lambda$. \label{stabaff} 
\end{lemma}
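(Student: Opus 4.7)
The plan is to deduce this from a standard fixed-point theorem for reflection groups, in the spirit of Steinberg. Since $\kappa \neq \kappa_c$, the translation part of the level $\kappa$ action is dilated by the nonzero scalar $\kappa/\kappa_b$ (Proposition \ref{kact}), so $W$ acts faithfully on $\h^*_\kappa$ as an affine reflection group, with reflecting hyperplanes precisely the fixed-point sets of the reflections $s_\halpha$, $\halpha \in \Phi^\vee_{re}$.

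The key input is the following general fact: if a Coxeter group acts as an (affine) reflection group on an affine space, then the stabilizer of any point $\lambda$ is generated by the reflections whose reflecting hyperplanes contain $\lambda$. In the finite case this is Steinberg's theorem; for the affine Weyl group one can either quote the general Coxeter-theoretic version, or observe that such a stabilizer is finite (it fixes a point, hence lies in a maximal compact subgroup of the affine orthogonal group, and is discrete), and then apply the finite Steinberg theorem to the subgroup of $W$ generated by the reflections through hyperplanes containing $\lambda$.

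Granted this, the proof reduces to identifying which reflections $s_\halpha$ fix $\lambda$ under the dot action. Unwinding definitions, $s_\halpha \cdot \lambda = \lambda$ is equivalent to $s_\halpha(\lambda + \hat\rho) = \lambda + \hat\rho$, i.e.\ to $\langle \halpha, \lambda + \hat\rho \rangle = 0$. Since $0 \in \mathbb{Z}$, this pairing being zero places $\halpha$ in $\Phi^\vee_\lambda$, so $s_\halpha \in W_\lambda$ by definition. Combining with the previous paragraph, every generator of $\wstab$ lies in $W_\lambda$, proving the inclusion.

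The main obstacle is just to have Steinberg's theorem available in the affine setting with the correct hypotheses; the rest is a direct calculation with the dot action. One subtle point to mention is the role of noncriticality: at $\kappa = \kappa_c$ the translation part of the level $\kappa$ action degenerates, so $W$ no longer acts as a faithful affine reflection group and the Steinberg-type statement one wants could fail, which is why the hypothesis $\kappa \neq \kappa_c$ is imposed.
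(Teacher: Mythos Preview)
Your overall strategy---showing that $\wstab$ is generated by the reflections it contains, then observing those reflections lie in $W_\lambda$---is correct and is exactly what the paper does. The gap is in the justification of the key step. You invoke a Steinberg-type theorem for an affine reflection group acting on a \emph{complex} affine space, but the standard statements (alcove theory for affine Weyl groups, or stabilizers of points in the Tits cone for general Coxeter groups) are for real points. Your proposed workaround does not close this gap: the stabilizer $\wstab$ is indeed finite (the linear-part map $\wstab \hookrightarrow \Wf$ is injective, since a nontrivial translation fixes no point), but knowing this and letting $R \subset \wstab$ be the subgroup generated by reflections through $\lambda$ does not by itself give $\wstab = R$. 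Finite Steinberg tells you that point stabilizers \emph{inside a given finite reflection group} are generated by reflections; here the ambient group $W$ is infinite, and you have not yet placed $\wstab$ inside a finite reflection group in which $\lambda$ is a point whose stabilizer is $\wstab$.

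The paper's proof is essentially the honest way to prove your asserted ``general fact'' in this setting. It first uses the $\mathbb{G}_m$-scaling centered at $-\rho$ to reduce to a single noncritical level $\kappa_\circ \in \kappa_c + \mathbb{R}^{>0}\kappa_b$, chosen so that $W$ preserves the real form $\h^*_{\mathbb{R}} \subset \h^*_{\kappa_\circ}$. Decomposing $\lambda = \lambda_{re} + i\lambda_{im}$, one computes $\wstab$ in two steps: the stabilizer of $\lambda_{re}$ is, by the real alcove theory, a finite parabolic $W_J$; then inside $W_J$, now acting linearly on $\h^*_{\mathbb{R}}$, the stabilizer of $\lambda_{im}$ is again generated by the reflections it contains (this is where the finite statement legitimately applies). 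This real/imaginary decomposition is exactly what is needed to pass from $\mathbb{R}$ to $\mathbb{C}$, and is the substantive content your sketch is missing.
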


\begin{proof}
It suffices to see that $\wstab$ is generated by the affine reflections $s_{\halpha}, \halpha \in \Phi^\vee_{re},$ it contains. We moreover claim it suffices to check this for a single $\kappa_\circ \neq \kappa_c$. Indeed, the scaling action of $\mathbb{G}_m$ on $\h^*$, centered at $-\rho$, yields $W$ equivariant isomorphisms between all noncritical $\h^*_\kappa$. 

We will take $\kappa_\circ \in \kappa_c + \mathbb{R}^{> 0} \kappa_b$.  Note that for such $\kappa$, 
$\h^*_\mathbb{R}$, the real affine span of the integral weights at that level, is stable under the level $\kappa$ dot action of $W$ on $\h^*$. Let us denote this restricted action by $\h^*_{\mathbb{R}, \kappa}$. Under the decomposition $\h^*_\kappa = \h^*_{\mathbb{R}} \oplus i \h^*_{\mathbb{R}}$, 
$W$ acts diagonally. The first factor transforms as $\h^*_{\mathbb{R}, \kappa}$, and the second transforms as the level zero unshifted action of $W$ on $\h^*_\mathbb{R}$. 

Let us write $\lambda \in \h^*_{\kappa_0} = \lambda_{re} + i \lambda_{im}$. We may compute $\wstab$ by first computing the stabilizer of $\lambda_{re}$, and then taking the subgroup of it which fixes $\lambda_{im}$. By our choice of $\kappa_\circ$, we may replace $\lambda$ by a $W$ translate so that $\lambda_{re}$ lies in the positive alcove: $$C^+ := \{ \nu \in \h^*_{\mathbb{R}}: \langle \nu + \hat{\rho}, \halpha_i \rangle \in \mathbb{R}^{\geqslant 0}, i \in \hat{I} \}.$$The stabilizer of $\lambda_{re}$ is then a parabolic subgroup $W_J$ of $W$, for a proper subset $J \subset \hat{I}$ of the affine simple roots. 

Because $J$ was a proper subset of $\hat{I}$, we may further act by $W_J$ to carry $\lambda_{im}$ into $$C^+_J := \{ \nu \in \h^*_\mathbb{R}: \langle \nu, \halpha_j \rangle \in \mathbb{R}^{\geqslant 0}, j \in J \}.$$It follows that the stabilizer of $\lambda_{im}$ in $W_J$ is generated by the simple reflections $s_j, j \in J$, it contains, as desired. \end{proof}

\begin{lemma} Fix $\lambda \in \h^*_\kappa, \kappa \neq \kappa_c$, and $\nu \in W_\lambda \cdot \lambda$. Then the intersection of $\Wf \cdot \nu$ and $W_\lambda \cdot \lambda$ is (i) acted on transitively by $W_{\operatorname{f}, \lambda}$, and (ii) contains a unique $\Wf$ antidominant weight.\label{inorbits} \end{lemma}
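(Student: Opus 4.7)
The plan is to deduce both statements rather quickly from Lemmas \ref{stabaff} and \ref{cosets}, with the only conceptual point being that the integral Weyl data is constant along $W_\lambda$-orbits.

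For (i), we first note that since $\nu \in W_\lambda \cdot \lambda$, the integral root systems agree: $\Phi^\vee_\nu = \Phi^\vee_\lambda$, and similarly $\Phf_\nu = \Phf_\lambda$, whence $W_\nu = W_\lambda$ and $W_{\operatorname{f},\nu} = W_{\operatorname{f},\lambda}$. The key claim is that any $y \in \Wf$ with $y \cdot \nu \in W_\lambda \cdot \lambda$ already lies in $W_{\operatorname{f}, \lambda}$. Indeed, since $W_\lambda \cdot \lambda = W_\lambda \cdot \nu$, we can write $y \cdot \nu = w \cdot \nu$ for some $w \in W_\lambda$, so $w^{-1} y$ stabilizes $\nu$. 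By Lemma \ref{stabaff}, the stabilizer of $\nu$ in $W$ is contained in $W_\nu = W_\lambda$, so $w^{-1} y \in W_\lambda$, and hence $y \in W_\lambda$. Combined with $y \in \Wf$, this gives $y \in W_\lambda \cap \Wf = W_{\operatorname{f}, \lambda}$. It follows that $\Wf \cdot \nu \cap W_\lambda \cdot \lambda = W_{\operatorname{f}, \lambda} \cdot \nu$, and transitivity of the $W_{\operatorname{f}, \lambda}$ action on this set is then immediate.

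For (ii), having identified the intersection with $W_{\operatorname{f}, \lambda} \cdot \nu$, we apply Lemma \ref{cosets} to the weight $\nu$ in place of $\lambda$ (permissible since $W_{\operatorname{f},\nu} = W_{\operatorname{f},\lambda}$). That lemma exhibits, within $\Wf \cdot \nu$, a unique $\Wf$-antidominant representative in each coset $y W_{\operatorname{f}, \lambda} \cdot \nu$ of the projection $\Wf \cdot \nu \simeq \Wf/W^\circ_{\operatorname{f},\lambda} \twoheadrightarrow \Wf/W_{\operatorname{f},\lambda}$. Taking $y = e$ produces exactly one $\Wf$-antidominant weight in $W_{\operatorname{f},\lambda} \cdot \nu$, which is the entire intersection by (i).

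The only potential obstacle I foresee is verifying carefully that $W_\nu = W_\lambda$ and $W_{\operatorname{f},\nu} = W_{\operatorname{f},\lambda}$ for $\nu \in W_\lambda \cdot \lambda$, but this is routine: an element $w \in W_\lambda$ permutes $\Phi^\vee_\lambda$ (the set of coroots integral on $\lambda + \hat\rho$) and satisfies $\langle w \cdot \lambda + \hat\rho, \halpha\rangle = \langle \lambda + \hat\rho, w^{-1} \halpha\rangle$, so integrality is preserved; the finite case is identical. Everything else reduces to inputs already recorded in Lemmas \ref{stabaff} and \ref{cosets}.
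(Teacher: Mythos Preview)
Your overall structure matches the paper's: reduce (i) to the equality $\Wf \cdot \nu \cap W_\lambda \cdot \lambda = W_{\operatorname{f},\lambda} \cdot \nu$, use Lemma \ref{stabaff} to land $y$ in $W_\lambda$, and for (ii) apply Lemma \ref{cosets} to the single $W_{\operatorname{f},\lambda}$-orbit. But there is a genuine gap: you assert $W_\lambda \cap \Wf = W_{\operatorname{f},\lambda}$ without justification, and this equality is precisely the main technical content of the lemma. The inclusion $\supset$ is clear, but $\subset$ is not: an element of $W_\lambda$ is a product of reflections $s_{\halpha}$ with $\halpha \in \Phi^\vee_{\lambda, re}$, and even if this product happens to lie in $\Wf$, there is no a priori reason it can be rewritten as a product of reflections in $\Phf_\lambda$. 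Nothing in Lemmas \ref{stabaff} or \ref{cosets} records this, and it is not a general fact about intersections of reflection subgroups.

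The paper fills this gap as follows. Since $\Phf$ is the root system of a Levi of $\Phi^\vee$, a positive coroot $\halpha \in \Phf_{\lambda,+}$ is simple in $\Phf_\lambda$ if and only if it is simple in $\Phi^\vee_\lambda$; hence $W_{\operatorname{f},\lambda}$ is a \emph{parabolic} subgroup of the Coxeter group $W_\lambda$, and may therefore be characterized as those $w \in W_\lambda$ preserving $\Phi^\vee_{\lambda,+} \setminus \Phf_{\lambda,+}$. But any $w \in \Wf$ permutes $\Phi^\vee_+ \setminus \Phf_+$, so any $w \in W_\lambda \cap \Wf$ preserves $\Phi^\vee_{\lambda,+} \setminus \Phf_{\lambda,+}$ and thus lies in $W_{\operatorname{f},\lambda}$. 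This parabolicity is also what later justifies the Bruhat order on $\wfl \backslash W_\lambda / \wstab$, so it is not a throwaway step. Your direct check that $W_{\operatorname{f},\nu} = W_{\operatorname{f},\lambda}$ (via $\Phi^\vee_\nu = \Phi^\vee_\lambda$ and intersecting with $\Phf$) is fine and slightly more direct than the paper's route for (ii), but for (i) you still need the missing equality.
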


\begin{proof}We start with (i), i.e. that $$\Wf \cdot \nu \cap W_\lambda \cdot \lambda = W_{\operatorname{f}, \lambda} \cdot \nu.$$The inclusion $\supset$ is straightforward. To show $\subset$, let us write $\nu = w \cdot \lambda$, for some $w \in W_\lambda$. Suppose that we have $yw \cdot \lambda = w' \cdot \lambda$, for some $y \in \Wf, w' \in W_\lambda$. Then we have $w^{-1} y^{-1} w' \cdot \lambda = \lambda$. By Lemma \ref{stabaff}, $w^{-1} y^{-1} w' \in W_\lambda$. As $w, w' \in W_\lambda$, it follows that $y \in W_\lambda$ as well. Therefore, it remains to show that \begin{equation} \label{intp} W_\lambda \cap \Wf = W_{\operatorname{f}, \lambda}.\end{equation}The inclusion $\supset$ is again clear. For the inclusion $\subset$, it suffices to show that $W_\lambda \cap \Wf$ is generated by the reflections $s_{\halpha}, \halpha \in \Phf,$ it contains. We will show that $W_\lambda \cap \Wf$ is in fact a parabolic subgroup of $W_\lambda$. 

To see this, recall that the simple positive roots in $\Phi^\vee_\lambda$ were those $\halpha \in \Phi^\vee_{\lambda, +, re}$ which could not be written as $\halpha' + \halpha''$, for $\halpha', \halpha'' \in \Phi^\vee_{\lambda, +}$, and similarly for $\Phf_\lambda$. Since ${}^{\operatorname{f}}\Phi^\vee$ is the root system of a Levi of $\Phi^\vee$, it follows that $\halpha \in {}^{\operatorname{f}}\Phi^\vee_{\lambda, +}$ is simple in $\Phf_\lambda$ if and only if it is simple in $\Phi^\vee_\lambda$. Thus, $W_{\operatorname{f}, \lambda}$ is a parabolic subgroup of $W_\lambda$. In particular, it may be characterized as those $w \in W_\lambda$ which preserve $\Phi^\vee_{\lambda, +} \setminus {}^{\operatorname{f}}\Phi^\vee_{\lambda, +}.$ But any $w \in W_\lambda \cap W_{\operatorname{f}}$ shares this property, and hence $W_\lambda \cap \Wf = W_{\operatorname{f}, \lambda}$, as desired. This completes the proof of (i). 

To prove (ii), it suffices to show that $W_{\operatorname{f}, \lambda} = W_{\operatorname{f}, \nu}$. But recalling that $W_\lambda = W_\nu$, we may use Equation \eqref{intp} to conclude $$W_{\operatorname{f}, \lambda} = W_\lambda \cap \Wf = W_\nu \cap \Wf = W_{\operatorname{f}, \nu},$$as desired.
\end{proof}

With these preparations, we may prove the linkage principle. 
\begin{theo} Let $\kappa$ be noncritical, and $\overline{\lambda} \in W_{\operatorname{f}} \backslash \h^*_\kappa$. Then: \label{glp}

\begin{enumerate}
    \item For any lift $\lambda \in \h^*_\kappa$ of $\overline{\lambda}$, the linkage class of $\overline{\lambda}$ is given by the image of $W_\lambda \cdot \lambda$ in $W_{\operatorname{f}} \backslash \h^*_\kappa$. 
    \item Write $W^\circ_\lambda$ for the stabilizer of $\lambda$ under the level $\kappa$ dot action of $W_\lambda$ on $\h^*_\kappa$. Then the highest weights in the linkage class of $\overline{\lambda}$ are parametrized by $W_{\operatorname{f}, \lambda} \backslash W_\lambda / W^\circ_\lambda$.
    
\end{enumerate}
\end{theo}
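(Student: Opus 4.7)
The plan is to transport the classical linkage principle for $\gk$ across Arakawa's exact Drinfeld--Sokolov functor $\operatorname{DS}_-$ of Theorem~\ref{mrd}. Part (2) is purely group-theoretic: for $w_1, w_2 \in W_\lambda$, the identity $\pi(w_1 \cdot \lambda) = \pi(w_2 \cdot \lambda)$ amounts to $w_1 \cdot \lambda \in \Wf \cdot (w_2 \cdot \lambda) \cap W_\lambda \cdot \lambda$, which by Lemma~\ref{inorbits} equals the $W_{\operatorname{f}, \lambda}$-orbit of $w_2 \cdot \lambda$. Combined with Lemma~\ref{stabaff}'s containment $W^\circ = W^\circ_\lambda \subseteq W_\lambda$, this exhibits $\pi(W_\lambda \cdot \lambda)$ as the set of double cosets $W_{\operatorname{f}, \lambda} \backslash W_\lambda / W^\circ_\lambda$.

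For Part (1) I would establish two inclusions between the linkage class of $\overline\lambda$ and $\pi(W_\lambda \cdot \lambda)$. For ``linkage $\subseteq \pi(W_\lambda \cdot \lambda)$'', it suffices to verify that $\pi(W_\lambda \cdot \lambda)$ is closed under $\prec$ in both directions. Given $L(\overline\mu) \prec M(\overline\nu)$ with, say, $\overline\nu \in \pi(W_\lambda \cdot \lambda)$, I would pick a $\Wf$-antidominant lift $\tilde\nu \in W_\lambda \cdot \lambda$ via Lemma~\ref{cosets}, so that $M(\overline\nu) \simeq \operatorname{DS}_-(M(\tilde\nu))$ by Theorem~\ref{mrd}. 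The classical Kac--Moody linkage principle identifies every Jordan--H\"older subquotient of $M(\tilde\nu)$ as $L(\tilde\xi)$ with $\tilde\xi \in W_\lambda \cdot \lambda$; exactness of $\operatorname{DS}_-$ and its action on simples (sending $L(\tilde\xi)$ to $L(\pi(\tilde\xi))$ or zero according to $\Wf$-antidominance) then force $\overline\mu = \pi(\tilde\xi)$ for some such $\tilde\xi$, giving $\overline\mu \in \pi(W_\lambda \cdot \lambda)$. The opposite direction ($\overline\mu$ known) runs symmetrically: any $\Wf$-antidominant lift $\tilde\nu$ of $\overline\nu$ must lie in the Kac--Moody block of a lift of $\overline\mu$, and the $\Wf$-invariance of $\pi$ combined with the $W_\lambda$-structure of Kac--Moody orbits ensures this block has $\pi$-image equal to $\pi(W_\lambda \cdot \lambda)$.

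For the reverse inclusion ``$\pi(W_\lambda \cdot \lambda) \subseteq$ linkage'', consider first the ``negative level'' case $\kappa \notin \kappa_c + \mathbb{Q}_{>0} \kappa_b$. The block $W_\lambda \cdot \lambda$ contains a unique $W$-antidominant weight $\lambda^- \in C^-_\kappa$, and the Kac--Moody BGG theorem gives $[M(\tilde\mu) : L(\lambda^-)] \geq 1$ for every $\tilde\mu \in W_\lambda \cdot \lambda$. Since $W$-antidominance implies $\Wf$-antidominance, applying $\operatorname{DS}_-$ to $\Wf$-antidominant lifts (furnished by Lemma~\ref{cosets}) yields $L(\pi(\lambda^-)) \prec M(\overline\mu)$ for every $\overline\mu \in \pi(W_\lambda \cdot \lambda)$; all such $\overline\mu$ are thus linked to the common weight $\pi(\lambda^-)$, and in particular to $\overline\lambda$.

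The \textbf{main obstacle} is the ``positive level'' case $\kappa \in \kappa_c + \mathbb{Q}_{>0} \kappa_b$: the distinguished element $\lambda^+$ of the block is $W$-dominant and hence $\Wf$-dominant, so $\operatorname{DS}_-(L(\lambda^+)) = 0$ and the bridging argument collapses. I would resolve this by invoking the categorical Feigin--Fuchs duality $\cW_\kappa\operatorname{-mod}^\vee \simeq \cW_{-\kappa + 2\kappa_c}\operatorname{-mod}$ of Theorem~\ref{ffd}, which preserves simples and interchanges Verma with co-Verma modules, compatibly with the combinatorial duality $\h^*_\kappa \simeq \h^*_{-\kappa + 2\kappa_c}$ of Proposition~\ref{flippy}; it therefore matches the $\prec$-relation and the blocks of $\cW_\kappa\operatorname{-mod}^\heartsuit$ with those of $\cW_{-\kappa + 2\kappa_c}\operatorname{-mod}^\heartsuit$, and since $-\kappa + 2\kappa_c$ is negative, the established case concludes the proof.
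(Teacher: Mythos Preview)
Your treatment of Part~(2) and of the inclusion ``linkage class of $\overline{\lambda}$ is contained in $\pi(W_\lambda\cdot\lambda)$'' in Part~(1) is essentially the paper's argument: both lift the $\prec$-relation through $\operatorname{DS}_-$ via Theorem~\ref{mrd} and invoke the Kac--Moody block decomposition, and both obtain the double-coset description from Lemma~\ref{inorbits} together with Lemma~\ref{stabaff}.

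Where you go \emph{further} than the paper is in supplying the reverse inclusion $\pi(W_\lambda\cdot\lambda)\subseteq$ linkage class. The paper's proof, as written, only verifies that $\pi(W_\lambda\cdot\lambda)$ is closed under $\prec$ in both directions, hence is a union of linkage classes; it does not argue that this union consists of a single class. You are right to notice and address this. Your negative-level bridge through the $W$-antidominant $\lambda^-$ (whose simple survives $\operatorname{DS}_-$ and appears in every Verma of the block) is correct and is exactly the sort of argument one wants.

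The positive-level step via Feigin--Fuchs duality, however, has a genuine gap. First, Theorem~\ref{vermsd} shows $\mathbb{D}$ sends Vermas to Vermas (up to a shift), not Vermas to co-Vermas. Second, and more seriously, your claim that $\mathbb{D}$ ``matches the $\prec$-relation and the blocks'' requires knowing that $\mathbb{D}$ carries simples to simples; this is nowhere established, and is not obvious, since $\mathbb{D}$ is a contravariant equivalence on compact objects of the renormalized dg-category rather than on the abelian category $\OO$. Third, Theorem~\ref{vermsd} itself appears only in Section~9, after the present theorem.

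The detour is unnecessary: a direct argument parallel to your negative-level one works. At positive level take the $W$-dominant $\lambda^+\in W_\lambda\cdot\lambda$. By the positive-level analogue of Proposition~\ref{vembgk} (every Verma in the block embeds into $M(\lambda^+)$), one has $[M(\lambda^+):L(\tilde\mu)]\geq 1$ for every $\tilde\mu\in W_\lambda\cdot\lambda$. Now for each $\overline{\mu}\in\pi(W_\lambda\cdot\lambda)$, Lemma~\ref{inorbits}(ii) supplies a $\Wf$-antidominant $\tilde\mu\in W_\lambda\cdot\lambda$ with $\pi(\tilde\mu)=\overline{\mu}$, so $\operatorname{DS}_- L(\tilde\mu)=L(\overline{\mu})\neq 0$; exactness of $\operatorname{DS}_-$ then gives $[M(\pi(\lambda^+)):L(\overline{\mu})]\geq 1$, i.e.\ $\overline{\mu}\prec\pi(\lambda^+)$. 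Hence every element of $\pi(W_\lambda\cdot\lambda)$ lies in the linkage class of $\pi(\lambda^+)=\overline{\lambda}$, with no appeal to duality.
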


\begin{proof} We start with (1). Let $\overline{\nu} \in \Wf \backslash \h^*_\kappa$ be arbitrary, and $\overline{\lambda'}$ lie in the image of $W_\lambda \cdot \lambda$. It suffices to show that if either $\overline{\nu} \prec \overline{\lambda'}$ or $\overline{\nu} \succ \overline{\lambda'}$, cf. Section \ref{slinkies} for the notation, then $\overline{\nu}$ is also in the image of $W_\lambda \cdot \lambda$. As the statement of the theorem is unchanged by replacing $\overline{\lambda}$ with $\overline{\lambda'}$, we may take $\overline{\lambda'} =\overline{\lambda}$. 

First consider the case $\overline{\nu} \prec \overline{\lambda}$, i.e. $[M(\overline{\lambda}): L(\overline{\nu})] > 0$. Before Drinfeld-Sokolov reduction, it follows that $[M(\lambda): L(\nu)] > 0$ for some lift $\nu$ of $\overline{\nu}$. By the block decomposition for $\gk$, $\nu$ lies in $W_\lambda \cdot \lambda$, as desired. 

Next consider the case $\overline{\lambda} \prec \overline{\nu}$. Consider an arbitrary lift $\nu \in \h^*_\kappa$ of $\overline{\nu}$. Before Drinfeld-Sokolov reduction, we deduce that $[M(\nu): L(\eta)] > 0$, for some $\Wf$ antidominant $\eta$ lying in $\Wf \cdot \lambda$. If we write $\eta  = y \cdot \lambda$, $y \in \Wf$, then the block for $\gk$ passing through $\eta$ is $W_{\eta} \cdot \eta = y W_{\lambda} \cdot \lambda$. In particular, $\nu$ lies in $yW_\lambda \cdot \lambda$, and hence $\overline{\nu}$ lies in the image of $W_\lambda \cdot \lambda$. This concludes the proof of (1).  

Statement (2) follows from combining (1) and Lemma \ref{inorbits}. Namely, we may identify $W_\lambda \cdot \lambda$ with $W_\lambda/W^\circ_\lambda$, and Lemma \ref{inorbits} identifies the $\Wf$ antidominant weights in $W_\lambda \cdot \lambda$ with $W_{\operatorname{f}, \lambda} \backslash W_\lambda/ W^\circ_\lambda$. \end{proof}

\subsection{Some conjectures on the blocks of $\cW$} \label{cbl} 
Let $\lambda$ be antidominant, and consider $W_{\operatorname{f}, \lambda} \backslash W_\lambda / W^\circ_\lambda$. This exhibits the highest weights of its block as a double coset of a Coxeter group by two parabolic subgroups. We conjecture the block of $\OO$ only depends on this. To this end, it likely admits a description as appropriate monodromic parabolic-singular Soergel modules. In particular, blocks should have graded lifts and Koszul duality. 

A related expectation is that one should have translation functors relating various blocks for $\cW$, which should realize some of the above equivalences. This was mentioned as a folklore conjecture in \cite{bw3}. With some care, one can construct translation functors  from those for $\gk$ via affine Skyrabin, as we hope to explain elsewhere. We were informed by Creutzig of an alternative approach to translation functors which has been developed in forthcoming work by him and Arakawa.

Next, let $\lambda$ be regular antidominant, so that we are dealing with a one sided coset $W_{\operatorname{f}, \lambda} \backslash W_\lambda$. In this case, our conjecture asserts this block is equivalent to a monodromic singular block of $\OO$ for the appropriate Kac--Moody algebra. Moreover, at least in this case  we conjecture that Drinfeld--Sokolov reduction should identify with a translation functor. To state this precisely, suppose for simplicitly that $\lambda$ is integral, and write $\OO_{\lambda}^{mon}$ for the corresponding block of monodromic Category $\OO$ for $\gk$, and $\OO_{\cw_\kappa, \lambda}$ for the corresponding block of $\OO$ for $\cW_\kappa$. Note that $\OO^{mon}_{\lambda}$ may be identified with the free monodromic Hecke algebra, and hence carries an involution $\iota$ coming from inversion on the group. Finally, write $\mathscr{T}$ for a translation functor from $\OO^{mon}_{\lambda}$ to a $\Wf$ singular block of monodromic Category $\OO$, which we denote by $\OO_{ -\rho}^{mon}$. Then, we conjecture an equivalence $\OO_{\cw_\kappa, \lambda} \simeq \OO^{mon}_{-\rho}$ fitting into a commutative diagram $$\xymatrix{\OO^{mon}_\lambda \ar@{-}[r]^{\sim}_{\iota} \ar[d]_{\operatorname{DS}_-} & \OO^{mon}_{\lambda} \ar[d]^{\mathscr{T}} \\ \OO_{\cw_\kappa, \lambda} \ar@{-}[r]^{\sim} & \OO_{- \rho}^{mon}.}$$For nonintegral $\lambda$ one should replace $\gk$ by a Kac-Moody algebra corresponding to its integral Weyl group with a appropriately modified torus, or consider the neutral block of the corresponding twisted free monodromic Hecke algebra for $\gk$. We suspect that several but not all cases of this conjecture should follow from our forthcoming work on the tamely ramified FLE.

 Analogous statements for $\lambda$ dominant should be true. In particular, there should exist appropriately defined positive level Soergel modules. Moreover all of these should have non-monodromic variants, given a good definition for non-monodromic Category $\OO$ for $\cW$. For Virasoro, translation functors were pursued and Koszulity was studied in \cite{naka}, \cite{wies}. 

Similar stories should hold for affine $\cW$-algebras associated to other nilpotent elements principal in a Levi. Finally, we were happy to learn that for the finite $\cW$-algebras, a completely parallel story has been largely been worked out \cite{bgk}, \cite{ginz}, \cite{los}, \cite{ms},  \cite{bw}. We thank Ben Webster for bringing this to our attention.

\begin{re} For ease of notation, we wrote this section over the complex numbers $\mathbb{C}$. However, the same results hold, with similar proofs, for any algebraically closed field $k$ of characteristic zero. Namely, one may deduce the case of $\overline{\mathbb{Q}}$ from that of $\mathbb{C}$. Lemmas \ref{fstab}, \ref{stabaff} may then be proven for $k$ by choosing a basis for $k$ over $\overline{\mathbb{Q}}$, in a similar to manner to how we deduced them for $\mathbb{C}$ from the statements for $\mathbb{R}$. \end{re}

\section{Verma embeddings}
In this section, we prove de Vost--van Driel's conjecture on homomorphisms between Verma modules.
 
 \subsection{Negative level}
 \label{neglev}
 In this subsection, $\kappa$ will be negative, i.e. $\kappa \notin \kappa_c + \mathbb{Q}^{\geqslant 0} \kappa_b$. We will now determine the homomorphisms between all Verma modules for $\cw_\kappa$. 
 
 For Verma modules $M, M'$ in distinct blocks, $\Hom(M, M') = 0$, cf. Lemma \ref{nohom}. Therefore we will fix a $\overline{\lambda} \in \Wf \backslash \h^*_\kappa$, and study the Verma modules within its block. Recall that if we pick a lift $\lambda \in \h^*_\kappa$ of $\overline{\lambda}$, the highest weights of the block through $\overline{\lambda}$ are parametrized by $W_{\operatorname{f}, \lambda} \backslash W_\lambda / \wstab.$ For a double coset $w \in W_{\operatorname{f}, \lambda} \backslash W_\lambda / \wstab$, let us write $M_{w}$ for the corresponding Verma module. 
 
 Recall that $W_\lambda$ is a Coxeter group, with simple reflections $s_i, i \in I_\lambda$. In Lemma \ref{inorbits}, it was shown that $W_{\operatorname{f}, \lambda}$ is a parabolic subgroup of $W_\lambda$. By possibly replacing $\lambda$ by a $W_\lambda$ translate, we may assume $\lambda$ is antidominant. In this case, its stabilizer $\wstab$ is again a parabolic subgroup of $W_\lambda$, cf. the proof of Lemma \ref{stabaff}. 
 
 As a quotient of a Coxeter group by two parabolic subgroups, $W_{\operatorname{f}, \lambda} \backslash W_\lambda / W^\circ_\lambda$ inherits a partial order from the Bruhat order on $W_\lambda$, which we denote by $\leqslant$. Explicitly, each double coset admits a unique minimal length representative, and one restricts the Bruhat order on $W_\lambda$ along this section. With these preliminaries, we may now state the main result of this subsection. 
 
 \begin{theo} For elements $v, w \in W_{\operatorname{f}, \lambda} \backslash W_\lambda / \wstab$, $\Hom( M_{{v}}, M_{{w}})$ is at most one dimensional, and is nonzero if and only if ${v} \leqslant {w}$. \label{vemb} \end{theo}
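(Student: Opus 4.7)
The plan is to use Arakawa's Drinfeld--Sokolov functor $\operatorname{DS}_-: \OO_{\gk} \to \OO$ of Theorem \ref{mrd} to transfer the corresponding statement for $\gk$-Vermas, namely the classical Verma embedding theorem of Kac--Kazhdan at noncritical level. By the linkage principle (Theorem \ref{glp}) it suffices to work within a single block, and the assumption that $\kappa$ is negative lets us pick an antidominant lift $\lambda$ of the relevant class in $\Wf \bk \h^*_\kappa$. For each double coset $v \in \dcs$, let $\tilde{v} \in W_\lambda$ denote its minimal length representative, and set $\mu_v := \tilde{v} \cdot \lambda$; by Lemma \ref{cosets} this $\mu_v$ is the unique $\Wf$-antidominant weight in its $\Wf$-orbit inside $W_\lambda \cdot \lambda$, so Theorem \ref{mrd} yields $\operatorname{DS}_-(M(\mu_v)) \simeq M_v$ and $\operatorname{DS}_-(L(\mu_v)) \simeq L_v$, where $L_v := L(\pi(\mu_v))$.

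The central input will be the Jordan--H\"older multiplicity identity
\[
[M_w : L_v] = [M(\mu_w) : L(\mu_v)].
\]
Exactness of $\operatorname{DS}_-$ combined with Arakawa's description of its behavior on simples expresses the left hand side as $\sum_{\mu'} [M(\mu_w) : L(\mu')]$ summed over those $\Wf$-antidominant $\mu' \in W_\lambda \cdot \lambda$ satisfying $\pi(\mu') = \pi(\mu_v)$; by Lemma \ref{cosets} applied to $\Wf$-orbits inside $W_\lambda \cdot \lambda$, the only such $\mu'$ is $\mu_v$ itself. By Kac--Kazhdan, the right hand side is positive precisely when $\tilde{v} \leqslant \tilde{w}$ in the Bruhat order on $W_\lambda$, which by definition is $v \leqslant w$ in the double coset Bruhat order. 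For the nonvanishing direction of (2), the classical embedding $M(\mu_v) \hookrightarrow M(\mu_w)$ maps under the exact functor $\operatorname{DS}_-$ to an embedding $M_v \hookrightarrow M_w$; conversely, any nonzero morphism $M_v \to M_w$ realizes $L_v$ as a Jordan--H\"older constituent of $M_w$, so the multiplicity identity forces $\tilde{v} \leqslant \tilde{w}$.

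For the at-most-one-dimensional and embedding assertions of (1), the cleanest route is to upgrade the multiplicity identity to an isomorphism
\[
\operatorname{DS}_- : \Hom_{\gk}(M(\mu_v), M(\mu_w)) \xrightarrow{\sim} \Hom_{\cw_\kappa}(M_v, M_w),
\]
so that the classical bound $\dim \Hom_{\gk}(M(\mu_v), M(\mu_w)) \leqslant 1$ and the automatic injectivity of nonzero $\gk$-Verma morphisms transfer directly to $\cw_\kappa$. Injectivity of the comparison map is immediate from exactness of $\operatorname{DS}_-$ and the embedding property on the $\gk$-side; surjectivity is the main anticipated obstacle. I plan to address it either by a direct socle analysis---arguing that $M_w$ has simple socle $M_{e}$, where $e$ is the double coset of $\lambda$, by applying $\operatorname{DS}_-$ to the classical simple socle of $M(\mu_w)$ and invoking the multiplicity identity to rule out extra simple summands---or, failing that, by using the affine Skryabin equivalence $\gk\operatorname{-mod}_{LN, \psi} \simeq \cw_\kappa\operatorname{-mod}$ of \eqref{affs} to realize $\Hom$-spaces of $\cw_\kappa$-Vermas as $\Hom$-spaces of the corresponding Whittaker objects in $\gk\operatorname{-mod}$, where the desired comparison becomes a statement about Whittaker coinvariants applied to a morphism of Vermas.
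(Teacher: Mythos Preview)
Your overall strategy---transport from $\gk$ via the exact functor $\operatorname{DS}_-$, using the multiplicity identity $[M_w:L_v]=[M(\mu_w):L(\mu_v)]$ obtained from Theorem~\ref{mrd} and Lemma~\ref{inorbits}---is exactly what the paper does. The ``if $v\leqslant w$ then $\Hom\neq 0$'' direction (apply $\operatorname{DS}_-$ to a Kac--Moody embedding) and its converse (a nonzero map forces $[M_w:L_v]>0$, hence a Bruhat relation upstairs) are both as in the paper, modulo one small combinatorial point you elide: you assume the minimal-length double-coset representative $\tilde v$ already gives the $\Wf$-antidominant weight. The paper does not use this; it works with the minimal-length representative $s(v)$ for the forward direction and the $\Wf$-antidominant representative $s'(v)$ for the converse, closing the loop via $s(v)\leqslant s'(v)$ (and only notes afterwards, in a remark, that in fact $s=s'$).

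The genuine gap is the ``at most one dimensional'' step. Neither of your two suggested routes is complete. For route~(a), exactness of $\operatorname{DS}_-$ gives $L_e\hookrightarrow M_w$, but to conclude the socle is exactly $L_e$ you would need to rule out other simple submodules $L_v\hookrightarrow M_w$, and there is no mechanism for lifting such an embedding back to $\gk$; the multiplicity identity only constrains composition factors, not their position in the socle series. Route~(b) via affine Skryabin is too vague as stated: the equivalence \eqref{affs} identifies $\cW_\kappa$-modules with objects of $\gk\operatorname{-mod}_{LN,\psi}$, not with objects of $\gk\operatorname{-mod}^{I_1^-}$, so it does not directly compare the desired $\Hom$-spaces.

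The paper's resolution is different and more elementary: it proves independently (Proposition~\ref{vembb}) that every nonzero morphism of $\cW_\kappa$-Vermas is injective, by a PBW-type argument on the associated graded of the current algebra $\mathfrak U(\cW)$. Granting this, one argues: $M_e$ is simple with $[M_v:M_e]=1$ (by applying $\operatorname{DS}_-$ to Proposition~\ref{vembgk}(1)), and $\Hom(M_e,M_v)$ is one dimensional; so given $f,g\colon M_v\to M_w$, scale so that $f-g$ kills the copy of $M_e$ inside $M_v$, whence $f-g$ has nonzero kernel and is therefore zero. This sidesteps both the $\Hom$-isomorphism and any socle computation, and is the step you are missing.
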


 \subsubsection{Negative level (i): recollections for Kac--Moody} To prove Theorem \ref{vemb}, we will need the analogous story for Kac--Moody.

 \begin{pro} Fix an antidominant weight $\lambda \in \h^*_\kappa$, and for $w \in W_\lambda/\wstab$ write $M_w$ for the Verma module $M(w \cdot \lambda)$. Then:
 
 \begin{enumerate}
     \item The antidominant Verma $M_e$ is simple, and $[M_y: M_e] = 1$ for any $y \in W_\lambda / \wstab$. 
     \item $\Hom(M_y, M_w)$ is at most one dimensional, and is nonzero if and only if $y \leqslant w$ in the Bruhat order on $W_\lambda /\wstab $.
 \end{enumerate}
 \label{vembgk}
 \end{pro}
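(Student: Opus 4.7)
The plan is to reduce this to the classical integral-block Verma embedding theorem for Kac--Moody algebras (Kac--Kazhdan, Deodhar--Gabber--Kac, and in the singular case, the parabolic analogue of Rocha-Caridi--Wallach). Both parts really only depend on $W_\lambda$, so in effect one is proving the BGG--Verma theorem for a Coxeter group acting by the dot action on an antidominant integral weight.

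For statement (1), the simplicity of $M_e$ is a direct application of the Kac--Kazhdan criterion: any proper submodule of $M(\lambda)$ contains a singular vector of weight $\lambda - n\alpha$ with $\alpha \in \Phi^\vee_{+, re}$ and $n = \langle \lambda + \hat{\rho}, \check{\alpha}\rangle \in \mathbb{Z}^{>0}$. Such $\alpha$ necessarily lies in $\Phi^\vee_\lambda$, which contradicts the antidominance of $\lambda$ with respect to $W_\lambda$ (cf. the argument of Lemma \ref{obv}, run one-sidedly). Hence $M_e = L_e$. That $[M_y : L_e] = 1$ then follows from the embedding $M_e \hookrightarrow M_y$ produced in (2), which identifies $L_e$ with a submodule of $M_y$ of multiplicity one at the weight $\lambda$ itself, combined with the fact that $\lambda$ is the unique antidominant weight in $W_\lambda \cdot \lambda$ modulo $\wstab$.

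For statement (2), I would first observe that any nonzero map $f \colon M_y \to M_w$ is injective, since $U(\hat{\fn}^-)$ is an integral domain acting freely on the highest weight generator of $M_y$. Consequently $\Hom(M_y, M_w)$ is identified with the space of $\hat{\fn}$-singular vectors of weight $y \cdot \lambda$ in $M_w$, which is at most one dimensional (classical in the Kac--Moody setting; it also follows from $[M_w : L_y] \leqslant 1$ via Kazhdan--Lusztig). For the equivalence with Bruhat order: existence of an embedding $M_y \hookrightarrow M_w$ when $y \leqslant w$ reduces, by transitivity and a chain of simple reflections in minimal length representatives in $W_\lambda/\wstab$, to the case $w = s_i y > y$ for some $i \in I_\lambda$. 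There the embedding is produced by the standard $\mathfrak{sl}_2$-argument applied to $\langle y \cdot \lambda + \hat{\rho}, \check{\alpha}_i\rangle \in \mathbb{Z}^{>0}$ (guaranteed by antidominance of $\lambda$ together with $y \leqslant s_i y$). Conversely, when $y \not\leqslant w$, the nonexistence of an embedding follows from the stronger statement $[M_w : L_y] = 0$, which is the Kac--Kazhdan strong linkage principle for $W_\lambda$.

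The main obstacle is cleanly handling the singular case $\wstab \neq 1$. One must verify that the Bruhat order on $W_\lambda/\wstab$ inherited via minimal length representatives is the right one for all the embedding and nonembedding arguments, and in particular that the simple-reflection step above can always be arranged inside a minimal length representative (so that the reduction to $\mathfrak{sl}_2$ applies without producing spurious singular vectors from $\wstab$). This compatibility is the standard parabolic refinement of Kazhdan--Lusztig theory for the Coxeter pair $W_\lambda \supset \wstab$, and reduces the analysis to the regular case. Once this bookkeeping is in place, all the ingredients above fit together to give both (1) and (2).
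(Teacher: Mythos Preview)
Your overall strategy is reasonable and in the same spirit as the paper's, but the paper takes a structurally different route: rather than arguing directly with the integral Weyl group $W_\lambda$ inside $W$, it first invokes Fiebig's equivalence to replace the nonintegral block of $\gk$ by an \emph{integral} block of the Kac--Moody algebra whose Dynkin type is $I_\lambda$ (a sum of finite and affine pieces). After this reduction the simple integral coroots $\halpha_i$, $i\in I_\lambda$, become honest simple coroots, translation functors are available to pass between regular and singular blocks, and one may cite Kazhdan--Lusztig theory and Kashiwara--Tanisaki directly. Your direct approach avoids Fiebig but then must confront the fact that the $\halpha_i$ are typically \emph{not} simple in $\Phi^\vee$; the ``standard $\mathfrak{sl}_2$-argument'' does not literally apply, and you are implicitly using the full Kac--Kazhdan embedding theorem for arbitrary positive real roots. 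That is fine, but should be said.

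There is one genuine gap. Your argument for $[M_y:L_e]=1$ does not work as written. The embedding $M_e\hookrightarrow M_y$ from (2) gives $[M_y:L_e]\geqslant 1$, but your proposed upper bound---``multiplicity one at the weight $\lambda$ itself''---is not a valid argument: the $\lambda$-weight space of $M_y$ typically has dimension $>1$, and other composition factors $L_x$ can contribute to it. The paper obtains $[M_y:L_e]=1$ by identifying it with the inverse Kazhdan--Lusztig polynomial $Q_{e,y}(1)$ and appealing to KL theory; this is not a formality and your sketch does not supply a substitute. (One clean alternative, once you know $M_e$ is simple: then $M_e\simeq A_e$, so $[M_y:L_e]=[A_y:L_e]=\dim\Hom(M_e,A_y)=\dim\Hom(M_y,A_e)=\dim\Hom(M_y,M_e)\leqslant 1$ by the first half of (2). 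But you did not make this argument, and your parenthetical ``$[M_w:L_y]\leqslant 1$ via Kazhdan--Lusztig'' is simply false in general.)

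Finally, you correctly flag the singular case $\wstab\neq 1$ as the delicate point but do not resolve it; the paper handles it uniformly by the translation-functor reduction to a regular block, which is only available after the Fiebig step.
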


 \begin{proof} We first recall that $W_\lambda$ is the Weyl group of a sum of Kac--Moody algebras of finite and affine type. To see this, let $\mathscr{I} \subset I_\lambda$ correspond to an indecomposable summand, i.e. a connected component of the Dynkin diagram.  Write $\delta$ for the indecomposable positive imaginary root of $\gk$. Then the Cartan matrix of $\mathscr{I}$ is of infinite type if and only if $\delta$ lies in the real span of the $\alpha_\iota, \iota \in \mathscr{I}$;  moreover, if $\delta$ lies in the real span of $\alpha_i, i \in \mathscr{I}$, it in fact may be written as $\sum_{i} c_i \halpha_i$, with all $c_i$ positive, cf. Lemmas 2.2 and 2.3 of \cite{kt}.  Since $\langle \delta, \halpha_i \rangle = 0, i \in \mathscr{I}$, it follows that $\mathscr{I}$ is of affine type by Proposition 4.7 of \cite{kitty}. 
 
 By a result of Fiebig \cite{pf}, the Proposition is reduced to the analogous assertion for a negative integral block for the Kac--Moody algebra of type $I_\lambda$. Moreover, it is straightforward to reduce the assertion to the analogous one for each $\mathscr{I} \subset I_\lambda$ which is indecomposable. To do so, one may use the corresponding tensor product factorization of Verma and simple highest weight modules. 
 
 Therefore, the Proposition is reduced to the case of an integral negative block for an affine Lie algebra, or an integral block of a finite dimensional simple Lie algebra. As the latter is well documented, we only discuss the former. By using translation functors from a regular to a singular block,  we may further reduce to the case of a regular integral block.
 
 It remains to verify the Proposition for a regular integral block at negative level for an affine Lie algebra $\mathfrak{l}$ with Weyl group $W_{\mathfrak{l}}$. For (1), that $M_e$ is simple goes back to Kac--Kazhdan \cite{kk}. That $[M_y: M_e] = 1$ for $y \in W_{\mathfrak{l}}$ is a consequence of Kazhdan--Lusztig theory. Namely, $[M_y:M_e]$ is given by the inverse Kazhdan--Lusztig polynomial $Q_{e,y}(1)$. To see that $Q_{e,y}(1) = 1$, one may e.g. use the interpretation of $Q_{e,y}(1)$ at positive level and compare with the Weyl--Kac character formula. 
  
  To see (2), the analogous claim at positive level is proved in Proposition 2.5.5 of \cite{kt2}. To deduce the claim at negative level, one may apply Kac--Moody duality by the argument of Lemma \ref{duverma}. Alternatively, the characterization of when there are nonzero intertwiners follows from Kac--Kazhdan, and their uniqueness follows from (1), cf. the proof of Theorem \ref{vemb}. \end{proof}

 \subsubsection{Negative level (ii): embeddings for $\cW$}
  First, let us show:

\begin{pro} Let $\kappa$ be arbitrary. A nonzero morphism between Verma modules for $\cw_\kappa$ is injective. \label{vembb}
\end{pro}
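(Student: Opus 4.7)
Let $f: M(\mu) \to M(\lambda)$ be nonzero. Since $M(\mu)$ is generated by its highest weight vector $v_\mu$, the element $s := f(v_\mu) \in M(\lambda)$ is nonzero, and is by construction a singular vector of highest weight $\mu$. Hence the image of $f$ is the submodule $\langle s \rangle \subseteq M(\lambda)$ generated by $s$, and $f$ factors as the tautological surjection $M(\mu) \twoheadrightarrow \langle s \rangle$ followed by the inclusion $\langle s \rangle \hookrightarrow M(\lambda)$. It suffices to show this surjection is an isomorphism; since it respects the $L_0$-grading, this reduces to verifying $\ch_q \langle s \rangle = \ch_q M(\mu)$.

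To establish this, we invoke the PBW theorem for $\cw_\kappa$-Verma modules. Let $W^{(1)}, \ldots, W^{(r)}$ be a choice of strong generators for $\cw_\kappa$ of conformal weights $d_1, \ldots, d_r$, and fix a total order on the index set $\{(i, n) : 1 \leqslant i \leqslant r,\ n \geqslant 1\}$. Then $M(\lambda)$ admits a basis of ordered monomials $W^{(i_1)}_{-n_1} \cdots W^{(i_k)}_{-n_k} v_\lambda$, so that $\ch_q M(\lambda) = q^{\langle L_0, \lambda\rangle} P(q)$ for a power series $P(q)$ independent of $\lambda$. Equipping $\cw_\kappa$ with its Li filtration, the associated graded $\operatorname{gr} \cw_\kappa$ is a commutative vertex algebra, and $\operatorname{gr} M(\lambda)$ is correspondingly a classical Verma module, naturally identified with the polynomial ring $R := \mathbb{C}[W^{(i)}_{-n} : 1 \leqslant i \leqslant r,\ n \geqslant 1]$ acted on by itself via multiplication.

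Write $\operatorname{gr} s = Q \in R$, with $Q \neq 0$ since $s \neq 0$. For any ordered PBW monomial $Y$, the leading part of $Y \cdot s$ in $\operatorname{gr} M(\lambda) = R$ is the polynomial product $Y \cdot Q$. Suppose $\sum_Y c_Y (Y \cdot s) = 0$ in $M(\lambda)$, and let $d_{\max}$ be the maximal filtration degree of any $Y \cdot s$ with $c_Y \neq 0$. Isolating the degree $d_{\max}$ contribution in $R$ yields $(\sum_{|Y| = d_{\max} - |s|} c_Y Y) \cdot Q = 0$; since $R$ is an integral domain and $Q \neq 0$, we conclude $\sum c_Y Y = 0$ in $R$, whence each relevant $c_Y$ vanishes by linear independence of distinct polynomial monomials. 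Downward induction on $d_{\max}$ gives that the entire family $\{Y \cdot s\}_Y$ is linearly independent in $M(\lambda)$, and a direct count shows its graded dimension equals $q^{\langle L_0, \mu \rangle} P(q) = \ch_q M(\mu)$. Combined with the reverse inequality $\ch_q \langle s \rangle \leqslant \ch_q M(\mu)$, which holds since $\langle s \rangle$ is a quotient of $M(\mu)$, this completes the proof.

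The only nontrivial ingredient is the PBW/Li filtration on $\cw_\kappa$, under which $\operatorname{gr} M(\lambda)$ becomes a polynomial ring, hence an integral domain. This is inherited from the Kac--Moody side via Feigin--Frenkel's realization of $\cw_\kappa$ as a quantum Hamiltonian reduction of $\mathbb{V}_\kappa$, cf.~\cite{fbz, aran}, and the argument goes through verbatim for Verma modules over any freely strongly generated $\mathbb{Z}^{\geqslant 0}$-graded conformal vertex algebra.
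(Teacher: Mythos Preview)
Your proof is correct and follows essentially the same strategy as the paper's: both reduce injectivity to the associated graded of a suitable filtration on $M(\lambda)$, observe that $\gr M(\lambda)$ is a free rank-one module over a polynomial ring, and conclude via the integral domain property. The only cosmetic difference is that the paper phrases things via the filtration on the current algebra $\mathfrak{U}(\cW)$ induced from the standard filtration on $\cW_\kappa$ (citing \cite{ara}), whereas you invoke the Li filtration; both yield the same polynomial associated graded, and your explicit linear-independence-by-leading-term argument is exactly the unpacking of the paper's one-line ``nonzero map of rank one $P$-modules.''
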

 
 \begin{proof} Let $M(\lambda)$ be a Verma module. Recall the $\Z$-graded current algebra $\mathfrak{U}(\mathscr{W})$, cf. \cite{ara}. The usual filtration on $\mathscr{W}$ induces a filtration on the associated Lie algebra of Fourier modes and a filtration $F^i \mathfrak{U}(\mathscr{W}), i \geqslant 0$. Write $W_i, 1 \leqslant i \leqslant \operatorname{rk} \fg$, for the usual generators of $\cW_\kappa$. Write $W_i(n), n \in \mathbb{Z},$ for the Fourier mode of $W_i$ of degree $n$, and $\overline{W}_i(n)$ for its symbol in $\gr \mathfrak{U}(\mathscr{W})$. Then  $\gr \mathfrak{U}(\mathscr{W})$ contains the polynomial algebra: $$P := \C[\overline{W}_i(n)],  \quad \quad 1 \leqslant i \leqslant \operatorname{rk} \fg, \quad n > 0. $$$M(\lambda)$ is cyclically generated as a $\mathfrak{U}(\mathscr{W})$ by its highest weight vector $v_\lambda$. Under the induced filtration $F^i M(\lambda), i \geqslant 0,$ of $M(\lambda)$, $\gr M(\lambda)$ is exhibited as a free $P$ module with generator the symbol of $v_\lambda$, cf. Section 5.1 of {\em loc. cit.}. 
 
 Given a nonzero map $\phi: M(\lambda) \rightarrow M(\nu)$, write $F^n M(\nu)$ for the minimal filtered piece of $M(\nu)$ containing $v_\lambda$. It follows that $\phi$ carries $F^i M(\lambda)$ into $F^{i + n} M(\nu)$. It suffices to check injectivity on the associated graded, which is clear since this is a nonzero map of rank one $P$ modules. \end{proof}

 Having gathered the necessary ingredients, we will determine the homomorphisms between Verma modules for $\cW$ at negative level. 
 
 \begin{proof}[Proof of Theorem \ref{vemb}] We first claim that $M_{{e}}$ is simple and $[M_{{v}}: M_{{e}}] = 1$ for all ${v}$  in $\dcs$. To see this, one may apply the Drinfeld--Sokolov reduction $\operatorname{DS}_-$, cf. Theorem \ref{mrd}, to Proposition \ref{vembgk}(1). 
 
 We next claim that $\Hom( M_{{e}}, M_{{v}})$ is one dimensional. Indeed, that it is at most one dimensional follows from the facts that $[M_{{v}}: M_{{e}}] = 1$ and that $M_{{v}}$ is of finite length. That it is one dimensional follows by applying $\operatorname{DS}_-$ to a nonzero morphism in $\gk$, which exists by Proposition \ref{vembgk}(2). That it remains nonzero follows from the exactness of $\operatorname{DS}_-$. 
 
 We next claim show $\Hom(M_{{v}}, M_{{w}})$ is at most one dimensional, for any ${v}, {w} \in \dcs$. To see this, suppose one has $f,g \in \Hom(M_{{v}}, M_{{w}}).$ By the preceding paragraph, we may multiply $f$ by a scalar and assume that $f - g$ vanishes on the copy of $M_{{e}}$ embedded in $M_{{v}}$. As a nonzero homomorphism between Verma modules is an embedding, cf. Proposition \ref{vembb}, it follows that $f = g$, as desired.

 We finally claim that $\Hom(M_{{v}}, M_{{w}})$ is nonzero if and only if ${v} \leqslant {w}.$ Let us write $s(v), s(w) \in W_\lambda$ for their minimal length coset representatives. Suppose that $v \leqslant w$, i.e. that $s(v) \leqslant s(w)$. By Proposition \ref{vembgk}, there is an embedding $M_{s(v)} \rightarrow M_{s(w)}$. Applying $\operatorname{DS}_-$, one obtains an embedding $M_v \rightarrow M_w$. For the reverse implication, suppose one has an embedding $M_v \rightarrow M_w$. In particular, one has $[M_w: L_v] > 0$. Before applying $\operatorname{DS}_-$, one has $[M_{s(w)}: L_{s'(v)}] > 0$, where $s'(v)$ is the unique $\Wf$ antidominant preimage of $v$ in $W_\lambda \cdot \lambda$, cf. Lemma \ref{inorbits}. It follows that $s'(v) \leqslant s(w)$. As $s(v) \leqslant s'(v)$ by standard Coxeter combinatorics, we have $v \leqslant w$, as desired. \end{proof}
 
 \begin{re}Although it is not necessary for the proof, one can show that $$s, s': \dcs \rightarrow W_\lambda$$coincide, i.e. $s = s'$.  \end{re}

 \subsection{Positive level}
 \label{poslev}
 We now explain how to deduce the classification of Verma embeddings at positive level from that at negative level via Feigin--Fuchs duality. 
 As a preparatory observation, note that under the linear action of $\Wf$ on $\h^*$, negation sends $\Wf$ orbits into $\Wf$ orbits. Similarly, under the dot action of $\Wf$ on $\h^*$, shifted negation $\nu \rightarrow - \nu - 2 \rho$ descends to an involution of $\Wf \backslash \h^*$.

 \begin{theo}For $\kappa$ noncritical, and $\lambda \in \Wf \backslash \h^*_\kappa$,   write $M_\kappa(\lambda)$ for the Verma module for $\cW_\kappa$ of highest weight $\lambda$. Then under Feigin-Fuchs duality, we have: $$\mathbb{D} M_\kappa(\lambda) \simeq M_{-\kappa + 2\kappa_c}(- \lambda - 2 \rho)[\dim N].$$In particular, writing $\operatorname{Verma}_\kappa^\heartsuit$ for the full subcategory of $\cW_\kappa\operatorname{-mod}^\heartsuit$ consisting of Verma modules, we have a contravariant equivalence $\operatorname{Verma}_\kappa^{\heartsuit, op} \simeq \operatorname{Verma}_{-\kappa + 2 \kappa_c}^\heartsuit.$ \label{vermsd}
 \end{theo}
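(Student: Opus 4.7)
The plan is to deduce Theorem~\ref{vermsd} from the Kac--Moody case of Theorem~\ref{tld}, combined with Drinfeld--Sokolov reduction. By construction, the duality~\eqref{maineq} is obtained from the Kac--Moody duality $\gk\operatorname{-mod}^\vee \simeq \hat{\fg}_{-\kappa+2\kappa_c}\operatorname{-mod}$ by passing to Whittaker coinvariants and invoking the affine Skryabin isomorphism~\eqref{affs}. Since $\operatorname{DS}_-$ of Theorem~\ref{mrd} implements precisely this composite on compact generators, and carries the Kac--Moody Verma $M^{\gk}(\tilde{\lambda}) := \operatorname{ind}^{\gk}_{L^+\fg \oplus \C\mathbf{1}} \C_{\tilde{\lambda}}$ to $M_\kappa(\pi(\tilde{\lambda}))$, it suffices to compute $\mathbb{D}_{\gk} M^{\gk}(\tilde{\lambda})$ and then track the cohomological shift supplied by Theorem~\ref{compactsinwhs}.

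First, I would establish the Kac--Moody analogue directly from the explicit formula~\eqref{is1} in the proof of Theorem~\ref{tld}: taking $K_k = L^+\fg \oplus \C\mathbf{1}$ and $V = \C_{\tilde{\lambda}}$, one reads off $\mathbb{D}_{\gk} M^{\gk}(\tilde{\lambda}) \simeq M^{\hat{\fg}_{-\kappa+2\kappa_c}}(-\tilde{\lambda} - 2\rho)$, up to a cohomological shift. Three identifications pin down the weight: (i) under the paper's conventions the Tate twist satisfies $\operatorname{Tate} = -2\kappa_c$, so the dual level is $-\kappa + 2\kappa_c$; (ii) $V^\vee = \C_{-\tilde{\lambda}}$; and (iii) the determinant line $\det(L^+\fg, L_0)$ contributes the character $\C_{-2\rho}$ of $\h$, via the standard modular character of the Borel. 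The resulting highest weight $-\tilde{\lambda} - 2\rho$ is consistent with the combinatorial shift of Proposition~\ref{flippy}, and the well-definedness of $\pi(-\tilde{\lambda}-2\rho) = -\lambda - 2\rho$ follows because shifted negation is $\Wf$-equivariant for the dot action.

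Next, I would transfer to $\cW$ and determine the cohomological shift using~\eqref{affs} and Theorem~\ref{compactsinwhs}: the $\cW$-duality on a compact generator realized at the $n$-th stage of the adolescent Whittaker filtration equals $\operatorname{DS}_-$ of the Kac--Moody duality, twisted by $[-2n\Delta]$ where $\Delta = \dim \operatorname{Ad}_{t^{-\check{\rho}}} L^+N / L^+N$. A direct check---presenting $M_\kappa(\lambda)$ as the Drinfeld--Sokolov reduction of $M^{\gk}(\tilde{\lambda})$ at the appropriate step---shows that the net shift comes out to $[\dim N]$. Combined with $\operatorname{DS}_- M^{\hat{\fg}_{-\kappa+2\kappa_c}}(-\tilde{\lambda} - 2\rho) \simeq M_{-\kappa+2\kappa_c}(-\lambda - 2\rho)$, this gives the stated object-level isomorphism.

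The main obstacle will be the careful bookkeeping of (a) the determinant line $\det(L^+\fg, L_0)$, verifying it precisely encodes the $-2\rho$ weight shift through the half-sum-of-positive-roots modular character; (b) unambiguously pinning down $\operatorname{Tate} = -2\kappa_c$ in the chosen normalization of semi-infinite cohomology; and (c) confirming the net cohomological shift really equals $[\dim N]$ rather than some a priori different multiple of $\Delta$. Once the isomorphism of objects is established, the contravariant equivalence of hearts $\operatorname{Verma}_\kappa^{\heartsuit, op} \simeq \operatorname{Verma}_{-\kappa + 2\kappa_c}^\heartsuit$ follows formally by ignoring the cohomological shift and using the contravariance of $\mathbb{D}$ on morphism spaces.
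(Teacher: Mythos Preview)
Your high-level strategy---compute the Kac--Moody dual of a Verma and then descend through the adolescent Whittaker filtration via Theorem~\ref{compactsinwhs}---is exactly the paper's. But your implementation of the Kac--Moody step has a genuine error that would derail the computation.

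The affine Verma $M^{\gk}(\tilde\lambda)$ is \emph{not} $\operatorname{ind}^{\gk}_{L^+\fg\oplus\C\mathbf{1}}\C_{\tilde\lambda}$; that is a Weyl module. The Verma is induced from an Iwahori-type subalgebra, and this distinction is precisely where the $-2\rho$ lives. If you plug $K=L^+\fg=L_0$ into~\eqref{is1}, the relative determinant $\det(L^+\fg,L_0)$ is trivial and no $\rho$-shift appears. The paper instead writes $M_\kappa(\Lambda)=\operatorname{ind}^{\gk}_{\operatorname{Ad}_{t^{-\check\rho}}L^+\fg}\operatorname{ind}^{\operatorname{Ad}_{t^{-\check\rho}}L^+\fg}_{\mathfrak{i}^-}\C_{w_\circ\Lambda}$, applies Shapiro first for the conjugated arc algebra (whose relative determinant against $L^+\fg$ is in degree zero and is trivialized), and only then picks up the $+2\rho$ from $\det(\operatorname{Ad}_{t^{-\check\rho}}L^+\fg/\mathfrak{i}^-[1])$ in the ordinary Shapiro step. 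The footnote there warns that the canonical Tate-splitting of the Iwahori differs from the one inherited via $L^+\fg$ by exactly $2\rho$; this is why the paper breaks the induction in two, and why your ``modular character of the Borel'' heuristic, applied at the wrong determinant, would not be a proof.

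A second gap is the passage from Kac--Moody to $\cW$. The paper does not simply apply $\operatorname{DS}_-$ to the Kac--Moody dual; it factors~\eqref{comp} as $\gk\operatorname{-mod}^{I_1^-}\xrightarrow{\operatorname{Av}_{I_1,\psi,*}}\gk\operatorname{-mod}^{I_1,\psi}\xrightarrow{\operatorname{ins}}\gk\operatorname{-mod}_{LN,\psi}$ and dualizes each step separately using Proposition~\ref{followduals}, the adjunction~\eqref{adj}, and Theorem~\ref{compactsinwhs}. The averaging step contributes an extra $[2\dim N]$ that your outline omits; the final $[\dim N]$ only emerges after combining the $[\dim N]$ from Lemma~\ref{duverma}, the $[2\dim N]$ from dualizing $\operatorname{Av}$, the $[-2\Delta]$ from Theorem~\ref{compactsinwhs}, and two applications of the normalization $\operatorname{ins}\operatorname{Av}_{I_1,\psi,*}M_{\kappa_\circ}(\nu')\simeq M_{\kappa_\circ}(\nu)[-\dim N+\Delta]$. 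Your plan correctly anticipates that this bookkeeping is the crux, but the pieces you list are not the right ones.
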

 
 \begin{proof}We first check that $M_\kappa(\lambda)$ is a compact object of $\cW_\kappa\operatorname{-mod}$. Write $\mathbf{I}, \mathbf{I}^-$ for the Iwahori subgroups of $L^+G$ corresponding to the preimages of $B$ and $B^-$ under the evaluation map $L^+G \rightarrow G$, respectively. Recall we wrote $I'_1$ for the prounipotent radical of $\mathbf{I}$, and $I_1 := \operatorname{Ad}_{t^{-\check{\rho}}} I'_1$. Similarly, let us write $I^{' -}_1$ for the prounipotent radical of $\mathbf{I}^-$, and $I_1^- := \operatorname{Ad}_{t^{-\check{\rho}}} I^{', -}_1$.
 
 Following the conventions of \cite{r}, we view the minus Drinfeld--Sokolov reduction $\operatorname{DS}_-$ as 
 usual reduction, up to a shift, applied to $\OO$ for $\gk$ defined with respect to $\operatorname{Ad}_{t^{-\check{\rho}}} \mathbf{I}^-$ rather than $\mathbf{I}$. I.e., we consider the composition $$\gk\operatorname{-mod}^{I'_1} \simeq \gk\operatorname{-mod}^{\operatorname{Ad}_{t^{-\check{\rho}}}I^{', -}_1} \xrightarrow{\operatorname{ins}[\dim N - \Delta]} \gk\operatorname{-mod}_{LN, \psi}.$$
 Explicitly, let us write $\mathfrak{i}^-$ for the Lie algebra of $I_1^-$. Then for $\nu \in \h^*_\kappa$, we set$$M_\kappa(\nu) = \operatorname{ind}_{\operatorname{Ad}_{t^{-\check{\rho}}}L^+ \fg}^{\gk} \operatorname{ind}_{\mathfrak{i}^-}^{\operatorname{Ad}_{t^{-\check{\rho}}}L^+\fg } \C_{w_\circ \nu}.$$Accordingly, if we write $\Lambda \in \h^*_\kappa$ for a lift of $\lambda$, then by Theorem \ref{mrd} and affine Skyrabin $M_\kappa(\lambda)$ is the image of the Verma module $M_\kappa(\Lambda)$ of $\gk$ under the composition: \begin{equation} \label{comp} \gk\operatorname{-mod}^{I_1^-} \xrightarrow{\operatorname{Av}_{I_1, \psi}} \gk\operatorname{-mod}^{I_1, \psi} \xrightarrow{\operatorname{ins}[\dim N - \Delta]} \gk\operatorname{-mod}_{LN, \psi}.\end{equation}For any category $\scc$ with a strong action of $D_\kappa(LG)$, the averaging functors between the $I_1^-$ and $(I_1,\psi)$ invariants are adjoint, up to a shift by the real dimension of $N$: \begin{equation}\label{adj}\operatorname{Av}_{I_1, \psi, *}: \scc^{I_1^-} \leftrightarrows \scc^{I_1, \psi}: \operatorname{Av}_{I_1^-, *}[2 \dim N],\end{equation}cf. the discussion in \cite[$\S$7]{r}. In particular, $\operatorname{Av}_{I_1, \psi}: \scc^{I_1^-} \rightarrow \scc^{I_1, \psi}$ preserves compactness, as it admits a continuous right adjoint. As insertion from any step of the adolescent Whittaker construction preserves compactness, we deduce that \eqref{comp} preserves compactness. Finally, since we are working with the renormalized derived category of Kac--Moody representations, $M_\kappa(\Lambda)$ is a compact object of $\gk\operatorname{-mod}$. Since $I_1^-$ is prounipotent, it is also a compact object of $\gk\operatorname{-mod}^{I_1^-}$, cf. Proposition \ref{cinu}.  
 
 Having shown that $M_\kappa(\lambda)$ is compact, we will calculate its dual by using \eqref{comp} and Proposition \ref{followduals}. Accordingly, we need to calculate the dual of $M_\kappa(\Lambda)$. 
 
 \begin{lemma}Under the perfect pairing: $$C^{\frac{\infty}{2} + *}(\hat{\mathfrak{g}}_{2\kappa_c}, L^+\mathfrak{g}, - \otimes -): \gk\operatorname{-mod} \otimes \hat{\mathfrak{g}}_{-\kappa + 2\kappa_c}\operatorname{-mod} \rightarrow \operatorname{Vect},$$the dual of $M_\kappa(\Lambda)$ is, up to isomorphism, $M_{-\kappa + 2\kappa_c}(- \Lambda - 2 \rho)[\dim N]$. 
 \label{duverma}
 \end{lemma}
Note the existing statements of the lemma in the literature, written for the usual category $\OO$, i.e. for $\mathbf{I}$, appear with an incorrect $\rho$ shift \cite{ag},\cite{gg},\cite{cc}. 
 \begin{proof} For a varying object $V$ of $\hat{\mathfrak{g}}_{-\kappa + 2\kappa_c}\operatorname{-mod}$, we need to corepresent: $$C^{\frac{\infty}{2} + *}( \hat{\mathfrak{g}}_{2\kappa_c}, L^+ \fg, M_\kappa(\Lambda) \otimes V) $$
 \begin{equation}\label{midl}\simeq C^{\frac{\infty}{2} + *}(\hat{\mathfrak{g}}_{2\kappa_c}, L^+ \fg, \operatorname{ind}_{\operatorname{Ad}_{t^{-\check{\rho}}} L^+ \fg} ^{\hat{\fg}_{2\kappa_c}} \operatorname{ind}_{\mathfrak{i}^-}^{\operatorname{Ad}_{t^{-\check{\rho}}}L^+ \fg}( \mathbb{C}_{w_\circ \Lambda} \otimes V)).\end{equation}
 
$$ \simeq C^{\frac{\infty}{2} + *}( \hat{\fg}_{2\kappa_c}, \operatorname{Ad}_{t^{-\check{\rho}}} L^+ \fg,\operatorname{ind}_{\operatorname{Ad}_{t^{-\check{\rho}}} L^+ \fg} ^{\hat{\fg}_{2\kappa_c}} \operatorname{ind}_{\mathfrak{i}^-}^{\operatorname{Ad}_{t^{-\check{\rho}}}L^+ \fg}( \mathbb{C}_{w_\circ \Lambda} \otimes V))  \otimes \operatorname{rel.det}( L^+ \fg, \operatorname{Ad}_{t^{-\check{\rho}}} L^+ \fg)$$The appearing relative determinant lies in cohomological degree zero, as follows from considering cancelling pairs of finite roots $\pm \alpha$, and we will trivialize it: \begin{equation}\cong C^{\frac{\infty}{2} + *}( \hat{\fg}_{2\kappa_c}, \operatorname{Ad}_{t^{-\check{\rho}}} L^+ \fg,\operatorname{ind}_{\operatorname{Ad}_{t^{-\check{\rho}}} L^+ \fg} ^{\hat{\fg}_{2\kappa_c}} \operatorname{ind}_{\mathfrak{i}^-}^{\operatorname{Ad}_{t^{-\check{\rho}}}L^+ \fg}( \mathbb{C}_{w_\circ \Lambda} \otimes V)).\label{med}\end{equation}
Since $\operatorname{Ad}_{t^{-\check{\rho}}} L^+ \fg$ has no nontrivial one dimensional representations, its splitting into $\hat{\fg}_{2\kappa_c}$ is unique, and therefore the canonical such.\footnote{This is not true for the Lie algebra of the Iwahori $\mathfrak{i} = \operatorname{Lie}(\mathbf{I})$, and indeed the canonical section of $\mathfrak{i}$ into $\hat{\fg}_{2 \kappa_c}$ is {\em not} given by the sequence $\mathfrak{i} \rightarrow L^+ \fg \rightarrow \hat{\fg}_{2 \kappa_c}$, but instead differs by a factor of $2\rho$ on the torus. This is why we broke the induction into two steps in \eqref{midl}, and likely where the $\rho$'s disappeared in \cite{ag}, \cite{gg}.} In particular, we may apply Proposition \ref{shap} to rewrite Equation \eqref{med} as: $$\simeq C^*( \operatorname{Ad}_{t^{-\check{\rho}}} L^+ \fg,  \operatorname{ind}_{\mathfrak{i}^-}^{\operatorname{Ad}_{t^{-\check{\rho}}}L^+ \fg} \mathbb{C}_{w_\circ \Lambda} \otimes V).$$
By the usual Shapiro lemma for Lie algebra cohomology, we may rewrite this as: $$\simeq C^*( \mathfrak{i}^-, \mathbb{C}_{w_\circ \Lambda} \otimes V \otimes \det( \operatorname{Ad}_{t^{-\check{\rho}}}L^+ \fg / \mathfrak{i}^- [1])^\vee) \simeq \operatorname{Hom}_{\mathfrak{i}^-}(\mathbb{C}_{-w_0 \Lambda} \otimes \det( \operatorname{Ad}_{t^{-\check{\rho}}}L^+ \fg / \mathfrak{i}^- [1] ), V) $$$$\simeq \operatorname{Hom}_{\mathfrak{i}^-}(\mathbb{C}_{-w_\circ \Lambda + 2 \rho} [\dim N], V)\simeq \operatorname{Hom}_{\hat{\fg}_{-\kappa + 2 \kappa_c}}(\operatorname{ind}_{\operatorname{Ad}_{t^{-\check{\rho}}} L^+ \fg}^{\hat{\fg}_{-\kappa + 2 \kappa_c}} \operatorname{ind}_{\mathfrak{i}^-}^{\operatorname{Ad}_{t^{-\check{\rho}}}L^+ \fg} \mathbb{C}_{-w_\circ \Lambda + 2 \rho}[\dim N], V),$$as desired. 
\end{proof}
 
Having calculated the dual of a Verma module for $\gk$, we may now apply Proposition \ref{followduals} and obtain a commutative diagram  \begin{equation}\label{dids}\xymatrix{ \gk\operatorname{-mod}^{I_1^-, c, op} \ar[rr]^{\operatorname{Av}_{I_1, \psi, *}} \ar[d]^{\mathbb{D}} && \gk\operatorname{-mod}^{I_1, \psi, c, op} \ar[d]^{\mathbb{D}} \\ \hat{\fg}_{-\kappa + 2\kappa_c}\operatorname{-mod}^{I_1^-, c} \ar[rr]^{((\operatorname{Av}_{I, \psi, *})^R)^\vee} && \hat{\fg}_{-\kappa + 2\kappa_c}\operatorname{-mod}^{I_1, \psi,c}.}\end{equation}  
Recall from Equation \eqref{adj} that the right adjoint $(\operatorname{Av}_{I, \psi, *})^R$ is $\operatorname{Av}_{I_1^-, *}[2\dim N]$. Recalling that $\operatorname{Av}_{I_1^-, *}$ is the composition: $$\scc^{I_1, \psi, *} \xrightarrow{\operatorname{Oblv}} \scc \xrightarrow{\operatorname{Av}_{I_1^-, *}} \scc^{I_1^-},$$it follows from Propositions \ref{dualfunc}, \ref{charpairing} that $(\operatorname{Av}_{I_1^-, *}[2 \dim N])^\vee = \operatorname{Av}_{I_1, \psi, *}[2 \dim N]$. Having unwound the bottom horizontal arrow of Equation \eqref{dids}, we insert $M_\kappa(\Lambda)$ in the top left and obtain via Lemma \ref{duverma} that: $$\mathbb{D} \operatorname{Av}_{I_1, \psi, *} M_\kappa(\Lambda) \simeq \operatorname{Av}_{I_1, \psi, *} M_{-\kappa + 2\kappa_c}( -\Lambda - 2 \rho)[3 \dim N].$$
An application of Theorem \ref{compactsinwhs} handles the subsequent insertion $\operatorname{ins}: \scc^{I_1, \psi} \rightarrow \scc_{LN, \psi}$, to give that \begin{equation}\label{uno}\mathbb{D} \operatorname{ins} \operatorname{Av}_{I_1, \psi, *} M_\kappa(\Lambda) \simeq \operatorname{ins} \operatorname{Av}_{I_1, \psi, *} M_{-\kappa + 2\kappa_c}(- \Lambda - 2 \rho) [3 \dim N - 2 \Delta].\end{equation}Since at any level $\kappa_\circ$, and weight $\nu' \in \h^*$ with image $\nu \in \Wf \backslash \h^*$ we have from Equation \eqref{comp} that\begin{equation}\label{dos}\operatorname{ins} \operatorname{Av}_{I_1, \psi, *} M_{\kappa_\circ}(\nu') \simeq M_{\kappa_\circ}(\nu)[-\dim N + \Delta],\end{equation}the result follows by combining Equations \eqref{uno}, \eqref{dos}. \end{proof}
 
\begin{ex} \label{feifu} Let $\fg = \mathfrak{sl}_2$, and let us write $k \in \mathbb{C}$ for the level $k \kappa_b$. In this situation $\cW_k$ is the Virasoro vacuum algebra $Vir_{c(k)}$, where \begin{equation}c(k) = 1 - 6 \frac{(k+1)^2}{k+2}. \label{e1}\end{equation}Then in our conventions, the Drinfeld-Sokolov minus reduction of $M_\kappa(\nu), \nu = v \rho, v \in \mathbb{C}$, is the Verma module $M(c(k),\Delta(k, v))$, where the conformal dimension $\Delta$ is given by \begin{equation}\label{e2} \Delta(k, v) = \frac{(k - v)(k + 2 - v)}{4(k+2)} - \frac{k- v}{2}.\end{equation}Theorem \ref{vermsd} applied to this case gives $$\mathbb{D} M( c(k), \Delta(k, v) ) \simeq M( c(-k - 4), \Delta(-k-4, -v - 2))[1].$$Substituting in Equations \eqref{e1}, \eqref{e2} yields the familiar form of Feigin--Fuchs duality $$\mathbb{D} M(c, \Delta) \simeq M(26 - c, 1 - \Delta)[1], \quad \quad c, \Delta \in \mathbb{C}.$$ \end{ex} 
 
 \begin{re}In the setting of Example \ref{feifu}, note that for $\fsl_2$ one has $\operatorname{Ad}_{t^{-\check{\rho}}} \mathbf{I}^- = \mathbf{I}$. In particular, one could equally well use the $+$ reduction, which sends $M_\kappa(\nu)$ to $M(c(k), \Delta'(k,v))$, where $$\Delta'(k,v) = \frac{ v(v+2)}{4(k+2)} - \frac{v}{2}.$$As is visible, the two differ by the Dynkin diagram automorphism of $\widehat{\fsl}_2$. \end{re}
 
Theorem \ref{vermsd} reduces the classification of Verma embeddings at positive level to the analogous classification at negative level, which was done in the previous subsection. We now unwind the answer the at positive level. The following propositions summarize the relevant combinatorics. 

\begin{pro} Let $\kappa$ be negative, and consider the pair of weights $\lambda \in \h^*_\kappa$ and its `flip' $-\lambda - 2 \rho \in \h^*_{-\kappa + 2\kappa_c}$. Then
\label{pm} 
\begin{enumerate}
    \item $\lambda$ is antidominant if and only if $-\lambda - 2\rho$ is dominant. 
    \item  Their integral Weyl groups are the same subgroup of $W$, i.e. $W_\lambda = W_{-\lambda - 2\rho}$.

    \item Their stabilizers in the affine Weyl group coincide, i.e. $W^\circ_{\lambda} = W^\circ_{-\lambda - 2 \rho}$.
        \item Their finite integral Weyl groups are the same subgroup of $\Wf$, i.e. $W_{\operatorname{f}, \lambda} = W_{\operatorname{f}, - \lambda - 2 \rho}$.
\end{enumerate}
\end{pro}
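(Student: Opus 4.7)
The plan is to derive all four statements from a single sign-flip identity, leveraging the $W$-equivariant isomorphism $i: \h^*_\kappa \simeq \h^*_{-\kappa + 2\kappa_c}$, $\lambda \mapsto -\lambda - 2\rho$ of Proposition \ref{flippy}. First I would establish
\[
\langle i(\lambda) + \hat{\rho}, \halpha\rangle = -\langle \lambda + \hat{\rho}, \halpha\rangle, \qquad \halpha \in \Phi^\vee_{re}.
\]
The cleanest derivation is to apply $W$-equivariance to the affine reflection $s_\halpha$: from $i(s_\halpha \cdot \lambda) = i(\lambda) + \langle \lambda + \hat{\rho}, \halpha\rangle \alpha$ (using that $i$ is negation plus translation by $-2\rho$) and $s_\halpha \cdot i(\lambda) = i(\lambda) - \langle i(\lambda) + \hat{\rho}, \halpha\rangle \alpha$, the identity follows at once. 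One may alternatively verify it by a one-line calculation with $\hat{\rho} = \rho + h^\vee c^*$ and the parametrization $\h^*_\kappa \simeq \h^* + (\kappa/\kappa_b) c^*$.

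With this identity, (2) is immediate: $\halpha \in \Phi^\vee_\lambda$ iff $\langle \lambda + \hat{\rho}, \halpha\rangle \in \mathbb{Z}$, a condition preserved under negation, so $\Phi^\vee_{\lambda, re} = \Phi^\vee_{i(\lambda), re}$ and hence $W_\lambda = W_{i(\lambda)}$. For (4), since $\Phf \subset \h$ and $\hat{\rho} - \rho$ pairs trivially with $\h$, one has $\langle \lambda + \hat{\rho}, \halpha\rangle = \langle \lambda + \rho, \halpha\rangle$ for $\halpha \in \Phf$, so the same reasoning yields $\Phf_\lambda = \Phf_{i(\lambda)}$ and therefore $\wfl = W_{\operatorname{f}, i(\lambda)}$.

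For (3) I would use the $W$-equivariance of $i$ directly: the stabilizer of $\lambda$ in $W$ coincides with that of $i(\lambda)$, and by Lemma \ref{stabaff} each such stabilizer is contained in the respective integral Weyl group. Since those coincide by (2), we conclude $\wstab = W^\circ_{i(\lambda)}$. Finally, (1) follows from Lemma \ref{obv}: $\lambda \in C^-_\kappa$ means $\langle \lambda + \hat{\rho}, \halpha_i\rangle \in \mathbb{Z}^{\leqslant 0}$ for each simple integral coroot $\halpha_i$, $i \in I_\lambda$; by (2) the simple integral coroots of $\lambda$ and $i(\lambda)$ are identical, and the sign-flip identity then translates this condition into $\langle i(\lambda) + \hat{\rho}, \halpha_i\rangle \in \mathbb{Z}^{\geqslant 0}$, i.e.\ $i(\lambda) \in C^+_{-\kappa + 2\kappa_c}$. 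No serious obstacle is anticipated, since the proposition is in essence a dictionary entry for the duality $i$ already constructed in Subsection \ref{combd}; the only point requiring care is bookkeeping of the $c^*$-component in the pairing with $\halpha_0$, which is handled uniformly by the identity above.
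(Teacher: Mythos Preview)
Your proof is correct and follows essentially the same route as the paper's: both reduce everything to the sign-flip identity $\langle \lambda + \hat{\rho}, \halpha\rangle = -\langle i(\lambda) + \hat{\rho}, \halpha\rangle$ for real coroots $\halpha$, and then read off (1)--(4). The paper obtains the identity by lifting $\lambda$ to $\Lambda \in \hat{\h}^*$ and noting $-\Lambda - 2\hat{\rho}$ is a lift of $i(\lambda)$, whereas you extract it from the $W$-equivariance of $i$ established in Proposition~\ref{flippy}; since that proposition was itself proved by exactly such a lift, the two arguments are the same in substance, with yours spelling out the deductions of (1)--(4) in slightly more detail than the paper's ``now straightforward''.
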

\begin{proof} We refer the reader to Subsection \ref{combd} for any unfamiliar notation. If we pick a lift of $\lambda \in \h^*_\kappa$ to $\Lambda \in \h^* \oplus \mathbb{C} c^* \oplus \mathbb{C} D^*,$ then a lift of $-\lambda - 2\rho$ is given by $- \Lambda - 2 \hat{\rho}$. For any coroot $\halpha \in \Phi^\vee$ we then have: $$\langle \Lambda + \rho, \halpha \rangle = - \langle (-\Lambda - 2 \hat{\rho}) + \hat{\rho}, \halpha \rangle.$$The claims of the Proposition are now straightforward, as long as one remembers for (3) that their stabilizers are generated by reflections, cf. the proof of Lemma \ref{stabaff}. \end{proof}
\begin{cor}Write $\h^*_\kappa \simeq \h^*_{-\kappa + 2\kappa_c}$ for the map $\nu \rightarrow - \nu - 2 \rho$. Taking the orbit of $\lambda$ gives an embedding $W_\lambda / W_\lambda^\circ \rightarrow \h^*_\kappa$, and similarly $W_{-\lambda - \rho} / W^\circ_{-\lambda - 2\rho} \rightarrow \h^*_{-\kappa + 2\kappa_c}$. Then under the identification of the two coset spaces by Proposition \ref{pm} (2),(3), 
the following diagram commutes $$\xymatrix{\h^*_\kappa  \ar@{-}[r]^{\sim} & \h^*_{-\kappa + 2 \kappa_c} \\ W_\lambda/W_\lambda^\circ \ar[u] \ar@{=}[r] & W_{-\lambda - 2\rho} / W^\circ_{-\lambda -2\rho} \ar[u].}$$Quotienting by $\Wf$, and applying Proposition \ref{pm} (4) to identify the arising double cosets, the following diagram commutes
 \begin{equation}\xymatrix{\Wf \backslash \h^*_\kappa  \ar@{-}[r]^{\sim} & \Wf \backslash \h^*_{-\kappa + 2 \kappa_c} \\ W_{\operatorname{f}, \lambda} \backslash W_\lambda/W_\lambda^\circ \ar[u] \ar@{=}[r] & W_{-\lambda - 2 \rho, \operatorname{f}} \backslash W_{-\lambda - 2\rho} / W^\circ_{-\lambda -2\rho}. \ar[u]}\label{flipcup}\end{equation}
\end{cor}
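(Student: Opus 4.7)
The plan is to derive both commutative diagrams as formal consequences of two inputs already in hand: the $W$-equivariance of the flip map $i\colon \nu \mapsto -\nu - 2\rho$ established in Proposition \ref{flippy}, and the identifications of integral Weyl groups, affine stabilizers and finite integral Weyl groups supplied by Proposition \ref{pm}(2),(3),(4).

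For the upper diagram, I would first note that the two vertical embeddings are the orbit maps $[w] \mapsto w\cdot\lambda$ and $[w']\mapsto w'\cdot(-\lambda-2\rho)$ respectively. Proposition \ref{pm}(2),(3) identifies the domains of these two orbit maps canonically as $W_\lambda/W^\circ_\lambda$ on both sides, so under this identification a class $[w]$ is sent on the left to $w\cdot\lambda\in\h^*_\kappa$ and on the right to $w\cdot(-\lambda-2\rho)\in\h^*_{-\kappa+2\kappa_c}$. Commutativity then reduces to the identity
\[
i(w\cdot\lambda) \;=\; w\cdot i(\lambda) \;=\; w\cdot(-\lambda-2\rho),
\]
which is exactly the $W$-equivariance of $i$ from Proposition \ref{flippy}.

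For the lower diagram, the key observation is that $\Wf \subset W$ acts on both $\h^*_\kappa$ and $\h^*_{-\kappa+2\kappa_c}$ through the (level $\kappa$, resp.\ level $-\kappa+2\kappa_c$) dot action as a subgroup of the corresponding copy of $W$. Since $i$ is $W$-equivariant, it is in particular $\Wf$-equivariant, and therefore descends to a bijection $\Wf\backslash\h^*_\kappa \simeq \Wf\backslash\h^*_{-\kappa+2\kappa_c}$, which is the top arrow. Proposition \ref{pm}(4) identifies $W_{\operatorname{f},\lambda} = W_{\operatorname{f},-\lambda-2\rho}$, so combined with (2),(3) the two double coset spaces $W_{\operatorname{f},\lambda}\backslash W_\lambda/W^\circ_\lambda$ and $W_{\operatorname{f},-\lambda-2\rho}\backslash W_{-\lambda-2\rho}/W^\circ_{-\lambda-2\rho}$ are canonically identified; this is the bottom arrow. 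The two vertical arrows are obtained from those of the upper diagram by quotienting by $\Wf$, so commutativity of the lower diagram follows from commutativity of the upper diagram by passing to $\Wf$-orbits.

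There is no real obstacle here — the content is entirely bookkeeping, with all nontrivial input packaged into Propositions \ref{flippy} and \ref{pm}. The only point worth flagging is to make sure that the two uses of the Bruhat-style orbit parametrization (at level $\kappa$ via Theorem \ref{glp}(2) applied to antidominant $\lambda$, and at level $-\kappa+2\kappa_c$ via Theorem \ref{glp}(2) applied to the dominant weight $-\lambda-2\rho$, cf.\ Proposition \ref{pm}(1)) are compatible in the sense that both use the same element $w\in W_\lambda$ as representative; this is immediate from the construction but is what makes the identification of the bottom row in \eqref{flipcup} canonical rather than merely an abstract bijection of sets.
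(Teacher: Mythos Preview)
Your proposal is correct and matches the paper's implicit reasoning: the paper states this result as an immediate corollary without proof, relying on exactly the inputs you identify (the $W$-equivariance of $i$ from Proposition \ref{flippy} and the identifications in Proposition \ref{pm}). Your write-up simply makes explicit the formal bookkeeping the paper leaves to the reader.
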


 We may now describe Verma embeddings at positive level $\kappa \in \kappa_c + \mathbb{Q}^{> 0} \kappa_b$ as follows. As in the discussion at the beginning of Subsection \ref{neglev}, we may immediately reduce to a single block of Category $\OO$ for $\cW_\kappa$. There is a unique highest weight in the block with a dominant lift, and dwe fix such a lift $\lambda \in \h^*_\kappa$. We may accordingly parametrize the highest weights in the block by $\dcs$, and for a double coset $w \in \dcs$, we will write $M_w$ for the corresponding Verma module. 

As discussed in Subsection \ref{neglev}, $\dcs$ is a quotient of a Coxeter group by two parabolic subgroups, and in particular carries a Bruhat order $\leqslant$. Then we have:

\begin{theo} For elements $v,w \in \dcs$, $\Hom(M_v, M_w)$ is at most one dimensional, and is nonzero if and only if $v \geqslant w$. \end{theo}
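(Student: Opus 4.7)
The plan is to deduce the positive level statement from the already-established negative level result (Theorem \ref{vemb}) by transporting Hom spaces across the Feigin--Fuchs duality of Theorem \ref{vermsd}.

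First I would set up the parametrizations so that the two sides line up. Fix a dominant lift $\lambda \in \h^*_\kappa$ of a highest weight in the block, and consider the flipped weight $-\lambda - 2\rho \in \h^*_{-\kappa + 2\kappa_c}$. By Proposition \ref{pm}, this flipped weight is antidominant, and the pertinent Coxeter-theoretic data $W_\lambda$, $W_\lambda^\circ$, $W_{\operatorname{f},\lambda}$ agree on the nose with their counterparts for $-\lambda - 2\rho$. Hence the indexing set $\dcs$ and its Bruhat order $\leqslant$ are literally the same whether we view it as parametrizing highest weights of the block through $\lambda$ at level $\kappa$ or highest weights of the block through $-\lambda - 2\rho$ at level $-\kappa + 2\kappa_c$; moreover, by the commutative diagram \eqref{flipcup}, the flip $\nu \mapsto -\nu - 2\rho$ is compatible with these identifications. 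Write $M_v^+$ for the positive level Verma labeled by $v \in \dcs$, and $M_v^-$ for the negative level Verma labeled by the same $v$.

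With this bookkeeping in place, Theorem \ref{vermsd} specializes to a canonical isomorphism $\mathbb{D} M_v^+ \simeq M_v^-[\dim N]$ for every $v \in \dcs$. Since $\mathbb{D}$ is a contravariant equivalence on $\operatorname{Verma}^\heartsuit$ and the cohomological shifts cancel, for any $v, w \in \dcs$ we obtain
\[
\Hom_{\cW_\kappa}(M_v^+, M_w^+) \;\simeq\; \Hom_{\cW_{-\kappa + 2\kappa_c}}(\mathbb{D} M_w^+, \mathbb{D} M_v^+) \;\simeq\; \Hom_{\cW_{-\kappa + 2\kappa_c}}(M_w^-, M_v^-).
\]
Applying Theorem \ref{vemb} at the negative level $-\kappa + 2\kappa_c$, the right-hand side is at most one dimensional, and is nonzero if and only if $w \leqslant v$ in the Bruhat order on $\dcs$. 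This is exactly the asserted characterization at positive level.

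There is no serious obstacle in this strategy; the only substantive point to check is that the contravariant duality $\mathbb{D}$ at the level of Verma modules really does intertwine the two parametrizations --- the one using the dominant lift $\lambda$ at positive level and the one using the antidominant lift $-\lambda - 2\rho$ at negative level. This compatibility is precisely the content of Proposition \ref{pm} together with the commutativity of \eqref{flipcup}, so once these are in hand the proof reduces to a one-line application of Theorems \ref{vermsd} and \ref{vemb}.
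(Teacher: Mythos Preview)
Your proposal is correct and follows essentially the same approach as the paper: the paper's own proof is the one-line statement that the result follows by combining Equation \eqref{flipcup}, Theorem \ref{vermsd}, and Theorem \ref{vemb}, and you have simply unwound that line in detail, correctly tracking the parametrizations and the reversal of Bruhat order under the contravariant duality.
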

\begin{proof}This follows by combining Equation \eqref{flipcup}, Theorem \ref{vermsd}, and Theorem \ref{vemb}.\end{proof}

Finally, let us note that applying the duality $\mathbf{D}$ within Category $\OO$ for $\cW_\kappa$ yields the classification of homomorphisms between the co-Verma modules $A(\nu), \nu \in \Wf \backslash \h^*$. 

\begin{cor} Let $\kappa \neq \kappa_c$, and fix two co-Verma modules $A(\mu), A(\nu)$. Then  $\Hom( A(\mu), A(\nu))$ is at most one dimensional, and any nonzero element is a surjection. \end{cor}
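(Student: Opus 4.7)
The plan is to deduce this corollary from the already-established classification of Verma homomorphisms by transporting along the contravariant duality $\mathbf{D}$ on $\OO$ recalled in Subsection~\ref{dp}. Since $A(\lambda) := \mathbf{D} M(\lambda)$ and $\mathbf{D}$ is an involutive contravariant equivalence of $\OO$ (as in \emph{loc.~cit.}), one has $\mathbf{D} A(\lambda) \simeq M(\lambda)$, so $\mathbf{D}$ induces a natural isomorphism
\begin{equation*}
\Hom_\OO(A(\mu), A(\nu)) \;\simeq\; \Hom_\OO(\mathbf{D} A(\nu), \mathbf{D} A(\mu)) \;\simeq\; \Hom_\OO(M(\nu), M(\mu)).
\end{equation*}

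First I would reduce to the case that $\mu$ and $\nu$ lie in the same linkage class: otherwise both sides of the displayed isomorphism vanish by Lemma~\ref{nohom} (and Theorem~\ref{blocks}). Within a block, the at-most-one-dimensionality then follows by invoking Theorem~\ref{vemb} when $\kappa$ is negative, and its positive-level counterpart established just above (obtained from Theorem~\ref{vemb} via Feigin--Fuchs duality, cf.\ Theorem~\ref{vermsd}). Since $\kappa$ is noncritical by hypothesis, one of these two regimes applies.

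For the surjectivity claim, I would use that every nonzero morphism between Verma modules is injective by Proposition~\ref{vembb}. A contravariant equivalence of abelian categories carries injections to surjections, so under the isomorphism above, a nonzero element of $\Hom(A(\mu), A(\nu))$ corresponds to an injection $M(\nu) \hookrightarrow M(\mu)$, and hence is itself a surjection $A(\mu) \twoheadrightarrow A(\nu)$.

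No serious obstacle is expected, as the statement is essentially a formal consequence of existing results; the only point requiring mild care is checking that $\mathbf{D}$ indeed squares to the identity on the nose (or at least up to a canonical isomorphism of functors), which is immediate from the construction in Subsection~\ref{dp} since the underlying graded-dual operation and the anti-involution $\tau$ are both involutive.
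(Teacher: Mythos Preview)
Your proposal is correct and follows exactly the approach the paper intends: the paper simply states that the corollary follows by ``applying the duality $\mathbf{D}$ within Category $\OO$ for $\cW_\kappa$,'' and you have spelled out precisely how, using $\Hom(A(\mu),A(\nu)) \simeq \Hom(M(\nu),M(\mu))$ together with Proposition~\ref{vembb} for surjectivity and Theorem~\ref{vemb} (and its positive-level analogue) for the dimension bound.
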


\begin{cor}At a negative level $\kappa$, parametrize the co-Vermas in a block as $A_v, v \in \dcs$, where $A_v := \mathbf{D} M_v$, and $M_v$ was defined in Subsection \ref{neglev}. Then $\Hom(A_v, A_w)$ is nonzero if and only if $v \geqslant w$. 
\end{cor}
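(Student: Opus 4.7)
The plan is to deduce this corollary directly from Theorem \ref{vemb} (the classification of Verma embeddings at negative level) via the contravariant duality $\mathbf{D}: \OO \to \OO^{op}$ recalled in Subsection \ref{dp}. By the very definition in the statement, $A_v = \mathbf{D} M_v$ for every $v \in \dcs$, so since $\mathbf{D}$ is an equivalence its contravariance yields a canonical identification
$$\Hom_\OO(A_v, A_w) \simeq \Hom_\OO(\mathbf{D} M_v, \mathbf{D} M_w) \simeq \Hom_\OO(M_w, M_v)$$
for every pair $v, w \in \dcs$.

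Now Theorem \ref{vemb} asserts that $\Hom_\OO(M_w, M_v)$ is at most one dimensional and is nonzero precisely when $w \leqslant v$ in the Bruhat order on $\dcs$, i.e. precisely when $v \geqslant w$. Transporting this through the isomorphism above gives the claim. The `surjectivity' half of the preceding corollary is compatible with this: under $\mathbf{D}$, a nonzero map $A_v \to A_w$ corresponds to a nonzero map $M_w \to M_v$, which by Proposition \ref{vembb} is an injection, and injections are carried to surjections by the exact contravariant functor $\mathbf{D}$.

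Since the whole argument is a formal manipulation with the duality $\mathbf{D}$ and the negative-level Verma embedding result, there is no substantive obstacle; the only inputs are the existence of $\mathbf{D}$ as a contravariant equivalence on $\OO$ (Subsection \ref{dp}) and Theorem \ref{vemb}, both of which are already in hand. In particular, one does not need to invoke Feigin--Fuchs duality or re-run the d\'evissage arguments of Section \ref{slinkies}; the reversal of the Bruhat inequality is simply an artifact of passing from $\Hom(M_w, M_v)$ to $\Hom(A_v, A_w)$ under the contravariant equivalence.
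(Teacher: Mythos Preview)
Your proof is correct and takes essentially the same approach as the paper, which simply notes that this corollary follows by applying the contravariant duality $\mathbf{D}$ within Category $\OO$ to Theorem \ref{vemb}.
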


\begin{cor}At a positive level $\kappa$, parametrize the co-Vermas in a  block as $A_v, v \in \dcs,$ where $A_v := \mathbf{D} M_v$, and $M_v$ was defined in Subsection \ref{poslev}. Then $\Hom(A_v, A_w)$ is nonzero if and only if $v \leqslant w$. \end{cor}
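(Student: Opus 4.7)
The plan is to deduce this as an immediate corollary of the positive-level Verma embedding theorem proved just above, combined with the contravariant duality $\mathbf{D}$ on Category $\OO$ recalled in Subsection \ref{dp}. Since $\mathbf{D}: \OO \to \OO^{op}$ is an autoequivalence, applying it to the arguments of a Hom space swaps source and target.

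Concretely, as $A_v := \mathbf{D} M_v$ by definition, the fully faithfulness of $\mathbf{D}$ will yield a canonical isomorphism
$$\Hom_\OO(A_v, A_w) = \Hom_\OO(\mathbf{D} M_v, \mathbf{D} M_w) \simeq \Hom_\OO(M_w, M_v).$$
Invoking the just-established classification of Verma embeddings at positive level, the right-hand side is at most one-dimensional, and is nonzero precisely when $w \geqslant v$ in the Bruhat order on $\dcs$, i.e.\ exactly when $v \leqslant w$. This gives the claim.

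There is no genuine obstacle to carry out: all the substantive work sits in the positive-level Verma classification, which itself was reduced via Theorem \ref{vermsd} and Proposition \ref{pm} to the negative-level case of Subsection \ref{neglev}. The role of $\mathbf{D}$ here is purely formal, namely to reverse the Bruhat inequality and thereby exchange the pattern of nonvanishing Homs between Vermas with that between co-Vermas. As a sanity check, the analogous negative-level co-Verma corollary follows by the identical argument applied to the negative-level Verma embedding theorem (Theorem \ref{vemb}), and indeed the two inequalities flip as one passes between the two levels, as expected from the overall symmetry provided by Categorical Feigin--Fuchs duality.
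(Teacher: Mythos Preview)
Your proposal is correct and follows precisely the approach the paper intends: the corollary is stated without proof, immediately after the sentence ``Finally, let us note that applying the duality $\mathbf{D}$ within Category $\OO$ for $\cW_\kappa$ yields the classification of homomorphisms between the co-Verma modules,'' and your argument is exactly this application of $\mathbf{D}$ to the positive-level Verma classification.
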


\bibliographystyle{plain}
\bibliography{sample}

 \end{document}